\def\l@subsection{\@tocline{2}{0pt}{2.5pc}{5pc}{}}
\renewcommand\tocchapter[3]{%
  \indentlabel{\@ifnotempty{#2}{\ignorespaces#2.\quad}}#3%
}
\newcommand\@dotsep{4.5}
\def\@tocline#1#2#3#4#5#6#7{\relax
  \ifnum #1>\c@tocdepth 
  \else
    \par \addpenalty\@secpenalty\addvspace{#2}%
    \begingroup \hyphenpenalty\@M
    \@ifempty{#4}{%
      \@tempdima\csname r@tocindent\number#1\endcsname\relax
    }{%
      \@tempdima#4\relax
    }%
    \parindent\z@ \leftskip#3\relax \advance\leftskip\@tempdima\relax
    \rightskip\@pnumwidth plus1em \parfillskip-\@pnumwidth
    #5\leavevmode\hskip-\@tempdima{#6}\nobreak
    \leaders\hbox{$\m@th\mkern \@dotsep mu\hbox{.}\mkern \@dotsep mu$}\hfill
    \nobreak
    \hbox to\@pnumwidth{\@tocpagenum{#7}}\par
    \nobreak
    \endgroup
  \fi}
\renewcommand\csname r@tocindent0\endcsname{0pt}
\def\l@subsection{\@tocline{2}{0pt}{2.5pc}{5pc}{}}
\newtheorem{theorem}{Theorem}[section]
\newtheorem{proposition}[theorem]{Proposition}
\newtheorem{lemma}[theorem]{Lemma}
\newtheorem{corollary}[theorem]{Corollary}
\newtheorem{definition}[theorem]{Definition}
\newtheorem{remark}[theorem]{Remark}
\newtheorem{claim}[theorem]{Claim}
\newcommand{\fb}{\mathfrak{b}}
\newcommand{\Z}{\mathbb{Z}}
\newcommand{\stkout}[1]{\ifmmode\text{\sout{\ensuremath{#1}}}\else\sout{#1}\fi}
\renewcommand\thetable{\thesection.\@arabic\c@table}
\title[Drainage Network Model with Branching]{On the convergence of the drainage network with branching}
\author[R. Santos, G. Valle, L. Zuazn\'abar]{Rafael Santos$^1$, Glauco Valle$^2$, Leonel Zuazn\'abar$^3$}
\thanks{1. Supported by CAPES and FAPERJ}
\thanks{2. Supported Universal CNPq project 421383/2016-0, CNPq grant 307938/2022-0 and FAPERJ grant E-26/202.636/2019.}
\date{\today}
\address{
\newline
\newline
$^1$ UFRJ - Instituto de Matem\'atica.
\newline  Caixa Postal 68530, 21945-970, Rio de Janeiro, Brasil.
\newline
e-mail: {\rm \texttt{rafaels@dme.ufrj.br}}
\newline
\newline
$^2$ UFRJ - Instituto de Matem\'atica.
\newline  Caixa Postal 68530, 21945-970, Rio de Janeiro, Brasil.
\newline
e-mail: {\rm \texttt{glauco.valle@im.ufrj.br}}
\newline
\newline
$^3$ 
UFABC - Centro de Matem\'atica, Computa\c{c}\~ao e Cogni\c{c}\~ao.
\newline   Av.
dos Estados, 5001, 09210-580 Santo Andr\'e, S\~ao Paulo, Brasil.
\newline
e-mail: {\rm \texttt{l.zuaznabar@ufabc.edu.br}}
}
\subjclass[2010]{60K37}
\keywords{Brownian Net, Invariance Principle, Systems of Random Walks, Drainage Network}
\begin{document}

\begin{abstract}
The Drainage Network is a system of coalescing random walks, exhibiting long-range dependence before coalescence, introduced by Gangopadhyay, Roy, and Sarkar \cite{grs}. Coletti, Fontes, and Dias \cite{cfd} proved its convergence to the Brownian Web under diffusive scaling. In this work, we introduce a perturbation of the system allowing branching of the random walks with low probabilities varying with the scaling parameter. When the branching probability is inversely proportional to the scaling parameter, we show that this drainage network with branching consists of a tight family such that any weak limit point contains a Brownian Net. We conjecture that the limit is indeed the Brownian Net.

\textit{Keywords:} System of coalescing random walks, Drainage Network, Drainage Network with branching, Brownian Web, Brownian Net.

\end{abstract}

\maketitle

\tableofcontents

\section{Introduction}
\label{sec:intro}

Here we study convergence in distribution for a diffusively scaled system of non-crossing one-dimensional branching-coalescing random walks starting at each point in the space and time lattice $\mathbb{Z} \times \mathbb{Z}$ that presents long-range dependence before coalescence. To not be repetitive, when we make mention of the convergence of a system of random walks, we always consider one-dimensional random walks and diffusive space-time scaling. For systems of coalescing random walks without branching and independence before coalescence, the limiting system is the so-called Brownian Web introduced by Fontes et al in \cite{finr,finr1}. Although formal descriptions of systems of coalescing Brownian motions had previously been considered initially by Arratia \cite{a,a1} and then by Toth and Werner \cite{tw}. Since \cite{finr,finr1}, the convergence in distribution of systems of coalescing random walks to the Brownian Web and its variants has been an object of extensive study, as we can see in \cite{bmsv,cssc,finr2,nrs,rss,ss,ss2}.

Motivated by an application that appears in Scheidegger \cite{scheidegger} about drainage pattern into an intramontane trench which is related to river networks, Roy, Saha, and Sarkar in \cite{grs} introduced the Drainage Network, see also \cite{dual}. It arises as a natural model for systems of coalescing random walks with dependence before coalescence. We can describe the model as follows: Each site in $\mathbb{Z}^2$ is independently open or closed with a given probability $p \in (0,1)$. From each site $(x,y) \in \mathbb{Z}^2$ departs a directed edge towards the nearest open site in $\mathbb{Z} \times \{y+1\}$, in case there is a unique nearest open site. If we have two options, necessarily of the form $(x+z,y+1)$ and $(x-z,y+1)$ for some $z\in \mathbb{Z}$, then we choose one of them with equal probability. Considering the first coordinate as space and the second one as time, we obtain a system of coalescing random paths called the Drainage Network. Its convergence to Brownian Web was proved first by Coletti, Fontes and Dias \cite{cfd} and later by Coletti and Valle \cite{cv} for a more general version of the model that allows paths to cross each other. 

The convergence of systems of branching-coalescing random walks have been later studied by Sun and Swart \cite{ss1}. They considered systems of coalescing random walks which evolve independently before coalescence with a superposed dynamic that allows the branching of paths with a small probability inversely proportional to the scaling parameter. They also characterized the diffusive limit as a system of branching-coalescing Brownian motions, which they called the Brownian Net. An alternative construction of this random object was given by Newman, Ravishankar and Schertzer in \cite{nrsc}. Besides that, Etheridge, Freeman, and Straulino proved convergence to Brownian Net for spatial $\Lambda$-Fleming-Viot process without selection in \cite{efs}. More information related to Brownian Web and Net can be found in \cite{survey1}.

Considering the Drainage Network again, it is natural to consider an extension of the model where branching can occur. When there are two nearest open sites as destination for an edge departing from a given site, we allow both edges to exist in the system with some given probability $\epsilon_n$, where $n$ is a scaling parameter. We call $\epsilon_n$ the branching probability. Trajectories of the Drainage Network with branching also exhibit long-range dependence. In this work, we study the convergence of the Drainage Network with branching under diffusive scaling to the Brownian Net under specific conditions on the branching probabilities. If $\epsilon_n$ is inversely proportional to $n$, we prove that the Drainage Network is a tight family such that any of its weak limit points contain a Brownian Net. We conjecture that the limit is indeed the Brownian Net, but we could not prove it in this article. 

In a further generalization, we can consider the branching parameter of order $n^{-\alpha}$ for some $\alpha > 0$. If $\alpha < 1$ it is simple to verify that the system does not converge under diffusive scaling. If $\alpha> 1$ we conjecture that the system converges to the Brownian Web, the result is straightforward for $\alpha > 2$ and we shall discuss the case $1<\alpha \le 2$ in another paper.

Our result contributes to the understanding of the universality class related to the Brownian Net. We believe that, like the Brownian Web, the convergence for the Brownian Net is robust and only needs asymptotic independence of the trajectories in the system and proper moment conditions.

The paper is organized as follows: In Section \ref{sec:BNet}, we describe the Brownian Web and Brownian Net. In Section \ref{sec:DNB}, we describe the Drainage Network with Branching and its dual and we also state our main results. In Section \ref{sec:coalescence}, some basic properties of the Drainage Network with branching are obtained. Finally, in Sections \ref{sec:conditionI} and \ref{sec:lefttobrownian}, the main theorems are proved.


\section{Brownian Web and Brownian Net}
\label{sec:BNet}

\vspace{0.2cm}

\subsection{The space of compact sets of paths}\
\label{subsec:compactification}

\vspace{0.2cm}

The Brownian Web and the Brownian Net are random elements of a proper metric space introduced in \cite{finr,finr1}. It is a space of sets of space-time continuous paths on a compactification of the space-time plane $\mathbb{R}^2$ endowed with the Hausdorff topology for the uniform metric on the compactification. We follow the definition in \cite{ss1}, which is equivalent but slightly different from that in \cite{finr2}, for details see \cite[Appendix]{ss1}.

Let $\bar{\mathbb{R}}^2$ be the compactification of the space-time plane $\mathbb{R}^2$ under the metric $\rho$ where:
\begin{equation}
\rho((x_1,t_1),(x_2,t_2)) = \left| \mbox{tanh}(t_1) - \mbox{tanh}(t_2) \right| \ \vee \ \left| \frac{\mbox{tanh}(x_1)}{1+|t_1|} - \frac{\mbox{tanh}(x_2)}{1+|t_2|} \right|,
\label{eq:metricpoint} 
\end{equation}  
and tanh$(x)$ is the hyperbolic tangent of $x$: tanh$(x) = \displaystyle\frac{e^x-e^{-x}}{e^x+e^{-x}}$. More precisely, consider 
$\big([-\infty,\infty] \times (-\infty,\infty)\big) \cup \{ (*,\pm \infty) \},$
endowed with a topology such that $(x_n,t_n) \rightarrow (*,\pm \infty)$ whenever $t_n \rightarrow \pm \infty$. Then $\bar{\mathbb{R}}^2$ can be thought of as the continuous image of this space under the map
\begin{equation}
(x,t) \in [-\infty,\infty] \times (-\infty,\infty) \mapsto \left(\frac{\mbox{tanh}(x)}{1+|t|},\mbox{tanh}(t) \right), 
\label{eq:metricmap} 
\end{equation}  
$$
(*, -\infty) \mapsto (0,-1) \ \textrm{ and } \
(*, \infty) \mapsto (0,1).
$$
Note that this mapping gives a region contained in the square $[-1,1] \times [-1,1]$. 

A path $\pi$ in $\bar{\mathbb{R}}^2$ with starting time denoted by $\sigma_{\pi} \in [-\infty,\infty]$, is a map $\pi$: $[\sigma_{\pi},\infty] \rightarrow [-\infty,\infty]\cup\{*\}$ such that $\pi(\infty) = *$, $\pi(\sigma_{\pi}) = *$ if $\sigma_{\pi} = -\infty$ and $t \rightarrow (\pi(t),t)$ is a continuous map from $[\sigma_{\pi},\infty]$ to $(\bar{\mathbb{R}}^2,\rho)$. Then define $\Pi$ to be the space of all paths $\pi$ in $\bar{\mathbb{R}}^2$ endowed with the metric $d$, where $d(\pi_1,\pi_2)$ is given by    
\begin{equation}
\left| \mbox{tanh}(\sigma_{\pi_1}) - \mbox{tanh}(\sigma_{\pi_2}) \right| \ \vee \ \sup_{t \geq (\sigma_{\pi_1} \wedge \sigma_{\pi_2})} \left| \frac{\mbox{tanh}(\pi_1(t \vee \sigma_{\pi_1}))}{1+|t|} - \frac{\mbox{tanh}(\pi_2(t \vee \sigma_{\pi_2}))}{1+|t|} \right|.
\label{eq:metricpath} 
\end{equation}  
We have that $(\Pi,d)$ is a complete separable metric space. 

Finally, we can define the space $(\mathcal{H},d_{\mathcal{H}})$, where the Brownian Web and Net will be defined. Consider $K \subset \Pi$ and let $\mathcal{H}$ denote the space of compact subsets of $(\Pi,d)$, under the Hausdorff metric $d_{\mathcal{H}}$, that is: 
\begin{equation}
d_{\mathcal{H}}(K_1,K_2) = \sup_{\pi_1 \in K_1} \inf_{\pi_2 \in K_2} d(\pi_1,\pi_2) \vee \sup_{\pi_2 \in K_2} \inf_{\pi_1 \in K_1} d(\pi_1,\pi_2), \ \, K_1, \, K_2 \in \mathcal{H}.
\label{eq:metricweb} 
\end{equation}  
We have that $(\mathcal{H},d_{\mathcal{H}})$ is also a complete separable metric space.

Letting $\mathcal{B}_{\mathcal{H}}$ denote the Borel $\sigma$-algebra associated with the metric $d_{\mathcal{H}}$, we will construct the Brownian Web $\mathcal{W}$ and Brownian Net $\mathcal{N}$ as random elements defined in $(\mathcal{H},\mathcal{B}_{\mathcal{H}})$.

\subsection{The Brownian Web and its dual}\

\vspace{0.2cm}

The Brownian Web $\mathcal{W}$ is a random element of  $(\mathcal{H},\mathcal{B}_{\mathcal{H}})$ characterized by the following: 

\smallskip

\begin{proposition}
\label{Result:brownianweb}
\textbf{(\cite[Theorem $2.1$]{finr1})} There exists an $(\mathcal{H},\mathcal{B}_{\mathcal{H}})$-valued random variable $\mathcal{W}$, called the standard Brownian Web, whose distribution is uniquely determined by the following properties:
\begin{enumerate}
\item[(a)] From any deterministic point $z \in \mathbb{R}^2$, there is almost surely a unique path $\pi_z \in \mathcal{W}$ starting from $z$; \
\item[(b)] For any finite deterministic set of points $z_1, \ldots , z_k \in \mathbb{R}^2$, the collection $(\pi_{z_1}, \ldots , \pi_{z_k})$ is distributed as coalescing standard Brownian motions;  
\item[(c)] For any deterministic countable dense subset $D \subset \mathbb{R}^2$, almost surely, $\mathcal{W}$ is the closure of $\{\pi_z: z \in D \}$ in $(\Pi,d)$.
\end{enumerate}
\end{proposition}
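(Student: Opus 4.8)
This is the existence-and-uniqueness theorem for the Brownian Web, and the plan is to reproduce the construction of Fontes, Isopi, Newman and Ravishankar \cite{finr1}: build the web over a countable dense set of starting points, close it up, prove the closure is compact, and then verify the three characterizing properties and uniqueness in law. \textbf{Step 1 (construction over a countable set).} Fix a deterministic countable dense $D=\{z_1,z_2,\dots\}\subset\R^2$ and build coalescing Brownian motions $\{W^{z_i}\}_{i\ge1}$ sequentially: run a standard Brownian motion from $z_1$, and having built $W^{z_1},\dots,W^{z_{k-1}}$, run an independent Brownian motion from $z_k$ until it first meets $\bigcup_{j<k}W^{z_j}$, letting it coalesce with the path it hits thereafter. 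By the strong Markov property and the symmetry of Brownian motion these laws are consistent (independent of the enumeration of $D$), so the $W^{z_i}$ live on one probability space, each pair evolving as two independent Brownian motions before meeting and coinciding afterwards; in particular the paths do not cross. Set $\mathcal{W}:=\overline{\{W^{z_i}:i\ge1\}}$, the closure taken in $(\Pi,d)$.

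\textbf{Step 2 (compactness — the crux).} The hard part is to show that $\mathcal{W}$ is almost surely compact, hence an element of $\mathcal{H}$. I would use the Arzel\`a--Ascoli-type compactness criterion for $(\mathcal{H},d_{\mathcal{H}})$ from \cite{finr1}: it suffices that, for each $T$, the family $\{W^{z_i}\}$ admits a single random, a.s. finite modulus of continuity on $[-T,T]$ after the compactifying change of variables $(x,t)\mapsto(\mathrm{tanh}(x)/(1+|t|),\mathrm{tanh}(t))$, and that no path escapes to spatial infinity in finite time. This comes from Brownian oscillation estimates: the reflection principle bounds $\Pd(\sup_{|s-t|\le\delta}|W_s-W_t|>\e)$, a union bound over a time mesh and over a mesh of starting points controls all the $W^{z_i}$ at once (coalescence only diminishes oscillations), while the factor $1/(1+|t|)$ in $\rho$ together with $\sup_t|W_t|/(1+|t|)<\infty$ a.s. takes care of $t\to\pm\infty$. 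Measurability is then automatic, because on the event that $\mathcal{W}$ is compact it is the $d_{\mathcal{H}}$-limit of the finite sets $\{W^{z_1},\dots,W^{z_m}\}$, each a manifestly $\mathcal{H}$-valued random variable.

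\textbf{Step 3 (verifying (a), (b), (c)).} Property (c) holds by construction for the fixed $D$; independence of the law of $\mathcal{W}$ from the choice of $D$ I would obtain by a coupling argument, using that two coalescing Brownian motions started at the same time from spatially close points stay uniformly close with high probability. For (b), given deterministic $z_1,\dots,z_k$, choose $D\ni z_j^{(n)}\to z_j$; Step 2 makes $(W^{z_1^{(n)}},\dots,W^{z_k^{(n)}})$ precompact, every subsequential limit is a $k$-tuple of paths through the $z_j$ lying in $\mathcal{W}$, and a Skorokhod coupling together with continuity of coalescing Brownian motions in their starting points identifies any such limit as coalescing Brownian motions from $(z_1,\dots,z_k)$; one also argues that $\mathcal{W}$ contains no further path from the $z_j$. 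Property (a) is the case $k=1$ plus uniqueness: any path in $\mathcal{W}$ from a deterministic $z$ is a limit of paths $W^{z_{i_m}}$ with $z_{i_m}\to z$, and all such limits agree because, for deterministic $z$, the probability that two Brownian motions started within distance $\delta$ of $z$ have not coalesced by a fixed time $t>\sigma_z$ tends to $0$ as $\delta\to0$.

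\textbf{Step 4 (uniqueness in distribution).} Finally, suppose $\mathcal{W}'$ is any $(\mathcal{H},\mathcal{B}_{\mathcal{H}})$-valued random variable satisfying (a)--(c), and fix a deterministic countable dense $D$. By (a) there is a.s. a unique $\pi'_z\in\mathcal{W}'$ starting from each $z\in D$; by (b) the joint law of $(\pi'_z)_{z\in D}$ on $\Pi^{\N}$ is that of coalescing Brownian motions, which is determined by its finite-dimensional distributions and hence equals the law of $(W^{z_i})_{i\ge1}$. By (c), $\mathcal{W}'=\overline{\{\pi'_z:z\in D\}}$ a.s., and since the map $\Pi^{\N}\to\mathcal{H}$ taking a sequence to the closure of its range is measurable, the law of $\mathcal{W}'$ coincides with that of $\mathcal{W}$. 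The principal obstacle throughout is Step 2: securing the uniform modulus-of-continuity and no-escape estimates that force the closure to be compact, with the behaviour near $t=\pm\infty$ governed by the compactification $\bar{\mathbb{R}}^2$.
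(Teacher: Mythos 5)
This proposition is not proved in the paper at all: it is quoted verbatim from \cite{finr1} (Theorem 2.1 there), so there is no internal argument to compare against. Your outline faithfully reproduces the construction and proof strategy of that reference — coalescing Brownian motions from a countable dense set, a.s. compactness of the closure via uniform modulus-of-continuity estimates in the compactified metric, verification of (a)--(c), and uniqueness in law via the measurable closure map — and it correctly identifies Step 2 (compactness) as the main technical burden, so it is an accurate sketch of the standard argument.
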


The characterization in Proposition \ref{Result:brownianweb} can be extended to allow the Brownian paths to have a fixed diffusion coefficient $\lambda^2 \neq 1$ and a drift $b \neq 0$. The only difference is in the property (b), where the coalescing Brownian motions may have a diffusion coefficient distinct from one and a non-zero drift. We denote by $\mathcal{W}_{\lambda,b}$ the \emph{Brownian Web with diffusion coefficient $\lambda^2 > 0$ and drift $b\in \mathbb{R}$} and $\mathcal{W}_{\lambda,b} = \mathcal{W}_{\lambda}$ if $b=0$, see \cite[Theorem 1.5]{ss1} for a formal construction. One can show that $\mathcal{W}_{\lambda,b}$ can be obtained as the image of $\mathcal{W}$ by a proper map on $(\mathcal{H},d_{\mathcal{H}})$. This map is defined as
$$
(\pi(t))_{t\ge \sigma_\pi} \in \Pi \mapsto 
(\lambda \pi(t) + b (t-\sigma_\pi))_{t\ge \sigma_\pi} \in \Pi.
$$ 

The Brownian Web $\mathcal{W}$ has a dual process $\widehat{\mathcal{W}}$ which is called the \emph{dual Brownian Web}. $\widehat{\mathcal{W}}$ is a collection of coalescing paths running backward in time which is uniquely determined by the restriction that these paths cannot cross any path from $\mathcal{W}$.

Let us describe the space where $\widehat{\mathcal{W}}$ takes its values. Given a set $A \subset \bar{\mathbb{R}}^2$, denote $-A = \{-z: z \in A\}$. Identifying each path $\pi \in \Pi$ with its graph as a subset of $\bar{\mathbb{R}}^2$, $\widehat{\pi} = -\pi$ defines a path running backward in time, with starting time $\widehat{\sigma}_{\widehat{\pi}} = -\sigma_{\pi}$. Let $\widehat{\Pi} = - \Pi$ denoting the set of all these backward paths endowed with a metric $\widehat{d}$ inherited from $(\Pi,d)$ under the sign change mapping. Let $\widehat{\mathcal{H}}$ be the space of compact subsets of $(\widehat{\Pi},\widehat{d})$ endowed with the Hausdorff metric $d_{\widehat{\mathcal{H}}}$ and Borel $\sigma$-algebra $\mathcal{B}_{\widehat{\mathcal{H}}}$. For any $K \in \mathcal{H}$, $-K$ denotes the set $\{-\pi: \pi \in K\} \in \widehat{\mathcal{H}}$.

The next proposition characterizes the joint law of the Brownian Web $\mathcal{W}$ and its dual $\widehat{\mathcal{W}}$ as random elements taking values in $\mathcal{H} \times \widehat{\mathcal{H}}$ equipped with the product $\sigma$-algebra.

\smallskip

\begin{proposition}
\label{Result:dualbrownianweb}
\textbf{(\cite[Theorem 2.4]{survey1})} There exists an $\mathcal{H} \times \widehat{\mathcal{H}}$-valued random element $(\mathcal{W},\widehat{\mathcal{W}})$, called the double Brownian Web (with $\widehat{\mathcal{W}}$ called the dual Brownian Web), whose distribution is uniquely determined by the following properties:
\begin{enumerate}
\item[(a)] $\mathcal{W}$ and $-\widehat{\mathcal{W}}$  are both distributed as the standard Brownian Web;
\item[(b)] Almost surely, no path $\pi_z \in \mathcal{W}$ crosses any path $\widehat{\pi}_{\widehat{z}} \in \widehat{\mathcal{W}}$, i.e., there are no $z=(x,t)$ and $\widehat{z} = (\widehat{x},\hat{t})$ with $t < \widehat{t}$ such that $(\pi_z(s_1)-\widehat{\pi}_{\widehat{z}}(s_1))(\pi_z(s_2)-\widehat{\pi}_{\widehat{z}}(s_2)) < 0$ for some $t < s_1 < s_2 < \widehat{t}$.
\end{enumerate}
Furthermore, for each $z \in \mathbb{R}^2$, $\widehat{\mathcal{W}}(z)$ a.s. consists of a single path $\widehat{\pi}_z$ which is the unique path in $\widehat{\Pi}$ that does not cross any path in $\mathcal{W}$. Thus $\widehat{\mathcal{W}}$ is a.s. determined by $\mathcal{W}$ and vice versa.
\end{proposition}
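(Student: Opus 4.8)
Since the statement is a characterization, the plan splits into an existence part, carried out by an explicit construction of $\widehat{\mathcal{W}}$ as a functional of $\mathcal{W}$, and a uniqueness part, which is extracted from property (b). For existence I would fix a realization of the standard Brownian Web $\mathcal{W}$ together with a skeleton $\{\pi_z: z\in D\}$ over a deterministic countable dense $D\subset\mathbb{R}^2$, as in Proposition~\ref{Result:brownianweb}, and also a deterministic countable dense $\widehat{D}\subset\mathbb{R}^2$ in general position with respect to the paths of $\mathcal{W}$. For $\widehat z=(\widehat x,\widehat t)\in\widehat D$ I would define the backward path $\widehat\pi_{\widehat z}$ from time $\widehat t$ by requiring that at every level $s<\widehat t$ its position be caught between the forward skeleton paths passing immediately to its left and those passing immediately to its right; one checks this prescription is self-consistent because two forward Brownian paths started arbitrarily close together coalesce, so the left and right enveloping families pinch down to a single limiting trajectory. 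The family $\{\widehat\pi_{\widehat z}:\widehat z\in\widehat D\}$ is then, by construction, a collection of coalescing backward paths none of which crosses a path of $\mathcal{W}$, and $\widehat{\mathcal{W}}$ is obtained as its closure in $(\widehat\Pi,\widehat d)$; compactness of the closure is inherited from that of $\mathcal{W}$ because every dual path is uniformly squeezed between forward ones. Alternatively one may invoke the T\'oth--Werner / Soucaliuc--T\'oth--Werner construction of a pair of forward and backward coalescing Brownian flows that by design never cross, which delivers the pair $(\mathcal{W},\widehat{\mathcal{W}})$ directly.

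The delicate step is verifying property (a), that is, that $-\widehat{\mathcal{W}}$ is again a standard Brownian Web. The substance here is that a single squeezed path $\widehat\pi_{\widehat z}$ has, after time reversal, exactly the law of a standard Brownian motion, and that any finite collection of such paths evolve as independent Brownian motions run backward in time and coalescing on contact. I would obtain this from the time-reversal symmetry of the Arratia flow together with a Skorokhod-type reflection description of a Brownian trajectory confined to the complement of a conditioned family of forward coalescing paths; this reflection/time-reversal bookkeeping is the main obstacle and the most technical part of the argument.

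Finally, for uniqueness of the joint law, suppose $(\mathcal{W},\widehat{\mathcal{W}})$ satisfies (a) and (b) and fix a deterministic $z\in\mathbb{R}^2$. By (a), $\widehat{\mathcal{W}}$ almost surely contains a unique backward path $\widehat\pi_z$ starting at $z$, and by (b) it crosses no path of $\mathcal{W}$. I would then show that, conditionally on $\mathcal{W}$, there is almost surely at most one path in $\widehat\Pi$ from $z$ crossing no path of $\mathcal{W}$: forward skeleton paths approaching $z$ from the left and from the right hem any admissible backward path into a region whose section at each time $s<t$ degenerates to a single point, again because nearby forward Brownian paths coalesce immediately; this is the ``wedge/mesh'' characterization of the Brownian web. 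Hence $\widehat\pi_z$ is an almost surely measurable function of $\mathcal{W}$, and running this over a deterministic countable dense set of points and taking closures shows that $\widehat{\mathcal{W}}$ is an almost surely measurable functional of $\mathcal{W}$ (and, symmetrically, $\mathcal{W}$ of $\widehat{\mathcal{W}}$). Therefore the conditional law of $\widehat{\mathcal{W}}$ given $\mathcal{W}$ is deterministic, which together with (a) pinning the marginal law of $\mathcal{W}$ determines the joint law uniquely and simultaneously yields the ``Furthermore'' assertion.
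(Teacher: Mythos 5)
This proposition is not proved in the paper: it is imported verbatim as \cite[Theorem 2.4]{survey1} (going back to \cite{finr1} and to Soucaliuc--T\'oth--Werner), so there is no internal proof to measure your proposal against. Judged on its own, your outline follows the standard architecture of the known proof: construct the dual as the family of backward paths squeezed between forward skeleton paths (equivalently, as the limit of the graphical dual of coalescing walks, or via the T\'oth--Werner reflection coupling), verify that the dual is itself a Brownian web, and get uniqueness of the joint law from the fact that the non-crossing dual path from a deterministic point is a.s.\ unique given $\mathcal{W}$. Your uniqueness argument is essentially complete; the one detail worth making explicit is the contradiction mechanism: if two distinct non-crossing backward paths from $z=(x,t)$ separated at some time $s<t$, a forward skeleton path started strictly between them at a deterministic point would be trapped between them and hence forced to pass exactly through $z$ at time $t$, an event of probability zero for a deterministic $z$ and a countable skeleton.

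The genuine gap is property (a). You correctly identify that showing $-\widehat{\mathcal{W}}$ is again a standard Brownian Web is the technical heart, but you only name the tools (time-reversal of the Arratia flow, Skorokhod reflection) without indicating how they close the argument; in particular, the claim that a single squeezed backward path is a Brownian motion, and that finitely many of them are coalescing Brownian motions, is exactly what needs proof and is not a soft consequence of symmetry. The cleanest known route, and the one underlying \cite{finr1} and \cite{survey1}, is to work at the discrete level: the dual of a system of coalescing simple random walks on $\mathbb{Z}^2_{\mathrm{even}}$ is itself a system of coalescing walks on the shifted sublattice, non-crossing with the forward system by construction, so both marginals converge to Brownian webs and the non-crossing property survives the limit; properties (a) and (b) are then inherited rather than verified directly in the continuum. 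Your direct "pinching" construction also silently assumes that the left and right forward envelopes of a prospective dual path agree at every time, which again requires the quantitative coalescence estimates (nearby forward paths coalesce in a time that vanishes with their initial separation, uniformly on compacts) rather than just the qualitative statement that close paths coalesce.
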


Analogously, we can define the double Brownian Web with diffusion coefficient $\lambda^2$ and drift $b$, which we denote by $(\mathcal{W}_{\lambda,b}, \widehat{\mathcal{W}}_{\lambda,b})$.

\subsection{Convergence criteria for the Brownian Web}\
\label{subsec:webcritterion}


The convergence criterion for the Brownian Web was first described in \cite[Theorem 2.2]{finr1}, except for a tightness condition which we call below Condition (T). This condition is unnecessary for systems with non-crossing paths (See \cite[Proposition B2]{finr1}). Condition (T) appears in \cite[Theorem 1.4]{nrs} for the first time.

Consider a sequence of $(\mathcal{H},\mathcal{B}_{\mathcal{H}})$-valued random variables $(\mathcal{Z}_n)_{n \in \mathbb{N}}$. There are four conditions that, if satisfied, guarantees that $(\mathcal{Z}_n)_{n \in \mathbb{N}}$ converges to a Brownian Web $\mathcal{W}_\lambda$:

\vspace{0.2cm}

\noindent \textbf{Condition (T)}: [Tightness criterion] The law of a sequence of $(\mathcal{H},\mathcal{B}_{\mathcal{H}})$-valued random variables $(\mathcal{Z}_n)_{n \in \mathbb{N}}$ is tight if for all $L>0$, $T>0$ and $\eta > 0$,
$$ 
\lim_{\delta \downarrow 0} \delta^{-1} \limsup_{n\rightarrow \infty} \sup_{(x,t) \in [-L,L]\times [-T,T]} P(\mathcal{Z}_n \in A_{\delta,\eta}(x,t)) = 0,
$$
where $A_{\delta,\eta}(x,t)$ is the event (in $\mathcal{B}_{\mathcal{H}}$) that $K \in \mathcal{H}$ contains some path which intersects the rectangle $[x-\frac{\eta}{2} , x+\frac{\eta}{2}] \times [t,t+\frac{\delta}{2}]$, and at a later time, intersects the left or right boundary of a larger rectangle $[x-\eta, x+\eta] \times [t,t+\delta]$.

\vspace{0.2cm}

\noindent \textbf{Condition (I)}: There exists $\pi_{n,z} \in \mathcal{Z}_n$ for each $z \in \mathbb{R}^2$, such that for any deterministic $z_1, \ldots, z_k \in \mathbb{R}^2$, $(\pi_{n,z_i})_{1 \leq i \leq k}$ converge in distribution to coalescing Brownian motions with diffusion coefficient $\lambda^2$ starting at $(z_i)_{1 \leq i \leq k}$.

\vspace{0.2cm}

\noindent \textbf{Condition (B1)}: For all $t>0$
$$
\limsup_{\delta \downarrow 0} \limsup_{n \rightarrow \infty} \sup_{a,t_0 \in \mathbb{R}} P[\eta_{\mathcal{Z}_n}(t_0,t;a,a+\delta) \geq 2 ] =0, 
$$
where given a $(\mathcal{H},\mathcal{B}_{\mathcal{H}})$-valued random element $\mathcal{X}, t_0 \in \mathbb{R},t>0,a<b,$ 
\begin{equation*}
	\eta_{\mathcal{X}}(t_0,t;a,b) = |\{\pi(t_0+t): \pi \in \mathcal{X}, \pi(t_0) \in [a,b]\}|\, ,
\end{equation*}
which counts the number of distinct paths in $\mathcal{X}$ at time $t_0+t$, among all paths that start from interval [a,b] at time $t_0$.

\vspace{0.2cm}

\noindent \textbf{Condition (B2)}: For all $t>0$ 
$$
\limsup_{\delta \downarrow 0} \frac{1}{\delta} \limsup_{n \rightarrow \infty} \sup_{a,t_0 \in \mathbb{R}} P[\eta_{\mathcal{Z}_n}(t_0,t;a,a+\delta) \geq 3 ] = 0.
$$

\vspace{0.2cm}

To conclude, it will be useful in our proofs that Condition (B2) can be replaced by condition (E$'$) described below, \cite[Theorem 1.4 and Lemma 6.1]{nrs}. (E$'$) was introduced in \cite{nrs} to prove Condition (B2) for systems that allow crossings of their paths. 

\noindent \textbf{Condition (E$'$)}: Let $\mathcal{Z}_n^{t_0^{-}}$ be the subset of paths in $\mathcal{Z}_n$ which starts before or at time $t_0$. If $\mathcal{Z}^{t_0}$ is any subsequential limit of $\mathcal{Z}_n^{t_0^{-}}$ for any $t_0 \in \mathbb{R}$, then $\forall t,a,b \in \mathbb{R}$ with $t>0$ and $a<b$, we have:
$$E[\hat{\eta}_{\mathcal{Z}^{t_0}}(t_0,t,a,b)] \leq E[\hat{\eta}_{\mathcal{W}}(t_0,t,a,b)] = \frac{b-a}{\sqrt{\pi t}},$$ 
where
	\begin{equation*}
		\hat{\eta}_{\mathcal{Z}}(t_0,t;a,b) = |\{ \pi(t_0+t) \cap (a,b): \pi \in \mathcal{Z}, \pi(t_0) \in \mathbb{R} \}|\, ,
	\end{equation*}
which counts the number of points in interval $(a,b)$ touched at time $t_0+t$ by paths in $\mathcal{Z}$ starting before or at time $t_0$. 

\subsection{The construction of the Brownian Net}\
\label{subsec:Bnet}

\vspace{0.2cm}

Here we define the Brownian Net, which generalizes the Brownian Web by allowing paths to branch. It was introduced by Rongfeng Sun and J. M. Swart in \cite{ss1}. The first step is to describe what is called the left-right Brownian Web. Consider a collection of space-time random paths characterized as a random element $(l_z,r_z)_{z\in \mathbb{R}^2}$ of $\mathcal{H} \times \mathcal{H}$ where $l_{z}$ and $r_z$ start both at $z$ for each $z\in \mathbb{R}^2$. We call $l_{z}$ an \emph{l-path} (left path) and $r_z$ an \emph{r-path} (right path). 
Also consider the following finite dimensional distributions: For any two finite collections of points $(z_i)_{1 \leq i \leq k}$ and $(z'_j)_{1 \leq j \leq k'}$ in $\mathbb{R}^2$ 
\begin{itemize}
\item The paths $(l_{z_1}, \ldots, l_{z_k}, r_{z'_1}, \ldots, r_{z'_{k'}})$ evolve independently until they meet each other.\

\item The l-paths $(l_{z_1}, \ldots, l_{z_k})$ coalesce when they meet and the same is true for the r-paths $(r_{z'_1}, \ldots, r_{z'_{k'}})$.\

\item Each pair of left-right paths $(l_{z_i},r_{z'_j})$ solves the following system of SDEs:
\begin{equation}
\begin{cases} dL_t = I_{\{L_t \neq R_t\}} dB_t^l + I_{\{L_t = R_t\}} dB_t^s - dt,\\ dR_t = I_{\{L_t \neq R_t\}} dB_t^r + I_{\{L_t = R_t\}} dB_t^s + dt, \end{cases}
\label{eq:diferentialeq} 
\end{equation}  
where the l-path $L$ and the r-path $R$ are driven by independent Brownian motions $B^l$ and $B^r$ when they are apart and driven by the same Brownian motion $B^s$ (independent of $B^l$ and $B^r$) when they coincide. Furthermore, $L$ and $R$ have the restriction that $L_t \leq R_t$ for all $t \geq \inf \{u \geq \sigma_L \vee \sigma_R: L_u \leq R_u \}$, where $\sigma_L$ and $\sigma_R$ being the starting times of $L$ and $R$.\ 
\end{itemize}  
The pair of SDEs \eqref{eq:diferentialeq} has a unique solution, and the above properties uniquely determine the joint law of $(l_{z_1}, \ldots, l_{z_k}, r_{z'_1}, \ldots, r_{z'_{k'}})$, which are called \emph{left-right coalescing Brownian motions}. Extending the starting points to a countable dense set in $\mathbb{R}^2$ and then taking the closure of the resulting set of l-paths and r-paths a.s. determines a random element of $\mathcal{H}^2$ denoted by $(\mathcal{W}^l,\mathcal{W}^r)$, which is called the \emph{left-right Brownian Web}.

The following result (see \cite[Theorem $1.5$]{ss1} or \cite[Theorem $3.2$]{survey1}) characterized the left-right Brownian Web and its dual.

\begin{proposition}
\label{Result:leftrightweb}
\textbf{\cite[Theorem $1.5$]{ss1}} There exists an $\mathcal{H}^2$-valued random variable $(\mathcal{W}^l,\mathcal{W}^r)$, called the (standard) left-right Brownian Web, whose distribution is uniquely determined by the following properties: 
\begin{enumerate}
\item[(i)] For each deterministic $z \in \mathbb{R}^2$, $\mathcal{W}^l$ and $\mathcal{W}^r$ almost surely contain a single path each, that starts from the vertex $z$;

\item[(ii)] For any finite deterministic set of points $z_1, \ldots z_k, z'_1, \ldots z'_{k'} \in \mathbb{R}^2$, the collection of paths $(l_{z_1}, \ldots , l_{z_k},r_{z'_1}, \ldots , r_{z'_{k'}})$ is distributed as a family of left-right coalescing Brownian motions. 

\item[(iii)] For any deterministic countable dense sets $D^l,D^r \subset \mathbb{R}^2$,
$$\mathcal{W}^l = \overline{\{ l_z: z \in D^l \}} \mbox{ and } \mathcal{W}^r = \overline{\{ r_z: z \in D^r \}} \ \mbox{a.s.}$$
\end{enumerate}
\end{proposition}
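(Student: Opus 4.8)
Since this is the characterization theorem of Sun and Swart, the plan is to follow the construction of \cite{ss1}, which parallels the construction of the Brownian Web in \cite{finr1}. The backbone of the argument has four parts: (1) well-posedness of the pair of SDEs \eqref{eq:diferentialeq} with the ordering constraint $L_t\le R_t$, hence a well-defined notion of a single left-right pair; (2) construction of the finite-dimensional family of left-right coalescing Brownian motions by prescribing the driving noise (independent Brownian motions for pairs that are apart, a common one while they coincide, coalescence of l-paths on contact and likewise for r-paths) and verification that this family is consistent under relabelling and deletion of paths; (3) an application of Kolmogorov's extension theorem to get a countable family $(l_z,r_z)_{z\in D^l\cup D^r}$ for prescribed countable dense $D^l,D^r\subset\mathbb{R}^2$, together with a precompactness estimate showing that the closure of this family in $(\Pi,d)^2$ is an $\mathcal{H}^2$-valued random element; and (4) a uniqueness argument showing that properties (i)--(iii) pin down the law.

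The first real step is the well-posedness of \eqref{eq:diferentialeq}. Setting $D_t=R_t-L_t$, on the time interval where the constraint is active one has $dD_t = I_{\{D_t\neq 0\}}(dB^r_t-dB^l_t)+2\,dt$, so that at the times when $D_t=0$ the drift $+2\,dt$ instantaneously pushes $D$ back onto $(0,\infty)$; this is the analytic heart of the matter. The claim to establish is that such a $D$ is unique in law, is non-negative after its first visit to $[0,\infty)$, and spends zero Lebesgue time at the origin (so the degenerate-diffusion/common-noise regime is negligible). I would get weak existence by a regularization procedure --- smoothing the indicator and replacing the hard constraint by a penalization, then passing to the limit using tightness and identifying the limit points as solutions of the martingale problem attached to \eqref{eq:diferentialeq} --- and weak uniqueness by analysing the one-dimensional process $D$ directly, as a drifted Brownian motion on $(0,\infty)$ whose law is determined by its generator there together with its (drift-enforced, non-sticky) behaviour at $0$. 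The single-pair case, including the case where the l-path starts strictly to the right of the r-path (so the pair crosses exactly once, at the first meeting time, before the constraint takes effect), both reduce to this analysis; the general finite family $(l_{z_1},\dots,l_{z_k},r_{z'_1},\dots,r_{z'_{k'}})$ then follows by the standard inductive argument over the successive meeting and coalescence times, applying the two-path result on each intervening time interval.

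The remaining steps are close to \cite{finr1}. To see that the closure $(\mathcal{W}^l,\mathcal{W}^r)$ is $\mathcal{H}^2$-valued, I would check that $\{l_z:z\in D^l\}$ and $\{r_z:z\in D^r\}$ are almost surely precompact in $(\Pi,d)$, which is an equicontinuity estimate uniform over the countably many Brownian (with drift) paths, coming from the Brownian modulus of continuity exactly as in the appendix of \cite{finr1}. Property (i) --- a.s.\ a single l-path and a single r-path from a deterministic point --- follows because two l-paths (resp.\ two r-paths) from the same point are coalescing Brownian motions with the same drift, hence coalesce immediately a.s., so the closure adds no extra path at a deterministic vertex. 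Uniqueness of the law then follows from the usual argument: the map sending a countable family of paths to the closure of its union is measurable, (ii) fixes the joint law of every finite subfamily indexed by $D^l\cup D^r$, and (iii) asserts that the object is precisely that closure over a deterministic dense set, so the law of $(\mathcal{W}^l,\mathcal{W}^r)$ is completely determined.

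I expect the main obstacle to be part (1): proving uniqueness in law for \eqref{eq:diferentialeq} at the interface $\{L=R\}$, where the system degenerates (common noise, no diffusion in $D$) and the ordering constraint is enforced only through the drift, and simultaneously handling pairs that begin in the ``wrong'' order. Everything downstream --- consistency of the finite-dimensional distributions, precompactness of the closure, and uniqueness of the law together with property (i) --- is routine once the corresponding facts for the Brownian Web are in hand.
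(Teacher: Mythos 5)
This proposition is not proved in the paper at all: it is quoted verbatim as Theorem 1.5 of \cite{ss1}, so there is no internal proof to compare against, and any evaluation of your proposal must be against the construction in \cite{ss1} itself. Your overall architecture (well-posedness of the pair SDE, consistency of the finite-dimensional families, Kolmogorov extension plus a precompactness estimate for the closure, and the standard uniqueness-of-law argument) is the right skeleton, and your treatment of property (i) and of the uniqueness step is fine.

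However, step (1) — which you correctly identify as the analytic heart of the matter — contains a genuine error. You claim that the difference process $D_t=R_t-L_t$ "spends zero Lebesgue time at the origin (so the degenerate-diffusion/common-noise regime is negligible)" and later describe the boundary behaviour at $0$ as "non-sticky". This is false, and it is precisely the opposite of what characterizes left-right coalescing Brownian motions. The set $\{t: L_t=R_t\}$ has \emph{positive} Lebesgue measure: the origin is a sticky point for $D$, because the diffusion coefficient of $D$ degenerates there (the common noise $B^s$ cancels in the difference) while only the deterministic drift $+2\,dt$ pushes $D$ away. This is exactly the content of Proposition \ref{Result:equationequivalence}(c) quoted in this very paper, where $S_t=\int_0^t I_{\{L_s=R_s\}}\,ds$ is shown to equal a running supremum of a drifted Brownian motion, hence is a.s. strictly positive and strictly increasing on a set of positive measure. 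If the time at the interface were negligible, the common driving motion $B^s$ would never act, equation \eqref{eq:diferentialeq} would reduce to two independent drifted Brownian motions constrained not to cross, and the resulting object would not be the left-right Brownian Web. A weak-uniqueness argument that treats $D$ as a drifted Brownian motion on $(0,\infty)$ with non-sticky behaviour at $0$ therefore characterizes the wrong process; the correct route (as in Lemma 2.2 of \cite{ss1}) is the explicit time-change decomposition $T_t+S_t=t$ with $S_t$ identified via a Skorokhod-type reflection formula, from which pathwise uniqueness and the sticky interaction both follow. You would need to replace your analysis of the interface by this time-change/reflection argument for the proof to go through.
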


\medskip

Recall that there exists a \emph{dual left-right Brownian Web} $(\widehat{\mathcal{W}^l},\widehat{\mathcal{W}}^r) \in \widehat{\mathcal{H}}^2$, such that $(\mathcal{W}^l,\widehat{\mathcal{W}}^l)$ (resp. $(\mathcal{W}^r,\widehat{\mathcal{W}}^r)$) is distributed as $(\mathcal{W},\widehat{\mathcal{W}})$ tilted with drift $-1$ (resp. $+1$). From Section 1.8 in \cite{ss1}, we have that $-(\mathcal{W}^l,\mathcal{W}^r,\widehat{\mathcal{W}}^l,\widehat{\mathcal{W}}^r)$) and
$(\widehat{\mathcal{W}}^l,\widehat{\mathcal{W}}^r,\mathcal{W}^l,\mathcal{W}^r)$) are identically distributed.

We can extend Proposition \ref{Result:leftrightweb} to the non-standard case. For fixed constants $\lambda > 0$ and $b>0$, we denote by $(\mathcal{W}_{\lambda,b}^l,\mathcal{W}_{\lambda,b}^r)$ the \emph{left-right Brownian Web with diffusion coefficient $\lambda^2$ and drift coefficient $b$}. Here we consider that the independent Brownian motions $B^l$, $B^r$, and $B^s$ have diffusion coefficient $\lambda^2$ and that in place of \eqref{eq:diferentialeq}, we have
\begin{equation}
\begin{cases} dL_t = I_{\{L_t \neq R_t\}} dB_t^l + I_{\{L_t = R_t\}} dB_t^s - b \, dt,\\ dR_t = I_{\{L_t \neq R_t\}} dB_t^r + I_{\{L_t = R_t\}} dB_t^s + b \, dt. \end{cases}
\label{eq:diferentialeq-b} 
\end{equation} 
This same extension can be done with the \emph{double left-right Brownian Web} (i.e. the joint process of the left-right Brownian Web and its dual)
$(\mathcal{W}^l_{\lambda,b},\mathcal{W}^r_{\lambda,b},\widehat{\mathcal{W}}^l_{\lambda,b},\widehat{\mathcal{W}}^r_{\lambda,b})$.

\medskip

Now we are ready to construct the Brownian Net $\mathcal{N}_{\lambda,b}$ by allowing paths hopping between paths in the left-right Brownian Web $(\mathcal{W}^l_{\lambda,b},\mathcal{W}^r_{\lambda,b})$. If two paths $\pi_1$ and $\pi_2$ satisfy $\pi_1(t) = \pi_2(t)$, we say that a path $\pi$ is obtained by hopping from $\pi_1$ to $\pi_2$ at time $t$ when $\pi = \pi_1$ on $[\sigma_{\pi_1},t]$ and $\pi = \pi_2$ on $[t,\infty]$. We also say that a time $t > \sigma_{\pi_1} \vee \sigma_{\pi_2}$ is a crossing time between $\pi_1$ and $\pi_2$ only if these two paths cross each other at this time, i.e., there exist times $t^{-} < t^{+}$ satisfying $(\pi_1(t^{-}) - \pi_2(t^{-}))(\pi_1(t^{+}) - \pi_2(t^{+})) < 0$ and 
$$
t = \displaystyle \inf_{s \in (t^{-},t^{+})} \{ (\pi_1(t^{-}) - \pi_2(t^{-}))(\pi_1(s) - \pi_2(s)) < 0 \}.
$$

\medskip

Given a set of paths $K \in \mathcal{H}$, denote by $\mathcal{H}_{cross}(K)$ the set of paths obtained by hopping a finite number of times among paths in $K$ at crossing times. The Brownian Net is a $(\mathcal{H},\mathcal{B}_{\mathcal{H}})$-valued random variable that can be constructed by setting $\mathcal{N}_{\lambda,b} =  \overline{\mathcal{H}_{cross}(\mathcal{W}^l_{\lambda,b} \cup \mathcal{W}^r_{\lambda,b})}$.

There are several ways to characterize the Brownian Net: hopping characterization, wedge characterization, and mesh characterization (see \cite{ss1}), but we will focus only on the first two characterizations, which are directly related to the convergence study of this work. We begin with the hopping characterization. The following result is from \cite[Theorem 1.3]{ss1}.

\begin{proposition}
\label{Result:hopping}
\textbf{(\cite[Theorem $1.3$]{ss1})} For every $\lambda > 0$ and $b>0$, there exists an $(\mathcal{H},\mathcal{B}_{\mathcal{H}})$-valued random variable $\mathcal{N}_{\lambda,b}$, called Brownian Net with diffusion coefficient $\lambda^2$ and drift coefficient $b$, whose distribution is uniquely determined by the following properties:
\begin{enumerate}
\item[(a)] For each $z \in \mathbb{R}^2$, $\mathcal{N}_{\lambda,b}$ a.s. contains a unique l-path $l_z$ and r-path $r_z$ that start at $z$;
\item[(b)] For any finite deterministic set of points $z_1, \ldots , z_k, z'_1, \ldots , z'_{k'} \in \mathbb{R}^2$, the random vector of paths $(l_{z_1}, \ldots , l_{z_k}, r_{z'_1}, \ldots r_{z'_{k'}})$ is distributed as a family of left-right coalescing Brownian motions with diffusion coefficient $\lambda^2$ and drift coefficient $b$;  
\item[(c)] For any deterministic countable dense sets $D^l, D^r \subset \mathbb{R}^2$,
$$
\mathcal{N}_{\lambda,b} = \overline{\mathcal{H}_{cross}(\{l_z: z \in D^l \} \cup \{ r_z: z \in D^r \})} \ \ \ \ \ \mbox{a.s.}
$$ 
\end{enumerate}
\end{proposition}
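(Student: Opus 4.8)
The plan is to realise $\mathcal{N}_{\lambda,b}$ explicitly as a measurable functional of the double left-right Brownian web and then to verify the three characterising properties, uniqueness being essentially a consequence of Proposition \ref{Result:leftrightweb}. Starting from the double left-right Brownian web $(\mathcal{W}^l_{\lambda,b},\mathcal{W}^r_{\lambda,b},\widehat{\mathcal{W}}^l_{\lambda,b},\widehat{\mathcal{W}}^r_{\lambda,b})$ furnished by the extension of Proposition \ref{Result:leftrightweb}, I would set
$$
\mathcal{N}_{\lambda,b} := \overline{\mathcal{H}_{cross}\big(\mathcal{W}^l_{\lambda,b}\cup\mathcal{W}^r_{\lambda,b}\big)},
$$
and prove in turn: (1) that $\mathcal{N}_{\lambda,b}$ is almost surely a compact subset of $\Pi$, hence a genuine $(\mathcal{H},\mathcal{B}_{\mathcal{H}})$-valued random variable, and that $K\mapsto\overline{\mathcal{H}_{cross}(K)}$ is measurable; (2) that it satisfies (a), (b) and (c); (3) that any $(\mathcal{H},\mathcal{B}_{\mathcal{H}})$-valued random element satisfying (a), (b) and (c) has the same law.

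Step (1) is the crux. The starting observation is that every path in $\mathcal{H}_{cross}(\mathcal{W}^l_{\lambda,b}\cup\mathcal{W}^r_{\lambda,b})$ inherits a non-crossing relation with the dual left-right web: since hops are performed only at crossing times of paths in $\mathcal{W}^l_{\lambda,b}\cup\mathcal{W}^r_{\lambda,b}$, and such paths do not cross the dual paths of $\widehat{\mathcal{W}}^l_{\lambda,b}\cup\widehat{\mathcal{W}}^r_{\lambda,b}$ (for same-type pairs this is Proposition \ref{Result:dualbrownianweb}(b) applied to the tilted double webs $(\mathcal{W}^l,\widehat{\mathcal{W}}^l)$ and $(\mathcal{W}^r,\widehat{\mathcal{W}}^r)$, together with the cross-type relations of the left-right web of \cite{ss1}), a hopped path never crosses a dual left or right path and so cannot enter, from outside, any \emph{wedge} of the dual left-right web — the region swept out, going backward in time, between a dual r-path and a dual l-path that have met. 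I would then show that the set of all paths in $\Pi$ that avoid every such wedge is closed, is equicontinuous on every compact time interval, and has ranges contained in compacts; for the equicontinuity one uses almost sure properties of the dual left-right web, in particular that wedges with corners in a fixed countable dense set already sit densely enough in space-time to force a uniform modulus of continuity on any admissible path (an estimate of the same flavour as the one underlying Condition (T)). A relative-compactness criterion for $(\Pi,d)$ of Arzel\`a--Ascoli type then yields compactness of this set of paths, and $\mathcal{N}_{\lambda,b}$, being a closed subset of it, is compact; the measurability statement follows by approximating $\mathcal{H}_{cross}$ by hopping at most finitely many rationally labelled crossing times.

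For step (2): property (b) is immediate, since the l-paths and r-paths contained in $\mathcal{N}_{\lambda,b}$ are by construction exactly those of $(\mathcal{W}^l_{\lambda,b},\mathcal{W}^r_{\lambda,b})$, which are left-right coalescing Brownian motions with the prescribed diffusion and drift. Property (a) reduces to identifying $l_z$ (resp.\ $r_z$) as the unique l-path (resp.\ r-path) of $\mathcal{N}_{\lambda,b}$ starting from a deterministic $z$: $l_z$ lies in $\mathcal{W}^l_{\lambda,b}\subseteq\mathcal{N}_{\lambda,b}$ as a limit of $l_{z_n}$ with $z_n\to z$ along $D^l$; one shows it is almost surely the pointwise-leftmost path of $\mathcal{N}_{\lambda,b}$ from $z$ — a path lying strictly to its left at some time would have to cross a dual r-path — and that any two l-paths from the same deterministic point coincide because l-paths coalesce on meeting (Proposition \ref{Result:leftrightweb}(i)--(ii)). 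Property (c), namely $\overline{\mathcal{H}_{cross}(\{l_z:z\in D^l\}\cup\{r_z:z\in D^r\})}=\mathcal{N}_{\lambda,b}$ almost surely for every deterministic countable dense $D^l,D^r$, has the trivial inclusion $\subseteq$; for $\supseteq$ one approximates an arbitrary path of $\mathcal{W}^l_{\lambda,b}\cup\mathcal{W}^r_{\lambda,b}$, and any crossing-time hop between two such paths, by paths started in $D^l,D^r$ together with their crossing-time hops, the delicate input being that a \emph{transversal} crossing time of a pair of paths is stable under uniform convergence of the pair. Finally, for step (3), given any $\mathcal{N}'$ satisfying (a), (b), (c): by (a), (b) and Proposition \ref{Result:leftrightweb}(iii) one has $\overline{\{l_z:z\in D^l\}}=\mathcal{W}^l_{\lambda,b}$ and $\overline{\{r_z:z\in D^r\}}=\mathcal{W}^r_{\lambda,b}$ almost surely with the joint law of the left-right Brownian web, and (c) then exhibits $\mathcal{N}'$ as the image of $(\mathcal{W}^l_{\lambda,b},\mathcal{W}^r_{\lambda,b})$ under the measurable functional $K\mapsto\overline{\mathcal{H}_{cross}(K)}$ — whose value does not depend on the choice of the dense sets, as established while proving (c) — so the law of $\mathcal{N}'$ is uniquely determined. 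The main obstacle throughout is step (1): controlling, uniformly over the uncountably many hopped paths, the oscillation of those paths, which is precisely where the dual left-right web and its wedges carry the proof.
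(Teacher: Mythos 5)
This proposition is not proved in the paper at all: it is quoted verbatim as Theorem 1.3 of \cite{ss1}, and the surrounding text of Section \ref{subsec:Bnet} merely recalls the two ingredients your outline is built on, namely the hopping construction $\mathcal{N}_{\lambda,b}=\overline{\mathcal{H}_{cross}(\mathcal{W}^l_{\lambda,b}\cup\mathcal{W}^r_{\lambda,b})}$ and the wedge characterization of Proposition \ref{Result:wedge}. Your sketch is essentially a condensed version of the original Sun--Swart argument, so there is no in-paper proof to compare it against; the only mild deviation is that you extract compactness from wedge-avoidance, whereas the standard route (echoed in the paper's remark after Condition (I$_{\mathcal{N}}$)) bounds the modulus of continuity of hopped paths by that of the sandwiching l- and r-paths --- both are viable.
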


\vspace{0.2cm}

The wedge's construction is based on the existence of certain forbidden regions in the space-time plane where the Brownian Net paths cannot enter. These regions are called wedges. The dual left-right Brownian Web ($\widehat{\mathcal{W}^l_{\lambda,b}},\widehat{\mathcal{W}}^r_{\lambda,b}$)  determines them as follows:

\begin{definition}
\label{Definition:Wedge}
Let $(\mathcal{W}^l_{\lambda,b},\mathcal{W}^r_{\lambda,b},\widehat{\mathcal{W}}^l_{\lambda,b},\widehat{\mathcal{W}}^r_{\lambda,b})$ be a double left-right Brownian Web. For any path $\hat{r} \in \widehat{\mathcal{W}}^r_{\lambda,b}$ and $\hat{l} \in \widehat{\mathcal{W}}^l_{\lambda,b}$ that are ordered with $\hat{r}(s) < \hat{l}(s)$ at the time $s = \hat{\sigma}_{\hat{r}} \wedge \hat{\sigma}_{\hat{l}}$, let $T = \displaystyle \sup_{t<s}\{ \hat{r}(t) = \hat{l}(t)\}$ be the first hitting time of $\hat{r}$ and $\hat{l}$ (possibly equals $-\infty$, in case they never meet). We call the open set:
\begin{equation}
\Psi = \Psi(\hat{r},\hat{l}) = \{(x,u) \in \mathbb{R}^2: T < u < s, \hat{r}(u) < x < \hat{l}(u) \}
\label{eq:wedge} 
\end{equation}  

\vspace{0.2cm}

\noindent a wedge of $(\widehat{\mathcal{W}}^l_{\lambda,b},\widehat{\mathcal{W}}^r_{\lambda,b})$ with left boundary $\hat{r}$, right boundary $\hat{l}$ and bottom point $z = (\hat{r}(T),T) = \hat{l}(T),T)$. A path $\pi \in \Pi$ is said to enter a wedge from outside if there exist times $t,s$ with $t > s \geq \sigma_{\pi}$ such that $(\pi(s),s) \notin \overline{\Psi}$ and $(\pi(t),t) \in \Psi$.

\end{definition}

\vspace{0.3cm}

Now we are ready to present the wedge characterization of the Brownian Net. The following result is from \cite[Theorem 1.10]{ss1}.

\begin{proposition}
\label{Result:wedge}
\textbf{(Theorem $1.10$ of \cite{ss1})} Let $(\mathcal{W}^l_{\lambda,b},\mathcal{W}^r_{\lambda,b},\widehat{\mathcal{W}}^l_{\lambda,b},\widehat{\mathcal{W}}^r_{\lambda,b})$ be a double left-right Brownian Web. Then almost surely,\
$$
\mathcal{N}_{\lambda,b} = \{ \pi \in \Pi: \pi \mbox{ does not enter any wedge of } (\widehat{\mathcal{W}}^l_{\lambda,b},\widehat{\mathcal{W}}^r_{\lambda,b}) \mbox{ from outside} \}
$$
is the Brownian Net associated with $(\widehat{\mathcal{W}}^l_{\lambda,b},\widehat{\mathcal{W}}^r_{\lambda,b})$, i.e., $\mathcal{N}_{\lambda,b} = \overline{\mathcal{H}_{cross}(\mathcal{W}^l_{\lambda,b} \cup \mathcal{W}^r_{\lambda,b})}$.
\end{proposition}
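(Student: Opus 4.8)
The plan is to prove the two set inclusions between the wedge-avoidance set
$\mathcal{N}_W:=\{\pi\in\Pi:\pi\text{ enters no wedge of }(\widehat{\mathcal{W}}^l_{\lambda,b},\widehat{\mathcal{W}}^r_{\lambda,b})\text{ from outside}\}$
and the hopping closure $\mathcal{N}_H:=\overline{\mathcal{H}_{cross}(\mathcal{W}^l_{\lambda,b}\cup\mathcal{W}^r_{\lambda,b})}$, the latter being the object that was defined to be $\mathcal{N}_{\lambda,b}$ and characterized in Proposition \ref{Result:hopping}. By the spatial scaling $\pi\mapsto\lambda^{-1}\pi$ it is enough to take $\lambda=1$ (which only rescales $b$), so I drop the subscripts below. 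The sole probabilistic ingredient is the joint law of the double left-right Brownian web of Section \ref{subsec:Bnet}; everything else is deterministic geometry on a fixed realization (the argument follows \cite{ss1}).

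For $\mathcal{N}_H\subseteq\mathcal{N}_W$ I would first record the interaction properties of the double left-right web: since $(\mathcal{W}^l,\widehat{\mathcal{W}}^l)$ and $(\mathcal{W}^r,\widehat{\mathcal{W}}^r)$ are each a tilted double Brownian web, a forward $l$-path never crosses a dual $l$-path and a forward $r$-path never crosses a dual $r$-path, while the left-right interaction forces a forward $l$-path to be reflected by --- hence never to cross from the left --- a dual $r$-path, and a forward $r$-path to be reflected by a dual $l$-path. Since the left and right boundaries of a wedge $\Psi(\hat r,\hat l)$ are respectively a dual $r$-path and a dual $l$-path, these properties, together with a null-event argument ruling out a forward path emanating from the bottom point of a wedge into its interior, give that every forward $l$- or $r$-path started at a deterministic point lies in $\mathcal{N}_W$; by closedness of $\mathcal{N}_W$ (below), the whole of $\mathcal{W}^l\cup\mathcal{W}^r$ lies in $\mathcal{N}_W$. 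Next, a short case analysis at a crossing time $t$ of two paths $\pi_1,\pi_2$ --- according to whether their common location $\pi_1(t)=\pi_2(t)$ lies outside $\overline\Psi$ (relocate any outside-witness of the concatenation to time $t$ and reduce to $\pi_2$ alone) or on $\partial\Psi$ with $t$ interior to the wedge's time-interval (then neither half of the concatenation can have been strictly inside earlier) --- shows that hopping at crossing times preserves wedge-avoidance, so $\mathcal{H}_{cross}(\mathcal{W}^l\cup\mathcal{W}^r)\subseteq\mathcal{N}_W$. Finally $\mathcal{N}_W$ is closed in $\Pi$: if $\pi_n\to\pi$ and $\pi$ enters a wedge $\Psi$ from outside, with an outside-witness at time $u$ and an inside-witness at time $t$, then since $\Psi$ and $\mathbb{R}^2\setminus\overline\Psi$ are open, $\pi_n$ does the same for $n$ large. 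Taking closures yields $\mathcal{N}_H\subseteq\mathcal{N}_W$.

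The substantial inclusion is $\mathcal{N}_W\subseteq\mathcal{N}_H$. Fix $\pi\in\mathcal{N}_W$ with starting point $z_0=(\pi(\sigma_\pi),\sigma_\pi)$, a horizon $T_0>\sigma_\pi$ and $\varepsilon>0$; it suffices to exhibit a path in $\mathcal{H}_{cross}(\{l_z:z\in D^l\}\cup\{r_z:z\in D^r\})$ within distance $\varepsilon$ of $\pi$ on $[\sigma_\pi,T_0]$. I would build one by discretizing time into a fine mesh $\sigma_\pi=s_0<\dots<s_N=T_0$, choosing at each $s_i$ an $l$-path and an $r$-path from the dense sets started within $\varepsilon$ of $(\pi(s_i),s_i)$, and threading a finite concatenation through these, switching from one path to another only at crossing times. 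The mechanism that makes this work is that the part of the compact window not covered at resolution $\varepsilon$ by these finitely many forward $l$- and $r$-paths is a finite union of wedges of $(\widehat{\mathcal{W}}^l,\widehat{\mathcal{W}}^r)$ whose spatial width is controlled by the modulus of continuity of the dual left-right web on the window; since $\pi$ enters none of these wedges from outside, it is squeezed to within $\varepsilon$ of the threaded concatenation. Combined with the previous step --- which gives that every such concatenation is itself $\varepsilon$-close to $\mathcal{N}_W$ --- this makes the Hausdorff distance between the restrictions of $\mathcal{N}_H$ and $\mathcal{N}_W$ to the window as small as desired, whence $\mathcal{N}_H=\mathcal{N}_W$; one also checks $\mathcal{N}_W\in\mathcal{H}$, i.e.\ that it is compact, from an equicontinuity bound on its paths, which are trapped between forward $l$- and $r$-paths.

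The main obstacle is exactly this approximation: converting the qualitative statement that $\pi$ avoids every wedge into a quantitative $\varepsilon$-approximation by an explicit finite concatenation of dense $l$- and $r$-paths. Two points need the detailed structure of the double left-right web: (i) that, at resolution $\varepsilon$ inside a compact window, the complement of the region swept by finitely many forward $l$- and $r$-paths really is a finite union of wedges of width tending to $0$, which rests on the reflection/ordering interactions between forward and dual paths together with a modulus-of-continuity estimate; and (ii) that the colour changes in the concatenation can be forced to occur at genuine crossing times of the chosen dense paths, and that degenerate simultaneous meetings do not occur for deterministically started paths, so that the concatenation lies in $\mathcal{H}_{cross}$. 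Everything else --- same-colour non-crossing of forward and dual paths, closedness of $\mathcal{N}_W$, compactness bookkeeping, and passing from a fixed window to all of $\bar{\mathbb{R}}^2$ --- is routine once these are in place.
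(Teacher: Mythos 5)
The paper offers no proof of this statement to compare against: Proposition \ref{Result:wedge} is imported verbatim as Theorem 1.10 of \cite{ss1}, and the authors use it as a black box (indeed, the whole point of their Appendix \ref{subsec:conditionUdiscuss} is that they could not verify the hypothesis (U$_{\mathcal{N}}^{''}$) that would let them apply it). So what you have written must be judged against the original argument of Sun and Swart rather than against anything in this paper.

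Your overall architecture --- two inclusions between the wedge-avoidance set $\mathcal{N}_W$ and the hopping closure $\mathcal{N}_H$, with the easy direction resting on same-colour non-crossing of forward and dual paths, preservation of wedge-avoidance under hopping at crossing times, and closedness of $\mathcal{N}_W$ --- is the correct skeleton and matches \cite{ss1}. But the inclusion $\mathcal{N}_W\subseteq\mathcal{N}_H$ as you present it is not a proof: the entire content of the theorem is concentrated in the sentence asserting that the region of a compact window left uncovered by finitely many forward $l$- and $r$-paths from the dense sets ``really is a finite union of wedges of $(\widehat{\mathcal{W}}^l,\widehat{\mathcal{W}}^r)$ whose spatial width is controlled by the modulus of continuity of the dual web,'' together with the claim that the colour switches in your threaded concatenation can be arranged to occur at genuine crossing times. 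You flag both points yourself, but flagging them does not discharge them. The first requires the precise forward/dual interaction structure of the left-right web (in \cite{ss1} this is where the relation between wedges of the dual and meshes of the forward web, and the a.s.\ classification of special points, enter); the second requires ruling out degenerate tangencies so that a meeting of an $l$-path with an $r$-path started to its left is actually a crossing time in the sense of the definition preceding Proposition \ref{Result:hopping}. A smaller caveat on the easy direction: your claim that a forward $l$-path is ``reflected by --- hence never crosses from the left --- a dual $r$-path'' is the right kind of statement, but it is exactly the mixed-colour forward/dual interaction that must be derived from the SDE \eqref{eq:diferentialeq-b} and the duality construction; it does not follow from the two same-colour double-web statements alone. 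As it stands the proposal is a faithful roadmap of the known proof with its hardest lemmas left as assertions.
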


\vspace{0.2cm}

\subsection{Convergence criteria for the Brownian Net}\
\label{subsec:browniannetcritter}

\vspace{0.2cm}

Convergence to Brownian Net has only been established for random sets of paths with the non-crossing property so far. In \cite{survey1}, the authors presented a list of sufficient conditions to prove convergence to the Brownian Net, which we will state below.

Consider a sequence of $(\mathcal{H},\mathcal{B}_{\mathcal{H}})$-valued random variables $(\mathcal{Y}_n)_{n \in \mathbb{N}}$. Based on the existence of subsets of non-crossing paths $W_n^l, \, W_n^r \subset \mathcal{Y}_n$, there are five conditions that, if satisfied, guarantees that $(\mathcal{Y}_n)_{n \in \mathbb{N}}$ converges to the Brownian Net $\mathcal{N}_{\lambda,b}$.

The first condition is related to the non-crossing restriction:

\vspace{0.2cm}

\noindent - \textbf{Condition (C)}: No path $\pi \in \mathcal{Y}_n$ crosses any $l \in W_n^l$ from right to left (i.e. there are no $s < t$ such that $\pi(s) > l(s)$ and $\pi(t) < l(t)$) and no path $\pi \in \mathcal{Y}_n$ crosses any $r \in W_n^r$ from left to right.

The second condition ensures that $(W_n^l, W_n^r)_{n \in \mathbb{N}}$ is a tight family and that any subsequential weak limit contains a copy of the left-right Brownian Web. It is called Condition (I$_{\mathcal{N}}$):

\vspace{0.2cm}

\noindent - \textbf{Condition (I$_{\mathcal{N}}$)}: There exist $l_{n,z} \in W_n^l$ and $r_{n,z} \in W_n^r$ for each $z \in \mathbb{R}^2$, such that for any deterministic $z_1, \ldots , z_k, z'_1, \ldots , z'_{k'} \in \mathbb{R}^2$,   $(l_{n,z_1}, \ldots , l_{n,z_k}, r_{n,z'_1}, \ldots , r_{n,z'_{k'}})$ converges in distribution to a random vector of paths $(l_{z_1}, \ldots , l_{z_k}, r_{z'_1}, \ldots r_{z'_{k'}})$ distributed as a family of left-right coalescing Brownian motions with diffusion coefficient $\lambda^2$ and drift coefficient $b$.

Note that, combining the tightness of $(W_n^l, W_n^r)_{n \in \mathbb{N}}$ with Condition (C) implies that $(\mathcal{Y}_n)_{n \in \mathbb{N}}$ is also a tight family. Indeed Condition (C) ensures that, almost surely, the modulus of continuity of paths in $\mathcal{Y}_n$ can be bounded by the modulus of continuity of paths in $W_n^l \cup W_n^r$, whose starting points become dense in $\mathbb{R}^2$ as $n \rightarrow \infty$ by Condition (I$_{\mathcal{N}}$). So tightness of $(\mathcal{Y}_n)_{n \in \mathbb{N}}$ is a consequence of conditions (C) and (I$_{\mathcal{N}}$).

The third condition ensures that any subsequential weak limit of $(\mathcal{Y}_n)_{n \in \mathbb{N}}$ contains not only a copy of the left-right Brownian Web but also a copy of the Brownian Net constructed by hopping among paths in $\mathcal{W}^l \cup \mathcal{W}^r$ at crossing times. This is Condition (H):

\vspace{0.2cm}

\noindent - \textbf{Condition (H)}: Almost surely, $\mathcal{Y}_n$ contains all paths obtained by hopping among paths in $W_n^l \cup W_n^r$ at crossing times.

Based on the wedge characterization of the Brownian Net, the last two conditions impose a limitation on the number of paths of any subsequential weak limit of $(\mathcal{Y}_n)_{n \in \mathbb{N}}$. These will be called conditions (U$_{\mathcal{N}}^{'}$) and (U$_{\mathcal{N}}^{''}$). 

\vspace{0.2cm}

\noindent - \textbf{Condition (U$_{\mathcal{N}}^{'}$)}: There exist $\widehat{W}_n^l, \widehat{W}_n^r \in \widehat{\mathcal{H}}$, whose starting points become dense in $\mathbb{R}^2$ as $n \rightarrow \infty$, such that a.s. paths in $W_n^l$ and $\widehat{W}_n^l$ (resp. paths in $W_n^r$ and $\widehat{W}_n^r$) do not cross.

\vspace{0.2cm}

\noindent - \textbf{Condition (U$_{\mathcal{N}}^{''}$)}: For any weak limit point $(\mathcal{Y},W^l,W^r,\widehat{W}^l,\widehat{W}^r)$ of $(\mathcal{Y}_n,W_n^l,W_n^r,\widehat{W}_n^l,\widehat{W}_n^r)$ and for any deterministic countable dense set $D \subset \mathbb{R}^2$, a.s. paths in $\mathcal{Y}$ do not enter any wedge of $(\widehat{W}^l(D),\widehat{W}^r(D))$ from outside.

\vspace{0.2cm}

In \cite{survey1} (U$_{\mathcal{N}}^{'}$) and (U$_{\mathcal{N}}^{''}$) are merged into a single condition denoted by (U$_{\mathcal{N}}$). Here we will be able to verify (U$_{\mathcal{N}}^{'}$) but not  (U$_{\mathcal{N}}^{''}$) for the Drainage Network with Branching. This is related to the conjecture mentioned in Section 1, and we will discuss it later. We point out that to verify  (U$_{\mathcal{N}}^{''}$), it is enough to show that paths in $\mathcal{Y}_n$ do not enter wedges of $(\widehat{W}_n^l,\widehat{W}_n^r)$ from outside, and when a pair of paths in $(\widehat{W}_n^l,\widehat{W}_n^r)$ converges to a pair of dual left-right coalescing Brownian motions, the associated first meeting time of the pair in $(\widehat{W}_n^l,\widehat{W}_n^r)$ also converges, which implies that the associated wedge converges.

Now we can state the convergence result:

\begin{proposition}
\label{Result:convergencecritter}
\textbf{( \cite[Theorem $6.11$]{survey1})} Let $(\mathcal{Y}_n)_{n \in \mathbb{N}}$ be a sequence of $(\mathcal{H},\mathcal{B}_{\mathcal{H}})$-valued random variables that satisfy conditions (C),(I$_{\mathcal{N}}$), (H), (U$_{\mathcal{N}}^{'}$) and (U$_{\mathcal{N}}^{''}$) described above. Then $(\mathcal{Y}_n)$ converges in distribution to the Brownian Net $\mathcal{N}_{\lambda,b}$.
\end{proposition}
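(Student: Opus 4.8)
The plan is to follow the strategy of \cite{ss1}: establish tightness, extract a subsequential weak limit, and then sandwich that limit between two copies of the Brownian Net built from the \emph{same} double left-right Brownian Web, so that the wedge characterization (Proposition \ref{Result:wedge}) forces the limit to equal $\mathcal{N}_{\lambda,b}$. For tightness, the pair $(W_n^l,W_n^r)$ is a family of non-crossing paths whose finite-dimensional distributions converge by Condition (I$_{\mathcal{N}}$), so it is tight as in \cite[Proposition B2]{finr1}, and $(\widehat{W}_n^l,\widehat{W}_n^r)$ is tight by Condition (U$_{\mathcal{N}}^{'}$) together with the non-crossing relation between the dual and forward pairs. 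Condition (C) transfers tightness to $(\mathcal{Y}_n)$, since the modulus of continuity of any path in $\mathcal{Y}_n$ is dominated by that of the left and right paths it cannot cross, whose starting points become dense by (I$_{\mathcal{N}}$). Passing to a subsequence I obtain a weak limit $(\mathcal{Y},W^l,W^r,\widehat{W}^l,\widehat{W}^r)$; by (I$_{\mathcal{N}}$) the family $(W^l,W^r)$ has the finite-dimensional distributions of left-right coalescing Brownian motions and hence contains a copy $(\mathcal{W}^l_{\lambda,b},\mathcal{W}^r_{\lambda,b})$ of the left-right Brownian Web (the closure of the limiting paths started from a deterministic countable dense set), while by (U$_{\mathcal{N}}^{'}$) and the non-crossing characterization of the dual, $(\widehat{W}^l,\widehat{W}^r)$ contains the associated dual $(\widehat{\mathcal{W}}^l_{\lambda,b},\widehat{\mathcal{W}}^r_{\lambda,b})$, so in the limit we have a genuine double left-right Brownian Web.

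For the lower bound $\mathcal{Y}\supseteq\mathcal{N}_{\lambda,b}$, Condition (H) says that every path obtained by finitely many hops at crossing times among paths of $W_n^l\cup W_n^r$ lies in $\mathcal{Y}_n$. Working under a Skorokhod coupling, I would show that a path obtained by hopping at crossing times among finitely many limiting left/right paths is a $d$-limit of such hopped paths in $\mathcal{Y}_n$, using that a crossing of two limiting Brownian left/right paths is a transversal intersection and is therefore approximated by genuine crossing times of the corresponding $\mathcal{Y}_n$-paths. Thus $\mathcal{Y}\supseteq\mathcal{H}_{cross}(\mathcal{W}^l_{\lambda,b}\cup\mathcal{W}^r_{\lambda,b})$, and since $\mathcal{Y}$ is compact, hence closed, $\mathcal{Y}\supseteq\overline{\mathcal{H}_{cross}(\mathcal{W}^l_{\lambda,b}\cup\mathcal{W}^r_{\lambda,b})}=\mathcal{N}_{\lambda,b}$ by Proposition \ref{Result:hopping}.

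For the upper bound $\mathcal{Y}\subseteq\mathcal{N}_{\lambda,b}$, Proposition \ref{Result:wedge} identifies $\mathcal{N}_{\lambda,b}$ with the set of paths that do not enter from outside any wedge of $(\widehat{\mathcal{W}}^l_{\lambda,b},\widehat{\mathcal{W}}^r_{\lambda,b})$. Condition (U$_{\mathcal{N}}^{''}$) gives that, for a deterministic countable dense $D$, a.s.\ no path of $\mathcal{Y}$ enters from outside any wedge with boundaries in $(\widehat{W}^l(D),\widehat{W}^r(D))$; I would then upgrade this to arbitrary wedges of the dual left-right web by approximating any such wedge from inside by wedges whose boundary paths start from points of $D$ (the bottom point is the first meeting point of a dual r-path and a dual l-path, and for approximating dual paths started at points of $D$ just above it the first meeting time converges), so that a path entering a general wedge from outside already enters a $D$-wedge from outside. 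Hence $\mathcal{Y}\subseteq\mathcal{N}_{\lambda,b}$, and combining with the lower bound, $\mathcal{Y}=\mathcal{N}_{\lambda,b}$ a.s.; since every subsequential limit has this law, tightness yields that $(\mathcal{Y}_n)$ converges in distribution to $\mathcal{N}_{\lambda,b}$.

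The main obstacle is passing the hopping operation through the weak limit in the lower bound: one must prove that crossing times of the discrete left/right paths converge to crossing times of the limiting Brownian left/right paths, with no crossings created or destroyed in the limit, and "crossing" is not a closed condition — this is exactly where the transversality of Brownian crossings and the non-crossing Condition (C) are used most essentially. A secondary but important subtlety, already flagged above, is to ensure the limiting forward and dual left-right webs are jointly a \emph{double} left-right Brownian Web and not merely correctly distributed marginally, since this is what makes the wedges used in the upper bound the correct ones for the net produced by the lower bound.
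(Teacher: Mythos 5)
The paper offers no proof of this proposition: it is quoted verbatim as Theorem 6.11 of \cite{survey1}, with only the remark that dropping (U$_{\mathcal{N}}^{''}$) from that proof yields the weaker Proposition \ref{Result:convergencecritter2}. Your outline faithfully reproduces the sandwich argument used in \cite{survey1} and \cite{ss1} — tightness from (C) and (I$_{\mathcal{N}}$), the hopping lower bound via (H), and the wedge upper bound via (U$_{\mathcal{N}}^{'}$) and (U$_{\mathcal{N}}^{''}$) — and correctly identifies the two genuinely delicate points (convergence of crossing times through the weak limit, and joint convergence of the forward and dual left-right webs), so it is essentially the same approach as the cited source.
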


To conclude this section, we state a weaker result that we will need.

\begin{proposition}
\label{Result:convergencecritter2}
Let $(\mathcal{Y}_n)_{n \in \mathbb{N}}$ be a sequence of $(\mathcal{H},\mathcal{B}_{\mathcal{H}})$-valued random variables which satisfy conditions (C),(I$_{\mathcal{N}}$), (H), and (U$_{\mathcal{N}}^{'}$) described above. Then $(\mathcal{Y}_n)$ is tight and any subsequential weak limit contains a copy of the Brownian Net $\mathcal{N}_{\lambda,b}$.
\end{proposition}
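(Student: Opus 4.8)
The plan is to read this off from the proof of Proposition \ref{Result:convergencecritter} (that is, \cite[Theorem 6.11]{survey1}): in that argument, conditions (C), (I$_{\mathcal{N}}$), (H) and (U$_{\mathcal{N}}^{'}$) are exactly what yield tightness of $(\mathcal{Y}_n)$ together with the containment $\mathcal{N}_{\lambda,b}\subseteq\mathcal{Y}$ for every subsequential weak limit $\mathcal{Y}$, whereas (U$_{\mathcal{N}}^{''}$) is used only at the very end, via the wedge characterization (Proposition \ref{Result:wedge}), to produce the reverse inclusion $\mathcal{Y}\subseteq\mathcal{N}_{\lambda,b}$. So I would reproduce that proof and simply drop the final wedge step.

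For tightness, I would use what was already observed after the statement of Condition (I$_{\mathcal{N}}$): Condition (I$_{\mathcal{N}}$) makes $(W_n^l,W_n^r)_n$ tight with dense limiting starting points, and Condition (C) then forces the modulus of continuity of any $\pi\in\mathcal{Y}_n$ to be controlled by that of the $l$- and $r$-paths of $W_n^l\cup W_n^r$ that squeeze it, so $(\mathcal{Y}_n)_n$ is tight; adjoining (U$_{\mathcal{N}}^{'}$) for the dual coordinates, the joint family $(\mathcal{Y}_n,W_n^l,W_n^r,\widehat{W}_n^l,\widehat{W}_n^r)_n$ is tight.

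Next, along a subsequence realizing a weak limit $(\mathcal{Y},W^l,W^r,\widehat{W}^l,\widehat{W}^r)$, I would pass to a common probability space with almost sure convergence (Skorohod). Taking $l_{n,z},r_{n,z}$ as in (I$_{\mathcal{N}}$) and extracting a further subsequence so that $l_{n,z}\to l_z$, $r_{n,z}\to r_z$, the convergences $W_n^l\to W^l$, $W_n^r\to W^r$ in $d_{\mathcal{H}}$ (with compact limits) give $l_z\in W^l\subseteq\mathcal{Y}$ and $r_z\in W^r\subseteq\mathcal{Y}$, and by (I$_{\mathcal{N}}$) the family $(l_z,r_z)$ has the finite-dimensional laws of left-right coalescing Brownian motions with diffusion coefficient $\lambda^2$ and drift $b$. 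Hence, for any fixed countable dense $D\subset\mathbb{R}^2$, the closures $\overline{\{l_z:z\in D\}}$ and $\overline{\{r_z:z\in D\}}$ form a copy of $(\mathcal{W}^l_{\lambda,b},\mathcal{W}^r_{\lambda,b})$ by Proposition \ref{Result:leftrightweb}, and both sit inside $\mathcal{Y}$. It then remains to check $\mathcal{H}_{cross}(\{l_z:z\in D\}\cup\{r_z:z\in D\})\subseteq\mathcal{Y}$, since taking closures and using Proposition \ref{Result:hopping}(c) gives $\mathcal{N}_{\lambda,b}\subseteq\mathcal{Y}$ a.s. For a path $\pi$ obtained by finitely many hops at crossing times $t_1<\dots<t_m$ among $\pi^{(0)},\dots,\pi^{(m)}\in\{l_z\}\cup\{r_z\}$, each $\pi^{(i)}$ is an almost sure limit of some $\pi^{(i)}_n\in W_n^l\cup W_n^r\subseteq\mathcal{Y}_n$; a strict crossing of $\pi^{(i-1)}$ and $\pi^{(i)}$ at $t_i$ is inherited by $\pi^{(i-1)}_n,\pi^{(i)}_n$ at some $t_i^n\to t_i$, so Condition (H) supplies $\pi_n\in\mathcal{Y}_n$ hopping among the $\pi^{(i)}_n$ at the $t_i^n$, with $\pi_n\to\pi$ in $(\Pi,d)$, whence $\pi\in\mathcal{Y}$ because $\mathcal{Y}_n\to\mathcal{Y}$ with $\mathcal{Y}$ compact.

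The hard part will be this last point: making rigorous that a strict crossing of two limiting $l/r$-Brownian paths is inherited by the approximating discrete paths at a nearby time, and that the associated discrete hopping path converges in $(\Pi,d)$ to the continuum hopping path (including its behaviour at the meeting and hopping instants). This is exactly the crossing-time analysis already carried out in \cite{ss1} and \cite{survey1} within the proof of Proposition \ref{Result:convergencecritter}, and I would transcribe it without change; the only genuine difference is that (U$_{\mathcal{N}}^{''}$) and the wedge characterization are never invoked, which is precisely why the conclusion is containment of $\mathcal{N}_{\lambda,b}$ rather than equality.
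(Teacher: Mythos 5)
Your proposal is correct and follows essentially the same route as the paper, which simply observes that the proof of Proposition \ref{Result:convergencecritter} in \cite{survey1} uses (U$_{\mathcal{N}}^{''}$) only in the final wedge step establishing the reverse inclusion $\mathcal{Y}\subseteq\mathcal{N}_{\lambda,b}$, so dropping that step yields tightness plus the containment of a copy of $\mathcal{N}_{\lambda,b}$. Your write-up merely expands the same argument (tightness from (C) and (I$_{\mathcal{N}}$), the Skorohod coupling, and the hopping/crossing-inheritance step transcribed from \cite{ss1}) in more detail than the paper records.
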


\vspace{0.2cm}

The proof of Proposition \ref{Result:convergencecritter2} follows directly from the proof of Proposition \ref{Result:convergencecritter} as in \cite{survey1}. Indeed condition (U$_{\mathcal{N}}^{''}$) is the condition that guarantees that any weak limit point of $(\mathcal{Y}_n)_{n \in \mathbb{N}}$ does not have more paths than $\mathcal{N}_{\lambda,b}$ and so it must be identically distributed to $\mathcal{N}_{\lambda,b}$.

\vspace{0.2cm}


\section{Drainage Network with branching}
\label{sec:DNB}

\vspace{0.2cm}

\subsection{Model description and main results} \label{model}\

\vspace{0.2cm}

We will begin with an informal description of the model: Consider the lattice $\mathbb{Z}^2$ where the first coordinate represents space and the second one time. Suppose that each vertex can be independently either open with probability $p$ or closed with probability $1-p$, for some fixed $p\in(0,1)$. Now we consider a random graph with vertex set $\mathbb{Z}^2$ such that from each vertex $(x,t)$ departs at most two directed edges according to the following rule: when there is a unique $y$ such that $(y,t+1)$ is the closest open vertex to $(x,t)$ at time $t+1$, then $(x,t) \mapsto (y,t+1)$ is the unique edge departing from $(x,t)$. In case of non-uniqueness of the nearest open vertex, we have two open vertices $(y,t)$ and $(y',t)$, where $y-x=x-y'$, then $(x,t) \mapsto (y,t+1)$ and $(x,t) \mapsto (y',t+1)$ are the edges departing from $(x,t)$. In this way, the collection of directed paths in the graph forms a system of coalescing-branching paths. We call this collection, or equivalently the random graph, the full \emph{Drainage Network with Branching (DNB).}

The full DNB described above is one particular case of a DNB, but the latter can have fewer paths. Specifically, we fix a probability $\epsilon \in [0,1]$ and we change the graph by removing edges in the following way: independently for each point $(x,t)$ which has two departing edges, keep both edges on the graph with probability $\epsilon$. Otherwise, choose uniformly only one of them to be removed. For $\epsilon = 1$, we have the full DNB. The DNB extends the usual Drainage Network, which is the case $\epsilon = 0$, by allowing the paths to branch (see Figure \ref{drainage} for an example). As we mentioned in the introduction, the Drainage Network was introduced in \cite{grs} to represent a drainage system where we have sources of a liquid that flows in a specific direction through empty spaces. It is motivated by studies about drainage patterns into an intramontane trench. There are even verifications of empirical predictions about river network models, see for instance \cite{dual}.  However, we do not know about a similar approach related to the Brownian Net, which is connected with other important theoretical systems, such as the Howitt-Warren flows, see \cite{sss}. From these considerations, we believe that the DNB is a natural model related to the recent convergence studies associated with the universality class of the Brownian Web and Net. 

\begin{figure}[h!]
\centering
\includegraphics[scale=0.40]{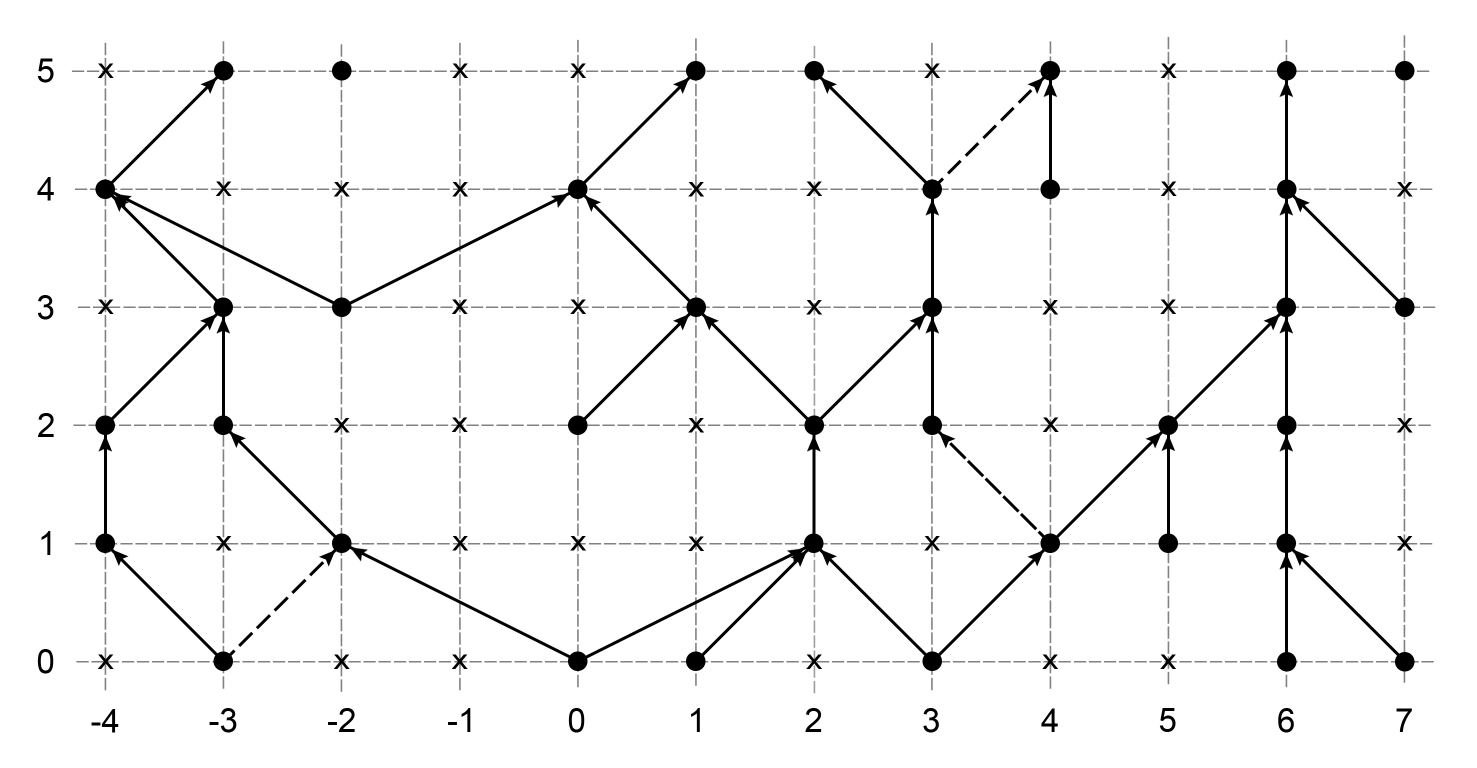}
\caption{Graph representation of a realization of the DNB. Black dots represent open vertices and {\small $\times$} represents closed vertices. Black lines represent paths of the DNB and the $3$ dashed lines represent edges that have been removed from the full DNB.}
\label{drainage}
\end{figure}

In Figure \ref{drainage}, we have the graph illustration for an example of DNB considering parameters $p=0.5$ and $\epsilon = 0.5$. Note that the vertex $(0,0)$ has two edges departing from it, so a branch occurs at this vertex for this realization of the system. Vertex $(4,1)$ also has two edges departing in the full DNB but only one in the DNB.

Now we present a formal description of the DNB with parameters $p$ and $\epsilon$. From now on, we suppose that $p \in (0,1)$ is fixed for the entire paper.

Let $(\omega(z))_{z\in\mathbb{Z}^2}$ 
be a family of IID Bernoulli random variables with parameter $p$ and $(\theta(z))_{z\in\mathbb{Z}^2}$ be a family of IID random variables on $\{-1,0,1\}$  that depends on a parameter $\epsilon \in (0,1)$ and have the following probability function:
$$
P(\theta(z) = 0) = \epsilon, \ \ \ \ \ \ \ \ \ \ P(\theta(z) = -1) = P(\theta(z) = 1) = \frac{1-\epsilon}{2}.
$$
We consider that these two families are independent of each other. Also 
as a reference for future use, we introduced the notation $\mathcal{F}_t = \sigma\{(\omega(z),\theta(z)), z=(z_1,z_2), z_1 \in \mathbb{Z}, z_2 \leq t\}$, $t \in \mathbb{Z}$, which represents the filtration generated by the environment $(\omega,\theta)$.

Every vertex $z \in \mathbb{Z}^2$ will be classified as \emph{open} or \emph{closed} if $\omega(z) = 1$ or $\omega(z) = 0$ respectively. Denote by $V$ the set of open vertices: $V=\{ z \in \mathbb{Z}^2: \omega(z)=1\}$. Moreover, we say that a vertex $z = (z_1,z_2)$ is above $\tilde{z} = (\tilde{z}_1,\tilde{z}_2)$ if $z_2 > \tilde{z}_2$, \emph{immediately above} if $z_2 = \tilde{z}_2 + 1$, \emph{to the right of} $\tilde{z}$ if $z_1 > \tilde{z}_1$ and \emph{to the left} if $z_1 < \tilde{z}_1$.

For all $z \in V$ we consider $h(z,i)$, $i \in \{-1,1\}$ as the nearest open vertex which is immediately above $z$ when it is uniquely defined, otherwise $h(z,-1)$ is the nearest open vertex immediately above and to the left of $z$; and $h(z,1)$ is the nearest open vertex immediately above and to the right of $z$. Using the function $h$ we define $\Gamma^l(z)$ and $\Gamma^r(z)$ as follows: 
$$ \Gamma^l(z) = \begin{cases} \displaystyle{h(z,-1), \mbox{ if } \theta(z) \in \{-1,0\}},  \\ h(z,1), \mbox{ if } \theta(z) = 1; \end{cases}
\quad \textrm{and} \quad 
\Gamma^r(z) = \begin{cases} \displaystyle{h(z,-1), \mbox{ if } \theta(z) = -1},  \\ h(z,1), \mbox{ if } \theta(z) \in \{0,1\}. \end{cases}$$ 
Note that $\Gamma^l(z) \neq \Gamma^r(z)$ only when we have a branching on vertex $z$, i.e., $\theta(z)=0$. 

Now for $z \in V$ and $\tilde{a} = (a_j)_{j \geq 1}$, $a_j \in \{l,r \}$ $\forall \, j\ge 1$, we can associate a continuous random path $(X_t^{z,\tilde{a}})_{t\geq0}$, by making $X_0^{z,\tilde{a}} = z$ and $X_n^{z,\tilde{a}} = \Gamma^{a_n} \circ \Gamma^{a_{n-1}} \circ \ldots \Gamma^{a_1}(z)$, and connecting consecutive vertices $\{X_{j-1}^{z,\tilde{a}},X_{j}^{z,\tilde{a}}\}$, $j\in \mathbb{N}$, through linear interpolation. 

Let $\mathcal{G} = (V,E)$ be the random directed graph with set of vertices $V$ and edges $E=\{\{z,\Gamma^l(z)\},\{z,\Gamma^r(z)\}:z \in V \}$. Also denote by $\mathcal{A} = \{ \tilde{a} = (a_j)_{j \geq 1} : a_j \in \{l,r \} \}$ and put $\mathcal{X}_\epsilon = \{(X^{z,\tilde{a}}_s,s)_{s\geq z_2}:z\in V, \tilde{a} \in \mathcal{A} \}$. The random set of paths $\mathcal{X}_\epsilon$ is called the \emph{Drainage Network with branching parameter $\epsilon$} and  $\mathcal{G} = (V,E)$ is its associated directed graph. Note that two paths in $\mathcal{X}_\epsilon$ will coalesce when they meet each other in the sense that they coincide from that time on.

Fix a sequence $\{\epsilon_n\}_{n\geq1}$ in $(0,1)$ and let $\mathcal{X}_{\epsilon_n}^n = \{ (\frac{z_1}{n},\frac{z_2}{n^2}) \in \mathbb{R}^2: (z_1,z_2) \in \mathcal{X}_{\epsilon_n} \}$, for $n \ge 1$, be the diffusively rescaled Drainage Network with branching parameter $\epsilon_n$. When the explicit value of $\epsilon_n$ is not relevant we will usually omit the index of $\mathcal{X}^n_{\epsilon_n}$. 

\begin{remark} \label{rem:moments}
Note that paths in the DNB have identically distributed increments with finite moments of any order. Indeed the absolute value of the increments is stochastically dominated by a geometric distribution with parameter $p$, denoted here $Geom(p)$. The verification of the last assertion is simple. Once at site $(x,t)$, the size of the next increment is the minimum between the distance at time $t+1$ between x and the nearest open site at its right-hand side  and the nearest open site at its left-hand side. These distances are IID $Geom(p)$, and we can take any of them as an upper bound for the increment size.  
\end{remark}

\begin{remark}
\label{remark:branchprob}
Note that the probability of branching in an open vertex of the DNB depends on the parameter $p$, since the branching can occur only if we have a tie in the nearest open vertices at the next time level. It is straightforward to see that the probability of having this tie for an open vertex is $\frac{p(1-p)}{2-p}$, and then, the probability of having a branching occurring in a vertex is $\frac{p(1-p)}{2-p}\epsilon$.
\end{remark}

Before we state our main results, we need to define some special subsets of paths in $\mathcal{X}^n$: An l-path in $\mathcal{X}^n$ is a path obtained by always choosing the leftmost edge when branching occurs along the path in the DNB. Analogously an r-path in $\mathcal{X}^n$ is a path obtained by always choosing the rightmost edges. Let $W^l_n \subset \mathcal{X}^n$ be its subset of l-paths  and $W^r_n \subset \mathcal{X}^n$ be the subset of r-paths. By this definition, we have that no path $\pi \in \mathcal{X}^n$ crosses any path $l \in W^l_n$ from right to left, i.e., $\pi(s) > l(s)$ and $\pi(t) < l(t)$ for some $s < t$, and no path $\pi \in \mathcal{X}^n$ crosses any path $r \in W^r_n$ from left to right. For example, in Figure \ref{drainage}, the l-path that starts on vertex $(2,1)$ is the one that passes by vertices $(2,2),(1,3),(0,4),(1,5)$ and the r-path that starts from this same vertex is the one that passes by vertices $(2,2),(3,3),(3,4),(2,5)$.

The main objective of our study is to describe the asymptotic behavior of the diffusively rescaled DNB when $\epsilon_n = \fb n^{-\alpha}$ for different values of $\alpha > 0$, where $\mathfrak{b}$ is a positive constant. In more generality, we may consider $(\epsilon_n)_{n\ge 1}$ such that $\lim_{n\rightarrow \infty} \epsilon_n n^\alpha = \mathfrak{b} > 0$, but to simplify our exposition, we will do the proofs assuming $\epsilon_n = \fb n^{-\alpha}$, since we consider that this generalization is straightforward. In this paper we deal solely with the case $\alpha = 1$ and in a further paper we shall consider the case $1< \alpha \le 2$. These cases are clearly distinct, for $\alpha = 1$ the system is expected to converge to the Brownian Net, for $1< \alpha \le 2$ the system is expected to converge to the Brownian Web.

\begin{remark}
If we have $\epsilon_n = \fb n^{-\alpha}$ for $\fb > 0$  and $\alpha \in [0,1)$, then the process $\mathcal{X}^n_{\epsilon_n}$ does not converge in distribution under diffusive scaling. In this case, $n \, \epsilon_n$ diverges to infinity when $n \rightarrow \infty$, which means that the lattice $\mathbb{Z}^2$ is compressed faster than the reduction of the branching probability. So, if $\mathcal{X}^n_{\epsilon_n}$ had a weak limit, it would be concentrated on paths with drift $\infty$ or $-\infty$ which is not possible.   
\end{remark}

\begin{remark}
\label{remark:sigmap}
The Brownian Web and Brownian Net that appear as limit processes in all theorems of this section will have a diffusion coefficient equal to $\lambda^2 = \lambda_p^2$, which is the variance of an increment of a DNB path. This variance is finite since the increments are stochastically bounded by $Geom(p)$. If one wishes to have these results for the standard Brownian Web and Net with diffusion coefficient equal to $1$, it can be achieved by dividing the space coordinate by $\lambda$ in the definition of $\mathcal{X}_\epsilon^n$. 
\end{remark}

\begin{remark}
The case of $\alpha = \infty$ (which means $\epsilon_n$ = 0 for all $n \in \mathbb{N})$ is equivalent to the usual Drainage Network where no branching occurs. For this process, the convergence to the Brownian Web has been established in \cite{cfd}.
\end{remark}

From now on we fix $\alpha =1$. The following two theorems are devoted to the study of the limit behavior of the DNB. We use the standard notation ``$\Longrightarrow$" for convergence in distribution.

\begin{theorem}
\label{Teo:leftright}
Let $W_n^l$ (resp. $W_n^r$) be the set of l-paths (resp. r-paths) of a diffusively rescaled DNB with parameter $\epsilon_n = \fb n^{-1}$ for some $\fb >0$. There exist $\hat{W}_n^l$ and $\hat{W}_n^r$ dual processes of $W_n^l$ and $W_n^r$ respectively such that 
$$
(W_n^l,  W_n^r, \hat{W}_n^l, \hat{W}_n^r) \Longrightarrow (\mathcal{W}_{\lambda,b_p}^l, \mathcal{W}_{\lambda,b_p}^r, \widehat{\mathcal{W}}_{\lambda,b_p}^l, \widehat{\mathcal{W}}_{\lambda,b_p}^r) \ \mbox{in } \mathcal{H}
\ \ \textrm{as } \ n \rightarrow \infty,
$$
where $b_p = \frac{\fb (1-p)}{(2-p)^2}$.
\end{theorem}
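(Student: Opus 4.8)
\textbf{Proof proposal for Theorem~\ref{Teo:leftright}.} The plan is to prove the joint convergence by establishing tightness of the four families together with convergence of their finite-dimensional distributions, and then to identify the limit through the characterization of the double left--right Brownian Web (Proposition~\ref{Result:leftrightweb} and its dual extension from \cite{ss1}). The dual families $\hat W_n^l,\hat W_n^r$ are taken to be the l-paths and r-paths of the dual DNB constructed in Section~\ref{sec:DNB}; by that construction, paths in $W_n^l$ never cross paths in $\hat W_n^l$ (and likewise for $W_n^r$ and $\hat W_n^r$), and forward and dual l-paths carry opposite drifts, mirroring the relation between $(\mathcal{W}^l_{\lambda,b},\widehat{\mathcal{W}}^l_{\lambda,b})$ and $(\mathcal{W}^r_{\lambda,b},\widehat{\mathcal{W}}^r_{\lambda,b})$.

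\emph{Tightness.} Every path in the four families is a linear interpolation of a random walk whose increments are, in absolute value, stochastically dominated by a $Geom(p)$ variable (Remark~\ref{rem:moments}), and within each family the paths are non-crossing. Tightness for a family of non-crossing paths with uniformly bounded increment moments is standard (the criterion of \cite[Proposition~B2]{finr1}) and is preserved under finite products, so $(W_n^l,W_n^r,\hat W_n^l,\hat W_n^r)_n$ is tight in $\mathcal{H}^2\times\widehat{\mathcal{H}}^2$.

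\emph{Finite-dimensional convergence.} For a single l-path, the increment $\xi$ has mean $0$ on the event that the nearest open site at the next level is unique, by the left--right symmetry of the open-site field; on a tie (probability $\frac{p(1-p)}{2-p}$, Remark~\ref{remark:branchprob}) the l-path moves left by the tie distance $D$ with conditional probability $\frac{1+\epsilon_n}{2}$ and right by $D$ with conditional probability $\frac{1-\epsilon_n}{2}$, so that $E[\xi]=-\epsilon_n\,E[D\,\mathbf{1}_{\text{tie}}]=-\epsilon_n c_p$ with
\[
c_p=(1-p)\,p^2\sum_{k\ge1}k\,(1-p)^{2(k-1)}=\frac{1-p}{(2-p)^2},
\]
while $\mathrm{Var}(\xi)\to\lambda_p^2$ and all moments stay bounded. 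With $\epsilon_n=\fb n^{-1}$, diffusive scaling turns the per-step drift into $-\epsilon_n c_p\, n^2/n=-\fb c_p=-b_p$, so adapting the invariance principle of \cite{cfd} to this $O(\epsilon_n)$-perturbation of the Drainage Network walk yields convergence of a single rescaled l-path to Brownian motion with drift $-b_p$ and diffusion coefficient $\lambda_p^2$; symmetrically for r-paths with drift $+b_p$ and for the dual paths with reversed drifts. For several l- and r-paths jointly, before any two of them come within distance $o(n)$ the portions of the environment they explore are disjoint with probability tending to $1$ --- this is where the long-range dependence must be controlled, exactly as in \cite{cfd,cv} --- which gives asymptotic independence; like paths coalesce on meeting; and for an l--r pair one must verify that the rescaled limit solves \eqref{eq:diferentialeq-b}. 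The delicate regime is when the two paths occupy the same vertex: they then travel together until the next branching at their common location, and since such branchings occur at asymptotic rate $\frac{p(1-p)}{2-p}\fb n$ per unit rescaled time, each producing a separation increment $2D/n$ with $E[D\mid\text{tie}]=\frac{1}{p(2-p)}$, the aggregated rate of separation is $\frac{p(1-p)}{2-p}\fb\cdot\frac{2}{p(2-p)}=2b_p$, which matches $d(R_t-L_t)=2b_p\,dt$ on $\{L_t=R_t\}$; when apart the two paths are asymptotically independent with the respective drifts, and the non-crossing property enforces $L_t\le R_t$ after the first ordering time. This yields Condition~(I$_{\mathcal{N}}$) for $(W_n^l,W_n^r)$, and the joint law including the duals is pinned down by the forward--dual non-crossing constraints exactly as in Proposition~\ref{Result:dualbrownianweb}.

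\emph{Identification and main obstacle.} Tightness, the finite-dimensional convergence above, and the density in $\mathbb{R}^2$ of the starting points of the four families identify every subsequential limit with $(\mathcal{W}^l_{\lambda,b_p},\mathcal{W}^r_{\lambda,b_p},\widehat{\mathcal{W}}^l_{\lambda,b_p},\widehat{\mathcal{W}}^r_{\lambda,b_p})$ through the characterization of the double left--right Brownian Web, which proves the theorem. The main obstacle is controlling the long-range dependence: both for the single-path invariance principle of the perturbed l/r walks and, more seriously, for the asymptotic independence of distinct paths and the precise left--right interaction --- in particular, for showing that the many small branching-induced jumps aggregate exactly to the $\pm b_p\,dt$ drift of \eqref{eq:diferentialeq-b}, which is, for this dependent and non-simple system, the analog of the simple-random-walk computation carried out in \cite{ss1}.
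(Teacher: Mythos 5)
Your outline has the right architecture and your drift computations are correct (both $E[\xi]=-\epsilon_n\frac{1-p}{(2-p)^2}$ for a single l-path step and the on-diagonal separation rate $2b_p$ match what the paper obtains in Lemma \ref{lemma:individualconvergence} and Section \ref{subsec:partI}), but there are two concrete gaps beyond the dependence-control issue that you flag honestly. First, your identification step is incomplete: tightness plus convergence of the finite-dimensional distributions of skeletal paths from a dense set only shows that a subsequential limit of $(W_n^l,W_n^r)$ \emph{contains} a left--right Brownian Web; to verify condition (iii) of Proposition \ref{Result:leftrightweb} (that the limit equals the closure of the skeleton, i.e.\ has no extra paths) one must additionally verify a path-counting condition such as (B1)--(B2) or (E$'$). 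The paper does this inside Proposition \ref{prop:dualwebconv}: it removes the drift by a homeomorphism, verifies (I) and (E$'$) for each drift-free web separately (using the coalescence bound of Lemma \ref{lemma:dnbcoalescencetime} to get local finiteness), and concludes $W_n^l\Rightarrow\mathcal{W}^l_{\lambda,b_p}$ and $W_n^r\Rightarrow\mathcal{W}^r_{\lambda,b_p}$ individually before assembling the pair via Condition (I$_{\mathcal{N}}$).

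Second, the dual components are under-treated. Saying that $\hat W_n^l$ does not cross $W_n^l$ and carries the reversed drift does not identify the limit of $\hat W_n^l$ as $\widehat{\mathcal{W}}^l_{\lambda,b_p}$: a subsequential limit $\mathcal{Z}^l$ could a priori contain too few backward paths, or backward paths that coincide with forward paths over time intervals. The paper invokes the criterion of \cite[Proposition 2.7]{dual} (Proposition \ref{prop:dualwebcritter}), which requires (i) existence in the limit of a dual path from every deterministic point, (ii) non-crossing, and (iii) a.s.\ non-coincidence of forward and dual paths over intervals. Item (i) is itself nontrivial here because dual paths live on $\tfrac12\mathbb{Z}$ with position-dependent increment laws, so their convergence needs the martingale CLT argument of Proposition \ref{proposition:martingale} and the dual coalescence estimate of Lemma \ref{lemma:dualcoalescencetime}, not a symmetry with the forward paths. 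Finally, note that the heart of the theorem --- Condition (I$_{\mathcal{N}}$), Proposition \ref{lemma:conditionI} --- is precisely what your proposal defers: the paper resolves the dependence problem with the auxiliary process $\tilde R^{(n)}$ alternating between independent environments and a difference-equation representation converging to \eqref{eq:LrRtalternative}, which is the analogue of the simple-random-walk computation in \cite{ss1} that you correctly identify as the missing ingredient.
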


\begin{theorem}
\label{Teo:teoprincipal}
If we have $\mathcal{X}^n_{\epsilon_n}$ the diffusively rescaled Drainage Network with branching parameter $\epsilon_n = \fb n^{-1}$ for $\fb >0$, then $\mathcal{X}^n_{\epsilon_n}$ is tight and any subsequential limit contains a copy of the Brownian Net $\mathcal{N}_{\lambda,b_p}$ with $b_p = \frac{\fb (1-p)}{(2-p)^2}$.
\end{theorem}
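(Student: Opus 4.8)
The plan is to invoke Proposition \ref{Result:convergencecritter2} with $\mathcal{Y}_n=\mathcal{X}^n_{\epsilon_n}$, the subsets $W^l_n$ and $W^r_n$ of l- and r-paths introduced above, and $\widehat{W}^l_n,\widehat{W}^r_n$ the l- and r-paths of the dual DNB. This reduces Theorem \ref{Teo:teoprincipal} to checking Conditions (C), (I$_{\mathcal{N}}$), (H) and (U$_{\mathcal{N}}^{'}$); tightness of $(\mathcal{X}^n_{\epsilon_n})$ is then automatic from (C) and (I$_{\mathcal{N}}$), as explained right after the statement of (I$_{\mathcal{N}}$).

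Conditions (C) and (I$_{\mathcal{N}}$) are short. Condition (C) is precisely the non-crossing property built into the definition of the l- and r-paths (stated in the paragraph before the theorems): no path of $\mathcal{X}^n$ crosses an l-path from right to left, and none crosses an r-path from left to right. Condition (I$_{\mathcal{N}}$) --- convergence of finitely many l- and r-paths with deterministic starting points to left-right coalescing Brownian motions with diffusion coefficient $\lambda^2$ and drift $b_p$ --- is contained in Theorem \ref{Teo:leftright}, whose proof is carried out separately. For (U$_{\mathcal{N}}^{'}$), I would use the dual DNB (as described in Section \ref{sec:DNB}) and take $\widehat{W}^l_n,\widehat{W}^r_n$ to be its dual l- and r-paths: these start from every rescaled site of the dual lattice, so their starting points become dense in $\mathbb{R}^2$, and by the very construction of the dual graph a dual l-path never crosses a primal l-path and a dual r-path never crosses a primal r-path.

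The core of the argument is Condition (H), which requires that $\mathcal{X}^n$ contain every path obtained by finitely many hops among paths of $W^l_n\cup W^r_n$ at crossing times. The structural fact I would exploit is that a DNB path is encoded by a \emph{sequence} of branch choices $\tilde a\in\mathcal{A}$, so that at any vertex a path may switch from following l-edges to following r-edges and back. Hence, whenever an l-path $l$ and an r-path $r$ cross strictly inside a unit time interval $[k,k+1]$, the DNB contains the path that agrees with $l$ up to the vertex $v=l(k)$, uses the edge $v\to\Gamma^r(v)$ there, and thereafter follows the r-path emanating from $v$; since $v$ and $r(k)$ differ by at most one increment (the two paths are on the verge of crossing), this r-path coalesces with $r$ within $O(1)$ units of lattice time by the coalescence estimates of Section \ref{sec:coalescence}, so the constructed DNB path deviates from the true hopped path by at most $O(n^{-1})$ in rescaled space on a rescaled time interval of length $O(n^{-2})$. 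Relocating each hop from a genuine crossing time to the preceding vertex, and using that the finitely many exceptional intervals enter the path metric $d$ through a supremum rather than additively, every path of $\mathcal{H}_{cross}(W^l_n\cup W^r_n)$ lies within $o(1)$ of $\mathcal{X}^n$, uniformly. To fit the framework of Proposition \ref{Result:convergencecritter2} verbatim, I would therefore run it on $\mathcal{Y}_n:=\overline{\mathcal{X}^n\cup\mathcal{H}_{cross}(W^l_n\cup W^r_n)}$, which satisfies (H) by construction and still satisfies (C) (hopping among l- and r-paths at crossing times preserves the relevant non-crossing properties), (I$_{\mathcal{N}}$) and (U$_{\mathcal{N}}^{'}$); since $d_{\mathcal{H}}(\mathcal{Y}_n,\mathcal{X}^n)\to 0$ in probability, $\mathcal{Y}_n$ and $\mathcal{X}^n_{\epsilon_n}$ have the same weak subsequential limits, and the conclusion of Proposition \ref{Result:convergencecritter2} for $\mathcal{Y}_n$ transfers: $(\mathcal{X}^n_{\epsilon_n})$ is tight and every subsequential limit contains a copy of $\mathcal{N}_{\lambda,b_p}$.

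The hard part will be Condition (H): one has to control, uniformly over all finitely-hopped paths and with the right tail estimates, the cost of moving each hop to the adjacent lattice vertex, which forces quantitative coalescence bounds for r-paths (and l-paths) started a bounded distance apart --- presumably exactly the estimates established in Section \ref{sec:coalescence} --- together with the verification that successive relocations do not interact. By comparison, Conditions (C), (I$_{\mathcal{N}}$) and (U$_{\mathcal{N}}^{'}$) are either immediate from the construction or already subsumed in Theorem \ref{Teo:leftright} and the dual DNB construction.
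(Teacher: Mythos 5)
Your overall architecture is the same as the paper's: reduce Theorem \ref{Teo:teoprincipal} to Proposition \ref{Result:convergencecritter2} and verify (C), (I$_{\mathcal{N}}$), (H) and (U$_{\mathcal{N}}^{'}$), with (I$_{\mathcal{N}}$) supplied by Proposition \ref{lemma:conditionI} (equivalently Theorem \ref{Teo:leftright}) and (C), (U$_{\mathcal{N}}^{'}$) read off from the construction of the DNB and its dual. Those parts are fine. The problem is your treatment of (H), which is both built on a false premise and, by your own admission, left incomplete at its quantitative core.

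You assume that an l-path and an r-path can cross strictly inside a unit time interval, and this forces you into an approximation scheme: relocating each hop to the preceding lattice vertex, enlarging the path set to $\overline{\mathcal{X}^n\cup\mathcal{H}_{cross}(W^l_n\cup W^r_n)}$, and controlling the Hausdorff distance uniformly over all finitely-hopped paths. None of this is needed, because crossings in the DNB can only occur at integer times at a shared lattice vertex. Indeed, if two DNB paths occupy sites $u<v$ at time $k$, then every nearest open vertex immediately above $u$ lies weakly to the left of every nearest open vertex immediately above $v$: assuming $x>y$ for such vertices $x$ (nearest to $u$) and $y$ (nearest to $v$) leads in every case to a contradiction (either one of them is an open site strictly closer to the other's base point, or one gets $x+y\le 2u$ and $x+y\ge 2v$ simultaneously). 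Hence $\Gamma^r(u)\le\Gamma^l(v)$, so an r-path started weakly to the left of an l-path can only overtake it by landing on the same vertex $w$ at an integer time and separating afterwards; the crossing time as defined in Section \ref{subsec:Bnet} is exactly that integer time, at which the two paths coincide at $w$. The path obtained by hopping there is therefore exactly $X^{z,\tilde a}$ for the concatenated choice sequence $\tilde a$ (all $l$'s up to the hop, all $r$'s after, and likewise for finitely many hops), hence literally an element of $\mathcal{X}^n$, since $\mathcal{X}_\epsilon$ is defined over all sequences $\tilde a\in\mathcal{A}$. So (H) holds exactly, with no modification of $\mathcal{Y}_n$ and no coalescence estimates --- this is what the paper means by saying (H) follows directly from the construction. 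As a secondary point, even if one pursued your route, the claim that the relocated r-path coalesces with $r$ ``within $O(1)$ units of lattice time'' overstates what Lemma \ref{lemma:dnbcoalescencetime} provides (the coalescence time of paths at bounded initial distance has only a $t^{-1/2}$ tail), and the uniformity over all hopped paths that you flag as ``the hard part'' is never supplied.
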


The statement of Theorem \ref{Teo:teoprincipal} means that if $\mathcal{X}$ is a limit point of $\mathcal{X}^n_{\epsilon_n}$ then there exists a subcollection of paths $\Hat{\mathcal{X}}\subset \mathcal{X}$ such that $\Hat{\mathcal{X}}$ is distributed as $\mathcal{N}_{\lambda,b_p}$.

Following the convergence criteria in Subsection \ref{subsec:browniannetcritter} we will show that conditions (C), (I$_{\mathcal{N}}$), (H) and (U$_{\mathcal{N}}^{'}$) holds for the DNB when $\epsilon_n = \fb n^{-1}$ to prove Theorem \ref{Teo:teoprincipal}. We conjecture that $\mathcal{X}^n_{\epsilon_n}$ converges to $\mathcal{N}_{\lambda,b_p}$. To prove this conjecture we would need to show that condition (U$_{\mathcal{N}}^{''}$) holds, but we were not able to prove it. The Spatial $\Lambda$-Fleming-Viot Process dealt with by Etheridge, Freeman and Straulino in \cite{efs} does not exhibit long range dependence, which imposes a technical difficulty to deal with the condition (U$_{\mathcal{N}}^{''}$) in our case. In Appendix \ref{subsec:conditionUdiscuss}, we talk about this difficulty and which details remain pending to show the convergence to Brownian Net. 

Theorems \ref{Teo:leftright} and \ref{Teo:teoprincipal} are proved in Section \ref{sec:lefttobrownian}, although the main condition to prove Theorem \ref{Teo:teoprincipal} is developed in Section \ref{sec:conditionI}.

In the following subsection, we will define the dual process for the DNB, which  is mentioned in Theorem \ref{Teo:leftright} and have a crucial role in our convergence study.

\vspace{0.2cm}

\subsection{Dual process for Drainage Network with branching}\
\label{subsec:dualdef}

\vspace{0.2cm}

For the process $\mathcal{X}^n$ we can describe a dual process that we denote by $\hat{\mathcal{X}}^n$, with paths that follow the reverse direction in time and satisfy the following restrictions:
\begin{itemize}
	\item The configuration of $\hat{\mathcal{X}}^n$ is unique and determined by the realization of the process $\mathcal{X}^n$;
	\item $\hat{\mathcal{X}}^n$ also has subsets of dual l-paths and dual r-paths that we denote by $\hat{W}^l_n$ and $\hat{W}^r_n$ respectively. Dual l-paths in $\hat{W}^l_n$ cannot cross l-paths in ${W}^l_n$ and dual r-paths in $\hat{W}^r_n$ cannot cross r-paths in ${W}^r_n$.
\end{itemize}

First we will construct $\hat{\mathcal{X}}_{\epsilon}$, the dual process of $\mathcal{X}_{\epsilon}$ and after that, we define $\hat{\mathcal{X}}^n$, the dual of the diffusively rescaled Drainage Network $\mathcal{X}^n$. To construct $\hat{\mathcal{X}}_{\epsilon}$ we will adapt some ideas presented in \cite{dual} for the case without branching. 

Recall from Section \ref{model} that  $\mathcal{G} = (V,E)$ is the random directed graph associated with $\mathcal{X}_{\epsilon}$. The vertices of dual processes will be given by the mid-points between two consecutive vertices in $V$ on each line of time. We denote this random set of vertices by $\hat{V}$. For a more formal description, given $z = (z_1,z_2)\in \mathbb{Z}^2$ let us define:
$$
	K^r(z) = \inf\{k\geq1: (z_1+k,z_2) \in V\},
$$
$$K^l(z) = \inf\{k\geq1: (z_1-k,z_2) \in V\},$$
$$r(z) = (z_1+K^r(z),z_2) \quad \textrm{and} \quad
l(z) = (z_1-K^l(z),z_2).$$

Notice that $r(z)$ and $l(z)$ denote the nearest open vertex respectively at the right-hand side of $z$ and at the left-hand side of $z$. For all $z \in V$, let $\displaystyle \hat{r}(z) = \left(z_1 + \frac{K^r(z)}{2},z_2 \right)$ denote the dual neighbor to the right of $z$ and $\displaystyle \hat{l}(z) = \left(z_1 - \frac{K^l(z)}{2},z_2 \right)$ denote the dual neighbor to the left of $z$. Now we can define the set of vertices of the dual process as:
$$
\hat{V} = \{\hat{r}(z):z \in V\}.
$$    
It is not hard to build an artificial example where $\Hat{V}_1 = \hat{V}_2$ and $V_1 \neq V_2$, but in fact $\hat{V}$ will uniquely determine $V$ with probability one. It happens because whenever we find a point $(x,t) \in \mathbb{Z}^2$ such that $(x-\frac{1}{2},t)$ and $(x+\frac{1}{2},t)$ belong to $\hat{V}$, then every vertex of $V$ at the time level $t$ becomes uniquely determined. Moreover $(x-\frac{1}{2},t)$ and $(x+\frac{1}{2},t)$ belong to $\hat{V}$, if and only if, $(x-1,t)$, $(x,t)$ and $(x+1,t)$ belong to $V$. Thus from basic properties of IID Bernoulli distribution for the open vertices, with probability one for each time level $t$ there exists $x=x(t)$ with $(x,t)$ as above.

\begin{remark}
	Note that the right dual neighbor of a vertex $z \in V$ is equal to the left dual neighbor of the vertex $r(z)$ that is also in $V$, so we can define $\hat{V}$ considering $\hat{l}(z)$ for all $z \in V$ instead of $\hat{r}(z)$ for all $z \in V$.
\end{remark}

\begin{remark}
	Since $K^r(z)$ and $K^l(z)$ can be odd numbers, the first coordinate of dual vertices can be non-integer, so $\hat{V}$ is a subset of the lattice $\frac{1}{2}\mathbb{Z} \times \mathbb{Z}$.
\end{remark}

Now, let us construct the edge set $\hat{E}$ of the dual graph. From each vertex $\hat{z} \in \hat{V}$ departs either one or two edges connecting it to a vertex immediately below (since this process evolves backward in time). If there exists a vertex $\hat{z}' \in \hat{V}$ at time level $\hat{z}_2-1$ such that the line connecting the points $\hat{z}$ and $\hat{z}' $  does not cross any edge in $E$ and $\hat{z}'$ is the nearest vertex in $\hat{V}$ with this property, then $(\hat{z},\hat{z}')$ is the unique edge in $\hat{E}$ departing from $\hat{z}$. But due to branching in the DNB, some vertices in $\hat{V}$ are not able to reach any other vertex immediately below without crossing any path of $\mathcal{X}_\epsilon$, or equivalently an edge of $E$. In these cases, the path of the dual process also branches. Because of this, we obtain a pair of edges departing from $\hat{z}$, one connecting it to the closest vertex to the right immediately below and another connecting it to the closest one to the left immediately below. When such branching occurs, a path from the dual crosses a path from the DNB, but we do not have crossings between l-paths or between r-paths. Moreover, with our definition, branchings of the dual are in one-to-one correspondence with branchings of the DNB. 

For a formal description of $\hat{E}$, define for all $\hat{z} \in \hat{V}$:
$$
a^l(\hat{z}) = \sup\{k \in \mathbb{Z}:(k,\hat{z}_2-1)\in V, \Gamma_1^l((k,\hat{z}_2-1)) < \hat{z}_1 \},
$$
$$
a^r(\hat{z}) = \inf\{k \in \mathbb{Z}:(k,\hat{z}_2-1)\in V, \Gamma_1^r((k,\hat{z}_2-1)) > \hat{z}_1 \}.
$$
If $a^l(\hat{z}) \neq a^r(\hat{z})$, then we set 
$$
\hat{\Gamma}^l(\hat{z}) = \hat{\Gamma}^r(\hat{z}) = \left(\frac{a^l(\hat{z})+a^r(\hat{z})}{2},\hat{z}_2-1 \right) \in \hat{V}.
$$ 
In this case we have a unique edge departing from $\hat{z}$ which is  $(\hat{z},\hat{\Gamma}^r(\hat{z})) \in \hat{E}$. This edge does not cross any edge of $\mathcal{G}$. Note that, by definition, $(a^l(\hat{z}),z_2-1)$, $(a^r(\hat{z}),\hat{z}_2-1)$ are the nearest vertices in $V$ respectively at the left-hand side and right-hand side of the dual vertex $\hat{\Gamma}^r(\hat{z})$.

If $a^l(\hat{z}) = a^r(\hat{z}) = a$, then for every vertex $\hat{z}' = (\hat{x},\hat{z}_2-1)$ in $\hat{V}$, $\hat{x} \in \frac{1}{2} \mathbb{Z}$, the edge $(\hat{z},\hat{z}')$ crosses either  $\big((a,\hat{z}_2-1),\Gamma^l(a,\hat{z}_2-1)\Big) \in E$ or $\big((a,\hat{z}_2-1),\Gamma^r(a,\hat{z}_2-1)\Big) \in E$. In this case we set $\hat{\Gamma}^l(\hat{z}) = \left(\hat{r}(a,\hat{z}_2-1),\hat{z}_2-1 \right) \in \hat{V}$ and $\hat{\Gamma}^r(\hat{z}) = (\hat{l}(a,\hat{z}_2-1),\hat{z}_2-1) \in \hat{V}$. Note that $\hat{\Gamma}^l(\hat{z})$ and $\hat{\Gamma}^r(\hat{z})$ are the nearest vertices in $\hat{V}$ to the right-hand side and left-hand side, respectively, of the vertex $(a,\hat{z}_2-1)$. It means that branching occurs in the dual process at vertex $\hat{z}$. Also note that since the dual paths flow in the reverse direction of time, the sense of left and right is the reverse of the DNB paths. For example, if we look at $\hat{z} = (2,3)$ in Figure \ref{dualfig}, we have that $a^l(\hat{z}) = a^r(\hat{z}) = 2$, $\hat{\Gamma}^l(\hat{z}) = (\frac{5}{2},2)$ and $\hat{\Gamma}^r(\hat{z}) = (1,2)$. Finally, the edge set of the dual graph $\hat{\mathcal{G}} = (\hat{V},\hat{E})$ is given by $\hat{E} = \hat{E}^l \cup \hat{E}^r$ where:
$$
\hat{E}^l = \{\langle \hat{z},\hat{\Gamma}^l(\hat{z})\rangle: \hat{z} \in \hat{V} \}
\quad \textrm{and} \quad 
\hat{E}^r = \{\langle \hat{z},\hat{\Gamma}^r(\hat{z})\rangle: \hat{z} \in \hat{V} \}.
$$

Now we can define a backward in time-continuous random path $(\hat{X}_t^{\hat{z},\tilde{d}})_{t\geq0}$ starting from vertex $\hat{z} \in \hat{V}$ by making $\hat{X}_n^{\hat{z},\tilde{d}} = \hat{\Gamma}^{d_n} \circ \hat{\Gamma}^{d_{n-1}} \circ \ldots \hat{\Gamma}^{d_1}(\hat{z})$ with $\tilde{d} = d_1, \ldots d_n \in \{l,r\}$ for $n \in \mathbb{N}$ and doing linear interpolation between all pairs of consecutive vertex $\{\hat{X}_{j+1}^{\hat{z},\tilde{d}},\hat{X}_j^{z,\tilde{d}}\}$ for $j=0,1\ldots n-1$.

In Figure \ref{dualfig}, we have the graph illustration of the dual process related to the DNB example presented in Figure \ref{drainage}. Note that between two open vertices of the DNB, we always have one vertex of the dual just in the middle and the direction of the edges (arrows), indicating that the dual flows in the reverse direction of time. Also note that a dual path branches at one vertex if, and only if, in front of it, we have an open vertex of DNB where a branching also occurs. So branchings for both systems are in one-to-one correspondence. 
\begin{figure}[h!]
	\centering
	\includegraphics[scale=0.40]{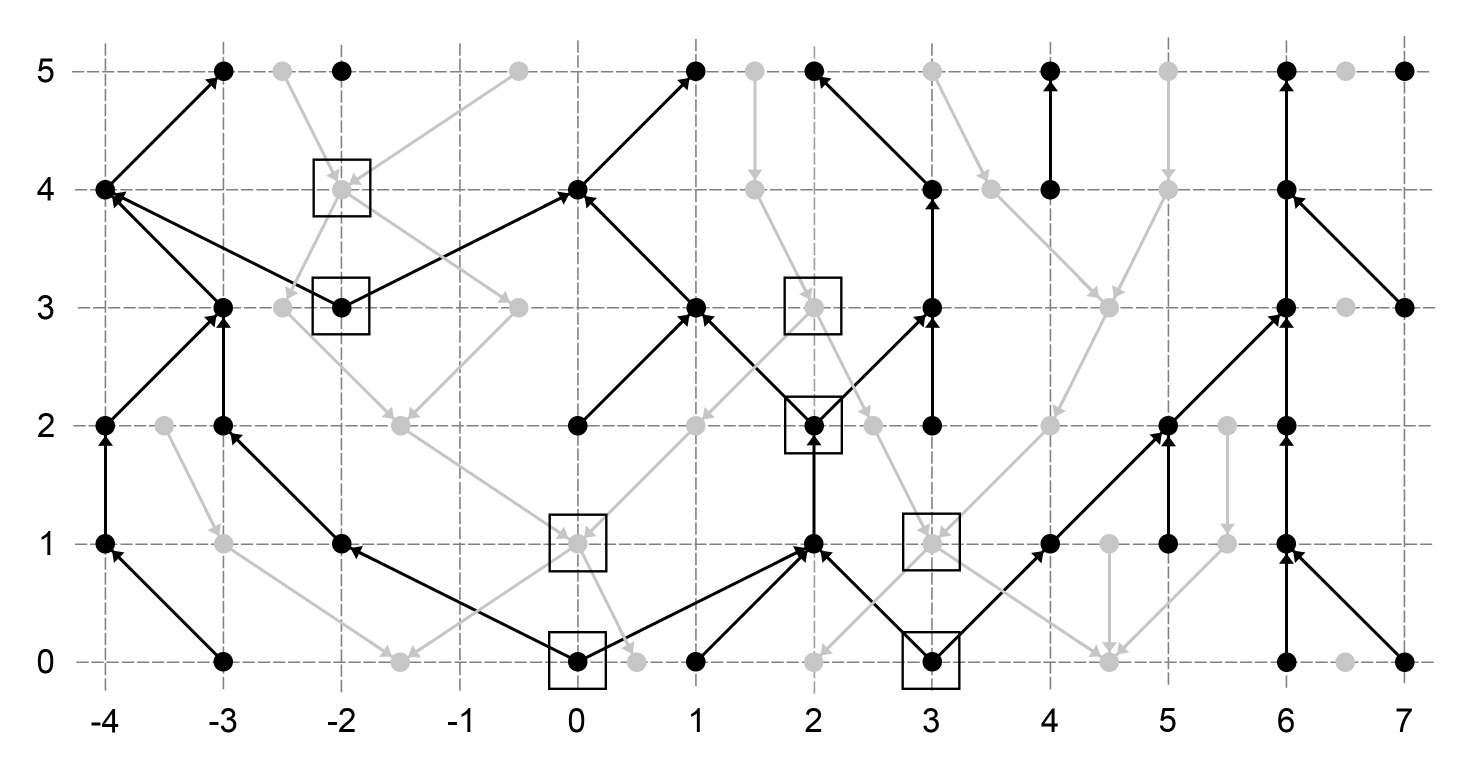}
	\caption{Graph representation of a realization of DNB and its dual. The black points and black arrows are related to the DNB while the gray points and gray arrows are related to the dual process. The squares mark the vertices where branching occurs.}
	\label{dualfig}
\end{figure}

\begin{remark} 
Every path of the dual process also evolves as a Markov process (in the reverse direction of time). It holds because the transition probabilities of a dual path from time $t$ to $t-1$ depend only on the position of this path at time $t$. More specifically, the behavior of these transition probabilities depends on whether the current position of the dual path is integer or not. In the proof of Proposition \ref{proposition:markov}, these transition probabilities are calculated explicitly.
\end{remark}

Now let 
$$
\hat{\mathcal{X}}^n = \hat{\mathcal{X}}_{\epsilon_n}^n = \{ (\frac{z_1}{n},\frac{z_2}{n^2}) \in \mathbb{R}^2: (z_1,z_2) \in \hat{\mathcal{X}}_{\epsilon_n} \},
$$ 
for $n \in \mathbb{N}$ be the dual of the diffusively rescaled Drainage Network with branching parameter $\epsilon_n$.
We denote by $\hat{W}^l_n \subset \hat{\mathcal{X}}^n$ the subset of the dual l-paths, which contains all the paths obtained by always choosing the left option whenever a branch in the dual happens. Similarly, we denote by $\hat{W}^r_n \subset \hat{\mathcal{X}}^n$ the subset of the dual r-paths, which contains all paths obtained by always choose the right option whenever a branch in the dual occurs. Note that we do not have crossings between paths in $W^l_n$ and paths in $\hat{W}^l_n$, nor between paths in $W^r_n$ with paths in $\hat{W}^r_n$. 

\vspace{0.2cm}


\section{Estimates for coalescence times}
\label{sec:coalescence}

\vspace{-0.2cm}

This section is devoted to obtaining an upper bound for the probability that either two l-paths or two r-paths do not coalesce until time $t$. This bound is fundamental to prove all theorems presented in the previous section.

Consider two l-paths $X^u_t, X^v_t \in W^l$ starting from points $u=(u_1,u_2)$ and $v=(v_1,v_2)$; and define the process $(Z^{u,v}_t)_{t \geq (u_2 \vee v_2)}$ as being the distance between the position of these two paths at time $t$, i.e.,
$$
Z^{u,v}_t = | X^u_t - X^v_t |, \ t \geq (u_2 \vee v_2).
$$
Note that due to the translation invariance of the DNB, the distribution of $Z^{u,v}_t$ depends on $u$ and $v$ only through the initial distance between the paths at time $(u_2 \vee v_2)$: $k = | X^u_{(u_2 \vee v_2)} - X^v_{(u_2 \vee v_2)} |$. Then let us consider both paths starting at time zero, i.e., $u_2 = v_2 = 0$, and with initial distance $k$ between them. We write $Z^k_t$ to simplify the notation.

Let us denote by $\tau_k$ the coalescing time for two paths starting at distance k from each other, i.e.
$$
\tau_k = \min \{ t \geq 0: Z^k_t = 0 \}.
$$

\begin{remark}
\label{remark:rightpaths}
Note that we can define an analogous process $Z^k_t$ and a coalescence time $\tau_k$ for a pair of r-paths at initial distance $k$ (we use the same notation). They have the same distribution as the respective variables when defined for l-paths at initial distance $k$.
\end{remark}

Finally, we can present the main result of this section.

\begin{lemma}
\label{lemma:dnbcoalescencetime}
There exists a constant $C_0 > 0$ such that for every $t > 0$ and $k \in \mathbb{N}$ we have:
$$
P(\tau_k > t) \leq \frac{C_0 k}{\sqrt{t}},
$$
where this coalescence time can refer to a pair of either l-paths or r-paths. 
\end{lemma}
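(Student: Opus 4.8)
The plan is to show that $(Z^k_t)_{t \ge 0}$, suitably controlled, behaves enough like a one-dimensional random walk bridge to a half-line that it hits $0$ by time of order $k^2$ with high probability, and then to convert this into the stated tail bound. The natural tool is a martingale / Lyapunov argument on $Z^k_t$ itself or on $(Z^k_t)^2$. First I would observe that although two $l$-paths evolve dependently (long-range dependence before coalescence, since the environment $\omega$ is shared), the \emph{difference} $Z^k_t$ is still, after embedding at the integer time levels, a nonnegative integer-valued process that makes increments of controlled size: by Remark~\ref{rem:moments} each path increment is stochastically dominated by $Geom(p)$, so $|Z^k_{t+1} - Z^k_t|$ has all moments bounded uniformly in $t$ and $k$. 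The key probabilistic input is that $Z^k_t$ is a (sub/super)martingale-like object with a strictly positive ``chance of decrease'' whenever it is positive: because $l$-paths are non-crossing and coalesce on meeting, and because with probability bounded below the two paths see a configuration forcing them closer (in fact when $Z^k_t$ is small, say $O(1)$, there is probability bounded below that they coalesce in one step). I would make this precise as: there is $c>0$ with $E[(Z^k_{t+1})^2 - (Z^k_t)^2 \mid \mathcal F_t] \le c$ on $\{Z^k_t > 0\}$ (a uniform bound on the conditional ``diffusivity''), together with $E[Z^k_{t+1} - Z^k_t \mid \mathcal F_t] \le 0$ on the same event (a supermartingale property for the distance of two $l$-paths, which holds because of the drift-free, symmetric nature of the DNB dynamics before the branching scaling is applied — here $\epsilon$ is fixed, not scaled).

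With these two ingredients the argument is classical. Consider $M_t = (Z^k_{t \wedge \tau_k})^2 - c\,(t\wedge\tau_k)$, which is a supermartingale; optional stopping at $t \wedge \tau_k$ gives $E[(Z^k_{t\wedge\tau_k})^2] \le k^2 + c\,E[t \wedge \tau_k] \le k^2 + ct$. That alone is not enough, so I would instead run the standard two-sided estimate: fix a level $N \gg k$, let $\sigma_N = \inf\{t: Z^k_t \ge N\}$, and use the supermartingale $Z^k_{t\wedge\tau_k\wedge\sigma_N}$ (for the linear functional) to bound $P(\sigma_N < \tau_k) \le k/N$ by optional stopping, and use $M_t$ on the event $\{\sigma_N \ge \tau_k\}$ to bound $E[\tau_k \wedge \sigma_N] \le (k^2 + \text{something})/c$. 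Combining, one gets $E[\tau_k \wedge \sigma_N] \lesssim k^2 + N^2 (k/N) = k^2 + kN$; choosing $N$ proportional to $k$ yields $E[\tau_k] \le C k^2$ provided one first checks $\tau_k < \infty$ a.s. (which follows from recurrence of the difference walk, itself a consequence of the zero-drift plus bounded-variance facts). Finally, to go from $E[\tau_k] \le C k^2$ to the quantitative tail $P(\tau_k > t) \le C_0 k/\sqrt t$, I would use the restart/Markov property of the environment at integer times: run the process for time $t/2$; either it has coalesced, or $Z^k_{t/2} = m$ for some $m\ge 1$, and by the moment bound on increments and the supermartingale property, $E[Z^k_{t/2}] \le k$ and more usefully $P(Z^k_{t/2} = m) $ has enough decay that, restarting, $P(\tau_k > t) \le P(\tau_{Z^k_{t/2}} > t/2)$, and iterating a Chebyshev-type bound $P(\tau_m > s) \le Cm^2/s$ (from the first moment estimate via Markov's inequality, applied at a doubling sequence of times) together with $E[Z^k_{t/2}] \le k$ gives $P(\tau_k > t) \le C' \,E[Z^k_{t/2}]/\sqrt{t/2} \le C_0 k/\sqrt t$. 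Alternatively, and more cleanly, one establishes $P(\tau_k > t) \le Ck/\sqrt t$ directly by a maximal inequality: on $\{\tau_k > t\}$ the supermartingale $Z^k$ has stayed positive, and a version of the Kolmogorov–Doob inequality for $Z^k_{t\wedge\tau_k}$ combined with the variance bound $E[(Z^k_{t\wedge\tau_k})^2]\lesssim k^2 + ct$ pins down the probability that it has avoided $0$.

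The main obstacle I anticipate is \textbf{not} the final conversion but the \emph{supermartingale / decrease estimate for $Z^k_t$}, i.e.\ establishing that the distance between two $l$-paths has non-positive drift and a uniformly-positive probability of strict decrease when positive. The difficulty is exactly the long-range dependence before coalescence emphasized in the introduction: the jump of an $l$-path from $(x,t)$ depends on the whole environment in the row $t+1$ within a $Geom(p)$-type window, so the joint law of the two simultaneous jumps is correlated in a way that depends on $Z^k_t$, and one must check that these correlations never create a positive drift in the gap and, crucially, that when the two paths are within distance $O(1)$ they coalesce with probability bounded below in one step (so the process cannot ``dodge'' $0$ indefinitely). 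I would handle this by an explicit case analysis on the relative positions of the nearest open sites seen by the left and right of the two paths, using the independence of $\omega$ across distinct columns to decouple the relevant Bernoulli variables, and using the symmetry $\theta \mapsto -\theta$ together with the reflection symmetry of the DNB to get the zero-drift bound. Everything after that — optional stopping, Markov's inequality, the restart argument — is routine given the uniform moment bound from Remark~\ref{rem:moments}.
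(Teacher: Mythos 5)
Your structural setup matches the paper's: the paper also works with the gap process $Z^k_t$, shows it is a nonnegative \emph{martingale} (not merely a supermartingale --- this is immediate because the two l-paths are non-crossing, so $Z^k_t = X^u_t - X^v_t$ with no absolute value once ordered, and the increments of $X^u$ and $X^v$ are identically distributed by translation invariance), verifies a uniform \emph{lower} bound $E[(Z^k_{l+1}-Z^k_l)^2\mid\mathcal F_l]\ge p(1-p)$ on $\{Z^k_l>0\}$ by exhibiting an explicit environment configuration that forces the gap to move, and bounds the conditional third moment via the $Geom(p)$ domination. It then feeds these three conditions into a black-box hitting-time estimate (Proposition \ref{Result:coalescencetime}, quoted from \cite{cssc}), which outputs exactly $P(\tau_k>t)\le C_0k/\sqrt t$. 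So the ``main obstacle'' you anticipate --- the drift estimate --- is in fact the easy part, and your instinct that the correlations might spoil it is resolved in one line by symmetry; the substantive non-degeneracy condition is the lower bound on the conditional variance, which you gesture at (positive probability of strict decrease) but do not isolate as the key hypothesis.

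The genuine gap is in your quantitative conversion. The claim ``choosing $N$ proportional to $k$ yields $E[\tau_k]\le Ck^2$'' is false: for a mean-zero walk the expected hitting time of $0$ started from $k$ is infinite, and your own estimate $E[\tau_k\wedge\sigma_N]\lesssim kN$ only controls the stopped time; you cannot remove the truncation by fixing $N\asymp k$, since $P(\sigma_N<\tau_k)\approx k/N$ does not vanish. Consequently the restart-plus-Markov argument built on $E[\tau_k]\le Ck^2$ collapses, and the ``alternative'' maximal-inequality route is too vague to stand on its own. The correct derivation --- which is essentially the content of the cited \cite[Theorem 5.2]{cssc} --- keeps both barriers and optimizes in $t$ rather than in $k$: write $P(\tau_k>t)\le P(\sigma_N<\tau_k)+P(\tau_k\wedge\sigma_N>t)\le k/N + E[\tau_k\wedge\sigma_N]/t\lesssim k/N + kN/t$ and choose $N\asymp\sqrt t$ to get $Ck/\sqrt t$ directly, with no intermediate first-moment bound on $\tau_k$. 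With that replacement (or by simply invoking the quoted proposition once your three conditions are verified), your argument closes.
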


\vspace{0.2cm}

For the proof of Lemma \ref{lemma:dnbcoalescencetime} we will make use of the following result from \cite{cssc}:

\begin{proposition}
\label{Result:coalescencetime}
\textbf{(\cite[Theorem $5.2$]{cssc})} Let $\{\mathcal{G}_l:l\geq0\}$ be a filtration and $\{Y_l: l \geq 0\}$ be a $\mathcal{G}_l$-adapted discrete-time stochastic process taking values in $\mathbb{R}_{+}$. Let $\nu^Y = \inf\{l \geq 1: Y_l = 0 \}$ be the first time that the process $Y_l$ reaches zero. Suppose that:  
\begin{enumerate}
\item[(i)] For any $l \geq 0$, a.s. $E[(Y_{l+1} - Y_l)|\mathcal{G}_l] \leq 0$.
\item[(ii)] There exist constants $C_1,C_2 > 0$ such that for any $l \geq 0$, a.s. on the event $\{Y_l > 0\}$, we have
$$
E[(Y_{l+1}-Y_l)^2 \ |\mathcal{G}_l] \geq C_1 \ \mbox{ and } \ E[ \ |Y_{l+1}-Y_l|^3 \ |\mathcal{G}_l] \leq C_2.
$$
\end{enumerate}
Then $\nu^Y < \infty$ almost surely. Further, there exists a constant $C_3 > 0$ such that for any $y > 0$ and any integer $n$,
$$
P(\nu^Y > n | Y_0 = y) \leq \frac{C_3 y}{\sqrt{n}}.
$$
\end{proposition}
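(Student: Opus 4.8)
The plan is to apply Proposition \ref{Result:coalescencetime} to the distance process sampled at integer times. Fix two l-paths at initial integer distance $k$ and set $Y_l := Z^k_l$ for $l \ge 0$, adapted to the environment filtration $\mathcal{G}_l := \mathcal{F}_l$. Since two l-paths cannot cross, their left--right order is preserved, so $Y_l \ge 0$, the paths coalesce exactly when they first meet, and the first zero of the interpolated distance $Z^k_t$ occurs at an integer time (between consecutive integer times the two linear interpolants keep their order, so they can only touch at an endpoint). Consequently $\nu^Y = \tau_k$ under the obvious identification, with $Y_0 = k \ge 1$. Granting the two hypotheses below, Proposition \ref{Result:coalescencetime} yields $P(\nu^Y > n \mid Y_0 = k) \le C_3 k/\sqrt{n}$; taking $n = \lfloor t \rfloor$ for $t \ge 1$ (so that $\sqrt{\lfloor t\rfloor} \ge \sqrt{t/2}$) and using $P(\tau_k > t) \le 1$ to cover $0 < t < 1$ gives $P(\tau_k > t) \le C_0 k/\sqrt{t}$ with $C_0 := \max\{C_3\sqrt{2},\,1\}$. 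The r-path case is identical by Remark \ref{remark:rightpaths}.

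Next I verify hypothesis (ii). Write the one-step increment as $Y_{l+1} - Y_l = \xi^v - \xi^u$, where $\xi^u$ and $\xi^v$ are the increments of the left and right paths. By Remark \ref{rem:moments} each of $|\xi^u|,|\xi^v|$ is stochastically dominated by a $\mathrm{Geom}(p)$ variable, which has finite moments of all orders; expanding $|\xi^v-\xi^u|^3 \le (|\xi^u|+|\xi^v|)^3$ and using these bounds gives $E[\,|Y_{l+1}-Y_l|^3 \mid \mathcal{G}_l\,] \le C_2$ uniformly. For the lower bound, on $\{Y_l>0\}$ the two paths occupy distinct integer sites, and I would exhibit a configuration of the open/closed sites at the next time level that forces $|\xi^v-\xi^u| \ge 1$ and that has probability bounded below by a constant $c(p)>0$ uniformly in the current positions and in $\mathcal{G}_l$ (the environment above time $l$ being independent of $\mathcal{F}_l$). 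This gives $E[(Y_{l+1}-Y_l)^2 \mid \mathcal{G}_l] \ge c(p) =: C_1 > 0$.

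The crux is hypothesis (i), the supermartingale property $E[Y_{l+1}-Y_l \mid \mathcal{G}_l] \le 0$. By translation invariance $\xi^u$ and $\xi^v$ share the same marginal law, so their unconditional means cancel and the entire drift comes from the correlation induced by the shared environment $\omega$. When the two paths are far apart their search windows for the nearest open site above lie in disjoint regions, the increments are independent, and the conditional drift is exactly $0$. When they are close, the right path's leftward search and the left path's rightward search overlap in the gap between them; here I would show, via a reflection/coupling argument exploiting the symmetry of the Bernoulli field $\omega$ about the midpoint of the two paths together with the non-crossing order constraint (and the branching rule encoded in $\Gamma^l$), that this overlap can only pull the paths together, so that $E[\xi^v-\xi^u \mid \mathcal{G}_l] \le 0$. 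I expect this step to be the main obstacle, since it requires a careful case analysis of the joint law of $(\xi^u,\xi^v)$ conditioned on the positions, on the values of $\theta$ at the two occupied sites, and on the configuration of $\omega$ in the gap, and it is precisely where the long-range dependence of the DNB interacts nontrivially with the branching mechanism.
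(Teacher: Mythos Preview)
Your proposal is not a proof of Proposition~\ref{Result:coalescencetime} at all; that proposition is quoted from \cite{cssc} and the paper does not prove it. What you have written is a proof of Lemma~\ref{lemma:dnbcoalescencetime}, which \emph{applies} Proposition~\ref{Result:coalescencetime}. Read as such, your overall strategy and your treatment of hypothesis~(ii) match the paper's proof of Lemma~\ref{lemma:dnbcoalescencetime} essentially verbatim: stochastic domination of the increments by $\mathrm{Geom}(p)$ gives the third-moment bound, and exhibiting one favorable environment configuration at the next time level gives the uniform lower bound on the conditional second moment (the paper's explicit choice is ``$(X^u_l,l+1)$ open, $(X^v_l,l+1)$ closed'', yielding $C_1=p(1-p)$).

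Where you diverge from the paper is in your assessment of hypothesis~(i), which you flag as ``the crux'' and ``the main obstacle'' and propose to attack via a reflection/coupling case analysis of the correlated increments. In the paper this step is a one-line martingale computation, not a supermartingale inequality: since the two l-paths cannot cross, the absolute value can be dropped, and since both paths obey the \emph{same} jump rule in an i.i.d.\ environment, the conditional means of their increments given $\mathcal{F}_l$ are equal by translation invariance. Hence
\[
E[Z^k_{l+1}\mid\mathcal{F}_l]=E[X^u_{l+1}-X^v_{l+1}\mid\mathcal{F}_l]=X^u_l-X^v_l=Z^k_l,
\]
with equality, regardless of how the increments are correlated through the shared environment. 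The correlation you worry about affects the joint law and higher moments, but the mean of a difference depends only on the marginal means, which coincide here. So your proposed route for (i) is not wrong, but it is working far harder than necessary; you already had the key observation (``$\xi^u$ and $\xi^v$ share the same marginal law, so their unconditional means cancel'') and only needed to notice that the same cancellation holds \emph{conditionally} on $\mathcal{F}_l$, together with the non-crossing property to remove the absolute value.
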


\vspace{0.2cm}

We will prove Lemma \ref{lemma:dnbcoalescencetime} using Proposition \ref{Result:coalescencetime}.

\medskip

\noindent \textit{Proof of Lemma \ref{lemma:dnbcoalescencetime}.} For this proof, we can consider that $\tau_k$ refers to coalescence between l-paths, since the proof considering r-paths would be identical. Also, recall that we assume that $u_2=v_2=0$. We also consider $u_1 > v_1$, so $u$ is at the left-hand side of $v$.

We have to verify that $Z^k_t$ and the coalescence time $\tau_k$ satisfy the conditions of Proposition \ref{Result:coalescencetime}. Note that $Z^k_t$ only takes values in $\mathbb{R}_{+}$, because two l-paths cannot cross each other. If the two conditions of Proposition \ref{Result:coalescencetime} hold, we will have proved Lemma \ref{lemma:dnbcoalescencetime}.

We have that $Z^k_t$ is a Markov chain with state space $\mathbb{Z}^{+} \cup \{0\}$. It has $0$ as the only absorbent state, since two paths coalesce when they meet for the first time. The processes $(X^u_{t})_{t \ge 0}$
and $(X^v_{t})_{t \ge 0}$ have IID symmetric increments. Besides that, the increments of $X^u_{t+1} - X^u_{t}$ and $X^v_{t+1} - X^v_{t}$ are also identically distributed (but not independent). Recalling that $\mathcal{F}_t$ is the filtration generated by the environment $(\omega,\theta)$, as defined in Section \ref{model}, we have:
$$
E[Z^k_{t+1}|\mathcal{F}_t] = E[ \ |X^u_{t+1} - X^v_{t+1}| \ | \mathcal{F}_t] =
E[ X^u_{t+1} - X^v_{t+1} | \mathcal{F}_t] = X^u_{t} - X^v_{t} = Z^k_t,
$$
which implies that the process $Z^k_t$ is a non-negative martingale. 
Then, for any $l \geq 0$, $E[(Z^k_{l+1} - Z^k_l) | \mathcal{F}_l] = 0$, so the condition (i) of Proposition \ref{Result:coalescencetime} is satisfied.

About the first inequality on condition (ii) of Proposition \ref{Result:coalescencetime}, note that the increments of $Z^k_t$ are not spatially homogeneous, but given $\mathcal{F}_l$ and that $Z^k_l > 0$, it is always possible to assure that $(Z^k_{l+1}-Z^k_l)^2 \geq 1$ with a convenient choice of open and closed vertices at time $l+1$. To achieve that, it is enough to have an open vertex at position $(X^u_{l},l+1)$ and a closed vertex at position $(X^v_{l},l+1)$. So, by Markov property of $Z^k_t$, we have that $E[(Z^k_{l+1}-Z^k_l)^2 \ |\mathcal{F}_l] \geq p(1-p) = C_1$ for any $l \geq 0$. 

The second inequality on condition (ii) of Proposition \ref{Result:coalescencetime} follows from the fact that both $(X^u_{t})_{t \ge 0}$ and $(X^v_{t})_{t \ge 0}$ have independent increments and additionally their increments have finite moments, see Remark \ref{rem:moments}: 

\vspace{-0.2cm}

\begin{align*}
    E[ \, |Z^k_{l+1} - Z^k_l|^3 \ |\mathcal{F}_l] &\leq 4 \big( E[ \, |X^u_{l+1} - X^u_{l}|^3 \ | \mathcal{F}_l] + E[ \ |X^v_{l+1} - X^v_{l}|^3 \ | \mathcal{F}_l] \big) \\
    &= 4\left( E[ \, |X^u_{l+1} - X^u_{l}|^3|] + E[ \, |X^v_{l+1} - X^v_{l}|^3|]\right) = C_2
\end{align*}
where $C_2$ depends only on $p$. \hfill $\square$

\medskip

The next result follows directly from Lemma \ref{lemma:dnbcoalescencetime}.

\begin{corollary}
\label{corollary:dnbcrossingtime}
Let $L_t^u \in W^l$ starting from $u=(u_1,u_2)$ and $R_t^v \in W^r$ starting from $v=(v_1,v_2)$ with $u_1 - v_1 = k > 0$ and $\tau^c_k = \min \{ t \geq 0: L_t^u \leq R_t^v \}$ the time until we have a crossing between $L_t^u$ and $R_t^v$. There exists a constant $C_0 > 0$ such that for every $t > 0$ and $k \in \mathbb{N}$ we have:
$$
P(\tau^c_k > t) \leq P(\tau_k > t) \leq \frac{C_0 k}{\sqrt{t}}.
$$
\end{corollary}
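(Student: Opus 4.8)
The plan is to obtain the first inequality $P(\tau^c_k>t)\le P(\tau_k>t)$ from a pathwise coupling, after which the second inequality is exactly Lemma~\ref{lemma:dnbcoalescencetime}. It suffices to treat the case $u_2=v_2=0$ (by translation invariance; the case $u_2\ne v_2$ is addressed at the end), so that $L^u_0=v_1+k$ and $R^v_0=v_1$. On the same environment $(\omega,\theta)$ I would introduce the l-path $\tilde L$ started at $v$, i.e., the path that always chooses the leftmost edge. Then $L^u$, $\tilde L$ and $R^v$ all live on one probability space, and the argument amounts to comparing these three paths.

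The first step is to show that $\tilde L_t\le R^v_t$ for all $t\ge 0$. Both paths start at $v$; as long as neither of them has visited a vertex $z$ with $\theta(z)=0$, one has $\Gamma^l(z)=\Gamma^r(z)$ at every visited vertex, so the two paths take identical steps and coincide. If such a branching vertex $z$ is eventually reached, at a time $\sigma$ say, then $\tilde L$ moves to $h(z,-1)$ and $R^v$ moves to $h(z,1)$; since $h(z,-1)$ lies strictly to the left of $z$ and $h(z,1)$ strictly to the right, this gives $\tilde L_{\sigma+1}<R^v_{\sigma+1}$. From that moment $\tilde L$ is a path of $\mathcal{X}$ lying strictly to the left of the r-path $R^v$, so by the non-crossing property (no path of $\mathcal{X}$ crosses an r-path from left to right) it can never move strictly to the right of $R^v$, whence $\tilde L_t\le R^v_t$ for all $t$. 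If no branching vertex is ever visited, $\tilde L\equiv R^v$ and the bound is trivial.

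The second step compares $L^u$ with $\tilde L$. Since $u_1>v_1$, $L^u$ starts strictly to the right of $\tilde L$, and being a path of $\mathcal{X}$ it cannot cross the l-path $\tilde L$ from right to left; hence $L^u_t\ge \tilde L_t$ up to the first time $T:=\min\{t\ge 0: L^u_t=\tilde L_t\}$ that they meet, after which, both continuing as l-paths, they coincide. Thus $T$ is the coalescence time of two l-paths at initial distance $k$, so $T$ has the same law as $\tau_k$. At time $T$ the first step gives $L^u_T=\tilde L_T\le R^v_T$, so $T$ belongs to $\{t\ge 0:L^u_t\le R^v_t\}$ and therefore $\tau^c_k\le T$ on the coupled space. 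Taking probabilities, $P(\tau^c_k>t)\le P(T>t)=P(\tau_k>t)$, and Lemma~\ref{lemma:dnbcoalescencetime} gives the stated bound.

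I do not anticipate a real obstacle here: the argument is entirely soft, and the only care needed is in the two monotonicity claims, which rest on the structural facts that at a branching the l-path takes the left open neighbor while the r-path takes the right one, and that no path of $\mathcal{X}$ crosses an l-path from right to left nor an r-path from left to right. Two routine points remain to be noted, namely that $\{t\ge 0:L^u_t\le R^v_t\}$ is closed so that $\tau^c_k$ is well defined (continuity of the interpolated paths), and, if one wants the statement without assuming $u_2=v_2$, that the same comparison can be carried out starting the l-path $\tilde L$ from the later of the two initial times.
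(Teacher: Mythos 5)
Your argument is correct and is essentially the paper's proof in mirror image: the paper introduces the companion r-path $R^u_t \in W^r$ started at $u$, notes that $L^u_t \le R^u_t$ always, and concludes that the crossing of $R^v$ must occur by the coalescence time of the two r-paths $R^v$ and $R^u$, whereas you introduce the companion l-path $\tilde L$ started at $v$ and use the coalescence time of the two l-paths $L^u$ and $\tilde L$. Since by Remark \ref{remark:rightpaths} the coalescence time at initial distance $k$ has the same law for a pair of l-paths as for a pair of r-paths, the two couplings yield exactly the same bound via Lemma \ref{lemma:dnbcoalescencetime}.
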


\begin{proof}
Let $R_t^u \in W^r$ be the r-path that starts from $u=(u_1,u_2)$. Since $L_t^u$ can never be at the right-hand side of $R_t^u$, it will be between $R_t^v$ and $R_t^u$, which implies that it has to cross $R_t^v$ before or at the same time as the coalescence between $R_t^v$ and $R_t^u$, which gives the first inequality. The last inequality follows from Lemma \ref{lemma:dnbcoalescencetime}.
\end{proof}

\begin{remark}
\label{remark:indepcoalesce}
Suppose that the processes $X^u$ and $X^v$ that appear in the definition of $Z^k$ are independent. More precisely, assume that the DNB paths $X^u$ and $X^v$ evolve according to distinct independent environments. Then both Lemma \ref{lemma:dnbcoalescencetime} and Corollary \ref{corollary:dnbcrossingtime} also hold in this case. To check this, we use a result from \cite[Lemma 2.2]{nrs} that obtains the same upper bound presented in Lemma \ref{lemma:dnbcoalescencetime} for the first meeting time between two independent nonsimple random walks. In \cite[Lemma 2.2]{nrs}, the increments of the random walks have zero mean and finite second moment. However it is straightforward to note from the proof that it is enough to have that the difference between the increments of the two random walks has zero mean instead of having  both paths with zero mean increments.
\end{remark}

Now we will state an equivalent result for coalescence times considering paths from the dual, which we prove in Subsection \ref{sec:dualcoalesce}. First let us give some notation for the dual case.

Recalling the notation of Section \ref{subsec:dualdef}, fix $\hat{z} \in \hat{V}$ and set recursively 
\begin{equation*}
\hat{\Gamma}_0^l(\hat{z}) = \hat{z}, \ \hat{\Gamma}_1^l(\hat{z}) = \hat{\Gamma}^l(\hat{z}) \ \textrm{and} \ 
	\hat{\Gamma}_{k+1}^l(\hat{z}) = \hat{\Gamma}^l(\Gamma_{k}^l(\hat{z}))\, , \text{ for } k\ge 1\, .
\end{equation*}

\noindent Let $\hat{Y}_k^l(\hat{z})$ denotes the first coordinate of $\hat{\Gamma}_{k}^l(\hat{z})$.
 
Also define $\hat{\Gamma}_{k}^r(\hat{z})$ and $\hat{Y}_k^r(\hat{z})$ analogously using the $\hat{\Gamma}^r$ instead of $\hat{\Gamma}^l$. So, $\hat{Y}^l_t(\hat{u})$ and $\hat{Y}^l_t(\hat{v})$ denote the position of the dual paths respectively at dual times $\hat{u}_2-t$ and $\hat{v}_2-t$ and starting from the dual vertices $\hat{u}$ and $\hat{v}$.
Define the process $(\hat{Z}^{\hat{u},\hat{v}}_t)_{t \geq 0}$ as being the distance between the position of these two dual paths after $t$ steps, starting to compute it at time $(\hat{u}_2 \wedge \hat{v}_2)$, i.e.
$$
\hat{Z}^{\hat{u},\hat{v}}_t = | \hat{Y}^l_{t+ \hat{u}_2 - (\hat{u}_2 \wedge \hat{v}_2)}(\hat{u}) - \hat{Y}^l_{t+ \hat{v}_2 - (\hat{u}_2 \wedge \hat{v}_2)}(\hat{v}) |, \ t \geq 0. 
$$
Let us denote by $\hat{\tau}_{\hat{u},\hat{v}}$ the time until coalescence between the two dual l-paths starting from $\hat{u}$ and $\hat{v}$, i.e.
$$
\hat{\tau}_{\hat{u},\hat{v}} = \min \{ t \geq 0: \hat{Z}^{\hat{u},\hat{v}}_t = 0 \}.
$$

Note that, unlike what happened with the DNB paths, in this case, the process $\hat{Z}^{\hat{u},\hat{v}}_t$ still depends on the specific position of the dual vertices $\hat{u}$ and $\hat{v}$ even if we know the initial distance between them. It occurs because the distribution of the increments of dual paths depends if they are in an integer position or not (see Figure \ref{dualfig}). Even if we know that the initial distance is a non-integer, the distribution of $\hat{Z}^{\hat{u},\hat{v}}_t$ also depends on which one of either $\hat{u}$ or $\hat{v}$ is a non-integer.

There is an argument that allows us to obtain the bound for the coalescence time considering only the case where $\hat{u}$ and $\hat{v}$ are integers, thus avoiding the problem of dealing with the distinct distributions for $\hat{Z}^{\hat{u},\hat{v}}_t$. To argue that, we will show an environment configuration that assures that  $\hat{Y}^l_{t+1}(\hat{u})$ and $\hat{Y}^l_{t+1}(\hat{v})$ are integers, given that at least one between $\hat{Y}^l_{t}(\hat{u})$ and $\hat{Y}^l_{t}(\hat{v})$ is a non-integer.

To simplify the notation, due to the time homogeneity of the dual process, we can consider that both dual paths start at time zero, i.e., $\hat{u}_2 = \hat{v}_2 = 0$. First, suppose that $\hat{Y}^l_{t}(\hat{u}) = y_u$ is an integer, $\hat{Y}^l_{t}(\hat{v}) = y_v$ is a non-integer and $y_u < y_v$ (besides that, by the construction rule of the dual paths, we need to have that $|y_v - y_u| \geq 1$). Consider the environmental event $(y_u,-t-1)$ is closed, $(y_u-1,-t-1)$ is open, $(y_u+1,-t-1)$ is open, $(y_v-0.5,-t-1)$ is open, $(y_v+0.5,-t-1)$ is closed and $(y_v+1.5,-t-1)$ is open. In this event, $((y_u,-t),(y_u,-t-1))$ and $((y_v,-t),(y_v+0.5,-t-1))$ are in $\hat{E}$, and  $\hat{Y}^l_{t+1}(\hat{u}) = y_u$, $\hat{Y}^l_{t+1}(\hat{u}) = y_v+0.5$. Both dual paths are at integer positions at time $t+1$. The event has probability either $p^4(1-p)^2$ if $y_u+1 < y_v-0.5$ or $p^3(1-p)^2$ if $y_u+1 = y_v-0.5$. 

Now suppose that both $\hat{Y}^l_{t}(\hat{u}) = y_u$ and $\hat{Y}^l_{t}(\hat{v}) = y_v$ are non integers and $y_u < y_v$. Consider the event $(y_u-1.5,-t-1)$ is open, $(y_u-0.5,-t-1)$ is closed, $(y_u+0.5,-t-1)$ is open, $(y_v-0.5,-t-1)$ is open, $(y_v+0.5,-t-1)$ is closed and $(y_v+1.5,-t-1)$ is open. In this event, $((y_u,-t),(y_u-0.5,-t-1))$ and $((y_v,-t),(y_v+0.5,-t-1))$ are in $\hat{E}$, and $\hat{Y}^l_{t+1}(\hat{u}) = y_u-0.5$, $\hat{Y}^l_{t+1}(\hat{u}) = y_v+0.5$, Both dual paths are at integer positions at time $t+1$. The event has probability either $p^4(1-p)^2$ if $y_u+0.5 < y_v-0.5$ or $p^3(1-p)^2$ if $y_u+0.5 = y_v-0.5$.

Because of the previous events, the probability of having both dual l-paths at integer positions at time $t+1$, given that at least one is at a non-integer position at time $t$, is bounded from below by $p^4(1-p)^2$. Denote

\vspace{-0.2cm}

$$
\hat{\nu}_{\hat{u},\hat{v}} = \inf \{ t\ge 0 : 
\hat{Y}^l_{t}(\hat{u}) \in \mathbb{Z} \textrm{ and } \hat{Y}^l_{t}(\hat{v}) \in \mathbb{Z} \}.
$$

Hence, by the Markov property of the dual l-paths (in the reverse direction of time), we have that $\hat{\nu}_{\hat{u},\hat{v}}$ is stochastically dominated by a random variable $\hat{\nu}$ that has geometric distribution with parameter $p^4(1-p)^2$. Then:  

\vspace{-0.2cm}

$$
P(\hat{\tau}_{\hat{u},\hat{v}} > t) = 
P((\hat{\tau}_{\hat{u},\hat{v}} - \hat{\nu}_{\hat{u},\hat{v}}) + \hat{\nu}_{\hat{u},\hat{v}} > t) \le
P(\hat{\tau}_{\hat{u},\hat{v}} - \hat{\nu}_{\hat{u},\hat{v}} > t/2) + 
P({\hat{\nu}_{\hat{u},\hat{v}}} > t/2). 
$$
\vspace{-0.2cm}

By the stochastic domination mentioned before and supposing that a coalescence time bound as the one that appears in Lemma \ref{lemma:dnbcoalescencetime} holds for dual l-paths starting at integer positions (we prove it in Appendix \ref{sec:dualcoalesce}), we have that $P(\hat{\nu}_{\hat{u},\hat{v}} > t/2) \le Ce^{-ct}$. Now, recalling that $k$ denotes the initial distance between the two dual l-paths, we have

\begin{align*}
	P\left(\hat{\tau}_{\hat{u},\hat{v}} - \hat{\nu}_{\hat{u},\hat{v}}> \frac{t}{2}\right) &\le \frac{C}{\sqrt{t}}\mathbb{E}\big[\hat{Z}^{\hat{u},\hat{v}}_{\hat{\nu}_{\hat{u},\hat{v}}}\big]\\
	&=\frac{C}{\sqrt{t}}\Big(k + \sum_{m=1}^{\infty}\mathbb{E}\left[(\hat{Z}^{\hat{u},\hat{v}}_{\hat{\nu}_{\hat{u},\hat{v}}}-k)\big| \hat{\nu}_{\hat{u},\hat{v}} = m \right]P\left(\hat{\nu}_{\hat{u},\hat{v}} = m\right)\Big)\\
	&\le \frac{C}{\sqrt{t}}\Big(k + \sum_{m=1}^{\infty}cm P\left(\hat{\nu}_{\hat{u},\hat{v}} = m\right)\Big)
	= \frac{C}{\sqrt{t}}\left(k + c\mathbb{E}\left[\hat{\nu}_{\hat{u},\hat{v}}\right]\right),
\end{align*}
where $C>0$ is an upper bound on the conditional expectation of an increment of the distance between two dual l-paths, i.e., one can find some $C$ that does not depend on the initial distance between both paths. It is straightforward to compute $C$, and we leave the details to the reader. Thus we only need to consider that both dual paths start from integer positions.

Denote by $\hat{Z}^k_t$ the distance after $t$ steps between the position of two dual paths that start in integer positions at time zero with initial distance $k$, and by $\hat{\tau}_k$ the coalescence time of these two dual paths, i.e:
$$
\hat{Z}^k_t = | \hat{Y}^l_t((0,0)) - \hat{Y}^l_t((k,0)) |, \ t \geq 0, \ \, \textrm{and }
\,
\hat{\tau}_k = \min \{ t \geq 0: \hat{Z}^k_t = 0 \}.
$$

\vspace{0.1cm}

\begin{remark}
Recall that the dual paths flow in the reverse direction of time. We put the notation in such a way to deal only with positive values for the time index of the process $\hat{Z}^{\hat{u},\hat{v}}_t$, but the coalescence between two dual paths starting from the dual vertices $\hat{u}$ and $\hat{v}$ will actually happen at time $(\hat{u}_2 \wedge \hat{v}_2) - \hat{\tau}_{\hat{u},\hat{v}}$.
\end{remark}

Remark \ref{remark:rightpaths} is also applied here. We are considering only dual l-paths, but everything would be analogous for r-paths.

\smallskip

Now we are finally ready to present an equivalent of Lemma \ref{lemma:dnbcoalescencetime} for dual paths.

\begin{lemma}
\label{lemma:dualcoalescencetime}
There exists a constant $\hat{C}_0 > 0$ such that for every $t > 0$ and $k \in \mathbb{N}$ we have:
$$
P(\hat{\tau}_k > t) \leq \frac{\hat{C}_0 k}{\sqrt{t}},
$$
where this coalescence time can refer to two dual l-paths or to two dual r-paths.
\end{lemma}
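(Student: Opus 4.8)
The plan is to follow the proof of Lemma~\ref{lemma:dnbcoalescencetime}, verifying the hypotheses of Proposition~\ref{Result:coalescencetime} for the distance process $\hat Z^k$ between two dual l-paths started from integer sites at distance $k$. By the reduction carried out just before the statement it suffices to treat integer starting positions, and as already observed the r-path case is identical, so we fix two dual l-paths with starting sites $(0,0)$ and $(k,0)$, write $\hat Z^k_l$ for the distance between their positions after $l$ backward steps, and let $\hat\tau_k$ be their coalescence time. We take $Y_l=\hat Z^k_l$, adapted to a suitable reverse-time filtration $\mathcal G_l$ generated by the portion of the environment $(\omega,\theta)$ needed to determine the two dual trajectories down to time $-l$ together with the environment of the strip separating them. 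Then $Y_l\in\tfrac12\mathbb Z_{\ge 0}$ and $0$ is absorbing, because dual l-paths do not cross one another, so it remains to check conditions (i) and (ii).

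For condition (i) we must show $E[\hat Z^k_{l+1}-\hat Z^k_l\mid\mathcal G_l]\le 0$. The key facts are: (a) a dual l-path is a deterministic functional of the environment, insensitive to the presence of the other path, so by the $\mathbb Z$-translation invariance of $(\omega,\theta)$ the two dual l-paths have identically distributed trajectories; (b) the one-step displacement of a dual l-path splits into a \emph{non-branching} part, which by the reflection invariance of $(\omega,\theta)$ about the current site has conditional mean zero --- reflection about any point of $\tfrac12\mathbb Z$ is a symmetry of the lattice $\mathbb Z^2$ and interchanges the roles of $a^l$ and $a^r$ --- and a \emph{branching} part, which occurs in one-to-one correspondence with branchings of the DNB; and (c) by the analogous reflection the branching displacements of an l-path and of an r-path are negatives of one another. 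Matching the branching contributions of the two dual l-paths then yields that $\hat Z^k_l$ is a martingale up to coalescence, so condition (i) holds. The delicate point is that the dual transition law depends on whether the current site is integer or half-integer, so (a) must be applied with care: this is handled by passing to the sequence of renewal times at which both dual l-paths occupy integer sites simultaneously, whose successive gaps are stochastically dominated by geometric random variables (exactly the estimate established in the reduction argument preceding the lemma), on the embedded chain translation invariance applies cleanly, and the excursions between renewals contribute only $O(1)$ fluctuations with $O(1)$ conditional mean gaps, which is harmless for the final $\sqrt{\cdot}$-bound.

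For the first inequality in condition (ii), on $\{\hat Z^k_l>0\}$ and conditionally on $\mathcal G_l$ one exhibits a configuration of open and closed sites on the time level immediately below the current positions --- of probability bounded below by a constant depending only on $p$, obtained by pinning down the two flanking open sites of one of the two dual vertices while leaving the other's undisturbed --- that forces $|\hat Z^k_{l+1}-\hat Z^k_l|\ge\tfrac12$; hence $E[(\hat Z^k_{l+1}-\hat Z^k_l)^2\mid\mathcal G_l]\ge C_1>0$. For the second inequality, the absolute one-step displacement of a dual l-path is stochastically dominated, uniformly in its position, by a constant multiple of a $Geom(p)$ variable --- the distance to the nearest open site on either side at the next time level is such a bound, just as in Remark~\ref{rem:moments} --- so $E[|\hat Z^k_{l+1}-\hat Z^k_l|^3\mid\mathcal G_l]\le C_2$ with $C_2=C_2(p)$.

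Granting (i) and (ii), Proposition~\ref{Result:coalescencetime} gives $P(\hat\tau_k>n\mid\hat Z^k_0=k)\le \hat C_0\, k/\sqrt n$ for every integer $n$, and since $\hat\tau_k$ is integer-valued and $t\mapsto \hat C_0 k/\sqrt t$ is decreasing this extends to all real $t>0$; combined with the reduction to integer starting sites this proves the lemma. I expect condition (i) to be the main obstacle: the nonzero drift of dual l-paths and, above all, the parity-dependence of the dual dynamics prevent the ``identically distributed increments, hence martingale distance'' argument from the primal case (Lemma~\ref{lemma:dnbcoalescencetime}) from transcribing verbatim, and reconciling it requires the renewal-time device together with the reflection-symmetry bookkeeping of the branching displacements sketched above.
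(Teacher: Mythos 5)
Your proposal is correct and follows essentially the same route as the paper: reduce to integer starting sites, pass to the embedded chain at the renewal times where both dual paths occupy integer positions (whose gaps are geometrically dominated), verify the supermartingale property and the second/third moment conditions of Proposition \ref{Result:coalescencetime} on that chain, and transfer back to real time. The paper packages the renewal-time construction and the drift cancellation as Proposition \ref{proposition:renewaldual2}, but the content matches your sketch, including the forced local configuration for the variance lower bound and the $Geom(p)$ domination for the third moment.
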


Since dual paths can be in integer positions or not, we cannot prove Lemma \ref{lemma:dualcoalescencetime} just using Proposition \ref{Result:coalescencetime} as we did for the DNB. To deal with this problem, we will use the following strategy: we define specific regeneration times where both dual paths are in integer positions, then we show that these times appear with enough frequency allowing us to simplify our problem considering the distance between the two dual paths only in these convenient regeneration times. The proof is presented in Section \ref{sec:dualcoalesce}.

\vspace{0.2cm}


\section{Convergence of the finite-dimensional distributions}
\label{sec:conditionI}

\vspace{0.2cm}

This section is devoted to proving that (I$_{\mathcal{N}}$) holds for the DNB. Recall that the branching parameter is $\epsilon_n = \fb n^{-1}$, for $\fb > 0$. So we begin by stating the main result of this section.

\begin{remark}
Whenever we have a path $\pi$ that starts at time $\sigma_{\pi}$, we can consider that $\pi_t =  \pi_{\sigma_{\pi}}$ for $t < \sigma_{\pi}$. It does not affect any convergence result and simplifies the notation when paths start from distinct times.
\end{remark}

\begin{proposition}
\label{lemma:conditionI}
Let $\epsilon_n = \fb n^{-1}$. Consider the sequences of points $z_1^{(n)}, \ldots , z_k^{(n)}$ and $ \tilde{z}_1^{(n)}, \ldots , \tilde{z}_{\tilde{k}}^{(n)}$ in $\mathbb{Z}^2$ where $z_i^{(n)} = (x_i^{(n)},s_i^{(n)})$ with $(\frac{1}{n}x^{(n)}_{i},\frac{1}{n^2}s^{(n)}_{i})\rightarrow (x_{i},s_{i}) = z_i \in \mathbb{R}^2$ for $i=1, \ldots , k$; and $\tilde{z}_j^{(n)} = (\tilde{x}_j^{(n)},\tilde{s}_j^{(n)})$ with $(\frac{1}{n}\tilde{x}^{(n)}_{j},\frac{1}{n^2}\tilde{s}^{(n)}_{j})\rightarrow (\tilde{x}_{j},\tilde{s}_{j}) = \tilde{z}_j \in \mathbb{R}^2$ for $j=1, \ldots , \tilde{k}$. Denote by $l_i^{(n)}$ the l-path of the DNB with branching parameter $\epsilon_n$ starting from $z_i^{(n)}$, and by $r_j^{(n)}$ the r-path of the DNB with branching parameter $\epsilon_n$ starting from $\tilde{z}_j^{(n)}$. Then the following convergence holds:
$$
\left(\frac{l_1^{(n)}\!(tn^2)}{n}, \ldots , \frac{l_k^{(n)}\!(tn^2)}{n},\frac{r_1^{(n)}\!(tn^2)}{n}, \ldots , \frac{r_{\tilde{k}}^{(n)}\!(tn^2)}{n} \right)_{t\in \mathbb{R}} \underset{n \rightarrow \infty}{\Longrightarrow} \left(l_1(t), \ldots l_k(t), r_1(t), \ldots r_{\tilde{k}}(t) \right)_{t\in \mathbb{R}},
$$ 
as random elements of the space of continuous functions $C(\mathbb{R},\mathbb{R}^{k+\tilde{k}})$ endowed with the uniform norm topology, where $(l_1, \ldots l_k, r_1, \ldots r_{\tilde{k}})$ is a collection of left-right coalescing Brownian motions (as defined in Section \ref{subsec:Bnet}), starting from $(z_1, \ldots z_k, \tilde{z}_1, \ldots \tilde{z}_{\tilde{k}})$.
\end{proposition}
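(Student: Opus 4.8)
The plan is to prove Proposition \ref{lemma:conditionI} by the standard route for establishing convergence of a discrete system to left-right coalescing Brownian motions: first establish tightness of the rescaled family in $C(\mathbb{R},\mathbb{R}^{k+\tilde k})$, then identify all subsequential limits as the desired diffusion by a martingale-problem argument, paying particular attention to the interaction between an l-path and an r-path. Throughout, I will use the interpolated, time-rescaled coordinates $\frac1n l_i^{(n)}(tn^2)$ and $\frac1n r_j^{(n)}(tn^2)$, and I will appeal to Remark \ref{rem:moments} (all increments are stochastically dominated by $\mathrm{Geom}(p)$, hence have all moments uniformly bounded) and to Lemma \ref{lemma:dnbcoalescencetime} and Corollary \ref{corollary:dnbcrossingtime} (coalescence-time and crossing-time tail bounds).

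\textbf{Step 1: Tightness.} Each individual path $X^{z,\tilde a}$ is, after recentering, a martingale with respect to the environment filtration $\mathcal F_t$ (this was observed in the proof of Lemma \ref{lemma:dnbcoalescencetime} for the distance process, and the same computation shows each coordinate increment has conditional mean zero since $\Gamma^l$, $\Gamma^r$ have symmetric increment laws). With uniformly bounded moments of all orders for the increments, Donsker-type tightness in $C(\mathbb{R},\mathbb{R})$ follows for each coordinate by a standard moment/maximal-inequality criterion; tightness of the vector then follows coordinatewise. One subtlety: an l-path and an r-path from the \emph{same} starting point are highly dependent, but tightness is a marginal statement on each coordinate, so this causes no difficulty here.

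\textbf{Step 2: Identifying the limit — the easy interactions.} Let $(L_1,\dots,L_k,R_1,\dots,R_{\tilde k})$ be any subsequential weak limit; by Skorokhod we may assume a.s. uniform-on-compacts convergence. Each coordinate is individually a Brownian motion with the correct diffusion coefficient $\lambda_p^2$: this is Donsker for a single (recentered) DNB path, using that its increments are i.i.d.\ with variance $\lambda_p^2$ and all moments finite. Two l-paths, once they meet, coalesce exactly, and Lemma \ref{lemma:dnbcoalescencetime} gives the quantitative control needed to pass ``coalescence upon meeting'' to the limit; the standard argument (e.g.\ as in \cite{cfd,cssc}) shows that any finite collection of l-paths converges to coalescing Brownian motions, and likewise for r-paths. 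The long-range dependence before coalescence, present in the drainage network, is handled exactly as in \cite{cfd}: two DNB paths that have not yet met move in disjoint regions of the environment at each time step except for a boundary effect that is negligible under diffusive scaling, so their joint law converges to that of independent Brownian motions up to the (converging) meeting time.

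\textbf{Step 3: Identifying the limit — the left-right interaction (the main obstacle).} The crux is to show that a pair $(l_i^{(n)},r_j^{(n)})$ converges to a solution of the SDE system \eqref{eq:diferentialeq-b}: the two limiting paths evolve as independent Brownian motions while apart, acquire opposing drifts $\mp b_p$, are driven by a common Brownian motion once they coincide, and after they first become correctly ordered ($L\le R$) they stay ordered forever. The ordering-after-meeting is structural: by the definition of l- and r-paths, once an l-path and an r-path have crossed they cannot cross back (an l-path is never crossed from right to left by any path of $\mathcal X^n$, in particular by an r-path), and Corollary \ref{corollary:dnbcrossingtime} controls the crossing time. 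The genuinely new point — not present in the drainage network without branching — is the emergence of the drift $b_p=\fb(1-p)/(2-p)^2$. I would compute the conditional mean of one step of $r_j^{(n)}-l_i^{(n)}$ when the two paths are at the same vertex: at such a vertex a branching occurs with probability $\frac{p(1-p)}{2-p}\epsilon_n$ (Remark \ref{remark:branchprob}), and conditioned on a branching the r-path takes the right child and the l-path the left child, producing an expected separation whose size is an explicit function of $p$; multiplying by $\epsilon_n=\fb n^{-1}$ and by the $n^2$ steps in time $t$, then dividing by $n$ in space, yields a drift of exactly $2b_p$ for the difference, i.e.\ $+b_p$ for $R$ and $-b_p$ for $L$, matching \eqref{eq:diferentialeq-b}. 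When the two paths are apart the branching contributes $O(\epsilon_n)=O(n^{-1})$ per step, which is negligible over $O(n^2)$ steps only if the total time spent together is $O(n^2)$ — which it is — but one must be careful: the drift contribution while apart is $o(1)$ because the number of ``together'' steps (where the main drift is accrued) is itself $\Theta(n^2)$ only in the right units, so I will track the occupation measure of $\{L=R\}$ and show the ``apart'' contribution to the drift vanishes in the limit. Assembling these estimates into a martingale problem: I will show that for the limit, $R_j-L_i$ minus its drift is a martingale with the right quadratic variation (variance $2\lambda_p^2$ while apart, $0$ while together, reflecting the common driving noise), and that $L_i,R_j$ individually solve the stated martingale problem with the common-noise structure on the coincidence set. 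Uniqueness of the solution to \eqref{eq:diferentialeq-b} (stated in Section \ref{subsec:Bnet}) then pins down the limit.

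I expect Step 3 to be the main obstacle, and within it the delicate bookkeeping of the drift: one must rule out that branchings occurring while the two paths are momentarily apart but still within $O(1)$ lattice distance — which happens $\Theta(n^2)$ times over the diffusive timescale near coincidences — contribute spuriously to the limiting drift or variance. The Lemma \ref{lemma:dnbcoalescencetime}/Corollary \ref{corollary:dnbcrossingtime} bounds, together with a local-time (occupation-density) estimate for the difference process analogous to that for a single random walk near zero, are what make this work, and these are exactly the tools the paper has set up.
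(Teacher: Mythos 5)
There is a genuine gap in Step 3: you misidentify the mechanism that produces the drift $b_p$. You locate it in the interaction of an l-path and an r-path at coincidence and assert that ``the drift contribution while apart is $o(1)$'' and that the main drift is accrued on the coincidence set. This is backwards. In \eqref{eq:diferentialeq-b} the drifts $\mp b\,dt$ are accrued against Lebesgue measure at all times, not against the occupation measure of $\{L=R\}$; correspondingly, in the discrete model \emph{every} step of an l-path has conditional mean $-b_p/n$, whether or not any r-path is anywhere near it. The mechanism is the asymmetric tie-breaking built into $\Gamma^l$: at each step, with probability $\epsilon_n\,p(1-p)/(2-p)$ a branching occurs at the current vertex and the l-path is then forced to take the left child instead of a uniform choice, giving a per-step bias of $-b_p/n$ (this is exactly the content of Lemma \ref{lemma:individualconvergence} and its decomposition $L^{(n)}=L^{a,(n)}+L^{b,(n)}$). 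Your own arithmetic shows the ``apart'' contribution is not negligible: $O(n^{-1})$ per step over $n^{2}$ steps is $O(n)$ in lattice units, i.e.\ $O(1)$ after rescaling by $n$ --- precisely the drift. If you discard it, the limiting martingale problem has drift $\mp b_p\,dS_t$ rather than $\mp b_p\,dt$, which is not the left-right Brownian motion; it is also inconsistent with Theorem \ref{Teo:leftright}, and with your own Step 2, where you describe single coordinates as driftless Brownian motions.

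Beyond this, the proposal passes too quickly over the point where most of the paper's work lies: justifying that the dependent pair $(L^{(n)},R^{(n)})$ behaves, in the limit, like independent noises while apart and a common noise on the coincidence set. The paper does this by constructing an auxiliary path $\tilde R^{(n)}$ that switches to a fresh environment whenever $|R^{(n)}-L^{(n)}|<n^{3/4}$, imposing an artificial non-crossing barrier at distance $\kappa n^{1/4}\log^2 n$, writing $(L^{(n)},\tilde R^{(n)})$ as an exact solution of the discrete system \eqref{eq:LrRtdifequation}, and then proving (Lemmas \ref{proposition:lastterm} and \ref{proposition:RtilandR}) that the crossing-correction terms and the discrepancy $|R^{(n)}-\tilde R^{(n)}|$ vanish under diffusive scaling; uniqueness for \eqref{eq:LrRtalternative} (Proposition \ref{Result:equationequivalence}) then identifies the limit. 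A martingale-problem identification could in principle replace that route, but it would still require such a coupling to verify the quadratic-variation structure and condition (iv) of \eqref{eq:LrRtalternative}, and your proposal does not indicate how to carry this out in the presence of the long-range dependence.
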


\vspace{0.2cm}

As a first step to prove Proposition \ref{lemma:conditionI} (one of the main results in this paper), we state and prove the convergence only for a pair of left-right paths in Proposition \ref{lemma:onepairconverge} just below. Some of our arguments in this first stage are similar to the ones that we find in \cite{ss1} for simple random walks with branching which are independent before coalescence. But the dependence on paths in the DNB implies that we need a new approach to prove convergence. After that, we generalize the proof to more than one pair of paths using ideas presented in \cite{cv} to prove an equivalent condition for the generalized Drainage Network without branching.

\begin{proposition}
\label{lemma:onepairconverge}
Let $\epsilon_n = \fb n^{-1}$. Consider sequences $(x^{(n)})_{n\ge 1}$ and $(y^{(n)})_{n\ge 1}$ in $\mathbb{Z}$ such that $\lim_{n\rightarrow \infty} x^{(n)}/n = x$ and $\lim_{n\rightarrow \infty} y^{(n)}/n = y$, $x,y \in \mathbb{R}$. Denote by $(L_t^{(n)})_{t \geq 0}$ the l-path of $\mathcal{X}^n_{\epsilon_n}$ starting from $(x^{(n)},0)$ and by $(R_t^{(n)})_{t \geq 0}$ the r-path of $\mathcal{X}^n_{\epsilon_n}$ starting from $(y^{(n)},0)$. Then the following convergence holds:
$$\left(\frac{1}{n}L_{tn^2}^{(n)},\frac{1}{n}R_{tn^2}^{(n)}\right)_{t \geq 0} \underset{n \rightarrow \infty}{\Longrightarrow} (L_t,R_t)_{t \geq 0},
$$
where $(L_t,R_t)_{t \geq 0}$ is the unique solution of (\ref{eq:diferentialeq-b}) with initial state $(L_0,R_0) = (x,y)$ under the constraint that $L_t \leq R_t$ for all $t \geq T = \inf\{s \geq 0: L_s \leq R_s\}$.
\end{proposition}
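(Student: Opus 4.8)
The plan is the standard two-step scheme for this kind of invariance principle: first establish $C$-tightness of the rescaled pair $\big(\tfrac1n L^{(n)}_{tn^2},\tfrac1n R^{(n)}_{tn^2}\big)_{t\ge 0}$ in $C([0,\infty),\mathbb{R}^2)$, and then show that every subsequential weak limit satisfies the properties that characterize left--right coalescing Brownian motions with diffusion coefficient $\lambda_p^2$ and drift $b_p$; since the associated SDE has a unique solution (see Section~\ref{subsec:Bnet}), this promotes subsequential convergence to full convergence. Tightness is routine: as a stand-alone process, $(L^{(n)}_t)_{t\ge 0}$ reads a fresh slice of the environment at each step, so it is a genuine random walk with i.i.d.\ increments, and the same holds for $(R^{(n)}_t)_{t\ge 0}$; their increments are stochastically dominated by $\mathrm{Geom}(p)$ (Remark~\ref{rem:moments}), hence have all moments finite and $C$-tightness follows from the usual fourth-moment criterion. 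For the marginals, an l-path (resp.\ r-path) has mean increment $\mp c_p\,\epsilon_n$ for an explicit constant $c_p=c_p(p)>0$ with $\fb\,c_p=b_p$, and increment variance converging to $\lambda_p^2$ as $\epsilon_n\to 0$; since $n\epsilon_n\to\fb$, Donsker's theorem gives $\tfrac1n L^{(n)}_{\cdot n^2}\Rightarrow$ Brownian motion with variance $\lambda_p^2$ and drift $-b_p$, and $\tfrac1n R^{(n)}_{\cdot n^2}\Rightarrow$ Brownian motion with variance $\lambda_p^2$ and drift $+b_p$.

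The substance is pinning down the \emph{joint} law, and the main obstacle is asymptotic independence of $L^{(n)}$ and $R^{(n)}$ while they are apart, since the DNB has no bounded range of dependence. I would handle this by a coupling with independent environments. Let $(\bar L^{(n)},\bar R^{(n)})$ be the l-path and r-path obtained from two \emph{independent} copies of the environment, started from the same points, and realize $(L^{(n)},R^{(n)})$ and $(\bar L^{(n)},\bar R^{(n)})$ on a common space so that the two pairs agree up to the first time $\mathcal T_n$ at which the gap $|L^{(n)}_t-R^{(n)}_t|$ drops to a cutoff level $\kappa_n$, where $\kappa_n\to\infty$ slowly (e.g.\ $\kappa_n=\lceil(\log n)^2\rceil$). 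The point is that the environment cells the two paths consult at a given step have geometric-tailed radii (Remark~\ref{rem:moments}); when the gap exceeds $\kappa_n$ these cells are disjoint except on an event whose probability, summed over the $O(n^2)$ steps preceding a fixed rescaled time, is $o(1)$. Hence the coupling is faithful with probability tending to $1$, so up to $\mathcal T_n$ the rescaled pair is within $o(1)$, uniformly on compacts, of a pair of independent rescaled random walks; in particular it converges to a pair of independent Brownian motions with the drifts found above, and $\mathcal T_n/n^2$ converges to their first meeting time $T$ (possibly $+\infty$), because $\kappa_n/n\to 0$.

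It remains to control the behaviour near and after the diagonal. Each visit of the gap to the window $[0,\kappa_n]$ lasts only $O(\kappa_n^2)=o(n^2)$ steps with probability $\to 1$: if the paths are in the order $L\le R$ the difference $R^{(n)}-L^{(n)}$ either hits $0$ (a meeting) or re-exceeds $2\kappa_n$ within $O(\kappa_n^2)$ steps, and if $L>R$ then by Corollary~\ref{corollary:dnbcrossingtime} together with Lemma~\ref{lemma:dnbcoalescencetime} and their independent-environment versions (Remark~\ref{remark:indepcoalesce}, applied after the coupling above) a meeting occurs within $O(\kappa_n^2)$ steps (an l-path is squeezed between two r-paths and must cross any r-path to its right before those two r-paths coalesce). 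From a common vertex the two paths take identical steps ($\Gamma^l$ and $\Gamma^r$ agree there) until the first branching along them, which occurs with probability $\asymp\epsilon_n\asymp\fb/n$ per step, so the number of steps spent together is geometric with mean of order $n$, i.e.\ $o(n^2)$, and at a branching they separate by an $O(1)$ amount with the l-path to the left. Iterating the coupling of the previous paragraph across successive excursions off the diagonal via the strong Markov property, one obtains that in any subsequential limit $(L,R)$: the marginals are the two drifted Brownian motions above; $L$ and $R$ are driven by independent noises while $L<R$ and by a common noise while $L=R$; $L_t\le R_t$ for all $t\ge T$; and $\{t:L_t=R_t\}$ has zero Lebesgue measure. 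These are exactly the defining properties of the unique solution of the left--right system~\eqref{eq:diferentialeq} (with diffusion coefficient $\lambda_p^2$ and drift $b_p$), so every subsequential limit is that solution, and the full sequence converges to it. The delicate point throughout remains the independent-environment coupling: making it precise and bounding its failure probability uniformly over the diffusive time window, which is where the geometric tails of Remark~\ref{rem:moments} and the non-collision estimates of Section~\ref{sec:coalescence} are used.
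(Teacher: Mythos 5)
Your overall strategy (couple with independent environments while the paths are far apart, then identify the limit through the Sun--Swart left--right system) is the same route the paper takes, but the proposal breaks down at the one point where the real work lies: the identification of the joint limit near the diagonal. Your claimed list of characterizing properties includes ``$\{t:L_t=R_t\}$ has zero Lebesgue measure,'' and this is false for the solution of \eqref{eq:diferentialeq}: by Proposition \ref{Result:equationequivalence}(c) the time spent on the diagonal is $S_t=\int_0^t I_{\{L_s=R_s\}}ds$, a nondegenerate increasing process, i.e.\ the pair is \emph{sticky} and the coincidence set has positive Lebesgue measure. Your own accounting leads you there: a single sojourn at a common vertex lasts $\mathrm{Geom}(O(1/n))\sim n$ steps, which is $o(n^2)$, but after a branching the paths separate by only $O(1)$ and return to the diagonal in $O(1)$ steps, so the sojourns accumulate to a positive fraction of $n^2$. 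Discarding the near-diagonal time excursion by excursion erases exactly the common-noise component $B^s$ and would characterize a different (and in fact ill-specified) process. Moreover, the informal list ``independent noises while apart, common noise while together, $L\le R$ after meeting'' is not a well-posed characterization; one must exhibit the limit as a solution of \eqref{eq:diferentialeq} or of the time-changed system \eqref{eq:LrRtalternative} and invoke pathwise uniqueness.

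Concretely, what is missing is the construction that makes this identification possible: the paper builds an auxiliary process $\tilde R^{(n)}$ with an explicit switching rule between the original and an independent environment at thresholds $n^{3/4}$, $n^{7/8}$, $\kappa n^{1/4}\log(n)^2$, together with a forced non-crossing/barrier rule (step ($\tilde R$.3)) and correction variables $U_j^{(n)}$ for crossing attempts, precisely so that $(L^{(n)},\tilde R^{(n)})$ solves a discrete analogue \eqref{eq:LrRtdifequation} of \eqref{eq:LrRtalternative} with explicit clocks $T_t^{(n)},S_t^{(n)}$ recording time off and on the diagonal. Passing to the limit in that system (including the verification of condition (iv) via the $\rho_\delta$ approximation, the proof that $\sum_j U_j^{(n)}/n\to 0$, and Lemma \ref{proposition:RtilandR} showing $\tilde R^{(n)}$ and $R^{(n)}$ have the same limit) is the core of the proof, and your proposal does not supply a substitute for it. You would also need to address crossings explicitly: once the two paths evolve in independent environments they can cross, and without a correction mechanism the limiting object need not satisfy $L_t\le R_t$ after the meeting time.
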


\vspace{0.2cm}

We divided the proofs of Propositions \ref{lemma:conditionI} and \ref{lemma:onepairconverge} into three subsections. Sections \ref{subsec:partI} and \ref{subsec:partII} together prove Proposition \ref{lemma:onepairconverge}, and Section \ref{subsec:partV} is dedicated to the prove Proposition \ref{lemma:conditionI}. 

\vspace{0.2cm}

\subsection{Part I: Only one pair $(L_t^{(n)},R_t^{(n)})_{t\ge 0}$ with $R_0^{(n)}\geq L_0^{(n)}$} \hfill\\ 
\label{subsec:partI}

First we state and prove a result that assures that the family $\{(L_t^{(n)},R_t^{(n)})_{t\ge 0} : n\ge 1\}$ is tight. So here we consider $L^{(n)} =(L_t^{(n)})_{t \geq 0}$ and $R^{(n)} = (R_t^{(n)})_{t \geq 0}$ as defined in Proposition \ref{lemma:onepairconverge}.

\begin{lemma}
\label{lemma:individualconvergence}
The processes $L^{(n)}$ and $R^{(n)}$ converge weakly under diffusive scaling to Brownian motions with diffusion coefficient $\lambda_p^2$ and drifts respectively $-b_p$ and $b_p$, where $\lambda_p^2$ is defined in Remark \ref{remark:sigmap} and $b_p = \frac{\fb (1-p)}{(2-p)^2}$. 
\end{lemma}

\begin{proof}
The convergences are a consequence of Donsker's Theorem. Indeed we will write the increments of $L^{(n)}$ as a sum of two sequences: One with mean zero converges to a Brownian motion under diffusive scaling by a direct application of Donsker's Theorem. The other one converges in probability to the drift function $t \mapsto -b_p t$ under diffusive scaling by the Law of Large Number. An analogous proof holds for $R^{(n)}$ for the drift function  $t \mapsto b_p t$, hence we will only present the proof for $L^{(n)}$.

Note that $L^{(n)}$ is equal in distribution to $L^{a,(n)} + L^{b,(n)}$, where $L^{a,(n)} = (L_t^{a,(n)})_{t \ge 0}$ evolves as a linearly interpolated path from the Drainage Network without branching and $L^{b,(n)} = (L_t^{b,(n)})_{t \ge 0}$ is a linearly interpolated jump process that only jumps in times where $L^{a,(n)}$ jumps to the right. At those jump times, $L^{b,(n)}$ will have probability $\frac{\fb p(1-p)}{2n(2-p)}$ to do a jump to the left with a length equal to twice of the jump length of $L^{a,(n)}$ and probability $1 - \frac{\fb p(1-p)}{2n(2-p)}$ to stay in the same position. Indeed $\frac{\fb p(1-p)}{2n(2-p)}$ is the probability that branching occurs in the DNB from a given vertex $(x,t)$ and, through a uniform choice, the right-hand side is chosen as the destination from that vertex. We can compute it as:
$$\textrm{(branching parameter)} \times \textrm{(side choice probability)} \times \textrm{(branching probability in full-DNB)}
		$$
		$$
		= \epsilon_n \times \frac{1}{2}  \times P((x,t+1) \textrm{ is closed}) \times \textrm{(probability of equality between two IID Geom(p))}
		$$
		$$
		= \frac{\fb}{2n} \times (1-p) \times \frac{p}{2-p} = \frac{\fb p(1-p)}{2n(2-p)}.
$$
So $L^{b,(n)}$ is a drift correction for paths in the Drainage Network without branching which generates paths identically distributed to l-paths in the DNB.

Both process $L^{a,(n)}$ and $L^{b,(n)}$ have independent increments with finite variance. We have that $L^{a,(n)}$ converges to a Brownian motion under diffusive scaling by Donsker's Theorem. Moreover $(L_t^{b,(n)} + \frac{b_p t}{n})_{t\ge 0}$ is a martingale for any fixed $n \in \mathbb{N}$, where $b_p = \frac{\fb (1-p)}{(2-p)^2}$.  Indeed we have that $-\frac{b_p}{n}$ is the mean value of the increments of $L^{b,(n)}$. The mean value is computed as follows:
$$
-\sum_{k=1}^\infty 2 k \, \textrm{(probability that } ((x,t),(x\pm k,t+1)) \in E \textrm{ and edge } ((x,t),(x + k,t+1)) \textrm{ is chosen)}
$$
$$
= - \sum_{k=1}^\infty 2 k \, \epsilon_n \times  \frac{1}{2}  \times P((x,t+1) \textrm{ is closed}) \times  \textrm{(probability that two IID Geom(p) are equal to k)}
$$
$$
= - \frac{\fb (1-p)}{n} \sum_{k=1}^\infty k (1-p)^{2(k-1)} p^2 = - \frac{\fb (1-p)}{n(2-p)^2} = - \frac{b_p}{n}.
$$
Now denote by $(Z_j^{b,(n)})_{j\ge 1}$ the increments of $L^{b,(n)}$ at integer times. Note that 
$Z_j^{b,(n)} \left| Z_j^{b,(n)} \neq 0\right.$ is stochastically dominated by $2 W$ with $W \sim Geom(p)$,
which implies that 
$$
Var\left(Z_j^{b,(n)}\right) \le E\big[ (Z_j^{b,(n)})^2 \big] = E\big[ (Z_j^{b,(n)})^2 \big| Z_j^{b,(n)} \neq 0 \big] P\big( Z_j^{b,(n)} \neq 0 \big) \leq \frac{2 \fb (1-p)}{np} \rightarrow 0
$$ 
as $n \rightarrow \infty$.

Apply Doob maximal inequality and we have for any fixed $t > 0$, $\delta >0$ and $n$ sufficiently large depending on $\delta$ that

\begin{align*}
	P \left( \sup_{0 \leq s \leq t} \left[ \frac{1}{n}L_{\left\lfloor sn^2 \right\rfloor}^{b,(n)} + b_p s\right] \geq \delta \right) &\le P\left(\max_{0 \leq k \leq \left\lfloor tn^2 \right\rfloor} \left[ \frac{1}{n} \sum_{j=1}^k \left( Z_j^{b,(n)} + \frac{b_p}{n}\right) \right] \geq \frac{\delta}{2} \right) 
\end{align*}
\begin{align*}
	&\leq  \frac{C}{\delta^2 n^2} Var\Big( \sum_{j=1}^{\left\lfloor tn^2 \right\rfloor} Z_j^{b,(n)}  \Big)
	\le \frac{C \, t}{\delta^2}\left(1 + \frac{1}{n^2}\right) Var\left(Z_j^{b,(n)}\right) \rightarrow 0, \, \textrm{ as } n\rightarrow \infty.
\end{align*}
Hence, 
\begin{equation}
L^{b,(n)}
\textrm{ converges under diffusive scaling to the deterministic path }
t \mapsto -b_p t
\label{eq:drift}
\end{equation}\
and the convergence holds in probability on every bounded interval $[0,T]$, $T\ge 0$.
\end{proof}

\vspace{0.1cm}

Now we present a result that gives some control on the probability of occurrence of long jumps in the paths of the DNB. It will be required further ahead in this section.

\begin{lemma}
\label{lemma:longjump}
Let $\pi_z(t)$, for $t > \sigma_\pi$ be a random path in DNB that starts from an open vertex $z$ at time $\sigma_\pi$. We have that, for any fixed $s>0$, $g \ge 1$ and $n\ge 1$,
$$P\left( \sup_{\sigma_\pi < t \leq \sigma_\pi + sn^2} | \pi_z(t) - \pi_z(t-1)| \geq g \right) \le 2 s n^2 e^{-c(g-1)},$$
for some $c = c(p) > 0$. In particular, for any $\gamma >0$, the probability that a path in the DNB makes a jump of size greater than $g(n) = n^\gamma$ in a time interval of order $O(n^2)$ goes to zero as $n \rightarrow \infty$. 
\end{lemma}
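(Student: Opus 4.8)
The plan is to reduce the supremum over time to a union bound over integer time-steps and then control a single increment. First I would note that by the piecewise-linear interpolation of DNB paths, the increment $\pi_z(t)-\pi_z(t-1)$ for non-integer $t$ is a convex combination of consecutive integer increments, so $\sup_{\sigma_\pi < t \le \sigma_\pi + sn^2}|\pi_z(t)-\pi_z(t-1)|$ is bounded by the maximum of $|\pi_z(j)-\pi_z(j-1)|$ over the at most $sn^2$ integer times $j$ in the relevant range (up to rounding, which accounts for the factor $2$). Hence
\[
P\Big(\sup_{\sigma_\pi < t \le \sigma_\pi+sn^2}|\pi_z(t)-\pi_z(t-1)|\ge g\Big)\le 2sn^2\,\sup_j P\big(|\pi_z(j)-\pi_z(j-1)|\ge g\big),
\]
by a straightforward union bound combined with translation invariance, so it remains to show that a single increment has exponentially decaying tail.

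Next I would invoke Remark~\ref{rem:moments}: the absolute value of any increment of a DNB path is stochastically dominated by a $Geom(p)$ random variable $W$, because the jump size is the minimum of the distances to the nearest open site on each side, and each of those distances is $Geom(p)$. Therefore $P(|\pi_z(j)-\pi_z(j-1)|\ge g)\le P(W\ge g)=(1-p)^{g-1}=e^{-c(g-1)}$ with $c=c(p)=-\log(1-p)>0$. Plugging this into the union bound gives exactly the claimed estimate $2sn^2 e^{-c(g-1)}$.

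For the ``in particular'' statement, I would simply substitute $g=g(n)=n^\gamma$ with $\gamma>0$: the bound becomes $2sn^2 e^{-c(n^\gamma-1)}$, and since the exponential in $n^\gamma$ dominates the polynomial factor $n^2$, this tends to $0$ as $n\to\infty$. The same computation works verbatim if the time interval has length $O(n^2)$ rather than exactly $sn^2$, by absorbing the constant into $s$. I do not anticipate a genuine obstacle here; the only mild care needed is the bookkeeping of the interpolation step (handling non-integer times and the endpoints $\sigma_\pi$, $\sigma_\pi+sn^2$ correctly), which is why the stated constant is $2sn^2$ rather than $sn^2$. The key inputs — the geometric stochastic domination from Remark~\ref{rem:moments} and translation invariance of the environment — are already available, so the proof is essentially a union bound plus a geometric tail estimate.
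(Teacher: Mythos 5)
Your proposal is correct and follows essentially the same route as the paper: a union bound over the at most $O(sn^2)$ integer time-steps combined with the $Geom(p)$ stochastic domination of a single increment from Remark \ref{rem:moments}, yielding the tail $(1-p)^{g-1}=e^{-c(g-1)}$ with $c=-\log(1-p)$. The only cosmetic difference is that you attribute the factor $2$ to the interpolation bookkeeping, whereas the paper absorbs the integer rounding via $(\lfloor s\rfloor+1)n^2\le 2sn^2$; both are harmless.
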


\begin{proof}
Let $Y_t = | \pi_z(t) - \pi_z(t-1)|$, for $t \in \mathbb{N}$ and $t \geq \sigma_\pi$. First, since the distribution of the path increments is invariant in time and space, we can assume that $\pi_z(t)$ starts at time 0 (i.e. $\sigma_\pi = 0$). Now, since $Y_t \preceq G \sim Geom(p)$ and using Markov property, for any fixed $s>0$
$$
P\left( \sup_{0 < t \leq sn^2} Y_t \geq g \right) \leq (\lfloor s \rfloor + 1) n^2 P(G \ge \lceil g \rceil) \le  2sn^2(1-p)^{g-1} .
$$  
To conclude the proof just take $c = -\log(1-p)$.
\end{proof}

\begin{corollary}
\label{cor:longjump}
For any $K > -2/\log(1-p)$, the probability that a path in the DNB makes a jump of size greater than $K \log(n)/2$ in a time interval of order $O(n^2)$ goes to zero as $n \rightarrow \infty$.
\end{corollary}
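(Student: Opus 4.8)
The plan is to obtain this as an immediate consequence of Lemma~\ref{lemma:longjump}. The phrase ``time interval of order $O(n^2)$'' means that there is a fixed $s>0$ for which the interval in question is contained in $[\sigma_\pi,\sigma_\pi+sn^2]$; hence it suffices to bound, uniformly over the starting vertex $z$ and starting time $\sigma_\pi$, the probability of the event $\bigl\{\sup_{\sigma_\pi<t\le\sigma_\pi+sn^2}|\pi_z(t)-\pi_z(t-1)|\ge K\log n\bigr\}$, and such a uniform bound is exactly what Lemma~\ref{lemma:longjump} provides since the estimate $2sn^2e^{-c(g-1)}$ there does not depend on $z$ or $\sigma_\pi$.

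First I would apply Lemma~\ref{lemma:longjump} with $g=g(n)=\lceil K\log n\rceil$, recalling that the constant produced by that lemma is $c=-\log(1-p)>0$. This yields the upper bound $2sn^2e^{-c(g(n)-1)}$, and since $g(n)-1\ge K\log n-1$ we get
$$
P\!\left(\sup_{\sigma_\pi<t\le\sigma_\pi+sn^2}|\pi_z(t)-\pi_z(t-1)|\ge K\log n\right)\le 2sn^2e^{-c(K\log n-1)}=2se^{c}\,n^{2-cK}.
$$
The right-hand side tends to $0$ as $n\to\infty$ precisely when $2-cK<0$, i.e.\ when $K>2/c=-2/\log(1-p)$, which is exactly the hypothesis on $K$. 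This completes the argument.

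There is essentially no obstacle here; the only points requiring (trivial) care are that replacing $K\log n$ by $\lceil K\log n\rceil$ does not affect the asymptotics, and that the union bound over the finitely many deterministic unit time-steps comprising the window $[\sigma_\pi,\sigma_\pi+sn^2]$ has already been absorbed into the factor $2sn^2$ in Lemma~\ref{lemma:longjump}.
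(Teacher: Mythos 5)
Your argument is correct and is exactly the intended deduction: the paper gives no separate proof of Corollary~\ref{cor:longjump}, treating it as the immediate specialization of Lemma~\ref{lemma:longjump} to $g=K\log n$ with $c=-\log(1-p)$, so that the bound $2se^{c}n^{2-cK}$ vanishes precisely when $K>2/c=-2/\log(1-p)$. Nothing further is needed.
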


\vspace{0.2cm}

For the remainder of this section, $\kappa > -2/\log(1-p)$ is a fixed positive real number. For any fixed $s>0$, we have by Corollary \ref{cor:longjump} that the probability that $L^{(n)}$ or $R^{(n)}$ makes any jump of size greater than $\kappa \log(n)/2$ in the time interval $[0,sn^2]$ converges to zero as $n$ goes to infinity. So, we can construct our arguments conditioning on the event that those long jumps do not happen, and we will assume that from now on.

Our initial strategy would be to represent $(L^{(n)},R^{(n)})$ as the solution of a difference equation, which in the limit yields an SDE with a unique solution that also solves the following equations: 

\begin{eqnarray}
&&(i) \ \
dL_t = d\tilde{B}_{T_t}^l + d\tilde{B}_{S_t}^s - b_p dt, \nonumber \\
&&(ii) \ \
dR_t = d\tilde{B}_{T_t}^r + d\tilde{B}_{S_t}^s + b_p dt, \nonumber \\
&&(iii) \ \ T_t + S_t = t \ \forall \, t>0, \nonumber \\
&&(iv) \ \
\displaystyle{\int_{0}^{t} \mbox{I}_{\{L_s < R_s\}} d S_s } = 0, \ \forall \, t>0,
\label{eq:LrRtalternative}
\end{eqnarray}
where $(\tilde{B}_{t}^l)_{t\ge 0}$, $(\tilde{B}_{t}^r)_{t\ge 0}$ and $(\tilde{B}_{t}^s)_{t\ge 0}$ are independent Brownian motions. Under the restriction that  $L_t \leq R_t$ for all $t \geq 0$, \eqref{eq:LrRtalternative} is equivalent to (\ref{eq:diferentialeq-b}) due to \cite[Lemma 2.2]{ss1} that we present just below. We point out that \cite[Lemma 2.2]{ss1} was stated assuming $b_p = 1$, but the extension to non unitary drift case is straightforward.

\begin{proposition}
\label{Result:equationequivalence}
\textbf{(\cite[Lemma 2.2 ]{ss1})} Denote $R^2_{\leq} = \{(x,y) \in \mathbb{R}^2: x \leq y\}$.
\begin{enumerate}
\item[(a)] There is a one-to-one correspondence in law between weak $R^2_{\leq}$-valued solutions of (\ref{eq:diferentialeq-b}) and weak $R^2_{\leq}$-valued solutions of (\ref{eq:LrRtalternative}).
\item[(b)] For each initial state $(L_0,R_0) \in R^2_{\leq}$, equation (\ref{eq:LrRtalternative}) has a unique pathwise solution.\
\item[(c)] Solutions to (\ref{eq:LrRtalternative}) satisfy $\displaystyle{S_t = \int_0^t I_{\{L_s = R_s\}}ds}$,
$$S_t = 0 \vee \sup_{0 \leq s \leq T_t} \left(\frac{1}{2}(L_0 + \tilde{B}_s^l - R_0 - \tilde{B}_s^r) - b_p s \right) \quad a.s.,$$
and $\displaystyle{\lim_{t\rightarrow \infty} T_t = \infty}$.
\end{enumerate}
\end{proposition}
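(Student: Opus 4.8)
The plan is to prove (a)--(c) together, the main tools being the Dambis--Dubins--Schwarz (DDS) time-change theorem in both directions for (a), and an explicit Skorokhod-type representation of the pair of clocks $(T_t,S_t)$ for (b) and (c). Throughout we work, as in the statement, with $R^2_{\le}$-valued solutions, so that $D_t:=R_t-L_t\ge 0$ for all $t$.

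\textbf{Part (a).} Given an $R^2_{\le}$-valued weak solution $(L,R)$ of \eqref{eq:diferentialeq} driven by $B^l,B^r,B^s$, set $T_t:=\int_0^t\mathbb{1}_{\{L_s\ne R_s\}}\,ds$ and $S_t:=\int_0^t\mathbb{1}_{\{L_s=R_s\}}\,ds$; then (iii) is immediate and $S$ increases only on $\{L=R\}$, which is (iv). The processes $N^l_t:=\int_0^t\mathbb{1}_{\{L_s\ne R_s\}}\,dB^l_s$, $N^r_t:=\int_0^t\mathbb{1}_{\{L_s\ne R_s\}}\,dB^r_s$ and $N^s_t:=\int_0^t\mathbb{1}_{\{L_s=R_s\}}\,dB^s_s$ are continuous martingales with $\langle N^l\rangle=\langle N^r\rangle=T$, $\langle N^s\rangle=S$, and pairwise null cross-variations (for $N^l,N^r$ by independence of $B^l,B^r$; for the pairs involving $N^s$ because $B^s$ is independent of $B^l,B^r$ and the integrands have disjoint supports). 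Applying DDS (enlarging the probability space if needed so the clocks run up to $+\infty$) produces independent standard Brownian motions $\tilde B^l,\tilde B^r,\tilde B^s$ with $N^l_t=\tilde B^l_{T_t}$, $N^r_t=\tilde B^r_{T_t}$, $N^s_t=\tilde B^s_{S_t}$; substituting into \eqref{eq:diferentialeq} gives \eqref{eq:LrRtalternative}. Conversely, starting from a solution of \eqref{eq:LrRtalternative}, the occupation-times identity $\int_0^t\mathbb{1}_{\{D_s=0\}}\,d\langle D\rangle_s=0$ together with $d\langle D\rangle_s=2\lambda^2\,dT_s$ forces $dT_s=\mathbb{1}_{\{L_s\ne R_s\}}\,ds$, and then (iii)--(iv) force $dS_s=\mathbb{1}_{\{L_s=R_s\}}\,ds$; one reconstructs mutually independent Brownian motions $B^l,B^r,B^s$ (again enlarging the space, e.g.\ $B^l_t=\int_0^t\mathbb{1}_{\{L_s\ne R_s\}}\,d(\tilde B^l\circ T)_s+\int_0^t\mathbb{1}_{\{L_s=R_s\}}\,dW^1_s$ for an auxiliary Brownian motion $W^1$, analogously for $B^r,B^s$) and checks via L\'evy's characterization that $(L,R,B^l,B^r,B^s)$ solves \eqref{eq:diferentialeq}. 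These two constructions are mutual inverses in law, giving (a).

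\textbf{Parts (b) and (c).} Fix $\tilde B^l,\tilde B^r,\tilde B^s$ and $(L_0,R_0)\in R^2_{\le}$. From (i)--(iii), $D_t=D_0+(\tilde B^r_{T_t}-\tilde B^l_{T_t})+2b_p t=Y_{T_t}+2b_p S_t$, where $Y_u:=D_0+\tilde B^r_u-\tilde B^l_u+2b_p u$ depends only on the data. The constraint $D\ge 0$ together with (iv) (so that $S$ is nondecreasing, $S_0=0$, and $dS$ charges only $\{D=0\}$) characterises $2b_p S_t$ as the Skorokhod reflection of $Y$ run along the clock $T$, namely $S_t=\Phi(T_t)$ with $\Phi(u):=\frac{1}{2b_p}\big(0\vee\sup_{0\le s\le u}(-Y_s)\big)$, which is continuous, nondecreasing, finite for each $u$, and depends only on the data. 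Combining with (iii), the clock solves $u+\Phi(u)=t$; since $u\mapsto u+\Phi(u)$ is a continuous strictly increasing bijection of $[0,\infty)$ onto itself, this determines $T_t$ uniquely (in particular $\lim_{t\to\infty}T_t=\infty$, the last assertion of (c)), and then $S_t=t-T_t$, $L_t=L_0+\tilde B^l_{T_t}+\tilde B^s_{S_t}-b_p t$, $R_t=R_0+\tilde B^r_{T_t}+\tilde B^s_{S_t}+b_p t$ are all determined --- this is pathwise uniqueness, (b). Conversely, \emph{defining} $\Phi,T,S,L,R$ by these formulas yields a genuine solution (the Skorokhod representation guarantees $D\ge 0$ and that $dS$ charges only $\{D=0\}$), so \eqref{eq:LrRtalternative} is well posed. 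Unwinding $\Phi$ gives the closed form for $S_t$ displayed in (c), and the identity $S_t=\int_0^t\mathbb{1}_{\{L_s=R_s\}}\,ds$ was already obtained in the converse part of (a).

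\textbf{Main obstacle.} The crux is the behaviour on the diagonal: one must show that the drifts $\mp b_p\,dt$ do not instantly separate $L$ from $R$, i.e.\ that $\{L=R\}$ is a genuine sticky set of positive Lebesgue measure and that $T,S$ are exactly the occupation times of $\{L\ne R\}$ and $\{L=R\}$ --- this is where the occupation-times identity $\int\mathbb{1}_{\{D=0\}}\,d\langle D\rangle=0$ and the Skorokhod representation do the work. The remaining points (independence of the time-changed Brownian motions, measurability of the inverse clock, and the $R^2_{\le}$-restriction that makes $S_0=0$) are routine.
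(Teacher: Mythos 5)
This proposition is quoted verbatim from \cite[Lemma 2.2]{ss1} and the paper supplies no proof of it, so there is no internal argument to compare against; your proof is essentially the original one of Sun and Swart (a time-change argument for (a), an explicit Skorokhod-reflection solution for (b) and (c)) and is correct in outline. Two points deserve slightly more care than you give them. First, the joint independence of $\tilde B^l,\tilde B^r,\tilde B^s$ does not follow from three separate applications of Dambis--Dubins--Schwarz (which yield only that each time-changed martingale is marginally Brownian); you need Knight's multidimensional extension for orthogonal martingales, which is what your vanishing cross-variations are really feeding into. Second, the identification $2b_pS_t=\bigl(\sup_{0\le u\le T_t}(-Y_u)\bigr)\vee 0$ mixes two clocks: the Skorokhod lemma lives in the time variable $u$ of $Y$, while $S$ is parametrized by $t$, so before transporting the lemma through the time change you should observe that $T$ is in fact strictly increasing --- this follows because $dT=I_{\{D\ne 0\}}\,dt$ and $\{D=0\}$ contains no nondegenerate interval (on such an interval one would have $dD=2b_p\,dt>0$, contradicting $D\equiv 0$), so $u\mapsto T^{-1}(u)$ is a genuine homeomorphism and $\hat S_u:=S_{T^{-1}(u)}$ satisfies the standard Skorokhod problem for $Y$. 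Finally, note that your formula for $S_t$ carries the positive part $\vee\,0$, which is correct and is missing from the display in the statement as transcribed in this paper (for $L_0<R_0$ the bare supremum is negative for small $t$).
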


\begin{remark}
A characterization of a solution of equation \eqref{eq:LrRtalternative} as a pair of reflected left-right Brownian paths is given in \cite[Proposition 3.2]{ss}. We do not state it here, as it will not simplify our exposition, but it may be useful to the reader to understand our proof.
\end{remark}

Comparing \cite[Lemma 2.2]{ss1} with our case, due to the dependence between $L^{(n)}$ and $R^{(n)}$, it is harder to represent $(L^{(n)},R^{(n)})$ as the solution of a convenient difference equation. The problem is that when $L^{(n)}$ and $R^{(n)}$ are sufficiently close to each other, it is hard to couple with a pair of independent left and right paths reflected at crossing attempts. So we will create an auxiliary process $\tilde{R}^{(n)}$ that will evolve in the same way as $R^{(n)}$ when it is far from $L^{(n)}$ and will evolve according to a different rule when it is near to $L^{(n)}$. The process $\tilde{R}^{(n)}$ will be created in such a way that we can represent $(L^{(n)},\tilde{R}^{(n)})$ as the solution of a difference equation that converges to the solution of \eqref{eq:LrRtalternative} and that the distance between $R^{(n)}$ and $\tilde{R}^{(n)}$ becomes negligible under diffusive scaling. The definition of $\tilde{R}^{(n)}$ is rather complex, since we should keep it evolving independently of $L^{(n)}$ as much as possible, blocking crossings between them.

The process $\tilde{R}^{(n)}$ will be constructed through a coupling that uses a convenient alternation between independent environments. 
Since we are conditioning on the event that $L^{(n)}$ and $R^{(n)}$  do not make jumps of size $\kappa \log(n)/2$ in some time interval $[0,sn^2]$, we essentially want to use the fact that, if two paths of the DNB are far from each other (by a distance of $n^{\frac{3}{4}} >> \kappa \log(n)$ at least), then conditionally these two paths will only need to observe two disjoint sets of vertices in the next time level to make their jump decisions, then their choices are made independently from each other.

Recall the definition of the environment in Section \ref{model}. We say that $(\omega(z))_{z \in \mathbb{Z}^2}$ and  $(\theta(z))_{z \in \mathbb{Z}^2}$ define the {\emph{$(\omega,\theta)$-environment}}.  By definition, $(L^{(n)},R^{(n)})$ evolves according to the $(\omega,\theta)$-environment. Let $\tilde{\omega}=(\tilde{\omega}(z))_{z\in\mathbb{Z}^2}$ and $\tilde{\theta}=(\tilde{\theta}(z))_{z\in\mathbb{Z}^2}$ be independent families of random variables identically distributed as $\omega$ and $\theta$ respectively, and also independent of them. Therefore, $(\tilde{\omega},\tilde{\theta})$ is a new environment. 
The process $\tilde{R}^{(n)}$ will evolve almost as a path of a DNB, and its jump choices will be obtained from an alternation between the $(\omega,\theta)$-environment and the $(\tilde{\omega},\tilde{\theta})$-environment. 

Let us denote by $\Delta_t^{(n)}$ the distance between $\tilde{R}_t^{(n)}$ and $L_t^{(n)}$ for $t\ge 0$. We construct $\tilde{R}^{(n)}$ setting $\tilde{R}^{(n)}_0 = R^{(n)}_0$ and then following the steps ($\tilde{R}$.1), ($\tilde{R}$.2) and ($\tilde{R}$.3) described below (recall that $\kappa$ was fixed before):
\begin{enumerate}
\item[($\tilde{R}$.1)] 
Let us define
\begin{equation}\label{tau}
    \varrho = \inf\{t \ge 0: |R_t^{(n)} - L_t^{(n)}| < n^{\frac{3}{4}}\}.
\end{equation}
Now let us consider $\{\Hat{R}_t^{(n)}: t\ge \varrho\}$ a DNB r-path, starting from $R_{\varrho}^{(n)}$, but considering the $(\tilde{\omega},\tilde{\theta})$-environment and conditioning that it does not make jumps of size greater than $\kappa \log(n)/2$ in some time interval. Also let us define the following stopping times,
\begin{align*}
    \nu &= \inf\{t \ge \varrho: |\Hat{R}_t^{(n)} - L_t^{(n)}| > n^{\frac{7}{8}}\},\\
    \psi &= \inf\{t \ge \varrho: |\Hat{R}_t^{(n)} - L_t^{(n)}| < \kappa \, n^{\frac{1}{4}}\log(n)^2\},
\end{align*}
and $\phi = \nu \wedge \psi$. Then we define
\begin{equation}
	\tilde{R}_t^{(n)}=\left\lbrace\begin{array}{lc}R_t^{(n)} & \text{, for } 0\le t \le \varrho,\\
	\Hat{R}_t^{(n)} & \text{, for } \varrho \le t\le \phi.
	\end{array}\right.
\end{equation}
If $\phi = \nu$ then the construction of $\tilde{R}^{(n)}$ continues in the step ($\tilde{R}$.2), otherwise it continues in the step ($\tilde{R}$.3). 

\noindent {\bf Remark about ($\tilde{R}$.1)}: $\tilde{R}^{(n)}$ and $R^{(n)}$ evolve together until $\Delta^{(n)}$ becomes smaller than $n^{\frac{3}{4}}$. Hence,  $\tilde{R}^{(n)}$ jumps according to $(\omega,\theta)$-environment during this time. After that, $\tilde{R}^{(n)}$ jumps according to $(\tilde{\omega},\tilde{\theta})$-environment. If at time zero, $\tilde{R}^{(n)}$ and $R^{(n)}$ are already at a distance smaller than $n^{\frac{3}{4}}$, $\tilde{R}^{(n)}$ immediately starts to evolve according to $(\tilde{\omega},\tilde{\theta})$-environment. Then, from the time  $\Delta^{(n)}$ is smaller than $n^{\frac{3}{4}}$, we wait until either $\Delta^{(n)}$ becomes greater than $n^{\frac{7}{8}}$, and we go to step ($\tilde{R}$.2), or $\Delta^{(n)}$ becomes smaller than $\kappa \, n^{\frac{1}{4}}\log(n)^2$, and we go to step ($\tilde{R}$.3).

\item[($\tilde{R}$.2)] 
Let us consider the following stopping times,
\begin{align*}
    \Hat \varrho &= \inf\{t \ge \phi: |\Hat{R}_t^{(n)} - L_t^{(n)}| < n^{\frac{3}{4}}\}\\
    \Hat \nu &= \inf\{t \ge \phi: |\Hat{R}_t^{(n)} - L_t^{(n)}| > n^{\frac{7}{8}} \text{ and } \Hat{R}_t^{(n)} = R_t^{(n)}\}.
\end{align*}
Then $\tilde{R}^{(n)}$ continues evolving as $\Hat{R}_t^{(n)}$ up to time $\Hat \varrho \wedge \Hat \nu$, i.e
\begin{equation*}
    \tilde{R}_t^{(n)} = \Hat{R}_t^{(n)} \text{, for } \phi \le t \le \Hat \varrho \wedge \Hat \nu .
\end{equation*}
In case $\Hat \nu < \Hat \varrho$, the definition of $\tilde{R}^{(n)}$ continue as follows. Consider
\begin{equation*}
     \mathring{\nu} = \inf\{t \ge \Hat \nu: |R_t^{(n)} - L_t^{(n)}| < n^{\frac{3}{4}}\},
\end{equation*}
and 
\begin{equation*}
    \tilde{R}_t^{(n)} = R_t^{(n)} \ \, \text{, for } \Hat \nu < t\le \mathring{\nu}.
\end{equation*}
So, we have defined $\tilde{R}^{(n)}$ up to the random time,
\begin{equation*}
	\mathring{\varrho} = \left\lbrace
 \begin{array}{ll} 
 \Hat \varrho & \textrm{, if } \Hat \varrho < \Hat \nu,\\
\mathring{\nu} & \textrm{, if } \Hat \nu < \Hat \varrho.
\end{array}
 \right.
\end{equation*}
To define $\tilde{R}^{(n)}$ for $t > \mathring{\varrho}$, note that, at time at $\mathring{\varrho}$, the process $\tilde{R}^{(n)}$ is already at a distance smaller than $n^{\frac{3}{4}}$ from $L^{(n)}$. Then, we come back to ($\tilde{R}$.1) considering $\varrho$ in \eqref{tau} as $\mathring{\varrho}$.

\noindent {\bf Remark about ($\tilde{R}$.2):} After step ($\tilde{R}$.1), if $\Delta^{(n)}$ is greater than $n^{\frac{7}{8}}$, we proceed with the construction of $\tilde{R}^{(n)}$ as follows: while $\tilde{R}^{(n)}$ evolves using the independent $(\tilde{\omega},\tilde{\theta})$-environment, either $\Delta^{(n)}$ becomes smaller than $n^{\frac{3}{4}}$ again, then the construction returns to step ($\tilde{R}$.1), or $\tilde{R}^{(n)}$ meets $R^{(n)}$ in a position at distance greater than $n^{\frac{7}{8}}$ from $L^{(n)}$, and then $\tilde{R}^{(n)}$ returns to jump according to $(\omega,\theta)$-environment. In the latter case $\tilde{R}^{(n)}$ and $R^{(n)}$ return to move together until $\Delta^{(n)}$ becomes smaller than $n^{\frac{3}{4}}$ again, and then, the construction returns to step ($\tilde{R}$.1).

\item[($\tilde{R}$.3)] 

\medskip

Note that at this stage of the construction we have defined the process $\tilde{R}^{(n)}$ up to time $\phi$ and $(\tilde{R}_{\phi}^{(n)} - L_{\phi}^{(n)}) \le \kappa \, n^{\frac{1}{4}}\log(n)^2$. To avoid crossing, if $\tilde{R}_{\phi}^{(n)} < L_{\phi}^{(n)}$ we redefine $\tilde{R}_\phi^{(n)}$ as $\tilde{R}_{\phi}^{(n)}  = L_{\phi}^{(n)}$. However, note that conditional to the event that $L^{(n)}$, $R^{(n)}$ and $\Hat R^{(n)}$ do not make jumps of size $\kappa \log(n)/2$, the crossing attempt does not happen at time $\phi$ and $\tilde{R}_\phi^{(n)}$ does not need to be redefined. 

Let us set $\mathring{\zeta}_0 = \phi$ and define recursively the sequence $\{\zeta_k, k\ge 1\}$ and $\{\mathring{\zeta}_k, k\ge 1\}$ of stopping times, as well as the evolution of $\tilde{R}^{(n)}$ between those times, as follows:
\begin{itemize}
\item[1.] Having defined $\mathring{\zeta}_{k - 1}$, set 
$$\zeta_k = \inf\{ t \ge \mathring{\zeta}_{k-1} : \text{ the } (\omega,\theta) \text{-DNB path branches at time } t \text{ at site } L_{t}^{(n)}\}.
$$

\item[2.] For $\mathring{\zeta}_{k-1} \le t < \zeta_k$, $\tilde{R}^{(n)}$ have the same increments of the process $L^{(n)}$, i.e.,
$$
\tilde{R}_{t+1}^{(n)} - \tilde{R}_t^{(n)} = L_{t+1}^{(n)} - L_{t}^{(n)}.
$$
During those time intervals $\Delta^{(n)}$ remains constant. We write  $\tilde{R}^{(n)} \approx L^{(n)}$ to represent the union of the random sets $\{t:\mathring{\zeta}_{k-1} \le t < \zeta_k\}$, $k\ge 1$, i.e. the set of times where $\tilde{R}^{(n)}$ is being forced to have the same increments of $L^{(n)}$. Note that if $\mathring{\zeta}_{k-1} = \zeta_k$ nothing happens in this stage. 

\item[3.] At time $\zeta_k$, $L^{(n)}$ jumps to the left and we set $\tilde{R}^{(n)}$ by imposing that it jumps to the right by the same amount of the jump of $L^{(n)}$, i.e.,
$$
\tilde{R}_{\zeta_k+1}^{(n)} - \tilde{R}_{\zeta_k}^{(n)} = |L_{\zeta_k+1}^{(n)} - L_{\zeta_k}^{(n)}|.
$$
\item[4.] If 
$$
\Delta_{\zeta_k+1}^{(n)} > \kappa \, n^{\frac{1}{4}}\log(n)^2,
$$
we go back to step ($\tilde{R}$.1) considering $\varrho$ in \eqref{tau} as $\zeta_{k}+1$, otherwise we continue.
\item[5.] From time $\zeta_k+1$, $\tilde{R}^{(n)}$ returns to evolve using the $(\tilde{\omega},\tilde{\theta})$-environment (independently of $L^{(n)}$). 
\item[6.] set
$$
\mathring{\zeta}_k = \inf\{ t > \zeta_{k}+1 : \text{ either } \Delta_{t}^{(n)} < \Delta_{t-1}^{(n)} \text{ or } \Delta_{t}^{(n)} > \kappa \, n^{\frac{1}{4}}\log(n)^2 \} .
$$
\item[7.] If 
$$
\Delta_{\mathring{\zeta}_k}^{(n)} > \kappa \, n^{\frac{1}{4}}\log(n)^2,
$$
we go back to step ($\tilde{R}$.1) considering $\varrho$ in \eqref{tau} as $\mathring{\zeta}_k$, otherwise we continue.
\item[8.] If 
$$
\Delta_{\mathring{\zeta}_k}^{(n)} < \Delta_{\mathring{\zeta}_k - 1}^{(n)} 
$$
we go back to stage 1 of step ($\tilde{R}$.3) to define $\zeta_{k+1}$. 
If 
$$
\Delta_{\mathring{\zeta}_k}^{(n)} < 0
$$ 
we redefine $\tilde{R}_{\mathring{\zeta}_k}$ as $\tilde{R}_{\mathring{\zeta}_k}^{(n)}  = L_{\mathring{\zeta}_k}^{(n)}$ to avoid crossing (which implies that  $\Delta_{\mathring{\zeta}_k}^{(n)}$ becomes equals to zero) and then go back to stage 1 of step ($\tilde{R}$.3) to define $\zeta_{k+1}$. 
\end{itemize}
Note that the interaction rule in this step blocks crossing attempts (that are possible due to independence).

\noindent {\bf Remark about ($\tilde{R}$.3):} Here we impose a non-crossing rule between $\tilde{R}^{(n)}$ and $L^{(n)}$. After step ($\tilde{R}$.1) or the event described at the end of this step ($\tilde{R}$.3) itself, $\Delta^{(n)}$ is smaller than $\kappa \, n^{\frac{1}{4}}\log(n)^2$. If we have an attempt of crossing between $\tilde{R}^{(n)}$ and $L^{(n)}$, we will force $\tilde{R}^{(n)}$ to stop in the same position of $L^{(n)}$. Whenever $\Delta^{(n)}$ becomes lesser than $\kappa \, n^{\frac{1}{4}}\log(n)^2$ (including an attempt of crossing),  $\tilde{R}^{(n)}$ will begin to make the same movements as those of $L^{(n)}$, keeping $\Delta^{(n)}$ constant. They remain evolving like this until the $(\omega,\theta)$-DNB path decides to branch in a position that $L^{(n)}$ is occupying. At this moment, $\Delta^{(n)}$ will increase by twice the jump length of $L^{(n)}$ (note that it is the same behavior that $R^{(n)}$ and $L^{(n)}$ have when they meet each other, but with $\tilde{R}^{(n)}$ we enforce this behavior before they meet each other). After that, $\tilde{R}^{(n)}$ returns to jump according to the $(\tilde{\omega},\tilde{\theta})$-environment (independently of $L^{(n)}$), even if $\Delta^{(n)}$ continues lesser than $\kappa \, n^{\frac{1}{4}} \log(n)^2$. We write  $\tilde{R}^{(n)} \approx L^{(n)}$ to represent the random set of times $t$ such that $\tilde{R}_t^{(n)}$ is being forced to have the same increments of $L_t^{(n)}$. From the moment that $\tilde{R}^{(n)}$ and $L^{(n)}$ start to jump independently again, either $\Delta^{(n)}$ decreases by any amount and then we restart step ($\tilde{R}$.3) (we will have $\tilde{R}^{(n)} \approx L^{(n)}$ again), or $\Delta^{(n)}$ becomes greater than $\kappa \, n^{\frac{1}{4}}\log(n)^2$ and then we return to step ($\tilde{R}$.1).
\end{enumerate}

By the construction of $\tilde{R}^{(n)}$ described in steps ($\tilde{R}$.1) - ($\tilde{R}$.3), it always jumps according to $R^{(n)}$, $L^{(n)}$ or the process $\Hat{R}^{(n)}$ introduced in step ($\tilde{R}$.1), which evolves according to the $(\tilde{\omega},\tilde{\theta})$-environment. Then we can also use Corollary \ref{cor:longjump} to check that $\tilde{R}^{(n)}$ does not make a jump of size greater than $\kappa \log(n)/2$ in time interval $[0,sn^2]$ with a probability that goes to one as $n$ goes to infinity. Anyway, since we are conditioning on the event that neither $R^{(n)}$ nor $L^{(n)}$ nor $\hat{R}^{(n)}$ makes jumps of size greater than $\kappa \, n^{\frac{1}{4}} \log(n)^2$, we have that $\tilde{R}^{(n)}$ will not make long jumps too.

Since $R^{(n)}$ and $L^{(n)}$ have drifts in opposite directions (see Lemma \ref{lemma:individualconvergence}), we only go back from step ($\tilde{R}$.2) to ($\tilde{R}$.1) in the construction of $\tilde{R}^{(n)}$ a finite number of times almost surely. This fact together with the estimates for coalescence times between $\tilde{R}^{(n)}$ and $R^{(n)}$ when they are near to each other will imply that the pairs $(L^{(n)},R^{(n)})$ and $(L^{(n)},\tilde{R}^{(n)})$ converge weakly to the same limit under diffusive scaling. It is proved in Lemma \ref{proposition:RtilandR}.

\begin{remark}
1. The above construction is fundamental throughout the rest of this section. For instance the choice of the distance $\kappa \, n^{\frac{1}{4}}\log(n)^2$ in step ($\tilde{R}$.3) is only justified in the proof of Lemma \ref{proposition:lastterm} at the end of the section. 2. Intuitively $(L^{(n)},\tilde{R}^{(n)})$ will converge to the solution of \eqref{eq:LrRtalternative} because, neglecting long jumps, it evolves as a pair of independent left-right paths reflected to avoid crossings, except for an amount of time that is negligible under diffusive scaling. This is the amount of time where $\tilde{R}^{(n)} \approx L^{(n)}$, and most of our work will be to show the claimed assertion that it is negligible under diffusive scaling.
\end{remark}

To illustrate the relation between $\tilde{R}^{(n)}$ and $L^{(n)}$, we have Figure \ref{tildeRtLt}, where we considered two different scenarios that can happen when the construction of $\tilde{R}^{(n)}$ reaches step ($\tilde{R}$.3).

In the scenario (a) of Figure \ref{tildeRtLt}, we
start at step ($\tilde{R}$.1). At time $\tau_1$, the distance $\Delta^{(n)}$ becomes smaller than $\kappa \, n^{\frac{1}{4}} \log(n)^2$ and we pass to step ($\tilde{R}$.3). At time $\tau_2$, the $(\omega,\theta)$-DNB branches at site $L_{\tau_2}^{(n)}$, thereafter $\Delta^{(n)}$ increases to $\kappa \, n^{\frac{1}{4}} \log(n)^2$ and then we go back to step ($\tilde{R}$.1). At time $\tau_3$, $\Delta^{(n)}$ becomes lesser than $\kappa \, n^{\frac{1}{4}} \log(n)^2$ a second time and then we return to step ($\tilde{R}$.3), having $\tilde{R}^{(n)} \approx L^{(n)}$ again. 

In the scenario (b) of Figure \ref{tildeRtLt}, we also start at step ($\tilde{R}$.1). At time $\tau_1$, the distance $\Delta^{(n)}$ becomes smaller than $\kappa \, n^{\frac{1}{4}} \log(n)^2$ and we pass to step ($\tilde{R}$.3), starting to have $\tilde{R}^{(n)} \approx L^{(n)}$. At time $\tau_2$, the $(\omega,\theta)$-DNB branches at site $L_{\tau_2}^{(n)}$, and then, $\tilde{R}^{(n)}$ and $L^{(n)}$ return to move independently. Although, $\Delta^{(n)}$ continues lesser than $\kappa \, n^{\frac{1}{4}} \log(n)^2$, so we are still in step ($\tilde{R}$.3). At time $\tau_3$, $\Delta^{(n)}$ decreases and then we return to have $\tilde{R}^{(n)} \approx L^{(n)}$. What has occurred at times $\tau_2$ and $\tau_3$ occurs again respectively at times $\tau_4$ and $\tau_5$.

\begin{figure}[!ht]
\centering
\includegraphics[scale=0.36]{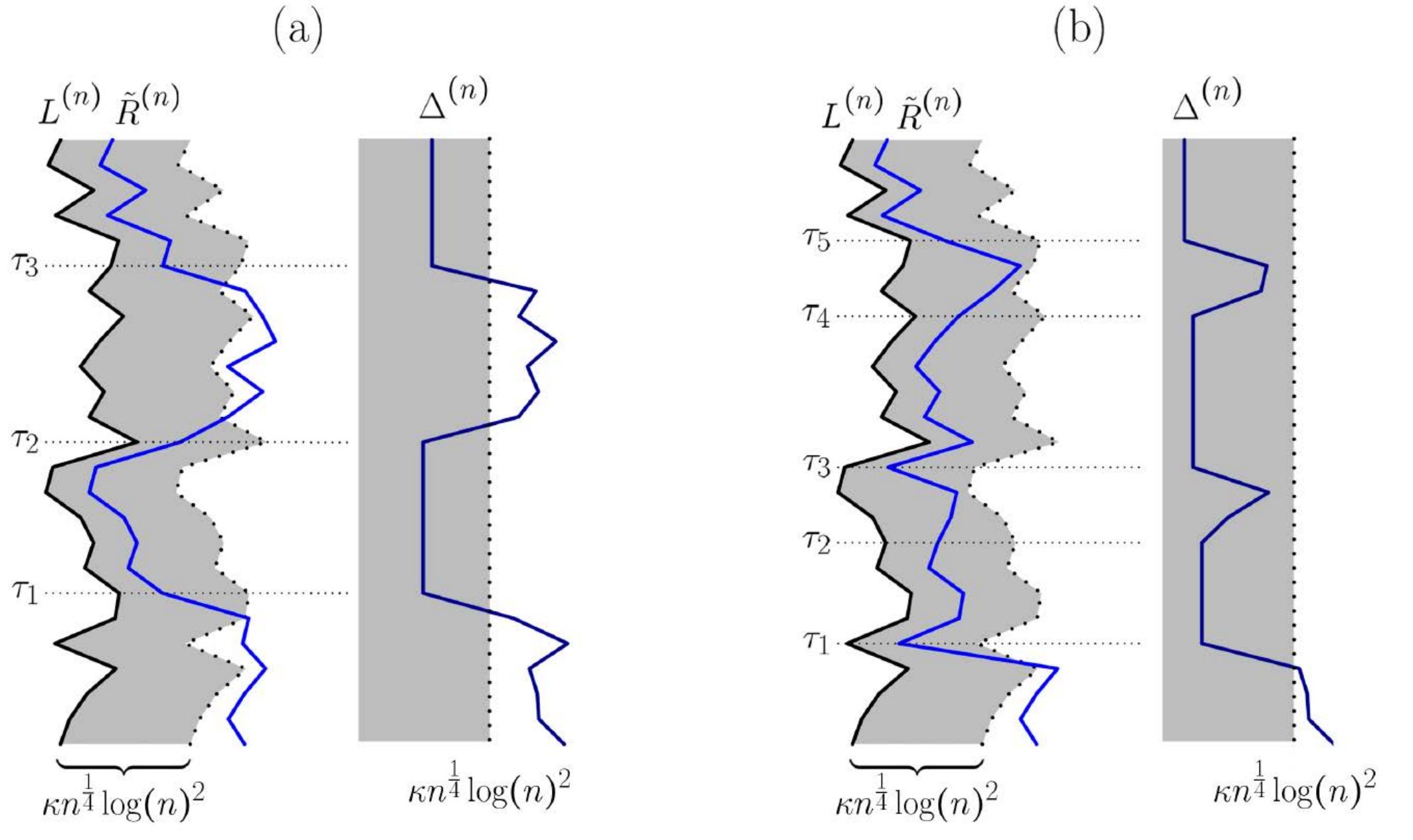}
\caption{Example of the interaction between $\tilde{R}^{(n)}$ and $L^{(n)}$ considering two different scenarios after the construction of $\tilde{R}^{(n)}$ reaches step ($\tilde{R}$.3). The pictures on the right in each scenario show the evolution of $\tilde{R}^{(n)}$ and $L^{(n)}$, where the dotted line indicates the distance $\kappa \, n^{\frac{1}{4}}\log(n)^2$ at right-hand side of $L^{(n)}$. The pictures on the left in each scenario show the evolution of $\Delta^{(n)} = \tilde{R}^{(n)} - L^{(n)}$ (which is constant while $\tilde{R}^{(n)} \approx L^{(n)}$). Whenever $\Delta^{(n)}$ enters the gray area or makes a jump to the left while it is inside the gray area, we begin to have $\tilde{R}^{(n)} \approx L^{(n)}$ until the $(\omega,\theta)$-DNB branches at a site that $L^{(n)}$ is occupying. The random times $\tau_i$, for i odd, indicate the times when we begin to have $\tilde{R}^{(n)} \approx L^{(n)}$ and the random times $\tau_i$, for i even, indicate the times where $\tilde{R}^{(n)}$ returns to move independently of $L^{(n)}$.}
\label{tildeRtLt}
\end{figure}

To prepare ourselves to write the system of difference equations for the pair $(L_t,\tilde{R}_t)$, we need to define some auxiliary processes. Let $V^l = (V_t^l)_{t \in \mathbb{N}}$, $V^r = (V_t^r)_{t \in \mathbb{N}}$ and $V^s = (V_t^s)_{t \in \mathbb{N}}$ be independent discrete-time symmetric Markov chains defined in $\mathbb{Z}$, starting at the origin at time zero and with the following transition probabilities:

\vspace{0.2cm} 

$P_v(x,x) = p$, and 
$P_v(x,y) = p(1-p)^{2|y-x|} + \frac{p^2}{2} (1-p)^{2|y-x|-1}$ , \; \ $\forall \, y \neq x,$

\vspace{0.2cm} 

\noindent where $p$ is the probability parameter of the DNB. The transition probabilities of these processes coincide with those of a path from the Drainage Network without branching.

For $\alpha = l,s$, let $D^{(n),\alpha,-} = (D_t^{(n),\alpha,-})_{t \in \mathbb{N}}$ be a process that can move only at times where $V^{\alpha}$ jumps to a position at its right, and at these times, $D^{(n),\alpha,-}$ will have probability $\frac{\fb p(1-p)}{2n(2-p)}$ to do a jump to the left with a length equal to twice the length of the jump of $V^{\alpha}$ and probability $1 - \frac{\fb p(1-p)}{2n(2-p)}$ to remain in the same position (see the proof of Lemma \ref{lemma:individualconvergence}). Likewise, for $\alpha = r,s$, let $D^{(n),\alpha,+} = (D_t^{(n),\alpha,+})_{t \in \mathbb{N}}$ be a process that can move only in the times where $V^{\alpha}$ jumps to a position at its left, and at these times, $D^{(n),\alpha,+}$ will have probability $\frac{\fb p(1-p)}{2n(2-p)}$ to do a jump to the right with a length equal to twice the length of the jump of $V^{\alpha}$ and probability $1 - \frac{\fb p(1-p)}{2n(2-p)}$ to remain in the same position. The choices are independent for $\alpha = r,l,s$, therefore $D^{(n),l,-}$,  $D^{(n),r,+}$ and $(D^{(n),s,-},D^{(n),s,+})$ are independent processes.

Denote by $J^{(n)}$ the set of integer times in which $\tilde{R}_t^{(n)} \approx L_t^{(n)}$ and recall that we are conditioning on the event that $L_t^{(n)}$ and $\tilde{R}_t^{(n)}$ do not make jumps of size greater than $\kappa \log(n)/2$. Then the pair $(L_t^{(n)},\tilde{R}_t^{(n)})_{t \geq 0}$ can be described at integer times as the solution of:

\begin{eqnarray}
&&(i) \ \ L_t^{(n)} = V_{T_t^{(n)}}^l + D_{T_t^{(n)}}^{(n),l,-} + V_{S_t^{(n)}}^s + D_{S_t^{(n)}}^{(n),s,-},\nonumber\\
&&(ii) \ \ \tilde{R}_t^{(n)} = V_{T_t^{(n)}}^r + D_{T_t^{(n)}}^{(n),r,+} + V_{S_t^{(n)}}^s + D_{S_t^{(n)}}^{(n),s,+} + \displaystyle{\sum_{j=1}^t U_j^{(n)}},\nonumber\\
&&(iii) \ \ T_t^{(n)} = \sum_{s=0}^{t-1} \mbox{I}_{\{s \notin J^{(n)}\}},\nonumber\\
&&(iv) \ \ S_t^{(n)} = \sum_{s=0}^{t-1} \mbox{I}_{\{s \in J^{(n)}\}},
\label{eq:LrRtdifequation}
\end{eqnarray}

\noindent where $U_j^{(n)}$, $j \in \mathbb{N}$, are random variables that assume the value of
\begin{equation}\label{Us}
\Big( \Big[ L_j^{(n)} - \big( \tilde{R}_{j-1}^{(n)} + \big( V_{T_j^{(n)}}^r + D_{T_j^{(n)}}^{(n),r,+} \big) - \big( V_{T_{j-1}^{(n)}}^r + D_{T_{j-1}^{(n)}}^{(n),r,+} \big) \big) \Big] \ \mbox{I}_{\big\{ \tilde{R}_{j-1}^{(n)} > L_{j-1}^{(n)}, (j-1) \notin J^{(n)} \big\} } \Big)_{+},
\end{equation}
where $(x)_{+}$ denotes the positive part of $x$. The random variables $U_j^{(n)}$ and the last term in the equation (ii) are here to deal with the corrections in $\tilde{R}_j^{(n)}$ that we have to make whenever an attempts to cross $L_j^{(n)}$ is blocked. At these crossing attempts, the correction puts $\tilde{R}^{(n)}$ on the same position of $L^{(n)}$.

Now define $L^{(n)}$, $\tilde{R}^{(n)}$, $V^{\alpha}$, $D^{(n),l,-}$, $D^{(n),r,+}$, $D^{(n),s,\pm}$,  $ T^{(n)}$, $S^{(n)}$ and $U^{(n)}$ at non integer times by linear interpolation. The rescaled processes then satisfy the following equations:

\vspace{-0.1cm}

$$(i) \ \ \frac{1}{n} L_{tn^2}^{(n)} = \frac{1}{n} V_{T_{tn^2}^{(n)}}^l + \frac{1}{n} D_{T_{tn^2}^{(n)}}^{(n),l,-} + \frac{1}{n} V_{S_{tn^2}^{(n)}}^s + \frac{1}{n} D_{S_{tn^2}^{(n)}}^{(n),s,-},$$

\vspace{0.4cm}

$$(ii) \ \ \frac{1}{n} \tilde{R}_{tn^2}^{(n)} = \frac{1}{n} V_{T_{tn^2}^{(n)}}^r + \frac{1}{n} D_{T_{tn^2}^{(n)}}^{(n),r,+} + \frac{1}{n} V_{S_{tn^2}^{(n)}}^s + \frac{1}{n} D_{S_{tn^2}^{(n)}}^{(n),s,+} + \frac{1}{n} \displaystyle{\sum_{j=1}^{tn^2} U_j^{(n)}},$$

\vspace{0.4cm}

$$(iii) \ \ \frac{1}{n^2} T_{tn^2}^{(n)} + \frac{1}{n^2} S_{tn^2}^{(n)} = t,$$

\begin{equation}
(iv) \displaystyle{\int_{0}^{t} \mbox{I}_{\left\{\frac{1}{n} \tilde{R}_{sn^2}^{(n)} - \frac{1}{n} L_{sn^2}^{(n)} \geq \kappa \log(n)^2 n^{-\frac{3}{4}} \right\}} d\left( \frac{1}{n^2} S_{sn^2}^{(n)} \right)} = 0.
\label{eq:LrRtequation}
\end{equation}

\vspace{0.2cm}

Applying Donsker's Theorem and the same arguments used in the proof of Lemma \ref{lemma:individualconvergence}, we have that
\begin{equation}
\left( \frac{1}{n} V_{tn^2}^l, \frac{1}{n} V_{tn^2}^r, \frac{1}{n} V_{tn^2}^s, -\frac{1}{n} D_{tn^2}^{(n),l,-}, \frac{1}{n} D_{tn^2}^{(n),r,+}, -\frac{1}{n} D_{tn^2}^{(n),s,-}, \frac{1}{n} D_{tn^2}^{(n),s,+} \right)_{t\geq0} 
\label{eq:7tuple}
\end{equation}
converges in distribution when $n \rightarrow \infty$ to $(\tilde{B}_t^l,\tilde{B}_t^r,\tilde{B}_t^s, b_p t, b_p t, b_p t, b_p t)_{t \geq 0}$, where $\tilde{B}_t^l$, $\tilde{B}_t^r$ and $\tilde{B}_t^s$ are independent Brownian motions with diffusion coefficient $\lambda_p^2$, $b_p = \frac{\fb (1-p)}{(2-p)^2}$, as verified in Lemma \ref{lemma:individualconvergence}. This convergence is the same obtained in \cite{ss1} to prove Proposition \ref{lemma:onepairconverge} for independent paths.

We still have to analyze the limit behavior of the last term in (ii) of \eqref{eq:LrRtequation}, but our next lemma gives us that this term must converge to zero in probability and consequently does not affect the limit behavior of $\frac{1}{n}\tilde{R}_{tn^2}^{(n)}$.

\begin{lemma}
\label{proposition:lastterm}
Considering $(U_j^{(n)})_{j\in \mathbb{N}}$ as defined in \eqref{Us}, we have that:
$$\frac{1}{n} \displaystyle{\sum_{j=1}^{tn^2} U_j^{(n)}} {\underset{n \rightarrow \infty}\longrightarrow} 0 ~\text{in probability} .$$
\end{lemma}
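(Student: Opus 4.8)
The plan is to bound the expectation of $\frac{1}{n}\sum_{j=1}^{tn^2} U_j^{(n)}$ and show it vanishes, which suffices since the sum is nonnegative. First I would observe that each $U_j^{(n)}$ is nonzero only when two conditions hold at step $j-1$: the auxiliary path satisfies $\tilde{R}_{j-1}^{(n)} > L_{j-1}^{(n)}$ and $(j-1)\notin J^{(n)}$, i.e.\ $\tilde R^{(n)}$ is moving independently of $L^{(n)}$ according to the environment $(\tilde\omega,\tilde\theta)$ and has not yet been forced to coincide with $L^{(n)}$. In that regime, by construction of step ($\tilde R$.3), such a crossing attempt can only occur when $\Delta_{j-1}^{(n)} = \tilde R_{j-1}^{(n)} - L_{j-1}^{(n)}$ is at most $\kappa\, n^{1/4}\log(n)^2$, i.e.\ the two paths are genuinely close. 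When a crossing attempt happens, the correction term $U_j^{(n)}$ equals the overshoot, which is at most the size of a single increment of the independent $\tilde R$-path plus a single increment of $L$; conditioning on the event (of probability tending to $1$) from Corollary \ref{cor:longjump} that no jump exceeds $\kappa\log(n)/2$ in $[0,sn^2]$, each nonzero $U_j^{(n)}$ is deterministically bounded by $\kappa\log(n)$.

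The key quantitative step is then to estimate the expected \emph{number} of indices $j\le tn^2$ at which $U_j^{(n)}\neq 0$. A crossing attempt at time $j$ (while $\tilde R$ evolves independently) requires that $L$ and $\tilde R$ have approached to within one increment of each other; after each such attempt the pair is reset to $\tilde R_j^{(n)}=L_j^{(n)}$ and enters the "$\approx$" regime ($j\in J^{(n)}$) until $L$ branches, which by Remark \ref{remark:branchprob} happens at each step with probability $\frac{p(1-p)}{2n(2-p)}\epsilon_n$-order — of order $n^{-1}$ — increasing $\Delta^{(n)}$ by an $O(1)$ amount, after which $\tilde R$ resumes independent motion starting at distance $O(1)$ from $L$. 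Since $L^{(n)}$ and $\tilde R^{(n)}$ driven by independent environments have the difference of increments centered (Remark \ref{remark:indepcoalesce}) with bounded moments, the time for two such independent paths starting at distance $O(1)$ to return to within one increment is comparable to the coalescence time, with tail $P(\cdot > t)\le C/\sqrt t$; in particular the expected return time to a new crossing attempt is of order $n^{0}$ only logarithmically, but crucially after a reset the distance must be rebuilt, and by the displacement bound the number of \emph{distinct} excursions away-and-back within $[0,tn^2]$ that reach a crossing attempt is $O(tn^2 / m_n)$ for a suitable mean excursion length $m_n$. I would make this precise by a renewal/optional-stopping argument: group time into i.i.d.-like blocks (reset $\to$ next crossing attempt), show the expected block length is bounded below by a constant (in fact the "$\approx$" phase alone lasts a geometric number of steps with mean of order $n$), hence the number of crossing attempts up to $tn^2$ has expectation $O(tn^2/n)=O(tn)$. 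Combined with the $O(\log n)$ bound per correction, $\mathbb{E}\big[\frac1n\sum_{j=1}^{tn^2}U_j^{(n)}\big] = O\big(\frac1n \cdot tn\cdot \log n / 1\big)$ — I would need the block-length lower bound to beat $\log n$, which it does since the mean of the geometric "$\approx$" phase is order $n$, giving $O(t\log(n)/n)\to 0$.

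The main obstacle I anticipate is making the renewal decomposition rigorous: the blocks are not exactly i.i.d.\ because the starting distance after a branch of $L$ is random (though $O(1)$ with exponential tails), the $L$-path itself depends on the shared environment, and one must be careful that the "$\approx$" phase and the independent-evolution phase interleave correctly as in steps ($\tilde R$.1)–($\tilde R$.3). I would handle this by a stochastic-domination argument: dominate the length of each "$\approx$" phase from below by an independent $\mathrm{Geom}$ of parameter of order $n^{-1}$ (the branching probability of $L$, from Remark \ref{remark:branchprob}), dominate the overshoot at each crossing attempt from above by $\kappa\log(n)$ on the good event, and dominate the number of crossing attempts in $[0,tn^2]$ from above by the number of renewals of the dominating geometric sequence, whose expectation is $O(tn^2 \cdot n^{-1})=O(tn)$ by the elementary renewal theorem. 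Then Markov's inequality gives convergence in probability. One also needs to dispose of the conditioning on the no-long-jump event, which is routine since that event has probability $\to 1$ by Corollary \ref{cor:longjump} and on its complement one can crudely bound the sum or simply note the convergence statement is unaffected by an event of vanishing probability.
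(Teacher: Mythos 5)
There is a genuine gap, and it sits exactly at the step you flag as delicate: the count of crossing attempts. Your renewal picture gives at best $E[N_u^{t,(n)}]=O(tn)$ crossing attempts in $[0,tn^2]$ (one per block of mean length of order $n$), and with your per-attempt bound $U_j^{(n)}\le \kappa\log n$ this yields $E\big[\tfrac1n\sum_j U_j^{(n)}\big]=O\big(\tfrac1n\cdot tn\cdot\log n\big)=O(t\log n)$, which diverges. The final claim of $O(t\log(n)/n)$ double-counts the factor of $n$ coming from the geometric ``$\approx$'' phase: that factor is already spent in reducing $tn^2$ steps to $tn$ blocks, and it cannot be used a second time. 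Sharpening the per-attempt bound from $\log n$ to a constant (which the paper does via the overshoot estimate, Lemma 2.6 of \cite{nrs}) still leaves $O(t)$, so no refinement of the overshoot control can rescue an $O(tn)$ attempt count; you genuinely need the number of crossing attempts to be $o(n)$.

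The conceptual issue is that crossing attempts are not recurrent in the way your block decomposition assumes. The freezing rule in step ($\tilde R$.3) means that every leftward jump of $\Delta^{(n)}$ below the threshold $\kappa\,n^{1/4}\log(n)^2$ is immediately followed (after the frozen ``$\approx$'' period) by a compensating rightward jump of size at least one when $L^{(n)}$ branches. The paper packages each such left-jump-plus-compensation into a single increment of an auxiliary process $\Delta_u^{(n)}$, whose drift is therefore bounded below by $p(1-p)/2$ \emph{uniformly in $n$} --- not of order $n^{-1}$. A crossing attempt requires $\Delta_u^{(n)}$ to descend from roughly $\kappa\,n^{1/4}\log(n)^2$ to $0$, i.e.\ to traverse about $n^{1/4}\log n$ consecutive intervals of width $\kappa\log n$ (no single jump can skip one, on the no-long-jump event), each traversal having probability at most $1-p_u<1$ against the order-one drift. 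This gives $P(\text{a crossing attempt occurs})\le tn^2(1-p_u)^{n^{1/4}\log n}$ and hence $E[N_u^{t,(n)}\mid\Gamma_{n,t}]\le t^2n^4(1-p_u)^{n^{1/4}\log n}\to 0$: with probability tending to one there are \emph{no} crossing attempts at all. Your picture, in which after each reset the pair returns to within one increment on a time scale governed by a $C/\sqrt t$ coalescence tail, ignores the asymmetry created by the freezing mechanism and therefore overcounts attempts by a factor that the rest of the argument cannot absorb.
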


\vspace{0.1cm}

The proof of Lemma \ref{proposition:lastterm} will be postponed. It will be presented immediately before Section \ref{subsec:partII}. 

Now note that, since $t \mapsto \displaystyle{\frac{1}{n^2} T_{tn^2}^{(n)}}$ and $t \mapsto \displaystyle{\frac{1}{n^2} S_{tn^2}^{(n)}}$ increases with slope at most $1$, the laws of $\displaystyle{\Big\{\Big( \frac{1}{n^2} T_{tn^2}^{(n)}\Big)_{t \geq 0} \Big\}_{n \in \mathbb{N}}}$ and $\displaystyle{ \Big\{\Big( \frac{1}{n^2} S_{tn^2}^{(n)}\Big)_{t \geq 0} \Big\}_{n \in \mathbb{N}}}$ are tight. By the convergence of the 7-tuple in \eqref{eq:7tuple} and Lemma \ref{proposition:lastterm}, we have that $\displaystyle{\Big\{ \Big( \frac{1}{n} L_{tn^2}^{(n)},  \frac{1}{n} \tilde{R}_{tn^2}^{(n)}\Big)_{t \geq 0} \Big\}_{n \in \mathbb{N}}}$ is also tight and consequently, for $n \in \mathbb{N}$, the laws of the $11$-tuple which consists of the $7$-tuple in \eqref{eq:7tuple} joint with
$$
\displaystyle{\left( \displaystyle{\frac{1}{n} L_{tn^2}^{(n)}, \frac{1}{n} \tilde{R}_{tn^2}^{(n)}, \frac{1}{n^2} T_{tn^2}^{(n)}, \frac{1}{n^2} S_{tn^2}^{(n)}} \right)_{t \geq 0}}
$$
is tight. Let us prove that all limit point process of subsequences have the same distribution and to achieve that, let us consider a subsequence that converge in distribution to
\begin{equation}
\left(\tilde{B}_t^l,\tilde{B}_t^r,\tilde{B}_t^s, b_p t, b_p t, b_p t, b_p t, L_t, R_t, T_t, S_t \right)_{t \geq 0}.  
\label{eq:11tuple}
\end{equation}
For this subsequence, by Skorohod's Representation Theorem, we can couple this $11$-tuple for $n \in \mathbb{N}$ and the limiting process in \eqref{eq:11tuple}, such that the convergence is almost surely.

Assuming this coupling, we claim that $(L_t, R_t, T_t, S_t)_{t \geq 0}$ solves equation \eqref{eq:LrRtalternative} and consequently is uniquely determined in law by Proposition \ref{Result:equationequivalence}. In fact (i), (ii) and (iii) of \eqref{eq:LrRtalternative} follows immediately from making $n$ go to infinity in (i), (ii) and (iii) of \eqref{eq:LrRtequation} respectively. Now to verify that (iv) of \eqref{eq:LrRtalternative} also holds, let us choose a continuous non-decreasing function $\rho_{\delta}: [0,\infty) \mapsto \mathbb{R}$, for each $\delta >0$, such that $\rho_{\delta}(u) = 0$ for $u \leq \delta$ and $\rho_{\delta}(u) = 1$ for $u \geq 2\delta$. Then, since $\rho_\delta$ is bounded, by \eqref{eq:LrRtequation} and our convergence assumption, we have that
\begin{align*}
	0 &= \lim_{n \rightarrow \infty} \displaystyle{\int_{0}^{t} \mbox{I}_{\left\{\frac{1}{n} \tilde{R}_{sn^2}^{(n)} - \frac{1}{n} L_{sn^2}^{(n)} \geq \kappa \log(n)^2 n^{-\frac{3}{4}}\right\}} d\left( \frac{1}{n^2} S_{sn^2}^{(n)} \right)} \\
	&\geq \lim_{n \rightarrow \infty} \displaystyle{\int_{0}^{t} \rho_{\delta}\left( \frac{1}{n}\tilde{R}_{sn^2}^{(n)} - \frac{1}{n} L_{sn^2}^{(n)} \right) d\left( \frac{1}{n^2} S_{sn^2}^{(n)} \right)} 
	= \int_{0}^t \rho_{\delta}(\tilde{R}_s - L_s)dS_s \, , \forall \delta>0\, .
\end{align*}
Letting $\delta \downarrow 0$, by Dominated Convergence Theorem, we have (iv) of \eqref{eq:LrRtalternative}. It gives us that the limit in distribution of any subsequence of $\left( \frac{1}{n} L_{tn^2}^{(n)}, \frac{1}{n} \tilde{R}_{tn^2}^{(n)} \right)$ is a left-right Brownian motion, which implies that $(L^{(n)},R^{(n)})$ under diffusive scaling converges to a left-right Brownian motion.

It remains to verify that under diffusive scaling $(L^{(n)},R^{(n)})$
and $(L^{(n)},\tilde{R}^{(n)})$
have the same weak limit. This is the content of the next lemma which completes the proof of Part I.

\begin{lemma}
\label{proposition:RtilandR}
The pairs $(L^{(n)}, R^{(n)})$ and $(L^{(n)}, \tilde{R}^{(n)})$ converge weakly to the same limit under diffusive scaling. 
\end{lemma}

\vspace{0.1cm}

\noindent \textit{Proof.} First note that $R^{(n)}$ and $\tilde{R}^{(n)}$ are equal except in some specific time windows (depending on the distances between $R^{(n)}$ and $L^{(n)}$, and between $\tilde{R}^{(n)}$ and $L^{(n)}$). Also note that for any $\beta<1$, paths that are at a distance smaller than $n^{\beta}$ from each other are shrunken to the same path under diffusive scaling, thus we do not need to worry with time windows where $\tilde{R}^{(n)}$ is at a distance smaller than $n^{\beta}$ from $R^{(n)}$. Therefore our strategy here will be to show that we can fix an exponent $\beta < 1$ such that the time windows where $\tilde{R}^{(n)}$ can be at a distance greater than $n^{\beta}$ from  $R^{(n)}$ almost surely shrink to zero under diffusive scaling. This is enough to ensure that $(L^{(n)}, R^{(n)})$ and $(L^{(n)}, \tilde{R}^{(n)})$ have the same limit.

Recall steps ($\tilde{R}$.1), ($\tilde{R}$.2) and ($\tilde{R}$.3), in the definition of $\tilde{R}^{(n)}$. Define the sequence $\{ \mathfrak{T}_k^{(n)}, \mathfrak{S}_k^{(n)}\}_{k \ge 1}$ as follows,
\begin{align*}
     \mathfrak{T}_1^{(n)} &= \inf\{ t \ge 0: |L^{(n)}_t - \tilde{R}^{(n)}_t| \le n^{\frac{3}{4}}\},\\
     \mathfrak{S}_1^{(n)} &= \inf\{ t > \mathfrak{T}_1^{(n)}: |L^{(n)}_t - \tilde{R}^{(n)}_t| > n^{\frac{7}{8}} \text{ and } R_t^{(n)} = \tilde{R}_t^{(n)}\},
\end{align*}
and having defined $ \mathfrak{T}_1^{(n)},\mathfrak{S}_1^{(n)},\dots,  \mathfrak{T}_k^{(n)}, \mathfrak{S}_k^{(n)}$ for $k \ge 1$, consider
\begin{align*}
     \mathfrak{T}_{k + 1}^{(n)} &= \inf\{ t > \mathfrak{S}_k^{(n)}: |L^{(n)}_t - \tilde{R}^{(n)}_t| \le n^{\frac{3}{4}}\},\\
     \mathfrak{S}_{k + 1}^{(n)} &= \inf\{ t \ge \mathfrak{T}_{k + 1}^{(n)}: |L^{(n)}_t - \tilde{R}^{(n)}_t| > n^{\frac{7}{8}} \text{ and } R_t^{(n)} = \tilde{R}_t^{(n)}\},
\end{align*}
Now fix $s>0$ and $\frac{7}{8} < \beta < 1$. We will be interested in time intervals $[\mathfrak{T}_k^{(n)},\mathfrak{S}_k^{(n)}]$ where $\mathfrak{T}_k^{(n)}< sn^2$ and either $R^{(n)}$ or $\tilde{R}^{(n)}$ reaches a distance greater than $n^\beta$ from $L^{(n)}$. Thus put 
$$
\begin{array}{rcl}
\Upsilon_{n,s,\beta} & := & \Big\{k\ge 1: \mathfrak{T}_k^{(n)}< sn^2 \textrm{ and } R^{(n)} \textrm{ or } \tilde{R}^{(n)} \\
& & \qquad \quad \textrm{ reaches a distance greater than } n^\beta \textrm{ from } L^{(n)} \textrm{ in } \big[\mathfrak{T}_k^{(n)},\mathfrak{S}_k^{(n)}\big]\Big\}. 
\end{array}
$$
To simplify notation we will simply write $\Upsilon_n$ suppressing the indication of $s$ and $\beta$.
Therefore we have that
$$
\mathfrak{T}_1^{(n)}  < \mathfrak{S}_1^{(n)} < ... < \mathfrak{T}_k^{(n)} < \mathfrak{S}_k^{(n)} < ...
< \mathfrak{T}_{\max \Upsilon_n}^{(n)} < \mathfrak{S}_{\max \Upsilon_n}^{(n)},
$$
with 
$\mathfrak{T}_{\max \Upsilon_n}^{(n)} \le \lfloor sn^2 \rfloor \le \mathfrak{S}_{\max \Upsilon_n}^{(n)}$ or $\mathfrak{S}_{\max \Upsilon_n}^{(n)} < \lfloor sn^2 \rfloor$. Moreover
\begin{itemize}
\item $\mathfrak{T}_k^{(n)}<sn^2$ is a time where $R^{(n)}$ and $\tilde{R}^{(n)}$ stop evolving together (that is, $\tilde{R}^{(n)}$ begins to evolve according to $(\tilde{\omega},\tilde{\theta})$-environment);
\item $\mathfrak{S}_k^{(n)}$ is a time where $R^{(n)}$ and $\tilde{R}^{(n)}$ meet each other at a distance greater than $n^{\frac{7}{8}}$ from $L^{(n)}$, returning to evolve together;
\item  In each time interval $[\mathfrak{T}_k^{(n)},\mathfrak{S}_k^{(n)}]$ with $k\in \Upsilon_n$, either $R^{(n)}$ or $\tilde{R}^{(n)}$ reaches a distance greater than $n^\beta$ from $L^{(n)}$.
\end{itemize} 

Denote by $\widetilde{T}_k^{\beta,(n)}$ the amount of time inside $[\mathfrak{T}_k^{(n)},\mathfrak{S}_k^{(n)}]$ where $\tilde{R}^{(n)}$ is at a distance greater than $n^{\beta}$ from $R^{(n)}$. Since $\min\{\tilde{R}^{(n)},R^{(n)}\} \ge L^{(n)}$, $\widetilde{T}_k^{\beta,(n)}$ can be positive only when $k \in \Upsilon_n$. Then
it is enough to show that, for some $\gamma < 2$,
\begin{equation}
\lim_{n\rightarrow\infty} P \Big( \sum_{k\in \Upsilon_n} \widetilde{T}_k^{\beta,(n)} > n^\gamma\Big) = 0.
\label{eq:distanttime}
\end{equation}
Note that equation \eqref{eq:distanttime} means that when $n$ is large enough, the amount of time in $[0,sn^2]$ where $\tilde{R}^{(n)}$ is at a distance of order $n$ (or greater) from $R^{(n)}$ become of order smaller than $n^2$. It implies that this amount of time is negligible under diffusive scaling as $n$ goes to infinity.

To prove \eqref{eq:distanttime}, first we need some control on $\# \Upsilon_{n}$, when $n$ goes to infinity. To do that, consider
$$
\begin{array}{rcl}
\widetilde{\Upsilon}_{n,\beta} := \Big\{k\ge 1: R^{(n)} \textrm{ or } \tilde{R}^{(n)} \textrm{ reaches a distance greater than } n^\beta \textrm{ from } L^{(n)} \textrm{ in } \big[\mathfrak{T}_k^{(n)},\mathfrak{S}_k^{(n)}\big]\Big\}, 
\end{array}
$$
and $\tilde{N}^{(n)}_\beta = \# \widetilde{\Upsilon}_{n,\beta}$. Clearly $\# \Upsilon_{n} \leq \tilde{N}^{(n)}_\beta$ for every $n \in \mathbb{N}$ and $s>0$. 
By the Strong Markov property we have that $\tilde{N}^{(n)}_\beta$ is stochastically bounded above by a geometric random variable with some parameter that we will denote by $\tilde{p}^{(n)}_\beta$. We cannot claim that the distribution is a geometric random variable, because the probability that $k \in \widetilde{\Upsilon}_{n,\beta}$ depends on the overshoot of $R^{(n)}$ or $\tilde{R}^{(n)}$ when they become greater than $n^{\frac{7}{8}}$. However the overshoot distribution is stochastically bounded, which implies the stochastic domination for $\tilde{N}^{(n)}_\beta$.

Now we will obtain a lower bound to $\tilde{p}^{(n)}_\beta$. To do that, we assume without loss of generality that $\tilde{R}^{(n)}$ reaches the distance $n^{\beta}$ from $L^{(n)}$ before $R^{(n)}$, but the argument would be the same in the other scenario. Given that the distance between $\tilde{R}^{(n)}$ and $L^{(n)}$ is greater than $n^{\beta}$, we have that $\Delta^{(n)} = \tilde{R}^{(n)} - L^{(n)}$  evolve as a random walk with drift $\displaystyle{\frac{2b_p}{n}}$ (twice of the drift of each path) until $\Delta^{(n)} \leq n^{\frac{3}{4}}$. Now let us define the events 

\vspace{0.3cm}

\noindent $\tilde{E}_n = \{ \Delta^{(n)}$ reaches a position greater than or equal to $n$  before it reaches a position smaller than or equal to $n^{\frac{3}{4}}$ given that $\Delta_0^{(n)} \ge n^{\beta} \},$ 

\vspace{0.2cm}

\noindent $\tilde{E}_{2,n} = \{ \Delta^{(n)}$ never reaches a position smaller than or equal to $n^{\frac{3}{4}}$ given that $\Delta_0^{(n)} \ge n\},$

\vspace{0.2cm}

\noindent $\tilde{F}_n = \{ \Delta^{(n)}$ never reaches a position smaller than or equal to $n^{\frac{3}{4}}$ after it reaches a position greater than or equal to $n^{\beta}\}$.

\vspace{0.2cm}

First we will find a lower bond for $P(\tilde{E}_n)$, so let's assume here that $\Delta_0^{(n)} = n^\beta$. For $n$ fixed let $\tau_{E}^{(n)} = \inf_{t > 0} \{ \Delta_t^{(n)} \notin (n^{\frac{3}{4}}, n)\}$. Now we denote by $\xi_{-}^{(n)}$ and $\xi_{+}^{(n)}$ the overshoot distribution of $\Delta^{(n)}$ when it crosses the position $n^{\frac{3}{4}}$ from right to left and when it crosses the position $n$ from left to right, respectively. Note that $\tau_{E}^{(n)}$ is an almost surely finite stopping time and $\big(\Delta_t^{(n)}\big)_{t < \tau_{E}^{(n)}}$ is a $\widetilde{\mathcal{F}}_t$-submartingale, where $\widetilde{\mathcal{F}}_t = \sigma\{(\omega(z),\theta(z),\tilde{\omega}(z),\tilde{\theta}(z)), z=(z_1,z_2), z_1 \in \mathbb{Z}, z_2 \leq t\}$, $t \in \mathbb{Z}$, which represents the filtration generated by the environments $(\omega,\theta)$ and $(\tilde{\omega},\tilde{\theta})$. 
Thus $\big(\Delta_{\tau_{E}^{(n)} \wedge k}^{(n)}\big)_{k\ge 1}$ is bonded in $L^2$, then uniformly integrable. The Optional Stopping Theorem gives us that
$$
E[\Delta_0^{(n)}] \leq E\Big[\Delta_{\tau_{E}^{(n)}}^{(n)}\Big]
$$
which implies 
$$
  n^\beta \le (n^{\frac{3}{4}}-\tilde{C}_{-}^{(n)}) (1-P(\tilde{E}_n)) + (n+\tilde{C}_{+}^{(n)})P(\tilde{E}_n) \le  n^{\frac{3}{4}} (1- P(\tilde{E}_n)) + (n+\tilde{C}_{+}^{(n)})P(\tilde{E}_n) ,
$$
where $\tilde{C}_{-}^{(n)} = E[\xi_{-}^{(n)}] > 0$ for all $n \in \mathbb{N}$ and 
$\tilde{C}_{+}^{(n)} = E[\xi_{+}^{(n)}] \leq \frac{5}{p}$ for $n$ large enough by Claim \ref{claim:overshoot} in Appendix \ref{subsec:conditionIresults}. Thus
$$ P(\tilde{E}_n) \geq \frac{n^\beta-n^{\frac{3}{4}}}{n-n^{\frac{3}{4}}+\tilde{C}_{+}^{(n)}} \ge \frac{n^\beta-n^{\frac{3}{4}}}{n-n^{\frac{3}{4}}+\frac{5}{p}} = \frac{1}{n^{1-\beta}}  \left(\frac{1 - n^{\frac{3}{4} - \beta} }{ 1 -\frac{n^{\frac{3}{4}} - \frac{5}{p}}{n} } \right) \ge \frac{1}{2n^{1-\beta}},
$$ 
for large values of $n$. 
Concerning the event $\tilde{E}_{2,n}$, by Donsker's and Portmanteau Theorems, we have that
\begin{align}\label{eq:browniansup} 
	\liminf_{n \rightarrow \infty} P(\tilde{E}_{2,n}) &\geq \liminf_{n \rightarrow \infty} P\Big(\inf_{t > 0}\Delta_{t}^{(n)} > n^{\frac{3}{4}} \big| \Delta_0^{(n)} = n\Big)\nonumber \\
	&> P\Big(\inf_{t > 0} B_t^{(0,0),2b_p,2\lambda_p^2} > -\frac{1}{2} \Big)
	= 1 - \exp\Big\{ -\frac{\sqrt{2}b_p}{\lambda_p}\Big\}
	= \frac{1}{\tilde{C}_1} > 0,
\end{align}
where $B_s^{z,\mu,\sigma^2}$ is a Brownian motion with drift $\mu$ and diffusion $\sigma^2$, starting at point $z$. The equality \eqref{eq:browniansup} occurs because $- (\inf_{t \geq 0} B_t^{(0,0),\mu,\sigma^2})$ when $\mu > 0$ has an exponential distribution with parameter $\frac{2\mu}{\sigma}$ (see e.g. Section 6.8 of \cite{resnick}). The result in \eqref{eq:browniansup} gives that $P(\tilde{E}_{2,n}) \geq (\tilde{C}_1)^{-1}$ for large values of $n$.

So, by the Strong Markov property, $\tilde{p}^{(n)}_\beta = P(\tilde{F}_n) \geq P(\tilde{E}_n)P(\tilde{E}_{2,n}) \geq \frac{1}{2\tilde{C}_1 n^{1-\beta}}$ for large values of $n$ and then, $\tilde{N}^{(n)}_\beta$ is stochastically bounded from above by a geometric random variable $G_\beta^{(n)}$ with parameter $\frac{1}{2\tilde{C}_1 n^{1-\beta}}$. Since $\{\widetilde{T}_k^{\beta,(n)}, k \in \widetilde{\Upsilon}_{n,\beta}\}$ is independent of $\tilde{N}^{(n)}_\beta$ and denoting $\widetilde{\Upsilon}_{n,\beta} = \{ k_j : j\le \tilde{N}^{(n)}_\beta \}$, we can write 
$$
P\Big( \sum_{k\in \Upsilon_n} \widetilde{T}_k^{\beta,(n)} > n^\gamma\Big)  \leq   P\Big(  \sum_{j = 1}^{\tilde{N}^{(n)}_\beta} \widetilde{T}_{k_j}^{\beta,(n)} > n^\gamma \Big)
$$ 
which can be decomposed as
\begin{align} 
	&\sum_{m=1}^{\infty} P\Big(  \sum_{j=1}^{\tilde{N}^{(n)}_\beta} \widetilde{T}_{j}^{\beta,(n)} > n^\gamma \Big| \tilde{N}^{(n)}_\beta = m\Big) P(\tilde{N}^{(n)}_\beta = m) \leq \sum_{m=1}^{\infty} P\Big(  \bigcup_{j=1}^{m} \left\{\widetilde{T}_{k_j}^{\beta,(n)} > \frac{n^\gamma}{m} \right\}\Big) P(\tilde{N}^{(n)}_\beta = m) \nonumber\\
	& \leq \sum_{m=1}^{\infty} \sum_{j=1}^{m} P\Big(\widetilde{T}_{k_j}^{\beta,(n)} > \frac{n^\gamma}{m}\Big) P(\tilde{N}^{(n)}_\beta = m) \leq \sum_{m=1}^{\infty} m \max_j P\left( \widetilde{T}_{k_j}^{\beta,(n)} > \frac{n^\gamma}{m} \right) P(\tilde{N}^{(n)}_\beta = m) \nonumber
\end{align}
\begin{align} 
	& \leq \sum_{m=1}^{\left\lfloor n^{1-\beta}\log(n)^2 \right\rfloor} m P(\tilde{N}^{(n)}_\beta = m) \max_j P\left( \widetilde{T}_{k_j}^{\beta,(n)} > \frac{n^\gamma}{n^{1-\beta}\log(n)^2} \right)  + \sum_{m=\left\lceil n^{1-\beta}\log(n)^2 \right\rceil}^{\infty}  m P(\tilde{N}^{(n)}_\beta = m) \nonumber\\
	&\leq E[\tilde{N}^{(n)}_\beta] \max_j P\left( \widetilde{T}_{k_j}^{\beta,(n)} > \frac{n^\gamma}{n^{1-\beta}\log(n)^2} \right) + \sum_{m=\left\lceil n^{1-\beta}\log(n)^2 \right\rceil}^{\infty}  m P(\tilde{N}^{(n)}_\beta \geq m), \nonumber
\end{align}
and using the stochastic domination, the rightmost side of the above inequality is bounded above by
\begin{align} \label{eq:tildeN}
	&E[G_{\beta}^{(n)}] \max_j P\left( \widetilde{T}_{k_j}^{\beta,(n)} > \frac{n^\gamma}{n^{1-\beta}\log(n)^2} \right) + \sum_{m=\left\lceil n^{1-\beta}\log(n)^2 \right\rceil}^{\infty}  m P(G_{\beta}^{(n)} \geq m)\nonumber\\
	&\leq 2\tilde{C}_1 n^{1-\beta} \max_j P\left( \widetilde{T}_{k_j}^{\beta,(n)} > \frac{n^{\gamma+\beta-1}}{\log(n)^2} \right) + \ 2\tilde{C}_1n^{1-\beta} \left( 1 - \frac{1}{2\tilde{C}_1n^{1-\beta}} \right)^{n^{1-\beta}\log(n)^2-1}.
\end{align}

\begin{remark}
    Note that we have not claimed that $\{\widetilde{T}_k^{\beta,(n)}, k \ge 1\}$ are independent or identically distributed, and we don't need these properties in the calculations above.
\end{remark}

Assume without loss of generality that $1 \in \widetilde{\Upsilon}_{n,\beta}$. Note that the rightmost term in \eqref{eq:tildeN} converges to zero as $n$ goes to infinity and then, to finish the proof, we will show that
$$P
\Big(\widetilde{T}_1^{\beta,(n)} > \frac{n^{\gamma+\beta-1}}{\log(n)^2}\Big) < \frac{1}{n^{1-\beta}\log(n)},
$$
for large values of $n$ and some $\gamma < 2$.  It will be clear that this last bound holds uniformly for any index $k \in \widetilde{\Upsilon}_{n,\beta}$. To prove it, we will introduce convenient subintervals of $[\mathfrak{T}_1^{(n)},\mathfrak{S}_1^{(n)}]$, thus we need to introduce more notation. Define
$$
\mathfrak{T}_{1,1}^{(n)} = \inf \big\{ t > \mathfrak{T}_1^{(n)} \, : \,
\max \big(|\tilde{R}_{t}^{(n)} - L_{t}^{(n)} | , |R_{t}^{(n)} - L_{t}^{(n)}| \big) > 2n^{\frac{7}{8}} \big\},
$$
$$
\mathfrak{S}_{1,1}^{(n)} = \inf \big\{ t \ge \mathfrak{T}_{1,1}^{(n)} \, : \,
R_{t}^{(n)} = \tilde{R}_{t}^{(n)} \big\}, 
$$
and having defined $\mathfrak{T}_{1,1}^{(n)}$ and $\mathfrak{S}_{1,1}^{(n)}$, continue defining $\mathfrak{T}_{1,j}^{(n)}$ and $\mathfrak{S}_{1,j}^{(n)}$ for $j \geq 2$ in the following way
$$
\mathfrak{T}_{1,j}^{(n)} = \inf \big\{ t > \mathfrak{S}_{1,j-1}^{(n)} \, : \,
\max \big(|\tilde{R}_{t}^{(n)} - L_{t}^{(n)} | , |R_{t}^{(n)} - L_{t}^{(n)}| \big) > 2n^{\frac{7}{8}} \big\},
$$
$$
\mathfrak{S}_{1,j}^{(n)} = \inf \big\{ t \ge \mathfrak{T}_{1,j}^{(n)} \, : \,
R_{t}^{(n)} = \tilde{R}_{t}^{(n)} \big\}, 
$$
until $\mathfrak{S}_{1,j}^{(n)} = \mathfrak{S}_{1}^{(n)}$, i.e, until the meeting between $R^{(n)}$ and $\tilde{R}^{(n)}$ occur at a distance equal or greater than $n^{\frac{7}{8}}$ from $L^{(n)}$. Then consider
\[
\nu^{(n)} = \max \{ j \geq 1 \, : \mathfrak{S}_{1,j}^{(n)} \le \mathfrak{S}_{1}^{(n)}\, \},
\]
\[
\widetilde{S}_j^{(n)} = \mathfrak{S}_{1,j}^{(n)} - \mathfrak{T}_{1,j}^{(n)}.
\]
Note that given $1 \in \widetilde{\Upsilon}_{n,\beta}$, $\mathfrak{S}_{1,1}^{(n)} \le \mathfrak{S}_1^{(n)}$, thus $\nu^{(n)}\ge 1$. Moreover for any $t \in [\mathfrak{T}_1^{(n)},\mathfrak{S}_1^{(n)}]$, we have that  $|R_t^{(n)} - \tilde{R}_t^{(n)}| < 2n^{\frac{7}{8}} < n^{\beta}$ (without loss of generality we are supposing $n$ sufficiently large such that $2n^{\frac{7}{8}} < n^{\beta}$) if $t \notin [\mathfrak{T}_{1,j}^{(n)},\mathfrak{S}_{1,j}^{(n)}]$ for any $j = 1, \ldots , \nu^{(n)}$ (Clearly the union of $[\mathfrak{T}_{1,j}^{(n)},\mathfrak{S}_{1,j}^{(n)}]$, $1\le j \le \nu^{(n)}$, does not cover $[\mathfrak{T}_{1}^{(n)},\mathfrak{S}_{1}^{(n)}]$). Then 
\[
\widetilde{T}_1^{\beta,(n)} \leq \sum_{j=1}^{\nu^{(n)}} \widetilde{S}_j^{(n)}.
\]

\begin{remark}
During the time interval $[\mathfrak{T}_1^{(n)},\mathfrak{S}_1^{(n)}]$ it is possible that $\tilde{R}^{(n)}$ gets near to $L^{(n)}$, starting to copy the increments of $L^{(n)}$. But even during these periods, the increments of $\tilde{R}^{(n)}$ continue to be distributed as the increments of an r-path, since it will go in the opposite direction of $L^{(n)}$ as soon as the $(\omega,\theta)$-DNB branches at a position that $L^{(n)}$ is occupying. Besides that, the imposition of the non-crossing rule between $\tilde{R}^{(n)}$ and $L^{(n)}$ is favorable to have the meeting between $\tilde{R}^{(n)}$ and $R^{(n)}$ happening at a distance greater than $n^{\frac{7}{8}}$ from $L^{(n)}$.
\label{tildeRtnearLt}
\end{remark}

Now we will obtain an upper bound on the distribution of $\nu^{(n)}$. For any $1 \leq j \leq \nu^{(n)}$, note that either $|\tilde{R}_{\mathfrak{S}_{1,j}^{(n)}}^{(n)} - L_{\mathfrak{S}_{1,j}^{(n)}}^{(n)}| \le n^{\frac{7}{8}}$ or $|\tilde{R}_{\mathfrak{S}_{1,j}^{(n)}}^{(n)} - L_{\mathfrak{S}_{1,j}^{(n)}}^{(n)}| > n^{\frac{7}{8}}$ and, in the former case, $\mathfrak{S}_{1,j}^{(n)} = \mathfrak{S}_{1}^{(n)}$ and $j=\nu^{(n)}$. Hence, by the independence of increments of the paths in DNB, we claim that $\nu^{(n)}$ is stochastically bounded from above by a geometric random variable with parameter 0.5. To prove it, let $L$, $R$ and $R'$ be respectively an l-path starting at position $0$ at time $0$, an r-path starting at position $0$ at time $0$ and an r-path starting at position $2n^{\frac{7}{8}}$ also at time $0$, and let $\tau$ be the coalescing time between $R$ and $R'$. It is equivalent to saying that $\tau$ is the coalescing time between $R-L$ and $R'-L$, which are positively drifted random walks starting respectively at positions $0$ and $2n^{\frac{7}{8}}$. By stationarity of the environment and symmetry properties, we have that the event
$$
\{ R-L \textrm{ and } R'-L \textrm{ meet each other for the first time at a distance greater than } n^{\frac{7}{8}} \textrm{ from } 0 \}
$$
has a probability greater than or equal to 0.5. This last assertion together with Remark \ref{tildeRtnearLt} proves the previous claim.

Denoting by $\tilde{G}$ a geometric random variable with parameter $0.5$, and since $\widetilde{S}_j^{(n)}$, $j \ge 1$, are independent of $\nu^{(n)}$, we have that 
\begin{eqnarray}\label{eq:indexesI2} 
	P \Big(\sum_{j=1}^{\nu^{(n)}}\widetilde{S}_j^{(n)} > \frac{n^{\gamma+\beta-1}}{\log(n)^2} \Big) &=& \sum_{k=1}^{\infty} P \Big( \sum_{j=1}^{\nu^{(n)}}\widetilde{S}_j^{(n)} > \frac{n^{\gamma+\beta-1}}{\log(n)^2} \Big| \nu^{(n)} = k\Big) P(\nu^{(n)} = k) \nonumber \\
&  & \!\!\!\!\!\!\!\!\!\!\!\!\!\!\!\!\!\!\!\!\!\!\!\!\!
\le \ \sum_{k=1}^{\infty} k \, \max_{j} P \left( \widetilde{S}_j^{(n)} > \frac{n^{\gamma+\beta-1}}{k\log(n)^2} \right) P(\nu^{(n)} = k) \nonumber \\
&  & \!\!\!\!\!\!\!\!\!\!\!\!\!\!\!\!\!\!\!\!\!\!\!\!\!
\le E[\nu^{(n)}] \, \max_j P \left( \widetilde{S}_j^{(n)} > \frac{n^{\gamma+\beta-1}}{\log(n)^3} \right) + \sum_{k=\left\lfloor\log(n)\right\rfloor}^{\infty} k P(\nu^{(n)} \geq k) \nonumber \\
&  & \!\!\!\!\!\!\!\!\!\!\!\!\!\!\!\!\!\!\!\!\!\!\!\!\!
\le E[\tilde{G}] \, \max_j P \left( \widetilde{S}_j^{(n)} > \frac{n^{\gamma+\beta-1}}{\log(n)^3} \right) + \sum_{k=\left\lfloor\log(n)\right\rfloor}^{\infty} k P(\tilde{G} \geq k) \nonumber \\
&  & \!\!\!\!\!\!\!\!\!\!\!\!\!\!\!\!\!\!\!\!\!\!\!\!\!
	 \leq 2 \, \max_j P \left( \widetilde{S}_j^{(n)} > \frac{n^{\gamma+\beta-1}}{\log(n)^3} \right) + 2^{1-\log(n)} \left( \log(n)+1 \right).
\end{eqnarray}
Since
$$
	\lim_{n \to \infty} n^{1-\beta}2^{1-\log(n)} \left( \log(n)+1 \right)  = 0 ,
$$
we do not need to worry about the rightmost term in \eqref{eq:indexesI2} and to conclude we will show that
$$
\max_j P \left(\widetilde{S}_j^{(n)} > \frac{n^{\gamma+\beta-1}}{\log(n)^3} \right) < \frac{1}{2n^{1-\beta}\log(n)},
$$
for large values of $n$. For the sake of simplicity, consider $j=1$. Note that $\widetilde{S}_1^{(n)}$ is a meeting time and then we will use estimates like the one in Lemma \ref{lemma:dnbcoalescencetime} to control the tail of its distribution. The time window $\widetilde{S}_1^{(n)}$ starts as soon as $\tilde{R}^{(n)}$ or $R^{(n)}$ reach a distance greater than $2n^{\frac{7}{8}}$ from $L^{(n)}$, which means that at this moment, the distance between $\tilde{R}^{(n)}$ and $R^{(n)}$ is at most $2n^{\frac{7}{8}}$ plus a random variable $\delta^{(n)}$ that represents the overshoot of $\tilde{R}^{(n)}$ or $R^{(n)}$ when one of them surpasses the distance $2n^{\frac{7}{8}}$ from $L^{(n)}$.
\begin{remark}
During the time window $\widetilde{S}_1^{(n)}$ it is possible that $\tilde{R}^{(n)}$ gets near to $L^{(n)}$, starting to copy the increments of $L^{(n)}$. But as explained in Remark \ref{tildeRtnearLt}, during these periods the increments of $\tilde{R}^{(n)}$ continue to be distributed as the increments of an r-path. So, the conditions of Proposition \ref{Result:coalescencetime} are also valid when we set $Y_l^{(n)} = |R_l^{(n)} - \tilde{R}_l^{(n)}|$ and consider the filtration $\{\tilde{\mathcal{F}}_l\}_{l \geq 0}$. Therefore, we can apply the bound obtained in Lemma \ref{lemma:dnbcoalescencetime} to the time until the first meeting between $\tilde{R}^{(n)}$ and $R^{(n)}$.
\label{remark:tildeRtcoalesce}
\end{remark}
By a rough estimate, the distribution of $\delta^{(n)}$ is bounded by the sum of two random variables, one representing the overshoot distribution of $R^{(n)} - L^{(n)}$ and the other representing the overshoot distribution of $\tilde R^{(n)} - L^{(n)}$. Both $R^{(n)} - L^{(n)}$ and $\tilde R^{(n)} - L^{(n)}$ have independent increments that can be stochastically bounded by geometric random variables, then follows from Lemma 2.6 of  \cite{nrs} that exists $C' < \infty$ such that $\sup_{n\ge 1} E[\delta^{(n)}] \le C'$. So, by Remark \ref{remark:tildeRtcoalesce} (Lemma \ref{lemma:dnbcoalescencetime}) and Chebyshev inequality, we have that, for large values of $n$,
\begin{align}\label{eq:betachoose2} 
	&P\left(\widetilde{S}_1^{(n)} >  \frac{n^{\gamma+\beta-1}}{\log(n)^3} \right) \nonumber\\
	&=P \hspace{-0.11cm} \left( \hspace{-0.11cm} \left.\widetilde{S}_1^{(n)} \hspace{-0.08cm} > \frac{n^{\gamma+\beta-1}}{\log(n)^3} \right|  \delta^{(n)}  <  n^{\frac{7}{8}} \hspace{-0.11cm}  \right) \hspace{-0.08cm}  P \hspace{-0.09cm}  \left( \delta^{(n)}  <  n^{\frac{7}{8}}   \right) + P \hspace{-0.11cm}  \left( \hspace{-0.11cm}  \left.\widetilde{S}_1^{(n)} \hspace{-0.08cm}  > \frac{n^{\gamma+\beta-1}}{\log(n)^3} \right|  \delta^{(n)}  \hspace{-0.06cm} \geq \hspace{-0.06cm} n^{\frac{7}{8}} \hspace{-0.09cm}  \right) \hspace{-0.08cm} P \hspace{-0.08cm} \left( \hspace{-0.06cm} \delta^{(n)} \hspace{-0.06cm}  \geq \hspace{-0.06cm} n^{\frac{7}{8}} \hspace{-0.06cm} \right) \hspace{-0.06cm} \nonumber \\
	&\leq \hspace{-0.08cm} \frac{C_0 \hspace{-0.06cm} \left( \hspace{-0.06cm} 2n^{\frac{7}{8}}+ n^{\frac{7}{8}} \hspace{-0.06cm} \right) \hspace{-0.06cm} \log(n)^{\frac{3}{2}}}{n^{\frac{\gamma+\beta-1}{2}}} + \frac{E[\delta^{(n)}]}{n^{\frac{7}{8}}} \leq \frac{3C_0 n^{\frac{7}{8}}\log(n)^{\frac{3}{2}}}{n^{\frac{\gamma+\beta-1}{2}}} + \frac{C'}{n^{\frac{7}{8}}}. 
\end{align}
To conclude, we just need to choose $\gamma$ and $\beta$ in such a way that the first term on the right hand side of \eqref{eq:betachoose2} becomes smaller than $(n^{\beta-1}\log(n)^{-1})/2$ for large values of $n$ (recalling that $\gamma < 2$ and $\frac{7}{8} < \beta < 1$). It can be achieved for example if we take $\gamma = \frac{47}{24}$ and $\beta = \frac{23}{24}$, completing the proof. \hfill $\square$ 

\vspace{0.2cm}

From the end of the proof of Lemma \ref{proposition:RtilandR} just above, we can state the following claim, which we use later.

\begin{claim}
\label{claim:rtRt}

For any $s>0$, the amount of time where $|R_t^{(n)} - \tilde{R}_t^{(n)}| > n^{\frac{23}{24}}$ in the time interval $[0,sn^2]$ is almost surely $o(n^{\frac{47}{24}})$.
\end{claim}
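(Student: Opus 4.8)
The plan is to deduce the statement directly from the proof of Lemma~\ref{proposition:RtilandR}, after a short deterministic reduction. Fix $\beta=23/24$ and, as in that proof, work on the event that $L^{(n)}$, $R^{(n)}$ and $\tilde R^{(n)}$ make no jump of size larger than $\kappa\log(n)/2$ on $[0,sn^2]$, which has probability tending to one. Recall from there the ``detach--reattach'' cycles of the coupling and, among them, the cycles $[\mathfrak T_k^{(n)},\mathfrak S_k^{(n)}]$, $1\le k\le \tilde N^{(n)}_\beta$, singled out because $R^{(n)}$ or $\tilde R^{(n)}$ reaches distance larger than $n^\beta$ from $L^{(n)}$ inside them; recall also that $\widetilde T_k^{\beta,(n)}$ is the amount of time inside the $k$-th such cycle with $|\tilde R_t^{(n)}-R_t^{(n)}|>n^\beta$.

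First I would record the reduction. Since $R_t^{(n)}\ge L_t^{(n)}$ for every $t$ and, by the non-crossing rule enforced in step $(\tilde R.3)$, also $\tilde R_t^{(n)}\ge L_t^{(n)}$ for every $t$, we have
$$
|R_t^{(n)}-\tilde R_t^{(n)}| \;=\; \big|(R_t^{(n)}-L_t^{(n)})-(\tilde R_t^{(n)}-L_t^{(n)})\big| \;\le\; \max\big(R_t^{(n)}-L_t^{(n)},\ \tilde R_t^{(n)}-L_t^{(n)}\big).
$$
So if $|R_t^{(n)}-\tilde R_t^{(n)}|>n^{23/24}$ for some $t\in[0,sn^2]$, then $R^{(n)}$ and $\tilde R^{(n)}$ are not evolving together at $t$ (otherwise the difference vanishes), and at least one of them is at distance larger than $n^\beta$ from $L^{(n)}$ at time $t$; hence $t$ lies in one of the cycles $[\mathfrak T_k^{(n)},\mathfrak S_k^{(n)}]$. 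These cycles being pairwise disjoint, the Lebesgue measure of $\{t\in[0,sn^2]:|R_t^{(n)}-\tilde R_t^{(n)}|>n^{23/24}\}$ is at most $\sum_{k=1}^{\tilde N^{(n)}_\beta}\widetilde T_k^{\beta,(n)}$.

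It then remains to quote the bound established in the proof of Lemma~\ref{proposition:RtilandR}: with $\beta=23/24$ and $\gamma=47/24$ it is shown there that $P\big(\sum_{k=1}^{\tilde N^{(n)}_\beta}\widetilde T_k^{\beta,(n)}>n^{47/24}\big)\to 0$. Combined with the reduction and with the negligibility of long jumps, this already gives the claim with ``almost surely'' read as ``with probability tending to one''. For the literal almost-sure statement I would refine the chain of estimates \eqref{eq:tildeN}--\eqref{eq:betachoose} — splitting the geometric tails there at $(\log n)^2$ instead of at $\log n$ — to obtain, for each $\varepsilon>0$, a polynomial-rate bound $P\big(\sum_k\widetilde T_k^{\beta,(n)}>\varepsilon n^{47/24}\big)\le C_\varepsilon\, n^{-a}(\log n)^{b}$ with $a,b>0$, and then apply Borel--Cantelli along a polynomially spaced subsequence $n_j=\lceil j^{M}\rceil$ with $M$ large, which yields the a.s. $o(\cdot)$ behaviour along $(n_j)$.

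The step I expect to be the main obstacle is this last one. The estimates produced in the proof of Lemma~\ref{proposition:RtilandR} are tuned to give convergence in probability, and at the exponent $47/24$ (indeed at any exponent strictly below $2$) the associated tail probabilities are not summable, so a plain Borel--Cantelli along the full sequence does not apply; moreover, because the processes $L^{(n)},R^{(n)},\tilde R^{(n)}$ live on DNBs with distinct branching parameters $\fb/n$ and are not coupled across $n$, there is no monotonicity allowing one to interpolate between subsequence points. The cleanest resolution is the subsequence argument above, which is in any case sufficient for the later use of Claim~\ref{claim:rtRt}, where one only needs to discard a diffusively negligible set of times along a convergent subsequence.
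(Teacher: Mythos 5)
Your reduction is exactly the one the paper has in mind: Claim \ref{claim:rtRt} is stated with no separate proof, only the remark that it follows from the proof of Lemma \ref{proposition:RtilandR}, and your first two paragraphs reconstruct that implicit reasoning faithfully. In particular, the observation that $R_t^{(n)}\ge L_t^{(n)}$ and $\tilde R_t^{(n)}\ge L_t^{(n)}$ force $|R_t^{(n)}-\tilde R_t^{(n)}|\le\max\big(R_t^{(n)}-L_t^{(n)},\tilde R_t^{(n)}-L_t^{(n)}\big)$, so that every time with $|R_t^{(n)}-\tilde R_t^{(n)}|>n^{23/24}$ falls inside one of the windows $[\mathfrak{T}_k^{(n)},\mathfrak{S}_k^{(n)}]$ and is counted by $\sum_k\widetilde T_k^{\beta,(n)}$, is precisely the content of the sentence in that proof that introduces $\widetilde T_k^{\beta,(n)}$ and declares it ``enough to show'' the tail bound with $\gamma=47/24$, $\beta=23/24$. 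So in approach you and the paper coincide.

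You have also put your finger on a genuine weakness, and it is a weakness of the paper's claim rather than of your argument. The chain of estimates \eqref{eq:tildeN}--\eqref{eq:betachoose} delivers $P\big(\sum_k\widetilde T_k^{\beta,(n)}>n^{47/24}\big)\to 0$ at rate $O(1/\log n)$ (the factor $E[G_F^{(n)}]\asymp n^{1-\beta}$ eats the polynomial decay of the single-window tail), so the tails are not summable, the processes are not coupled across $n$, and no Borel--Cantelli or monotonicity argument is available to upgrade this to the almost-sure $o(n^{47/24})$ statement as literally written; your subsequence repair only yields the almost-sure behaviour along $(n_j)$, as you say yourself. Since Claim \ref{claim:rtRt} is in fact never invoked again in the paper, and every place where such a statement could be used only requires convergence in probability, the honest reading is that ``almost surely'' should be ``with probability tending to one as $n\to\infty$''; under that reading your proof is complete and is the paper's proof.
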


\medskip

\noindent \textit{Proof of Lemma \ref{proposition:lastterm}.} We have a sum of non-negative terms, thus it is enough to show that for any fixed $t>0$ and $\epsilon > 0$:
$$\limsup_{n\rightarrow\infty} P\Big(\frac{1}{n} \displaystyle{\sum_{j=1}^{tn^2} U_j^{(n)}} > \epsilon\Big) = 0,$$
where $(U_j^{(n)})_{j\in \mathbb{N}}$ was defined in \eqref{Us}.

Set $\Gamma_{n,t}$ as the event that neither $L^{(n)}$ nor $\tilde{R}^{(n)}$ makes a jump of size greater than $\kappa \log(n)/2$ in the time interval $[0,tn^2]$. By Corollary \ref{cor:longjump}, it is enough to prove that
$$\limsup_{n\rightarrow\infty} P\Big(\frac{1}{n} \displaystyle{\sum_{j=1}^{tn^2} U_j^{(n)}} > \epsilon \, ; \, \Gamma_{n,t} \Big) = 0.$$

\vspace{0.2cm}

Note that the distribution of $U_j^{(n)}$ on the event $\{U_j^{(n)} > 0\}$ depends on how far the paths $L_j^{(n)}$ and $\tilde{R}_j^{(n)}$ are from each other before the attempt of crossing. However, when $L_j^{(n)}$ and $\tilde{R}_j^{(n)}$ are evolving independently, we have that $\Delta^{(n)} = \tilde{R}^{(n)} - L^{(n)}$ is a random walk with increments that have a finite absolute third moment, which follows from Remark \ref{rem:moments}. Then, by Lemma 2.6 of \cite{nrs}, there exists $C' = C'(p)$ such that

$$
E[U_j^{(n)} |U_j^{(n)} > 0,L_{j-1}^{(n)},\tilde{R}_{j-1}^{(n)}] \le C'.
$$

\noindent Conditioning on the event that suppresses long jumps, the conditional expectation of $U_j^{(n)}$ only decreases. Thus the previous claims and the non-crossing rule of steps ($\tilde{R}$.1), ($\tilde{R}$.2) and ($\tilde{R}$.3) guarantee that (enlarging $C'$ if necessary)

$$
E[U_j^{(n)} |U_j^{(n)} > 0, \Gamma_{n,t}] \le C'.
$$

\vspace{0.2cm}

Applying Markov's inequality:
\begin{equation}
    P\Big(\frac{1}{n} \displaystyle{\sum_{j=1}^{tn^2} U_j^{(n)}} > \epsilon \, ; \, \Gamma_{n,t} \Big) \leq P\Big(\frac{1}{n}\sum_{j=1}^{tn^2}U_j^{(n)} > \epsilon\Big|\Gamma_{n,t}\Big) \le  \frac{1}{n\epsilon}E\Big(\sum_{j=1}^{tn^2}U_j^{(n)} \Big|\Gamma_{n,t}\Big). 
\label{eq:pU} 
\end{equation}
Now, denote by $N_u^{t,(n)}$ the random variable that counts the number of attempts of crossings between $L_t^{(n)}$ and $\tilde{R}_t^{(n)}$ before time $tn^2$ (so, the only cases where $U_j^{(n)}$ have a positive value). Given $N_u^{t,(n)}$ and $i_1 < ... <i_{N_u^{t,(n)}} \le tn^2$, define the event
$$
J(i_1,...,i_{N_u^{t,(n)}}) = \{ U_{i_1} > 0, ..., U_{i_{N_u^{t,(n)}}} > 0\}. 
$$
Then write the expectation in \eqref{eq:pU} as
$$
E \Big( \sum_{i_1 < ... <i_{N_u^{t,(n)}}} \hspace{-0.3cm} P\Big( J(i_1,...,i_{N_u^{t,(n)}}) \Big| N_u^{t,(n)} , \Gamma_{n,t} \Big)  E\Big( \sum_{j=1}^{tn^2} U_j^{(n)} \Big| J(i_1,...,i_{N_u^{t,(n)}}) , N_u^{t,(n)} , \Gamma_{n,t} \Big) \, \Big| \, \Gamma_{n,t} \Big) \hspace{-0.05cm},
$$
where
$$ 
E\Big( \sum_{j=1}^{tn^2} U_j^{(n)} \Big| J(i_1,...,i_{N_u^{t,(n)}}) , N_u^{t,(n)} , \Gamma_{n,t} \Big)
\le
\sum_{k=1}^{N_u^{t,(n)}}
E\Big( U_{i_k}^{(n)} \Big| U_{i_k}^{(n)} > 0 , N_u^{t,(n)} , \Gamma_{n,t} \Big) \le C'  N_u^{t,(n)} 
$$

Returning to \eqref{eq:pU}, we obtain that
$$
P\Big(\frac{1}{n} \displaystyle{\sum_{j=1}^{tn^2} U_j^{(n)}} > \epsilon \, ; \, \Gamma_{n,t} \Big) \le \frac{C'}{n \epsilon} E(N_u^{t,(n)} | \Gamma_{n,t}).
$$

Therefore we need to show that $E(N_u^{t,(n)}| \Gamma_{n,t})$ is $o(n)$. We are going to prove a stronger result, that $E(N_u^{t,(n)}| \Gamma_{n,t})$ converges to zero as $n$ goes to infinity (or equivalently $E(N_u^{t,(n)})$, since $P(\Gamma_{n,t})$ converges to one as n goes to $\infty$). 

Given $\Gamma_{n,t}$, the process $\Delta^{(n)}$ does not make any jump of size greater than $\kappa \log(n)$ in the interval $[0,tn^2]$. It means that as soon as $\Delta^{(n)}$ becomes smaller than $\kappa \, n^{\frac{1}{4}}\log(n)^2$, it needs to cross towards $0$, one by one, each interval of size $\kappa \log(n)$, until it could reach $0$. So, $\Delta^{(n)}$ has to visit at least $n^{\frac{1}{4}}\log(n)$ positions at its left before it can reach zero.

If $\Delta^{(n)} \le \kappa \, n^{\frac{1}{4}}\log(n)^2$, whenever $\Delta^{(n)}$ jumps to the left, it remains in the same position for some time and, after that, it necessarily jumps to the right. Now we are going to define an auxiliary process $\tilde{\Delta}^{(n)}$ based on $\Delta^{(n)}$ with three aims: 1. discard times where $\Delta^{(n)}$ remains constant because of $\tilde{R}^{(n)} \approx L^{(n)}$. 2. discard times where $\Delta^{(n)} \ge \kappa \, n^{\frac{1}{4}}\log(n)^2$. 3. help us to control crossing attempts between $\tilde{R}^{(n)}$ and $L^{(n)}$. It starts as soon as $\Delta^{(n)}$ becomes smaller than $\kappa \, n^{\frac{1}{4}}\log(n)^2$, at the same position as $\Delta^{(n)}$, and evolves in the following way: 
\begin{itemize}
\item[(i)] We only consider increments of time for $\tilde{\Delta}^{(n)}$ at moments when $\Delta^{(n)}$ can jump, i.e., when $\Delta^{(n)} \le \kappa \, n^{\frac{1}{4}}\log(n)^2$ we do not consider times where $\tilde{R}^{(n)} \approx L^{(n)}$ in step ($\tilde{R}$.3).
\item[(ii)] Whenever $\Delta^{(n)}$ jumps to zero, $\tilde{\Delta}^{(n)}$ also jumps to zero and stops. It does not move anymore. All the following steps should only be considered until $\tilde{\Delta}^{(n)} = 0$. 
\item[(iii)] Whenever $\Delta^{(n)}$ jumps from a position smaller than or equal to $\kappa \, n^{\frac{1}{4}}\log(n)^2$ to a position greater than $\kappa \, n^{\frac{1}{4}}\log(n)^2$, then  $\tilde{\Delta}^{(n)}$ jumps to $\kappa \, n^{\frac{1}{4}}\log(n)^2$ and time freezes for $\tilde{\Delta}^{(n)}$ until (iv). 
\item[(iv)] Except for (ii), i.e. $\Delta^{(n)}$ has not stopped at zero yet, whenever $\Delta^{(n)}$ jumps from a position greater than $\kappa \, n^{\frac{1}{4}}\log(n)^2$ to a position less than or equal to $\kappa \, n^{\frac{1}{4}}\log(n)^2$, then $\tilde{\Delta}^{(n)}$ jumps to the same destination as $\Delta^{(n)}$.
\item[(v)] Except for (ii), if $\Delta^{(n)} \leq \kappa \, n^{\frac{1}{4}}\log(n)^2$, whenever $\Delta^{(n)}$ jumps to the left (its value decreases), the next jump that changes the value of $\Delta^{(n)}$ is necessarily a jump to the right, after a period with $\tilde{R}^{(n)} \approx L^{(n)}$, that occurs when the $(\omega,\theta)$-DNB branches at a position that $L^{(n)}$ is occupying. In this case, $\tilde{\Delta}^{(n)}$ makes a single jump equal to the sum of these two jumps of $\Delta^{(n)}$. Recall that $\Delta^{(n)} \ge 0$ by the non-crossing rule and equally $\tilde{\Delta}^{(n)} \ge 0$. For instance, suppose that time $t$ for $\Delta^{(n)}$ corresponds to time $\tilde{t}$ for $\tilde{\Delta}^{(n)}$ and $\Delta^{(n)}_t = \tilde{\Delta}^{(n)}_{\tilde{t}} = x \leq \kappa \, n^{\frac{1}{4}}\log(n)^2$. If $\Delta^{(n)}_{t+j} = x-5$, $1\le j \le 4$, and $\Delta^{(n)}_{t+5} = x+1$, then we will have $\tilde{\Delta}^{(n)}_{\tilde{t}+1} = x+1$. 
\item[(vi)] Except for (iii), whenever $\Delta^{(n)}$ makes jumps to the right that are not related to a branching of the $(\omega,\theta)$-DNB or stay in the same position without having $\tilde{R}^{(n)} \approx L^{(n)}$, $\tilde{\Delta}^{(n)}$ simply jumps equally to $\Delta^{(n)}$.
\end{itemize}
Note that $L^{(n)}$ and $\tilde{R}^{(n)}$ can only visit integer positions and consequently $\Delta^{(n)}$ is always an integer. So, if any position mentioned in the construction above is non integer, we simply round up the value to the nearest integer and it will make no difference in our arguments. Besides that, by construction, we can verify the following properties:
\begin{itemize}
\item If $\tilde{\Delta}^{(n)}$ starts at a position greater than zero, then the probability that $\Delta^{(n)}$ reaches zero in the time interval $[0,tn^2]$ is bounded above by the probability that $\tilde{\Delta}^{(n)}$ reaches zero in this time interval. 
\item The process $\tilde{\Delta}^{(n)}$  is a Markov process while evolving inside the interval $[0, \kappa \, n^{\frac{1}{4}}\log(n)^2)$. 
\item Considering (v) and (vi), while $\tilde{\Delta}^{(n)}$ is evolving inside the open interval $(0, \kappa \, n^{\frac{1}{4}}\log(n)^2)$, any of its increments is composed of two terms: The first one is a typical increment of the difference between two independent DNB paths, an l-path and an r-path. The second one is zero, if the first is non-negative, otherwise it is equal to twice the absolute value of an increment of an l-path of the DNB given that it branches. Note that the probability that the first increment is negative is bounded from below by $(p(1-p))^2$, which is a lower bound for the probability that $L^{(n)}$ remains constant and $\tilde R^{(n)}$ jumps one unit to the left, thus $\Delta^{(n)}$ also decreases by one unit. Also, the first term has a mean of order $o(1/n)$, but positive. The second term has mean $2/p(2-p)$ (twice the mean of a geometric random variable with parameter $p(2-p)$, see Remark \ref{ramificacao}). Then, we have that $\tilde{\Delta}^{(n)}$ has a positive drift that is bounded from below by $2p(1-p)^2/(2-p)$ for $n \in \mathbb{N}$.
\end{itemize}

\begin{remark}\label{ramificacao}Denoting by $Y^{(n)}$ a random variable with the same distribution of an increment of $L^{(n)}$ and by $B$ the event that the DNB branches at the position of $L^{(n)}$, we have that $P([Y^{(n)}=k] \cap B) = \epsilon_n p^2(1-p)^{(2k-1)}$. Since
\[
P(B) = \sum_{k=1}^{\infty} \epsilon_n p^2(1-p)^{(2k-1)} = \epsilon_n p^2[(1-p)/(1-(1-p)^2)],
\] 
it follows that $P(Y=k|B) = (1-p)^{2(k-1)}[1-(1-p)^2]$ and consequently the increment of $L^{(n)}$ given $B$ is distributed as a geometric random variable with parameter $p(2-p)$.
\end{remark}

By the above properties, when $\tilde{\Delta}^{(n)}$ is at a position greater than or equal to $j \kappa \log(n)$ for $j = 1, \ldots , n^{\frac{1}{4}}\log(n)$, it has probability $p_u^{(n)} > 0$ of never visiting $(j-1) \kappa \log(n)$ (due to translation invariance, the probability does not depend on $j$). Besides that, since $\tilde{\Delta}^{(n)}$ has a positive drift that does not converge to zero as $n$ goes to infinity, there exists $p_u > 0$ such that $p_u^{(n)} \geq p_u$ for any value of $n$. Thus, for $\tilde{\Delta}^{(n)}$ starting at $\kappa n^{\frac{1}{4}}\log(n)^2$, it follows from the Gambler's ruin problem that
$$P \left( \left. \inf_{0 \leq s \leq tn^2}  \tilde{\Delta}_s^{(n)} = 0 \right| \Gamma_{n,t} \right) \leq (1-p_u)^{n^{\frac{1}{4}}\log(n)}.$$

Hence, we can write
$$E(N_u^{t,(n)} | \Gamma_{n,t}) \leq tn^2 P \left( \left. \inf_{0 \leq s \leq tn^2}  \tilde{\Delta}^{(n)} = 0 \right| \Gamma_{n,t} \right) \leq tn^2 (1-p_u)^{n^{\frac{1}{4}}\log(n)},$$
which converges to zero as $n \rightarrow \infty$ for any fixed $t>0$, concluding the proof. \hfill $\square$

\subsection{Part II: Only one pair $(L^{(n)},R^{(n)})_{n \in \mathbb{N}}$ with $R_0^{(n)} < L_0^{(n)}$} 
\label{subsec:partII}

Before we prove the convergence in this case, we will state Lemma \ref{lemma:pathsstayclose} and Lemma \ref{lemma:meetingtime} that will be useful here and in the following scenarios. Both lemmas are proved in Appendix \ref{subsec:conditionIresults}.

\vspace{0.2cm}

\begin{lemma}
\label{lemma:pathsstayclose}
Fix $(x,s) \in \mathbb{Z}^2$ and $\gamma_1 >0$. Let $l^{(n)}_x$ and $\overline{l}^{(n)}_{x}$ be two l-paths evolving in the same environment, starting from $(x,s)$ and $(x+\lfloor n^{\gamma_1} \rfloor ,s)$ respectively.  Denoting by $\tau^{(n)}_x$ the coalescence time between $l^{(n)}_x$ and $\overline{l}^{(n)}_{x}$, we have that the following limit holds: 
\begin{equation*}
\lim_{n \to \infty} P\left(\sup_{s \leq t \leq s + \tau^{(n)}_x} \left| \overline{l}^{(n)}_{x}(t) -  l^{(n)}_x(t) \right| > n^{\gamma} \right) = 0, \, \mbox{ for any } \gamma > \gamma_1\, .
\end{equation*}
This statement also holds if $l^{(n)}_x$ and $\overline{l}^{(n)}_{x}$ are evolving according to independent environments (in this case, we can think of $\tau^{(n)}_x$ as the first meeting time). Besides that, this statement also holds if we replace both l-paths by r-paths.
\end{lemma}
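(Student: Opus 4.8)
The plan is to reduce the statement to a maximal inequality for a nonnegative martingale. Set $Z_t = \overline{l}^{(n)}_{x}(t) - l^{(n)}_x(t)$ for $t \ge s$. In the shared-environment case, since $\overline{l}^{(n)}_{x}$ starts strictly to the right of $l^{(n)}_x$ and no path of $\mathcal{X}^n$ crosses an l-path of $W^l_n$ from right to left, we have $Z_t \ge 0$ for $t$ up to the coalescence time and $Z_t \equiv 0$ thereafter; in the independent-environment case the same start ordering gives $Z_t > 0$ for every $t$ strictly before the first meeting time $\tau^{(n)}_x$, and hence $|\overline{l}^{(n)}_{x}(t) - l^{(n)}_x(t)| = Z_t$ throughout the relevant range $[s, s+\tau^{(n)}_x]$ in both cases.

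Next I would verify that the integer-skeleton process $(Z_{s+k})_{k \ge 0}$ is a martingale with respect to $(\mathcal{F}_{s+k})_{k\ge0}$. The key observation is that both $l^{(n)}_x$ and $\overline{l}^{(n)}_{x}$ are l-paths, so by translation invariance of the environment each has the \emph{same} deterministic conditional mean increment given the past (equal to $-b_p/n$, as computed in the proof of Lemma \ref{lemma:individualconvergence}); therefore the drift cancels in the difference $Z$ no matter how far apart the two paths are and regardless of any correlation between their increments, and when they have already coalesced the increment of $Z$ is identically $0$. Consequently the stopped process $(Z_{s+(k\wedge \tau^{(n)}_x)})_{k\ge0}$ is a nonnegative martingale started from $Z_s = n^{\gamma_1}$; in the independent case one instead stops at the first skeleton time at which the difference becomes $\le 0$ and uses that this last crossing jump is stochastically dominated by $Geom(p)$ (Remark \ref{rem:moments}) to see that the positive part of the stopped process is an $L^1$-bounded submartingale.

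The conclusion then follows from Doob's maximal inequality applied to this (sub)martingale:
$$P\Big(\sup_{k\ge0} Z_{s+(k\wedge \tau^{(n)}_x)} \ge n^\gamma\Big) \le \frac{Z_s + O(1)}{n^\gamma} = O\big(n^{\gamma_1-\gamma}\big) \xrightarrow[n\to\infty]{} 0,$$
since $\gamma > \gamma_1$. To pass from this skeleton bound to the continuous-time supremum in the statement, note that $Z_t$ is piecewise linear with breakpoints at integer times, and on the final partial interval of $[s, s+\tau^{(n)}_x]$ the function $Z$ only decreases towards $0$, so $\sup_{s\le t\le s+\tau^{(n)}_x}|Z_t|$ is attained at an integer time lying in that interval and hence is bounded by the quantity just controlled. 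The r-path case is identical after reflecting the non-crossing direction and the sign of the drift.

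I do not expect a serious obstacle here: the argument is essentially the observation that the inter-path gap is a drift-free nonnegative (super)martingale, so the only real care needed is (i) the bookkeeping reducing the continuous supremum to the integer skeleton and (ii) in the independent-environment case, handling the single step by which $Z$ can overshoot below $0$ — both routine. If one prefers to avoid even this bookkeeping, an alternative is to bound $P(\sup_{k\le \tau^{(n)}_x} Z_{s+k} \ge n^\gamma)$ by the optional-stopping identity $E[Z_{s+(\tau^{(n)}_x \wedge \sigma_{n^\gamma})}] = Z_s$ with $\sigma_{n^\gamma} = \inf\{k: Z_{s+k}\ge n^\gamma\}$, which gives the same $n^{\gamma_1-\gamma}$ rate directly.
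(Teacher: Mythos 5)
Your proof is correct, but it takes a genuinely different route from the paper's. The paper fixes $\gamma_2\in(\gamma_1,\gamma)$ and splits the event into $\{\tau^{(n)}_x>n^{2\gamma_2}\}$, controlled by the coalescence-time tail bound of Lemma \ref{lemma:dnbcoalescencetime}, and the event that the gap exceeds $n^\gamma$ within the window $[s,s+n^{2\gamma_2}]$, controlled by bounding the gap by the individual displacements of the two paths and invoking their diffusive behaviour from Lemma \ref{lemma:individualconvergence}. You instead observe that, because the two l-paths have identical conditional mean increments given the past (by translation invariance of the environment, exactly the observation already used in the proof of Lemma \ref{lemma:dnbcoalescencetime}), the gap stopped at coalescence is a nonnegative martingale started from $n^{\gamma_1}$, and a single application of Doob's maximal inequality gives the explicit rate $n^{\gamma_1-\gamma}$. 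Your argument is shorter, more quantitative, and bypasses both Lemma \ref{lemma:dnbcoalescencetime} and the invariance principle; it also handles the shared and independent environments uniformly, with the only extra care being the $O(1)$ overshoot below zero in the independent case (dominated via Remark \ref{rem:moments}), which you treat correctly through the positive part of the stopped martingale. The paper's truncation argument, by contrast, reuses machinery already established elsewhere in the paper and avoids any discussion of overshoots, at the cost of an intermediate exponent $\gamma_2$ and a less explicit rate. Both reductions from the continuous-time supremum to the integer skeleton are routine, as you note, since the interpolation is piecewise linear.
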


\begin{lemma}
\label{lemma:meetingtime}
Fix $(x,s) \in \mathbb{Z}^2$ and $\gamma_1 >0$. Let $r^{(n)}_x$ be an r-path that starts from $(x,s) \in \mathbb{Z}^2$ and $l^{(n)}_{x}$ be an l-path that starts from $(x+\lfloor n^{\gamma_1} \rfloor,s)$,  both evolving according to the same environment. Now let $\tilde{r}^{(n)}_x$ be an r-path that also starts from the same point $(x,s)$, but evolves in an independent environment until it crosses $l^{(n)}_{x}$ for the first time. After that, $\tilde{r}^{(n)}_x$ starts to evolve in the same environment of $r^{(n)}_x$ and $l^{(n)}_{x}$. Also define $\mathcal{T}_{x,s}^{(n)}$ as the amount of time after $s$ that we have to wait until $r^{(n)}_x$ and $\tilde{r}^{(n)}_x$ meet each other at the right hand side of $l^{(n)}_{x}$ for the first time. Therefore the distribution of $\mathcal{T}^{(n)} = \mathcal{T}_{x,s}^{(n)}$ does not depend on $x$ or $s$, and moreover 
$$
\lim_{n \to \infty} P(\mathcal{T}^{(n)} > 2n^{2\gamma}) = 0, \text{ for any } \gamma > \gamma_1\, .
$$
\end{lemma}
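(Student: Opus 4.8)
The plan is to reduce the meeting time $\mathcal{T}^{(n)}$ to a coalescence time for two r-paths that are evolving according to the \emph{same} environment, so that Lemma \ref{lemma:dnbcoalescencetime} (together with Remark \ref{remark:indepcoalesce}) can be applied. The translation invariance of the DNB environment immediately gives that the distribution of $\mathcal{T}_x^{(n)}$ does not depend on $x$ or $s$, so we may fix $s=0$ and write $\mathcal{T}^{(n)}=\mathcal{T}_0^{(n)}$. First I would split the waiting time $\mathcal{T}^{(n)}$ into two phases: the first phase $\mathcal{T}^{(n)}_{\mathrm{I}}$ lasts until $\tilde{r}^{(n)}_x$ has crossed $l^{(n)}_x$ for the first time, and the second phase $\mathcal{T}^{(n)}_{\mathrm{II}} = \mathcal{T}^{(n)} - \mathcal{T}^{(n)}_{\mathrm{I}}$ lasts from that crossing moment until $r^{(n)}_x$ and $\tilde{r}^{(n)}_x$ coalesce at the right-hand side of $l^{(n)}_x$. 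It then suffices to show that, for any $\gamma>\gamma_1$,
\begin{equation*}
\lim_{n\to\infty} P\!\left(\mathcal{T}^{(n)}_{\mathrm{I}} > n^{2\gamma}\right) = 0
\qquad\text{and}\qquad
\lim_{n\to\infty} P\!\left(\mathcal{T}^{(n)}_{\mathrm{II}} > n^{2\gamma}\right) = 0,
\end{equation*}
since $\{\mathcal{T}^{(n)}>2n^{2\gamma}\}\subset\{\mathcal{T}^{(n)}_{\mathrm{I}}>n^{2\gamma}\}\cup\{\mathcal{T}^{(n)}_{\mathrm{II}}>n^{2\gamma}\}$.

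For the first phase: $\tilde r^{(n)}_x$ starts at $x$ and $l^{(n)}_x$ starts at $x+n^{\gamma_1}$, and these two paths evolve in independent environments until the crossing; an r-path has non-negative drift and an l-path has non-positive drift (recall Lemma \ref{lemma:individualconvergence}), so the difference $l^{(n)}_x(t)-\tilde r^{(n)}_x(t)$ is a process whose increments have non-positive conditional mean. Crucially the crossing time $\mathcal{T}^{(n)}_{\mathrm{I}}$ is at most the first meeting time of these two paths, and by Corollary \ref{corollary:dnbcrossingtime} together with Remark \ref{remark:indepcoalesce} (which applies because the difference of the increments has mean $\le 0$, so the hypotheses of \cite[Lemma 2.2]{nrs} hold after the obvious monotone comparison) we get
\begin{equation*}
P\!\left(\mathcal{T}^{(n)}_{\mathrm{I}} > n^{2\gamma}\right) \le \frac{C_0\, n^{\gamma_1}}{\sqrt{n^{2\gamma}}} = C_0\, n^{\gamma_1-\gamma} \longrightarrow 0,
\end{equation*}
since $\gamma>\gamma_1$. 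For the second phase: at the start of phase II the two r-paths $r^{(n)}_x$ and $\tilde r^{(n)}_x$ are at some (random, a.s.\ finite) distance $\Delta^{(n)}$ from each other; using Lemma \ref{lemma:pathsstayclose} applied to $r^{(n)}_x$ and $l^{(n)}_x$ (starting at distance $n^{\gamma_1}$), together with the overshoot bound of Claim \ref{claim:overshoot} and \cite[Lemma 2.6]{nrs}, the distance $\Delta^{(n)}$ at the crossing time is, with probability tending to $1$, at most $n^{\gamma'}$ for any chosen $\gamma_1<\gamma'<\gamma$. From the crossing moment on, $r^{(n)}_x$ and $\tilde r^{(n)}_x$ evolve in the same environment, so $\mathcal{T}^{(n)}_{\mathrm{II}}$ is (bounded above by) a coalescence time between two r-paths at initial distance $\Delta^{(n)}$; by Lemma \ref{lemma:dnbcoalescencetime} and conditioning on $\{\Delta^{(n)}\le n^{\gamma'}\}$,
\begin{equation*}
P\!\left(\mathcal{T}^{(n)}_{\mathrm{II}} > n^{2\gamma}\right)
\le \frac{C_0\, n^{\gamma'}}{\sqrt{n^{2\gamma}}} + P\!\left(\Delta^{(n)} > n^{\gamma'}\right)
= C_0\, n^{\gamma'-\gamma} + o(1) \longrightarrow 0,
\end{equation*}
which finishes the argument. \hfill$\square$

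The main obstacle I expect is controlling the distance $\Delta^{(n)}$ at the crossing time and justifying that $\mathcal{T}^{(n)}_{\mathrm{II}}$ is genuinely dominated by a same-environment coalescence time rather than merely a meeting time --- one has to be careful that the non-crossing structure of r-paths forces $\tilde r^{(n)}_x$ to actually merge with $r^{(n)}_x$ once they touch at the right of $l^{(n)}_x$, and that the requirement "at the right-hand side of $l^{(n)}_x$" does not cost extra time beyond the overshoot estimate. This is where Lemma \ref{lemma:pathsstayclose} and Claim \ref{claim:overshoot} do the real work; the rest is routine application of the coalescence-time bounds already established.
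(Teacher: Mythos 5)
Your overall strategy is the same as the paper's: decompose $\mathcal{T}^{(n)}$ into a crossing phase and a subsequent same-environment coalescence phase, bound the first via Corollary \ref{corollary:dnbcrossingtime} and Remark \ref{remark:indepcoalesce}, and bound the second via Lemma \ref{lemma:dnbcoalescencetime} after controlling the distance between $r^{(n)}_x$ and $\tilde r^{(n)}_x$ at the start of the second phase. The paper's decomposition waits until \emph{both} r-paths have crossed $l^{(n)}_x$ before starting the second phase, while yours waits only for $\tilde r^{(n)}_x$; that difference is harmless, since once $\tilde r^{(n)}_x$ is to the right of $l^{(n)}_x$ it cannot cross back, so the subsequent coalescence of the two r-paths necessarily occurs at the right of $l^{(n)}_x$.

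The one step that does not go through as written is your control of $\Delta^{(n)}$. You invoke Lemma \ref{lemma:pathsstayclose} ``applied to $r^{(n)}_x$ and $l^{(n)}_x$'', but that lemma is stated (and proved, via the coalescence-time bound) only for a pair of l-paths or a pair of r-paths; an r-path and an l-path do not coalesce — they cross and then separate with opposite drifts — so neither the statement nor its proof covers this pair, and the overshoot bound of Claim \ref{claim:overshoot} alone does not give you the position of $r^{(n)}_x$ relative to $\tilde r^{(n)}_x$ at the crossing time. The paper sidesteps this entirely: writing $D^{(n)}_t=|r^{(n)}_x(t)-\tilde r^{(n)}_x(t)|$, it observes that on the event $\{\mathcal{T}^{(n)}_{\mathrm{I}}\le n^{2\gamma_2}\}$ (which has probability tending to one by the phase-one bound, with $\gamma_1<\gamma_2<\gamma_3<\gamma$) one has $\Delta^{(n)}\le\sup_{0\le t\le n^{2\gamma_2}}D^{(n)}_t$, and this supremum is at most $n^{\gamma_3}$ with probability tending to one simply because each of the two r-paths individually converges to a drifted Brownian motion under diffusive scaling (Lemma \ref{lemma:individualconvergence}). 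Replacing your appeal to Lemma \ref{lemma:pathsstayclose} by this direct estimate on $\sup_t D^{(n)}_t$ makes your argument complete; the rest of your proof is correct.
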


\vspace{0.2cm}

\begin{remark}By Remark \ref{remark:indepcoalesce}, we have that Lemma \ref{lemma:meetingtime} also holds if $\tilde{r}^{(n)}_x$ continue evolving considering an independent environment after it crosses $l^{(n)}_{x}$. 
\end{remark}

Now note that since $R^{(n)}$ has positive drift and $L^{(n)}$ has negative drift, they will cross each other in an almost surely finite time for every $n \geq 1$. Besides that, as verified in Part I, $\Big\{ \left( \frac{1}{n} L_{tn^2}^{(n)},  \frac{1}{n} R_{tn^2}^{(n)}\right)_{t \geq 0} \Big\}_{n \in \mathbb{N}}$ are tight and then, we can take a subsequence that converges weakly to some limiting process $(L_t,R_t)_{t \geq 0}$. By Skorohod's Representation Theorem, we can couple these processes for $n \in \mathbb{N}$ and the limit process such that the convergence holds almost surely. 

Let $\overline{S}^{(n)}$ be the random time when  $R^{(n)}$ and $L^{(n)}$ cross each other. Assuming the coupling, we have that $\frac{1}{n^2}\overline{S}^{(n)} \rightarrow \overline{S}$, $\frac{1}{n} R_{\overline{S}^{(n)}}^{(n)} \rightarrow R_{\overline{S}}$ and $\frac{1}{n} L_{\overline{S}^{(n)}}^{(n)} \rightarrow L_{\overline{S}}$ almost surely as $n$ goes to infinity, where $\overline{S}$ is the crossing time between the limit paths $L$ and $R$. This is a consequence of the uniform convergence of the subsequence of $\Big\{ \left( \frac{1}{n} L_{tn^2}^{(n)},  \frac{1}{n} R_{tn^2}^{(n)}\right)_{t \geq 0} \Big\}_{n \in \mathbb{N}}$ that we got from Skorohood's Representation Theorem. By the interaction rule between $R^{(n)}$ and $L^{(n)}$, we have that once they meet each other, $R^{(n)}$ can not be at the left-hand side of $L^{(n)}$ anymore. 

So, we can divide our proof into two stages: one considering $R_t^{(n)}$ and $L_t^{(n)}$ for $t \in [0,\overline{S}^{(n)})$ and the other when $t \geq \overline{S}^{(n)}$. About this second scenario, we  already have verified in Subsection \ref{subsec:partI} that the convergence holds when $R^{(n)}$ and $L^{(n)}$ start from fixed vertices such that $R_0^{(n)} \geq L_0^{(n)}$. The difference here is that these paths start at a random time $\overline{S}^{(n)}$, but it is a $\mathcal{F}_t$-stopping time and by the Strong Markov Property of $(L^{(n)},R^{(n)})$ the weak convergence holds in time interval $[\overline{S},\infty)$.

For $t < \overline{S}^{(n)}$, first note that $\overline{S}^{(n)}$ is almost surely finite, as a consequence of  Corollary \ref{corollary:dnbcrossingtime}. Indeed, the convergence that we mentioned above implies that $\overline{S}^{(n)}$ is of order $n^2$, when $R_t^{(n)}$ and $L_t^{(n)}$ start at a distance of order $n$ from each other. Besides that, given any initial distance between $R^{(n)}$ and $L^{(n)}$, Corollary \ref{corollary:dnbcrossingtime} gives us an estimate on the tail of the distribution of $\overline{S}^{(n)}$. Together with Lemma \ref{lemma:longjump}, we have that $R_t^{(n)}$ and $L_t^{(n)}$ will not make jumps of size greater than $n^{\frac{3}{4}}$ in the time interval $[0,\overline{S}^{(n)})$ with a probability that converges to one as $n \rightarrow \infty$. 

Now denote by $\overline{T}^{(n)}$ the first time that $R^{(n)}$ and $L^{(n)}$ reach a distance smaller than $n^{\frac{3}{4}}$ from each other and let us couple $R^{(n)}$ with another path $\overline{R}^{(n)}$. The path $\overline{R}^{(n)}$ is equal to $R^{(n)}$ until $\overline{T}^{(n)}$. Note that, at this moment, and conditioning on the event that $R^{(n)}$ and $L^{(n)}$ do not make jumps of size greater than $\kappa \, log(n)/2$, $R^{(n)}$ is still on the left-hand side of $L^{(n)}$. As in Section \ref{subsec:partI}, let $\tilde{\omega}=(\tilde{\omega}(z))_{z\in\mathbb{Z}^2}$ and $\tilde{\theta}=(\tilde{\theta}(z))_{z\in\mathbb{Z}^2}$ be independent families of random variables identically distributed as $\omega$ and $\theta$ respectively, and also independent of them. After $\overline{T}^{(n)}$, $\overline{R}^{(n)}$ begins to evolve according to the $(\tilde{\omega}(z),\tilde{\theta}(z))$-environment, until $\overline{R}^{(n)}$ meets $R^{(n)}$ at the right-hand side of $L^{(n)}$ for the first time. At this time, $\overline{R}^{(n)}$ returns to be equal to $R^{(n)}$. Denote by $\overline{S}_c^{(n)}$ the first time where $\overline{R}^{(n)}$ is at the right-hand side of $L^{(n)}$.

\begin{remark} At this point it is natural that the reader asks himself why we are introducing $\overline{R}^{(n)}$. Indeed we introduce $\overline{R}^{(n)}$ to impose an independence between $\overline{R}^{(n)}$ and $L^{(n)}$ in a time interval $[\overline{T}^{(n)} , \overline{S}^{(n)} ]$ which is negligible under diffusive scaling. To see that it is negligible, we just have to start another r-path from the position of $L^{(n)}$ at time $\overline{T}^{(n)}$ and use coalescing time estimates for this r-path and $\overline{R}^{(n)}$. The coalescence occurs after time $\overline{S}^{(n)}$, and after it we only need to use the Markov property and the result from Section \ref{subsec:partI}. Thus we do not need to introduce $\overline{R}^{(n)}$, and we would have a slightly shorter proof. However in the next section we use a more complex construction based on the definition $\overline{R}^{(n)}$ to couple multiple r-paths and l-paths to independent versions with reflection between left-right pairs.     
\end{remark}

Since $\Big\{ \left( \frac{1}{n} \overline{R}^{(n)}_{tn^2},  \frac{1}{n} R_{tn^2}^{(n)}\right)_{t \geq 0} \Big\}_{n \in \mathbb{N}}$ is tight, we can take a subsequence that converges weakly to some limiting process $\displaystyle{\left(\overline{R}_t,R_t\right)_{t \geq 0}}$. By Skorohod's Representation Theorem, we can couple these processes for $n \in \mathbb{N}$ and the limiting process such that the convergence holds almost surely. Also, denote by $\overline{S}_c$ the time where $\overline{R}$ crosses $L$.

Again conditioning on the event that $R^{(n)}$ and $L^{(n)}$ do not make jumps of size greater than $\kappa \, log(n)$, we have that $\overline{R}^{(n)}$ and $L^{(n)}$ are independent paths until $\overline{R}^{(n)}$ meets $R^{(n)}$ at the right-hand side of $L^{(n)}$. Then, by Lemma \ref{lemma:individualconvergence}, $(\overline{R}^{(n)}_t,{L}_t^{(n)})_{t<\overline{S}_c^{(n)}}$ converges weakly to $(B_t^r,B_t^l)_{t < \overline{S}_c}$ under diffusive scaling, where $B_t^r$ and $B_t^l$ are independent Brownian motions with diffusion coefficient $\lambda_p$ and drifts $b_p$ and $-b_p$ respectively.

To conclude, we have to show that $(R^{(n)}$,$L^{(n)})$ and $(\overline{R}^{(n)},L^{(n)})$ converge weakly under diffusive scale to the same limit. Let $\mathcal{T}^{(n)}$ be as in the statement of Lemma \ref{lemma:meetingtime} for the triple $(R^{(n)},\overline{R}^{(n)},L^{(n)})$, which here is equivalent to the length of time where $R^{(n)}$ and $\overline{R}^{(n)}$ are different. Then
$$
P\Big( \sup_{\overline{T}^{(n)} \leq t \leq \overline{T}^{(n)} + \mathcal{T}^{(n)}} \big|R^{(n)}_t - \overline{R}^{(n)}_t\big| > n^{\frac{5}{6}} \Big)
$$
is bounded from above by
\begin{align}\label{eq:partII}
	& P\Big(\mathcal{T}^{(n)} > 2n^{\frac{8}{5}}\Big) + P \Big( \sup_{\overline{T}^{(n)} \leq t \leq \overline{T}^{(n)} + 2n^{\frac{8}{5}}} \big|R^{(n)}_t - \overline{R}^{(n)}_t\big| > n^{\frac{5}{6}} \Big) \nonumber\\
	& \leq P\left(\mathcal{T}^{(n)} > 2n^{\frac{8}{5}}\right) + P \Big( \sup_{\overline{T}^{(n)} \leq t \leq \overline{T}^{(n)} + 2n^{\frac{8}{5}}} \big( \big|R^{(n)}_t - R^{(n)}_{\overline{T}^{(n)}}\big| + \big|\overline{R}^{(n)}_t - \overline{R}^{(n)}_{\overline{T}^{(n)}}\big| \big) > n^{\frac{5}{6}} \Big).
\end{align}
The first term in \eqref{eq:partII} converges to zero as $n$ goes to infinity by Lemma \ref{lemma:meetingtime} (choosing $\gamma_1 = \frac{3}{4}$ and $\gamma = \frac{4}{5}$).  About the second term in \eqref{eq:partII}, by Lemma \ref{lemma:individualconvergence} we have that both $R^{(n)}$ and $\overline{R}^{(n)}$ converge under diffusive scaling to Brownian motions with drift $b_p$. Since $5/6 > 4/5$, by the Strong Markov property, this second term also converges to zero as $n$ goes to infinity, which let us conclude that $(R^{(n)}$,$L^{(n)})$ and $(\overline{R}^{(n)},L^{(n)})$ converge weakly under diffusive scaling to the same limit.

Finally, the proofs presented in Sections \ref{subsec:partI} and \ref{subsec:partII} together give us the proof of Proposition \ref{lemma:onepairconverge} \hfill $\square$ 

\vspace{0.2cm}

\subsection{Part III: $k$ l-paths and $\tilde{k}$ r-paths}
\label{subsec:partV}

Now we will assume that we have $k$ sequences of l-paths of the DNB, $l_i^{(n)}$ starting from $z_i^{(n)} = (x_i^{(n)},0)$ with $n^{-1} z_i^{(n)}$ converging to $z_i = (x_i,0)$ for $1\le i \le k$, and $\tilde{k}$ sequences of r-paths, $r^{(n)}_{\tilde{z}_j}$ starting from $\tilde{z}_j^{(n)} = (\tilde{x}_j^{(n)},0)$, with $n^{-1}\tilde{z}_j^{(n)}$ converging to $\tilde{z}_j = (\tilde{x}_j,0)$ for $j = 1, \ldots, \tilde{k}$. Based on the non-crossing property, without loss of generality we assume the worst scenery, which is all r-paths starting at the left of all l-paths, i.e, $\tilde{x}_{\tilde{k}}^{(n)} < \ldots < \tilde{x}_{1}^{(n)} < x_1^{(n)} < \ldots < x_k^{(n)}$.

The main strategy is to couple our collection of l-paths and r-paths with a collection of paths that are conditionally independent given the non-occurrence of long jumps. This coupling should have the property that the time windows where the collections differ are negligible under diffusive scaling. So we start considering slight modifications of paths $(l_i^{(n)})_{i=1}^{k-1}$. The modified paths will be denoted by $(\overline{l}_i^{(n)})_{i=1}^{k-1}$ and they are constructed in the following way: $\overline{l}_i^{(n)}$ is equal to $l_i^{(n)}$ until it reaches a distance smaller than $n^{\frac{3}{4}}$ from another l-path at its right-hand side. When it happens, $\overline{l}_i^{(n)}$  begins to evolve according to a different environment $(\omega_i(z),\theta_i(z))_{z \in \mathbb{Z}^2}$ (which is an independent copy of $(\omega,\theta)$) until it attempts to cross another l-path, at this moment the paths will coalesce at the position of the l-path with the greater index. Note that in a system with independent paths until coalescence, they do not coalesce when they cross each other (coalescence only occurs when the paths meet each other at the same position). This is not the case here, however both the systems will have the same asymptotic behavior due to the  estimates on coalescence times. Additionally, we set $\overline{l}_k^{(n)} = l_k^{(n)}$ and suppose that all the environments involved in the construction are mutually independent.

We will also replace r-paths with modified versions denoted by $(\overline{r}_j^{(n)})_{j=1}^{\tilde{k}}$, which we will construct in the following. Let $(\overline{\omega}_j(z),\overline{\theta}_j(z))_{z\in \mathbb{Z}^2}$, for $j=1,...,\tilde{k}$, be copies of $(\omega,\theta)$, still supposing that all the environments involved in the construction are mutually independent. Now define
$$
\overline{\mathcal{F}}_t = \sigma\big( ({\omega}(z),{\theta}(z)), \ ({\omega_i}(z),{\theta_i}(z))_{i=1}^{k-1}, \ (\overline{\omega}_j(z),\overline{\theta}_j(z))_{j=1}^{\tilde{k}}, \ z=(z_1,z_2), \ z_1 \in \mathbb{Z}, \ z_2 \leq t \big),
$$ 
for $t\ge 0$, and consider the $\overline{\mathcal{F}}_t$-stopping times
$$
\overline{T}_{i,j}^{(n)} = \inf \big\{ t\ge 0 : \,  (\overline{l}_i^{(n)}(t) - r_j^{(n)}(t)) \le n^{\frac{3}{4}} \big\} , \ i=1,...,k, \ \mbox{and} \ j=1,...,\tilde{k},
$$  

i.e. $\overline{T}_{i,j}^{(n)}$ is the first time that $r_j^{(n)}$ reaches a distance smaller than $n^{\frac{3}{4}}$ to the left of $\overline{l}_i^{(n)}$. Note that for any fixed $n \in \mathbb{N}$ and $i = 1,2, \ldots, k$, by the assumption in the order of the starting points, $\overline{T}_{i,1}^{(n)} \le \overline{T}_{i,2}^{(n)} \le ... \le \overline{T}_{i,\tilde{k}}^{(n)}$. Also, for any fixed $n \in \mathbb{N}$ and $j = 1,2, \ldots, \tilde{k}$,  $\overline{T}_{1,j}^{(n)} \le \overline{T}_{2,j}^{(n)} \le ... \le \overline{T}_{k,j}^{(n)}$. Some of these stopping times can be equal, which happens, for example, if some l-paths coalesce before they get close to the same r-path.\

Each of the paths $\overline{r}_1^{(n)}, \ldots, \overline{r}_{\tilde{k}}^{(n)}$ is built according to the following steps. 
\begin{itemize}
\item[(i)] Each $\overline{r}_j^{(n)}$ are equal to $r_j^{(n)}$ until it reaches time 
$$
\varrho = \min_{1\le i\le k} \overline{T}_{i,1}^{(n)},
$$
if $j=1$, or 
$$
\varrho = \min_{1\le j'\le j-1} \overline{\mathfrak{T}}_{j,j'} \wedge \min_{1\le i\le k} \overline{T}_{i,j}^{(n)}
$$

if $2\le j \le \tilde{k}$, where
$$
\overline{\mathfrak{T}}_{j,j'} = \inf \big\{ t\ge 0 : \,  (\overline{r}_j^{(n)}(t) - r_{j'}^{(n)}(t)) \le n^{\frac{3}{4}} \big\}, \ \, 1 \leq j' \leq j-1.
$$
Thus $\min_{1\le j'\le j-1} \overline{\mathfrak{T}}_{j,j'}$ is the $\overline{\mathcal{F}}_t$-stopping time where $\overline{r}_j^{(n)}$ reaches a distance smaller or equal to $n^{\frac{3}{4}}$ from another r-path initially at its left. At time $\varrho$, $\overline{r}_j^{(n)}$ begins to evolve according  to the $(\overline{\omega}_j,\overline{\theta}_j)$-environment. If $\varrho = \min_{1\le j'\le j-1} \overline{\mathfrak{T}}_{j,j'}$ go to step (ii), otherwise go to step (iii).
\item[(ii)] After time $\varrho$, when either $\overline{r}_j^{(n)}$ meets or crosses $\overline{r}_{j'}^{(n)}$, for $1\le j'\le j-1$, for the first time, they coalesce at the position of $\overline{r}_{j'}^{(n)}$ and continue to evolve according to the environment that $\overline{r}_{j'}^{(n)}$ is using.
\item[(iii)] Suppose that $\varrho = \overline{T}_{i,j}^{(n)}$. After time $\varrho$,  $\overline{r}_j^{(n)}$ evolves according to the  $(\overline{\omega}_j,\overline{\theta}_j)$-environment until it reaches $\overline{S}_{i,j}^{(n)}$, where
$$
\overline{S}_{i,j}^{(n)} = \inf \big\{ t\ge 0 : \, \overline{r}_j^{(n)}(t) \ge \overline{l}_i^{(n)}(t) \big\} , \ i=1,...,k, \ \mbox{and} \ j=1,...,\tilde{k},
$$
i.e. $\overline{S}_{i,j}^{(n)}$ is the first time that either $\overline{r}_j^{(n)}$ meets or crosses $\overline{l}_i^{(n)}$. Then go to step (iv). If $\varrho = \overline{S}_{i,j}^{(n)}$, then go immediately to step (iv). Note that for any fixed $n \in \mathbb{N}$ and $i = 1,2, \ldots, k$, by the assumption in the order of the starting points, $\overline{S}_{i,1}^{(n)} \le \overline{S}_{i,2}^{(n)} \le ... \le \overline{S}_{i,\tilde{k}}^{(n)}$. Also, for any fixed $n \in \mathbb{N}$ and $j = 1,2, \ldots, \tilde{k}$, $\overline{S}_{1,j}^{(n)} \le \overline{S}_{2,j}^{(n)} \le ... \le \overline{S}_{k,j}^{(n)}$. Moreover, $\overline{T}_{i,j}^{(n)} \le \overline{S}_{i,j}^{(n)}$, for any pair $(i,j)$ with $i=1,...,k$ and $j=1, \ldots, \tilde{k}$.
\item[(iv)] From time $\overline{S}_{i,j}^{(n)}$, $\overline{r}_j^{(n)}$ begins to evolve according to the same environment that $\overline{l}_i^{(n)}$ is using until one of the three following possibilities happens:
\begin{enumerate}
\item Another r-path $\overline{r}_{j'}^{(n)}$ crosses $\overline{l}_i$ and we have $\overline{l}_i^{(n)} < \overline{r}_{j'}^{(n)} < \overline{r}_j^{(n)}$ with $\overline{r}_j^{(n)} - \overline{r}_{j'}^{(n)} \le n^{\frac{3}{4}}$. In this scenario, $\overline{r}_{j'}^{(n)}$ begins to evolve according to environment that $\overline{l}_i^{(n)}$ is considering and $\overline{r}_j^{(n)}$ automatically begin to evolve according to the $(\overline{\omega}_j,\overline{\theta}_j)$-environment, because $\overline{r}_j^{(n)}$ enters the situation described in step (ii).
\item $\overline{r}_j^{(n)}$ meets $r_j^{(n)}$ while is evolving according to the $(\omega,\theta)$-environment. In this case we return to have $\overline{r}_j^{(n)}$ equal to $r_j^{(n)}$. 
\item $\overline{r}_j^{(n)}$ reaches a distance smaller than $n^{\frac{3}{4}}$ to the left of another l-path $\overline{l}_m^{(n)}, m\ge i+1$ and is at a distance greater than $\frac{n^{\frac{3}{4}}}{2}$ from $\overline{l}_i^{(n)}$. In this case $\overline{r}_j^{(n)}$ begins to evolve according to the $(\overline{\omega}_j,\overline{\theta}_j)$-environment. From here, if $\overline{r}_j^{(n)}$ returns to a distance smaller than $\frac{n^{\frac{3}{4}}}{2}$ from the nearest l-path $\overline{l}_i^{(n)}$ at its left, go to step (v).
\end{enumerate}
\item[(v)] Whenever $\overline{r}_j^{(n)}$ returns to a distance smaller than $\frac{n^{\frac{3}{4}}}{2}$ from the nearest l-path $\overline{l}_i^{(n)}$ at its left and there are not any other r-path $\overline{r}_l^{(n)}$ between $\overline{r}_j^{(n)}$ and $\overline{l}_i^{(n)}$, we have that $\overline{r}_j^{(n)}$ will return to evolve according to the same environment that $\overline{l}_i^{(n)}$ is using, like in the beginning of step (iv), if they were not already in the same environment.
\end{itemize}

\begin{remark}
In the construction of the auxiliary paths $\overline{l}_i^{(n)}$ and $\overline{r}_j^{(n)}$ for $i = 1,...,k$ and $j = 1,...,\tilde{k}$ is fundamental to assure that any $\overline{r}_j^{(n)}$ will never cross any $\overline{l}_i^{(n)}$ from right to left. Note that the steps (iv) and (v) in the construction above, together with the imposition that the coalescence in step (ii) occurs in the position of the $r$-path that is occupying the rightmost position, are there to guarantee that this kind of crossing will never happen if these paths do not make jumps of size equal to $\frac{n^{\frac{3}{4}}}{4}$ or greater. We will suppress these long jumps later.
\label{rem:partIII}
\end{remark}

\begin{figure}[!ht]
\centering
\includegraphics[scale=0.54]{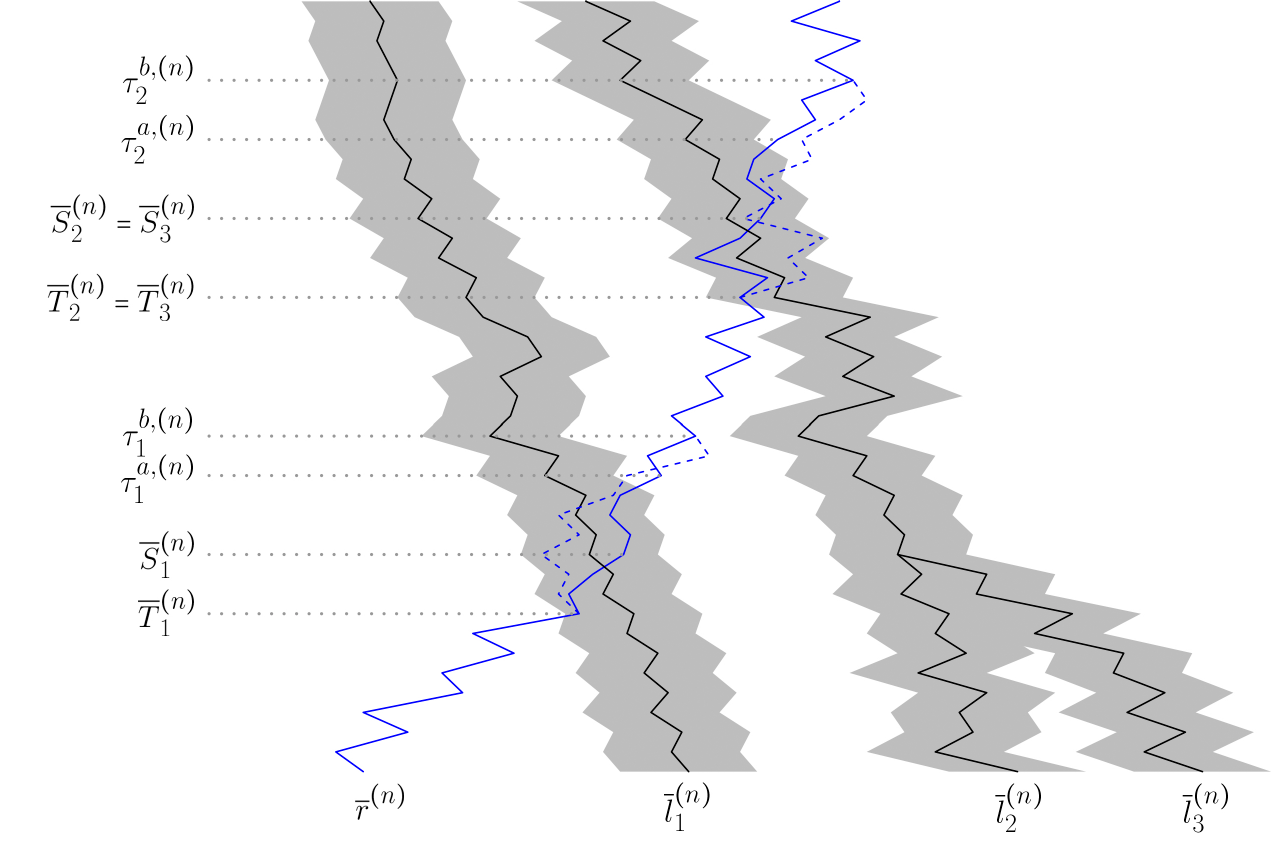}
\caption{Example of the evolution of $\overline{r}^{(n)}$, $\overline{l}^{(n)}_1, \overline{l}^{(n)}_2, \overline{l}^{(n)}_3$. The gray areas indicate the regions near to any $\overline{l}_j^{(n)}$, $j=1,2,3$ such that if $\overline{r}^{(n)}$ enters there, the environment that $\overline{r}^{(n)}$ is considering can change. The dashed lines indicate the evolution of $r^{(n)}$ when it is not equal to $\overline{r}^{(n)}$. 
}
\label{fig_partIV}
\end{figure}

To illustrate an example of this construction, we have Figure \ref{fig_partIV} considering only one alternative r-path $\overline{r}^{(n)}$ and three alternative l-paths $\overline{l}^{(n)}_i$ for $i=1,2,3$. Note that when $\overline{r}^{(n)}$ hits the stopping time $\overline{T}_1^{(n)} = \overline{T}_{1,1}^{(n)}$, it begins to evolve according to the $(\overline{\omega},\overline{\theta})$-environment (independently of $r^{(n)}$) and this event initiates step (iii). At time $\overline{S}_1^{(n)} = \overline{S}_{1,1}^{(n)}$, $\overline{r}^{(n)}$ crosses $\overline{l}_1^{(n)}$ and starts to evolve considering the same environment of $\overline{l}_1^{(n)}$ (which at this point, is the $(\omega,\theta)$-environment, since $\overline{l}_1^{(n)}$ is far from any other $\overline{l}_i^{(n)}$), initiating the step (iv). After that, we will wait for either Scenario 2 or Scenario 3 of step (iv) to happen. At time $\tau_1^{b,(n)}$, $\overline{r}^{(n)}$ meets $r^{(n)}$ in a distance greater than $n^{\frac{3}{4}}$ from any l-path at their right-hand side, so $r^{(n)}$ and $\overline{r}^{(n)}$ are evolving considering the same environment and then $\overline{r}^{(n)}$ returns to be equal to $r^{(n)}$ (it means that the event described in Scenario 2 of step (iv) happened first). At time $\overline{T}_2^{(n)} = \overline{T}_3^{(n)} = \overline{T}_{2,1}^{(n)} = \overline{T}_{3,1}^{(n)}$, $\overline{r}^{(n)}$ reaches a distance smaller than $n^{\frac{3}{4}}$ from $\overline{l}_2^{(n)}$ and $\overline{l}_3^{(n)}$, which already coalesced at this point, returning to the step (iii). From now on the example repeat the same behavior that we described.

From $(\overline{l}_1^{(n)}\hspace{-0.05cm}, \ldots, \overline{l}_k^{(n)}\hspace{-0.05cm}, \overline{r}_1^{(n)}\hspace{-0.05cm}, \ldots, \overline{r}_{\tilde{k}}^{(n)})$, we will construct the process $(\check{l}_1^{(n)}\hspace{-0.05cm}, \hspace{-0.05cm}\ldots, \check{l}_k^{(n)}\hspace{-0.05cm}, \check{r}_1^{(n)}\hspace{-0.05cm}, \ldots, \check{r}_{\tilde{k}}^{(n)})$. They will evolve equally, except that the paths $(\check{l}_1^{(n)}, \ldots, \check{l}_k^{(n)}, \check{r}_1^{(n)}, \ldots, \check{r}_{\tilde{k}}^{(n)})$ can not make jumps of size greater than $n^{\frac{3}{4}}/4$. So both processes will be equal until the first time when one path among $(\overline{l}_1^{(n)}, \ldots, \overline{l}_k^{(n)}, \overline{r}_1^{(n)}, \ldots, \overline{r}_{\tilde{k}}^{(n)})$ makes a jump of size greater than $n^{\frac{3}{4}}/4$, if it happens. At this moment, the associated path in $(\check{l}_1^{(n)}, \ldots, \check{l}_k^{(n)}, \check{r}_1^{(n)}, \ldots, \check{r}_{\tilde{k}}^{(n)})$ will jump only $n^{\frac{3}{4}}/4$ and both process will no longer be equal, despite that, both processes continue to evolve according to the steps described for $(\overline{l}_1^{(n)}, \ldots, \overline{l}_k^{(n)}, \overline{r}_1^{(n)}, \ldots, \overline{r}_{\tilde{k}}^{(n)})$. Now from $(\check{l}_1^{(n)}, \ldots, \check{l}_k^{(n)}, \check{r}_1^{(n)}, \ldots, \check{r}_{\tilde{k}}^{(n)})$ we define, $\check{T}_{i,j}^{(n)}$ and $\check{S}_{i,j}^{(n)}$ in the same way that we defined, $\overline{T}_{i,j}^{(n)}$ and $\overline{S}_{i,j}^{(n)}$ from $(\overline{l}_1^{(n)}, \ldots, \overline{l}_k^{(n)}, \overline{r}_1^{(n)}, \ldots, \overline{r}_{\tilde{k}}^{(n)})$, which are also $\overline{\mathcal{F}}_t$-stopping times.

Fix $s > 0$ and let us define the following events:
\begin{align*}
	\mathcal{A}_{n,s} &= \Big\{ \max_{1 \leq i \leq k} \sup_{0 \leq t \leq sn^2} \left|l_{i}^{(n)}(t) - \overline{l}_{i}^{(n)}(t)\right| \leq n^{\frac{7}{8}} \ \mbox{and} \max_{1 \leq j \leq \tilde{k}} \sup_{0 \leq t \leq sn^2} \left|r^{(n)}_j(t) - \overline{r}^{(n)}_j(t)\right| \leq n^{\frac{14}{15}}\Big\}\, \\
	\mathcal{B}_{n,s} & = \bigcap_{i=1}^{k} \bigcap_{j=1}^{\tilde{k}} \Big\{\overline{l}_{i}^{(n)} \text{ and } \overline{r}_{j}^{(n)} \text{ do not make any jump of size greater than }\frac{n^\frac{3}{4}}{4} \text{in } [0,sn^2] \Big\}
\end{align*}
Our approach here is similar to the one used in the proof of \cite[Theorem 8]{cfd}. We show that conditioned to $\mathcal{A}_{n,s}\cap\mathcal{B}_{n,s}$, $(l_1^{(n)}, \ldots, l_k^{(n)}, r_1^{(n)}, \ldots, r_{\tilde{k}}^{(n)})$ and $(\check{l}_1^{(n)}, \ldots, \check{l}_k^{(n)},  \check{r}_1^{(n)},  \ldots, \check{l}_{\tilde{k}}^{(n)})$  have the same weak limit under diffusive scaling and that $P((\mathcal{A}_{n,s}\cap\mathcal{B}_{n,s})^c)$ goes to zero as $n$ goes to infinity, which will allow us to replace the original paths by $(\check{l}_1^{(n)}, \ldots, \check{l}_k^{(n)}, \check{r}_1^{(n)},  \ldots, \check{l}_{\tilde{k}}^{(n)})$  in our proof.

Let us begin with $\mathcal{B}_{n,s}^c$. Since the probability that $\overline{l}_i^{(n)}$ makes a jump of size greater than $n^{\frac{3}{4}}/4$ does not depend on $i$ and is equal to the probability of $\overline{r}_j^{(n)}$ makes a jump of size greater than $n^{\frac{3}{4}}/4$ for any $j$, we can apply Lemma \ref{lemma:longjump} and have that
\begin{align}\label{eq:eventB}
	P(\mathcal{B}_{n,s}^c) &\leq (k+\tilde{k})P\Big(\overline{r}_1^{(n)} \mbox{ makes a jump of size greater than } n^\frac{3}{4}/4 \mbox{ in } sn^2 \mbox{ attempts}\Big) \nonumber\\
	& \leq 2(k+\tilde{k})sn^2\exp\left\{-\frac{c}{4}\left(n^{\frac{3}{4}}-4\right)\right\}, 
\end{align}
which converges to zero as n goes to infinity.

To deal with  $\mathcal{A}_{n,s}^c$, write $\mathcal{A}_{n,s}^c = \overline{\mathcal{A}}_{n,s}^c  \cup \ddot{\mathcal{A}}_{n,s}^c$ where:
\begin{align*}
	\overline{\mathcal{A}}_{n,s}^c &= \Big\{ \max_{1 \leq i \leq k} \sup_{0 \leq t \leq sn^2} \left|l_{i}^{(n)}(t) - \overline{l}_{i}^{(n)}(t)\right| > n^{\frac{7}{8}} \Big\}\, , \\
	\ddot{\mathcal{A}}_{n,s}^c &= \Big\{ \max_{1 \leq j \leq \tilde{k}} \sup_{0 \leq t \leq sn^2} \left|r^{(n)}_j(t) - \overline{r}_j^{(n)}(t)\right| > n^{\frac{14}{15}}\Big\}.
\end{align*}
Concerning $\overline{\mathcal{A}}_{n,s}^c$, write $P\big(\overline{\mathcal{A}}_{n,s}^c\big)$ as
\begin{equation}
 P\Big(\bigcup_{i=1}^k \Big\{ \sup_{0 \leq t \leq sn^2} \left|l_{i}^{(n)}(t) - \overline{l}_{i}^{(n)}(t)\right| > n^{\frac{7}{8}}\Big\}\hspace{-0.05cm}\Big) \hspace{-0.05cm} \leq k P\hspace{-0.05cm}\Big(\sup_{0 \leq t \leq sn^2} \Big|l_{1}^{(n)}(t) - \overline{l}_{1}^{(n)}(t)\Big| > n^{\frac{7}{8}}\Big)\hspace{-0.05cm}.
\label{eq:eventAbar}
\end{equation}
Now applying Lemma \ref{lemma:pathsstayclose} with $\gamma_1 = \frac{3}{4}$ and $\gamma = \frac{7}{8}$, we have that the right hand side of the expression (\ref{eq:eventAbar}) converges to zero as $n$ goes to infinity, since $l_{1}^{(n)}$ and $\overline{l}_{1}^{(n)}$ will be different only in a time window that starts when $\overline{l}_{1}^{(n)}$ reaches a distance smaller than $n^{\frac{3}{4}}$ from another l-path $\overline{l}_{2}^{(n)}$ and ends when $\overline{l}_{1}^{(n)}$ and $\overline{l}_{2}^{(n)}$ coalesce. For $\ddot{\mathcal{A}}_{n,s}^c$, note that $\overline{r}_j^{(n)}(t)$ can begin to evolve in a different environment for two reasons: Either because $\overline{r}_j^{(n)}(t)$ reaches a distance smaller than $n^\frac{3}{4}$ from another $\overline{r}_l^{(n)}(t)$ or because $\overline{r}_j^{(n)}(t)$ reaches a distance smaller than $n^\frac{3}{4}$ from an l-path $\overline{l}_i^{(n)}$ at its right. Denote by $\Xi_{j}^r$ and $\Xi_{j}^l$ the set of times where $\overline{r}_j^{(n)}(t)$ is evolving in a different environment due to the first and the second reasons, respectively (note that $\Xi_{j}^r$ and $\Xi_{j}^l$ can have some indexes in common). Then we have that
\[
P(\ddot{\mathcal{A}}_{n,s}^c) \leq P\Big(\max_{1 \leq j \leq \tilde{k}} \sup_{t \in \Xi_{j}^r \cup \Xi_{j}^l} \left|r^{(n)}_j(t) - \overline{r}_j^{(n)}(t)\right| > n^{\frac{14}{15}}\Big)
\]
\begin{equation}
\leq P\Big( \bigcup_{j=1}^{\tilde{k}}\Big\{ \sup_{t \in \Xi_{j}^r} \left|r^{(n)}_j(t) - \overline{r}_j^{(n)}(t)\right| > n^{\frac{14}{15}}\Big\}\Big) + P\Big( \bigcup_{j=1}^{\tilde{k}} \Big\{ \sup_{t \in \Xi_{j}^l} \left|r^{(n)}_j(t) - \overline{r}_j^{(n)}(t)\right| > n^{\frac{14}{15}}\Big\}\Big).
\label{eq:Xi}
\end{equation}
The leftmost term in \eqref{eq:Xi} converges to zero as $n$ goes to infinity for the same reason that \eqref{eq:eventAbar} converges to zero as $n$ goes to infinity. About the rightmost term in \eqref{eq:Xi}, note that $r_j^{(n)}(t)$ and $\overline{r}_j^{(n)}(t)$ are different due to the second reason only during a maximum of $k$ time windows that start at the stopping times $\overline{T}_{i,j}^{(n)}$ and have some length $\mathcal{T}_{i,j}^{(n)}$, which has the same distribution of $\mathcal{T}^{(n)}$ defined in Lemma \ref{lemma:meetingtime}. Then, applying this lemma, we can bound the rightmost term in \eqref{eq:Xi} from above by
$$
P \Big( \bigcup_{j=1}^{\tilde{k}} \bigcup_{i=1}^k \Big\{ \sup_{\overline{T}_{i,j}^{(n)} \leq t \leq \overline{T}_{i,j}^{(n)} + \mathcal{T}_{i,j}^{(n)}} \left|r_j^{(n)}(t) - \overline{r}_j^{(n)}(t)\right| > n^{\frac{14}{15}}\Big\} \Big)
$$
which consequently is bounded above by 
\begin{align}\label{eq:eventAdots}
	& \tilde{k}k P \Big( \sup_{\overline{T}_{1,1}^{(n)} \leq t \leq \overline{T}_{1,1}^{(n)} + \mathcal{T}_{1,1}^{(n)}} \left|r_1^{(n)}(t) - \overline{r}_1^{(n)}(t)\right| > n^{\frac{14}{15}} \Big) \nonumber\\
	&\leq \tilde{k}k P\big(\mathcal{T}_{1,1}^{(n)} > 2n^{\frac{13}{7}}\big) + \tilde{k}k P \Big( \sup_{\overline{T}_{1,1}^{(n)} \leq t \leq \overline{T}_{1,1}^{(n)} + 2n^{\frac{13}{7}}} \left|r_1^{(n)}(t) - \overline{r}_1^{(n)}(t)\right| > n^{\frac{14}{15}} \Big).
\end{align}
The first term in \eqref{eq:eventAdots} converges to zero, as $n$ goes to infinity, by Lemma \ref{lemma:meetingtime} (choosing $\gamma_1 = \frac{3}{4}$ and $\gamma = \frac{13}{14}$). About the second term in \eqref{eq:eventAdots}, we have by Lemma \ref{lemma:individualconvergence} that both $r_1^{(n)}$ and $\overline{r}_1^{(n)}$ converge under diffusive scale to Brownian motions with diffusion coefficient $\lambda_p$ and drift $b_p$. So, by the Strong Markov property, this second term also converges to zero as $n$ goes to infinity and hence $P(\mathcal{A}_{n,s}^c)$ goes to zero as $n$ goes to infinity.

Now let us fix a uniformly continuous bounded function $H:C([0,s], \mathbb{R}^{k+\tilde{k}}) \rightarrow \mathbb{R}$. Denoting by $\left(L_1, \ldots, L_k, R_1, \ldots, R_{\tilde{k}}\right)$ the collection of left-right coalescence Brownian motions starting from $(z_1, \ldots, z_k, \tilde{z}_1, \ldots, \tilde{z}_{\tilde{k}})$, then proving the desired convergence is equivalent to show that
$$\lim_{n\to \infty} \Big| E \Big[ H \Big(
{\scriptstyle \frac{l_{1}^{(n)}(sn^2)}{n}, \ldots, \frac{l_{k}^{(n)}(sn^2)}{n}, \frac{r_{1}^{(n)}(sn^2)}{n}, \ldots, \frac{r_{\tilde{k}}^{(n)}(sn^2)}{n} }\Big)\Big] \hspace{-0.03cm}-\hspace{-0.03cm} E\big[H\hspace{-0.05cm}\big(L_1, \ldots, L_k, R_1 \ldots R_{\tilde{k}}\big)\big] \Big| = 0.$$
To prove it, use $(\overline{l}_1^{(n)}, \ldots, \overline{l}_k^{(n)}, \overline{r}_1^{(n)}, \ldots, \overline{r}_{\tilde{k}}^{(n)})$ and $(\check{l}_1^{(n)}, \ldots, \check{l}_k^{(n)}, \check{r}_1^{(n)}, \ldots, \check{r}_{\tilde{k}}^{(n)})$ to bound the expression above by
$$\Big| E\Big[H\Big( {\scriptstyle\frac{l_{1}^{(n)}(sn^2)}{n}, \ldots, \frac{l_{k}^{(n)}(sn^2)}{n}, \frac{r_1^{(n)}(sn^2)}{n}, \ldots, \frac{r_{\tilde{k}}^{(n)}(sn^2)}{n}\Big)\Big]} - E\Big[H\Big( {\scriptstyle\frac{\overline{l}_{1}^{(n)}(sn^2)}{n}, \ldots, \frac{\overline{l}_{k}^{(n)}(sn^2)}{n}, \frac{\overline{r}_1^{(n)}(sn^2)}{n}, \ldots, \frac{\overline{r}_{\tilde{k}}^{(n)}(sn^2)}{n}}\Big)\Big] \Big|$$
$$+ \Big|  E\Big[H\Big( { \scriptstyle \frac{\overline{l}_{1}^{(n)}(sn^2)}{n}, \ldots, \frac{\overline{l}_{k}^{(n)}(sn^2)}{n}, \frac{\overline{r}_1^{(n)}(sn^2)}{n}, \ldots, \frac{\overline{r}_{\tilde{k}}^{(n)}(sn^2)}{n} }\Big)\Big] - E\Big[H\Big( { \scriptstyle \frac{\check{l}_{1}^{(n)}(sn^2)}{n}, \ldots, \frac{\check{l}_{k}^{(n)}(sn^2)}{n}, \frac{\check{r}_1^{(n)}(sn^2)}{n}, \ldots, \frac{\check{r}_{\tilde{k}}^{(n)}(sn^2)}{n}\Big)}\Big] \Big|$$
\begin{equation}
+ \Big| E\Big[H\Big( {\scriptstyle \frac{\check{l}_{1}^{(n)}(sn^2)}{n}, \ldots, \frac{\check{l}_{k}^{(n)}(sn^2)}{n}, \frac{\check{r}_1^{(n)}(sn^2)}{n}, \ldots, \frac{\check{r}_{\tilde{k}}^{(n)}(sn^2)}{n} }\Big)\Big] - E\big[H\big( L_1, \ldots, L_k, R_1 \ldots R_{\tilde{k}}\big)\big] \Big|.
\label{eq:multihellybray}
\end{equation}

The first term in \eqref{eq:multihellybray} is bounded above by
\begin{align} \label{eq:hellybrayfirstterm}
 &{\scriptstyle \Big| E\Big[\Big(H\Big(\frac{l_{1}^{(n)}(sn^2)}{n}, \ldots, \frac{l_{k}^{(n)}(sn^2)}{n}, \frac{r_1^{(n)}(sn^2)}{n}, \ldots, \frac{r_{\tilde{k}}^{(n)}(sn^2)}{n}\Big) - H\Big(\frac{\overline{l}_{1}^{(n)}(sn^2)}{n}, \ldots, \frac{\overline{l}_{k}^{(n)}(sn^2)}{n}, \frac{\overline{r}_1^{(n)}(sn^2)}{n}, \ldots, \frac{\overline{r}_{\tilde{k}}^{(n)}(sn^2)}{n}\Big)\Big)\mathbb{I}_{\mathcal{A}_{n,s}}\Big] \Big|} \nonumber\\
 & {\scriptstyle + \Big| E\Big[\Big(H\Big(\frac{l_{1}^{(n)}(sn^2)}{n}, \ldots, \frac{l_{k}^{(n)}(sn^2)}{n}, \frac{r_1^{(n)}(sn^2)}{n}, \ldots, \frac{r_{\tilde{k}}^{(n)}(sn^2)}{n}\Big) - H\Big(\frac{\overline{l}_{1}^{(n)}(sn^2)}{n}, \ldots, \frac{\overline{l}_{k}^{(n)}(sn^2)}{n}, \frac{\overline{r}_1^{(n)}(sn^2)}{n}, \ldots, \frac{\overline{r}_{\tilde{k}}^{(n)}(sn^2)}{n}\Big)\Big)\mathbb{I}_{\mathcal{A}^c_{n,s}}\Big] \Big|}.
\end{align}
On the event $\mathcal{A}_{n,s}$, we have that $\sup_{0\le t\le sn^2} \left| \frac{l_{i}^{(n)}(t)}{n} - \frac{\overline{l}_{i}^{(n)}(t)}{n} \right| \xrightarrow[n \to \infty]{} 0$, for $i = 1, \ldots, k$, and $\sup_{0\le t\le sn^2} \left| \frac{r_{j}^{(n)}(sn^2)}{n} - \frac{\overline{r}_{j}^{(n)}(sn^2)}{n} \right| \xrightarrow[n \to \infty]{} 0$, for $j = 1, \ldots, \tilde{k}$. Then, since $H$ is a continuous function, we have that the first term on the right hand side of \eqref{eq:hellybrayfirstterm} converges to zero as $n$ goes to infinity. Besides that, since $P(\mathcal{A}_{n,s}^c) \to 0$ as $n$ goes to infinity and $H$ is a and bounded function, we have that the second term on the right hand side of \eqref{eq:hellybrayfirstterm} also converges to zero as n goes to infinity.

About the second term in \eqref{eq:multihellybray}, note that conditioned to $\mathcal{B}_{n,s}$, we have that the processes $(\overline{l}_1^{(n)}, \ldots, \overline{l}_k^{(n)}, \overline{r}_1^{(n)}, \ldots, \overline{r}_{\tilde{k}}^{(n)})$ and $(\check{l}_1^{(n)}, \ldots,\check{l}_k^{(n)},$ $\check{r}_1^{(n)}, \ldots, \check{r}_{\tilde{k}}^{(n)})$ are equal in $[0,sn^2]$. Thus
$${\scriptstyle \Big|  E\Big[H\Big(\frac{\overline{l}_{1}^{(n)}(sn^2)}{n}, \ldots, \frac{\overline{l}_{k}^{(n)}(sn^2)}{n}, \frac{\overline{r}_1^{(n)}(sn^2)}{n}, \ldots, \frac{\overline{r}_{\tilde{k}}^{(n)}(sn^2)}{n}\Big)\Big] - E\Big[H\Big(\frac{\check{l}_{1}^{(n)}(sn^2)}{n}, \ldots, \frac{\check{l}_{k}^{(n)}(sn^2)}{n}, \frac{\check{r}_1^{(n)}(sn^2)}{n}, \ldots, \frac{\check{r}_{\tilde{k}}^{(n)}(sn^2)}{n}\Big)\Big] \Big| =}$$
$${\scriptstyle = \Big| E\Big[\Big(H\Big(\frac{\overline{l}_{1}^{(n)}(sn^2)}{n}, \ldots, \frac{\overline{l}_{k}^{(n)}(sn^2)}{n}, \frac{\overline{r}_1^{(n)}(sn^2)}{n}, \ldots, \frac{\overline{r}_{\tilde{k}}^{(n)}(sn^2)}{n} \Big) - H\Big(\frac{\check{l}_{1}^{(n)}(sn^2)}{n}, \ldots, \frac{\check{l}_{k}^{(n)}(sn^2)}{n}, \frac{\check{r}_1^{(n)}(sn^2)}{n}, \ldots, \frac{\check{r}_{\tilde{k}}^{(n)}(sn^2)}{n}\Big)\Big) \mathbb{I}_{\mathcal{B}^c_{n,s}} \Big] \Big|}$$
$$\leq 2(k+\tilde{k})sn^2\exp\left\{-\frac{c}{4}\left(n^{\frac{3}{4}}-4\right)\right\} \left\|H\right\|_{\infty},$$
which converges to zero as n goes to infinity since $H$ is a bounded function.

Now it remains to deal with the last term in \eqref{eq:multihellybray}. To show that it converges to zero, we use the same argument of \cite[Proposition 5.2]{ss1} to prove an equivalent result for simple coalescing random walks with branching. First, note that, for any fixed $t \in [0,sn^2]$, we have that all l-paths $\check{l}^{(n)}_i, i=1,\ldots,k$, are independent until they meet or try to cross each other, i.e. when they coalesce. Now we will argue that the jump probabilities of $\check{r}_j^{(n)}(t)$, for any $j = 1,\ldots,\tilde{k}$, given the position of the nearest l-path at its left-hand side, which we denote by $\check{l}^{(n)}_m$, is independent of any other l-path $\check{l}^{(n)}_i, i=1,\ldots,k$ that have not coalesced yet with $\check{l}^{(n)}_m$: 
\begin{itemize}
\item If $i < m$ and $\check{l}_{i}^{(n)}(t) \neq \check{l}_{m}^{(n)}(t)$, then either (1) $\check{l}_{i}^{(n)}(t)$ is at a distance greater than $n^{\frac{3}{4}}$ from $\check{r}_j^{(n)}(t)$
or (2) $\check{l}_{i}^{(n)}(t)$ is at a distance smaller than or equal to $n^{\frac{3}{4}}$ from $\check{r}_j^{(n)}(t)$. In situation (1) $\check{r}_j^{(n)}(t)$ jumps independently of $\check{l}_{i}^{(n)}(t)$ because the paths $\check{r}_j^{(n)}$ and $\check{l}_{i}^{(n)}$ can not make jumps of size greater than $n^{\frac{3}{4}}/4$. In situation (2) we also have independence because $\check{l}_{i}^{(n)}(t)$ is also at a distance smaller than $n^{\frac{3}{4}}$ from $\check{l}_{m}^{(n)}(t)$ and consequently, $\check{l}_{i}^{(n)}(t)$ is evolving according to a different environment. Note that if $\check{l}_{i}^{(n)}(t)$ would make a jump trying to cross $\check{r}_j^{(n)}(t)$, then $\check{l}_{i}^{(n)}(t)$ would also try to cross $\check{l}_{m}^{(n)}(t)$ and hence these two l-paths would coalesce.
\item If $i > m$ and $\check{l}_{i}^{(n)}(t) \neq \check{l}_{m}^{(n)}(t)$, then either (1) if $\check{l}_{i}^{(n)}(t)$ is at a distance smaller than $n^{\frac{3}{4}}/2$ from $\check{r}_j^{(n)}(t)$ or (2) $\check{l}_{i}^{(n)}(t)$ is at a distance greater than $n^{\frac{3}{4}}/2$ from $\check{r}_j^{(n)}(t)$. In situation (1) we have that $\check{r}_j^{(n)}(t)$ is independent of $\check{l}_{i}^{(n)}(t)$, because $\check{r}_j^{(n)}(t)$ is evolving according to a different environment. In situation (2) we have independence because $\check{r}_j^{(n)}$ and $\check{l}_{i}^{(n)}$ can not make jumps of size greater than $n^{\frac{3}{4}}/4$.
\end{itemize}

\vspace{-0.08cm}

Besides that, we also have independence between pairs of r-paths $\check{r}_j, j=1,\ldots,\tilde{k}$. When these r-paths are at a distance greater than $n^{\frac{3}{4}}$ of each other, we have independence because $\check{r}_j$ do not make jumps of size greater than $\frac{n^{\frac{3}{4}}}{4}$ and when they are at a distance smaller than $n^{\frac{3}{4}}$ of each other, we also have independence because they will be evolving in different environments.

So, due to the Markov property and the independence mentioned above, we have that $(\check{l}_1^{(n)}, \ldots, \check{l}_k^{(n)},\check{r}_1^{(n)}, \ldots, \check{r}_{\tilde{k}}^{(n)})$ can be constructed inductively by concatenating independent evolution of sets of paths, where each set consists of either a single l-path, a single r-path or a pair of left-right paths, like was done in \cite{ss1}.

Before we describe the construction, for a fixed $n$, define the non decreasing sequence of stopping times $\check{\tau}_j^{(n)}$ where $\check{\tau}_j^{(n)}$ is the j-th time where either a coalescence between two l-paths occurs, a coalescence between two r-paths occurs or a crossing between an r-path and an l-path occurs. Since we have at most $k+\tilde{k}-2$ coalescence events and at most $k\tilde{k}$ crossings, the sequence $\check{\tau}_j^{(n)}$ will have at most $k\tilde{k} + k + \tilde{k} -2$ stopping times.

We first construct the system up to $\check{\tau}_1^{(n)}$. The paths are initially ordered as $\check{r}_1^{(n)}, \ldots, \check{r}_{\tilde{k}}^{(n)},$ $\check{l}_1^{(n)}, \ldots, \check{l}_k^{(n)}$, so, we can partition them as $\{\check{r}_1^{(n)}\}\ldots\{\check{r}_{\tilde{k}}^{(n)}\}\{\check{l}_1^{(n)}\}\ldots\{\check{l}_k^{(n)}\}$ and each partition evolve independently until they hit $\check{\tau}_1^{(n)}$. When it happens, we have a change in this partition. For example, if $\check{\tau}_1^{(n)}$ is related to a crossing event between $\check{r}_{\tilde{k}}^{(n)}$ and $\check{l}_1^{(n)}$, we will begin to have the partition $\{\check{r}_1^{(n)}\}\ldots\{\check{r}_{\tilde{k}-1}^{(n)}\} \{\check{l}_1^{(n)},\check{r}_{\tilde{k}}^{(n)}\}\{\check{l}_2^{(n)}\}\ldots\{\check{l}_k^{(n)}\}$. If  $\check{\tau}_1^{(n)}$ is related to a coalescence event, let us say between $\check{l}_2^{(n)}$ and $\check{l}_3^{(n)}$ for example, we will have the partition $\{\check{r}_1^{(n)}\}\ldots\{\check{r}_{\tilde{k}}^{(n)}\}\{\check{l}_1^{(n)}\}\{\check{l}_3^{(n)}\}\{\check{l}_4^{(n)}\}\ldots\{\check{l}_k^{(n)}\}$. After that, we continue to let the partition elements evolve independently until they hit $\check{\tau}_2^{(n)}$, when we have another change in the partition. This procedure continues in the same way, changing the partition whenever the system hits a stopping time $\check{\tau}_j^{(n)}$. In addition, note that some of the stopping times $\check{\tau}_j^{(n)}$ can be equal. Since the sequence $\check{\tau}_j^{(n)}$ has at most a finite number of stopping times, we change the partition only a finite number of times and it eventually leads to a single pair $\{\check{l}_k^{(n)},\check{r}_{\tilde{k}}^{(n)}\}$. 

So, by Markov property of our system and the observation that the stopping times $\check{\tau}_j^{(n)}$ used in the construction are almost surely continuous functionals on $\Pi^{k+\tilde{k}}$ with respect to the law of independent evolution of paths in different partition elements, we can describe the weak limit under diffusive scale according to the limit of each partition element. By Proposition \ref{lemma:onepairconverge} we have that the partition elements composed of a pair of an l-path and an r-path converge to a pair of left-right coalescing Brownian motion. By Lemma \ref{lemma:individualconvergence}, we have that the partition elements composed by a single l-path or r-path converge to a Brownian motion with drift $-b_p$ and $+b_p$ respectively.

This completes the proof of convergence when all paths start at the same time.

\begin{remark}
If we have paths starting from different times, we just need to add more stopping times to the sequence $\check{\tau}_j^{(n)}$ to include the times when a new path starts in the system. But the argument can still be used and is basically the same. 
\end{remark}

Finally, the proofs presented this section give us the proof of Proposition \ref{lemma:conditionI}, since the general case will follow by induction. 

\vspace{0.2cm}


\section{Proof of Theorems \ref{Teo:leftright} and \ref{Teo:teoprincipal}}
\label{sec:lefttobrownian} 

This section is devoted to proving Theorems \ref{Teo:leftright} and \ref{Teo:teoprincipal}.

\smallskip

In the proof of Theorem \ref{Teo:leftright}, we follow the same steps as the proof of \cite[Theorem 5.3]{ss1}. We will need the next result, which is equivalent to \cite[Theorem 2.9]{dual} for the Drainage Network without branching:

\begin{proposition}
\label{prop:dualwebconv}
Under the conditions of Theorem \ref{Teo:leftright}, we have that $(W_n^l, \hat{W}_n^l) \Longrightarrow (\mathcal{W}_{\lambda,b_p}^l, \widehat{\mathcal{W}}_{\lambda,b_p}^l)$ and $(W_n^r, \hat{W}_n^r) \Longrightarrow (\mathcal{W}_{\lambda,b_p}^r, \widehat{\mathcal{W}}_{\lambda,b_p}^r)$.
\end{proposition}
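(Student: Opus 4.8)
The plan is to prove Proposition \ref{prop:dualwebconv} by verifying the convergence criteria for a double Brownian Web (with drift), following the template of \cite[Theorem 2.9]{dual}. I will treat the pair $(W_n^l, \hat W_n^l)$; the argument for $(W_n^r, \hat W_n^r)$ is entirely symmetric (the r-paths have drift $+b_p$ instead of $-b_p$, and one replaces ``left'' by ``right'' throughout). Recall from Section \ref{subsec:dualdef} that by construction no path in $\hat W_n^l$ crosses any path in $W_n^l$, which is the discrete analogue of the non-crossing property (b) in Proposition \ref{Result:dualbrownianweb}. So the structure of the proof is: (1) show $W_n^l$ converges to $\mathcal{W}_{\lambda,b_p}^l$; (2) show $\hat W_n^l$ is tight and that any subsequential weak limit of the pair $(W_n^l,\hat W_n^l)$ has the no-crossing property and the correct marginal for the dual; (3) invoke the characterization of the double Brownian Web with diffusion $\lambda^2$ and drift $-b_p$ to conclude that the joint law is determined, hence the full sequence converges.

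For step (1): the set $W_n^l$ is a family of non-crossing paths (two l-paths coalesce when they meet, and never cross), so tightness of $W_n^l$ follows from \cite[Proposition B2]{finr1} once we have the finite-dimensional convergence, and the finite-dimensional convergence of l-paths to coalescing Brownian motions with diffusion $\lambda_p^2$ and drift $-b_p$ is exactly the l-path part of Proposition \ref{lemma:conditionI} (take $\tilde k = 0$). Together with the coalescing structure this identifies the weak limit of $W_n^l$ as $\mathcal{W}_{\lambda,b_p}^l$ by the characterization in Proposition \ref{Result:leftrightweb}(i)--(iii) restricted to l-paths (equivalently, the tilted Brownian Web characterization in \cite[Theorem 1.5]{ss1}). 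For step (2): the dual l-paths $\hat W_n^l$ also form a non-crossing coalescing family (dual l-paths never cross each other, only dual branchings exist and those move a dual l-path and a dual r-path apart, not two dual l-paths), so the same tightness argument applies provided we establish the finite-dimensional convergence of dual l-paths to coalescing Brownian motions with diffusion $\lambda_p^2$ and drift $+b_p$ (the sign of the drift flips under time reversal, as with the ordinary dual web). The coalescence-time bound needed to pass from finite-dimensional convergence to the identification of the limit as the full dual web — specifically the statement that the limit has no extra paths — is supplied by Lemma \ref{lemma:dualcoalescencetime}, exactly as Lemma \ref{lemma:dnbcoalescencetime} is used on the primal side; this is the role played by \cite[Lemma 2.2]{dual} in the no-branching case.

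The main obstacle is the finite-dimensional convergence of the dual l-paths with the correct drift, i.e. proving the dual analogue of (a suitable piece of) Proposition \ref{lemma:conditionI}. Here one cannot simply quote \cite{dual} because of the branching perturbation: one must show that the dual l-path increments, which depend on whether the path currently sits at an integer or a half-integer site (see the discussion before Lemma \ref{lemma:dualcoalescencetime}), still satisfy a Donsker-type invariance principle after correcting for the $O(n^{-1})$ drift introduced by $\epsilon_n = \fb n^{-1}$. The strategy would mirror the proof of Lemma \ref{lemma:individualconvergence}: decompose a dual l-path increment as a ``no-branching dual Drainage Network'' increment plus a small drift-correction term, apply the regeneration structure of the dual (the stopping times $\hat\nu_{\hat u,\hat v}$ at which both paths return to integer positions, which occur with positive frequency) to reduce to an i.i.d.\ setting, and compute that the drift correction converges to $b_p\, t$ in probability under diffusive scaling — the constant $b_p = \frac{\fb(1-p)}{(2-p)^2}$ should come out the same as in Remark \ref{remark:branchprob} and Lemma \ref{lemma:individualconvergence} because branchings of the dual are in one-to-one correspondence with branchings of the DNB. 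For finitely many dual l-paths, the coalescing structure combined with Lemma \ref{lemma:dualcoalescencetime} then upgrades single-path convergence to convergence of the coalescing family, and joint convergence of the primal-dual pair $(W_n^l,\hat W_n^l)$ follows from the deterministic coupling (the dual configuration is a measurable function of the primal configuration) together with the no-crossing constraint, which forces any subsequential limit to be the double left-right (l-)web with the stated parameters by Proposition \ref{Result:dualbrownianweb} in its tilted form.

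\begin{remark}
Once Proposition \ref{prop:dualwebconv} is available, Theorem \ref{Teo:leftright} follows by the same argument as \cite[Theorem 5.3]{ss1}: joint tightness of $(W_n^l,W_n^r,\hat W_n^l,\hat W_n^r)$ is immediate from the marginal tightnesses just established, and any subsequential limit must be a double left-right Brownian Web with diffusion $\lambda^2$ and drift $b_p$ because the four marginals and the mutual no-crossing relations (l vs l, r vs r, l vs $\hat l$, r vs $\hat r$) together with the characterization in \cite[Theorem 1.5]{ss1} pin down the joint law uniquely.
\end{remark}
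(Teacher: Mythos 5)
Your overall architecture is reasonable, but there is a genuine gap at the very first step, and it propagates. To conclude that a subsequential limit of $W_n^l$ \emph{is} $\mathcal{W}_{\lambda,b_p}^l$, it is not enough to have tightness plus finite-dimensional convergence of the skeletal paths (Proposition \ref{lemma:conditionI}) plus the coalescing structure: these only show that any limit point \emph{contains} a copy of the (tilted) Brownian Web. One must also rule out extra paths in the limit, which is exactly the role of conditions (B1)--(B2), or of (E$'$), in the convergence criteria of Section \ref{subsec:webcritterion}; invoking ``Proposition \ref{Result:leftrightweb}(iii) restricted to l-paths'' presupposes that the limit equals the closure of its skeletal paths, which is the statement to be proved. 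The paper's proof addresses this head-on: it first removes the drift via the homeomorphism $\varphi_2$ and then verifies condition (E$'$) for $\varphi(W_n^l)$, using the coalescence-time bound of Lemma \ref{lemma:dnbcoalescencetime} to get local finiteness of $\mathcal{Z}_{t_0}(t_0+\delta)$ and the stochastic domination by coalescing Brownian motions started from every point of $\mathbb{R}$. Your proposal never mentions any of (B1), (B2), (E$'$) for the primal, and the same omission recurs for the dual marginal, where you would need it a second time.

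The second issue is that your route for the dual is substantially heavier than what is actually needed, and its hardest ingredient is left as a sketch. You propose to prove full finite-dimensional convergence of \emph{several} coalescing dual l-paths (a dual analogue of Proposition \ref{lemma:conditionI}) and then pin down the joint law via the tilted version of Proposition \ref{Result:dualbrownianweb}; you correctly flag the dual fdd convergence as ``the main obstacle'' but only outline it. The paper avoids this entirely by using \cite[Proposition 2.7]{dual} (stated as Proposition \ref{prop:dualwebcritter}): once the primal marginal is known to be a Brownian Web, the dual is identified by three much weaker properties — existence of a backward path from each deterministic point (Lemma \ref{lemma:onedualpath}, a \emph{single}-path convergence proved via a martingale CLT), the non-crossing property inherited from the discrete construction, and the non-coincidence-over-intervals condition (iii), handled as in \cite[Theorem 2.9 and Lemma 2.11]{dual}. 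Your characterization via Proposition \ref{Result:dualbrownianweb} does not require condition (iii), so your route is not wrong in principle; but as written it replaces one unproved condition by a harder unproved one (multi-path dual fdd convergence plus the dual's own no-extra-paths bound), so the proposal does not yet constitute a proof.
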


\medskip

We will prove Proposition \ref{prop:dualwebconv} after proving Theorem \ref{Teo:leftright}.

\medskip

\noindent \textit{Proof of Theorem \ref{Teo:leftright}.} By Proposition \ref{prop:dualwebconv} we have that $\{(W_n^l,W_n^r,\hat{W}_n^l,\hat{W}_n^r)\}_{n \in \mathbb{N}}$ is tight. Let $(X^l,X^r,\mathcal{Z}^l,\mathcal{Z}^r)$ be a subsequential weak limit of this sequence. Then, again by Proposition \ref{prop:dualwebconv}, we have that $(X^l,\mathcal{Z}^l)$ is distributed as $(\mathcal{W}^l_{\lambda,b_p},\widehat{\mathcal{W}}^l_{\lambda,b_p})$ and $(X^r,\mathcal{Z}^r)$ is distributed as $(\mathcal{W}^r_{\lambda,b_p},\widehat{\mathcal{W}}^r_{\lambda,b_p})$. Recall that $(W_n^l, W_n^r)$ and $(\mathcal{W}^l_{\lambda,b_p},\mathcal{W}^r_{\lambda,b_p})$ determine their dual processes, which are respectively $(\hat{W}_n^l,\hat{W}_n^r)$ and $(\widehat{\mathcal{W}}^l_{\lambda,b_p},\widehat{\mathcal{W}}^r_{\lambda,b_p})$. Then, if we have that 
\begin{equation} \label{Gdualconv}
(W_n^l,  W_n^r) \Longrightarrow (\mathcal{W}_{\lambda,b_p}^l, \mathcal{W}_{\lambda,b_p}^r),
\end{equation}
it follows that $(X^l,X^r,\mathcal{Z}^l,\mathcal{Z}^r)$ has the same distribution as $(\mathcal{W}^l_{\lambda,b_p},\mathcal{W}^r_{\lambda,b_p},\widehat{\mathcal{W}}^l_{\lambda,b_p},\widehat{\mathcal{W}}^r_{\lambda,b_p})$, which proves Theorem \ref{Teo:leftright}.  \hfill $\square$\\

To prove \eqref{Gdualconv}, we have to verify that $(X^l,X^r)$ satisfies conditions (i), (ii) and (iii) in Proposition \ref{Result:leftrightweb}. Conditions (i) and (iii) follow from Proposition \ref{prop:dualwebconv}. In addition to that, by condition (I$_{\mathcal{N}}$) (Proposition \ref{lemma:conditionI}) we have that $(X^l,X^r)$ also satisfies condition (ii), consequently $(X^l,X^r)$ has the same distribution as the left-right Brownian Web $(\mathcal{W}^l_{\lambda,b_p},\mathcal{W}^r_{\lambda,b_p})$. \hfill 

\medskip

\noindent \textit{Proof of Proposition \ref{prop:dualwebconv}.} We will only prove Proposition \ref{prop:dualwebconv} for the system with l-paths, but the proof for the r-paths is analogous. Let us begin by introducing some maps. Let $\phi:\Pi \rightarrow \Pi$ such that, for any path $\pi_t \in \Pi$, we have: 
$$
\phi(\pi_t) = \pi_t + b_p t, \mbox{ for any } t \ge \sigma_\pi.
$$
Now let $\varphi:\mathcal{H} \rightarrow \mathcal{H}$ be such that, for any $K \in \mathcal{H}$,
$$
\varphi(K) = \{\phi(\pi), \pi \in K\}.
$$
Analogously, let $\varphi_2:\mathcal{H} \times \widehat{\mathcal{H}} \rightarrow \mathcal{H} \times \widehat{\mathcal{H}}$ be such that, for any $(K,\hat{K}) \in \mathcal{H} \times \widehat{\mathcal{H}}$,
$$
\varphi_2(K,\hat{K}) = \{(\phi(\pi),\phi(\hat{\pi})), \pi \in K, \hat{\pi} \in \hat{K}\}.
$$
These three maps are homeomorphisms in their respective domains. 

Note that the map $\varphi_2$, when applied to the set of l-paths and its dual, removes the drift of all these paths; and when applied to $(\mathcal{W}_{\lambda,b_p}^{l},\mathcal{W}_{\lambda,b_p}^l$) gives us the Brownian Web together with its dual. The main idea of the proof, following an approach used in \cite{fav} to prove the main theorem of that article, will be to verify that
\begin{equation}
\varphi_2(W_n^l,\hat{W}_n^l) \Longrightarrow (\mathcal{W}_\lambda,\widehat{\mathcal{W}}_\lambda), \mbox{ as } n \rightarrow \infty.
\label{eq:mapconvergence}
\end{equation}

Since the map $\varphi_2$ is a homeomorphism on $\mathcal{H} \times \widehat{\mathcal{H}}$, we have that \eqref{eq:mapconvergence} implies the statement of Proposition \ref{prop:dualwebconv}. To prove that \eqref{eq:mapconvergence} holds, we will begin by proving that
\begin{equation}
\varphi(W_n^l) \Longrightarrow \mathcal{W}_\lambda, \mbox{ as } n \rightarrow \infty.
\label{eq:partialmapconvergence}
\end{equation}

To verify that \eqref{eq:partialmapconvergence} holds, we will follow the converge criterion to Brownian Web presented in Section \ref{subsec:webcritterion}. So, according to that criterion, since we do not have crossings among the paths of $\varphi(W_n^l)$, it is only necessary to prove conditions (I) and (E$'$), also described in Section \ref{subsec:webcritterion}. To simplify the notation, we denote $W_n = \varphi(W_n^l)$ and $(W_n,\hat{W}_n) = \varphi_2(W_n^l,\hat{W}_n^l)$.

We already have the condition (I) as a consequence of Proposition \ref{lemma:conditionI}.

As in Section \ref{subsec:webcritterion}, denote by $W_n^{t_0^{-}}$ the subset of paths in $W_n$ which starts before or at time $t_0$ and let $\mathcal{Z}_{t_0}$ be a subsequential limit of $W_n^{t_0^{-}}$ for any fixed $t_0 \in \mathbb{R}$. Recalling the condition (E$'$), we have to prove that 
$$
E[\hat{\eta}_{\mathcal{Z}_{t_0}}(t_0,t,a,b)] \leq E[\hat{\eta}_{\mathcal{W}}(t_0,t,a,b)] = \frac{b-a}{\sqrt{\pi t}},
$$
where $\hat{\eta}_{\mathcal{Z}_{t_0}}(t_0,t,a,b)$, for $a<b$, is the number of distinct points in $(a,b) \times \{t_0+t\}$ that are occupied by a path of $\mathcal{Z}_{t_0}$ that started before or at time $t_0$.

The verification of condition (E$'$) follows the same arguments presented in \cite[Section 6 ]{nrs}. The aim is to show that \cite[Lemmas 6.2 and 6.3]{nrs} also hold in our case, since it was proved in that paper that these two lemmas imply condition (E$'$) for general models of coalescing random walks, in particular to the DNB, see also \cite{cfd} and \cite{cv}.

Fix $\delta > 0$ and denote by $\mathcal{Z}_{t_0}(t_0+\delta)$ the intersection of paths in $\mathcal{Z}_{t_0}$ with the time line $t_0 + \delta$. To prove that \cite[Lemma 6.2]{nrs} holds in our case we need to show that $\mathcal{Z}_{t_0}(t_0+\delta)$ is almost surely locally finite for any $\delta > 0$. To prove that \cite[Lemma 6.3]{nrs} holds in our case we need to show that the set of paths in $\mathcal{Z}_{t_0}$ truncated before time $t_0 + \delta$, for any $\delta > 0$ is distributed as coalescing Brownian motions starting from the random set $\mathcal{Z}_{t_0}(t_0+\delta) \subset \mathbb{R}^2$.

We can argue that $\mathcal{Z}_{t_0}(t_0+\delta)$ is almost sure locally finite for any $\delta > 0$ as a consequence of Lemma \ref{lemma:dnbcoalescencetime}. Define the event 

\vspace{0.2cm}

$O_t = \{$There is an l-path that starts from a point $z \in \mathbb{Z}$ at time $0$ that visits the origin at time $t \}$. 

\vspace{0.2cm}

\noindent One can see that
\begin{equation}
\label{eq:locallyfiniteness}
P(O_t) \leq \frac{C}{\sqrt{t}},
\end{equation}
and it gives the locally finiteness property of $\mathcal{Z}_{t_0}(t_0+\delta)$ (\cite[Lemma 6.2]{nrs}). The statement in \eqref{eq:locallyfiniteness} is equivalent to  \cite[Lemma 3.3]{cv} for the generalized Drainage Network and \cite[Lemma 2.7]{nrs} for independent random walks. The proof that \eqref{eq:locallyfiniteness} holds given the upper bound of Lemma \ref{lemma:dnbcoalescencetime} is analogous to the proof of \cite[Lemma 2.7]{nrs}.

Now, since we have locally finiteness of $\mathcal{Z}_{t_0}(t_0+\delta)$, condition (I) implies that $\mathcal{Z}_{t_0}^{(t_0+\delta)_T}$ is distributed as coalescing Brownian motions starting from the random set $\mathcal{Z}_{t_0}(t_0+\delta)$, where $\mathcal{Z}_{t_0}^{(t_0+\delta)_T}$ is the set of paths in $\mathcal{Z}_{t_0}$ that start before or at time $t_0$ and are truncated before time $t_0+\delta$. Then, we get the statement of \cite[Lemma 6.3]{nrs} and condition (E$'$) from the fact that a family of coalescing Brownian motions starting from a random locally finite subset of the real line is stochastically dominated by a system of coalescing Brownian motions starting from every point in $\mathbb{R}$ at time $t_0 + \delta$ and taking limit when $\delta \to 0$. Hence, \eqref{eq:partialmapconvergence} holds.

The next step will be to show \eqref{eq:mapconvergence} using that $\varphi(W_n^l) \Longrightarrow \mathcal{W}$ as $n \rightarrow \infty$. We will use an approach similar to the one used in \cite[Theorem $2.9$]{dual} to prove an equivalent result for a Drainage Network model without branching. The main idea here is to use the following result:

\begin{proposition}
\label{prop:dualwebcritter}
\textbf{(\cite[Proposition $2.7$]{dual})} Let $(\mathcal{W},\mathcal{Z})$ be a $(\mathcal{H} \times \widehat{\mathcal{H}},\mathcal{B}_{\mathcal{H}} \times \mathcal{B}_{\widehat{\mathcal{H}}})$-valued random variable with $\mathcal{W}$ distributed as the Brownian Web. If the following conditions are satisfied:
\begin{enumerate}
\item[(i)] For any deterministic $(x,t) \in \mathbb{R}^2$, there exists a path $\hat{\pi}^{(x,t)} \in \mathcal{Z}$ starting at $(x,t)$ and going backward in time almost surely;
\item[(ii)] Paths in $\mathcal{Z}$ do not cross paths in $\mathcal{W}$ almost surely. That is, there does not exist any $\pi \in \mathcal{W}$, $\hat{\pi} \in \mathcal{Z}$ and $t_1, t_2 \in (\sigma_{\pi},\sigma_{\hat{\pi}})$ such that $(\hat{\pi}(t_1) - \pi(t_1))(\hat{\pi}(t_2)-\pi(t_2)) < 0$ almost surely;
\item[(iii)] Paths in $\mathcal{Z}$ and paths in $\mathcal{W}$ do not coincide over any time interval almost surely. That is, for any $\pi \in \mathcal{W}$ and $\hat{\pi} \in \mathcal{Z}$, does not exist a pair of points $t_1 < t_2$ with $\sigma_{\pi} \leq t_1 < t_2 \leq \sigma_{\hat{\pi}}$ such that $\hat{\pi}_t = \pi_t$ for all $t \in [t_1,t_2]$ almost surely;
\end{enumerate}
Then, $\mathcal{Z} = \widehat{\mathcal{W}}$ almost surely.
\end{proposition}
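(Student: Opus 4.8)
\noindent \textit{Proof proposal for Proposition \ref{prop:dualwebcritter}.}
The plan is to compare $\mathcal{Z}$ with the dual Brownian Web $\widehat{\mathcal{W}}$ that is almost surely determined by $\mathcal{W}$ via Proposition \ref{Result:dualbrownianweb}, and to establish the two inclusions $\widehat{\mathcal{W}} \subseteq \mathcal{Z}$ and $\mathcal{Z} \subseteq \widehat{\mathcal{W}}$ separately. Throughout I would fix a deterministic countable dense set $\widehat{D} \subset \mathbb{R}^2$ and work conditionally on a realization of $\mathcal{W}$, recalling that $\widehat{\mathcal{W}}$ is a measurable function of $\mathcal{W}$ and that $-\widehat{\mathcal{W}}$ is a standard Brownian Web; applying the analogue of property (c) of Proposition \ref{Result:brownianweb} to $-\widehat{\mathcal{W}}$ then gives, almost surely, the identity $\widehat{\mathcal{W}} = \overline{\{\widehat{\pi}_z : z \in \widehat{D}\}}$.

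First I would prove $\widehat{\mathcal{W}} \subseteq \mathcal{Z}$. Fix $z \in \widehat{D}$. By hypothesis (i) there is a backward path $\hat{\pi}^{z} \in \mathcal{Z}$ issued from $z$, and by hypothesis (ii) this path does not cross any path of $\mathcal{W}$. Since $z$ is deterministic, the uniqueness clause of Proposition \ref{Result:dualbrownianweb} guarantees that $\widehat{\pi}_z$ is the unique backward path in $\widehat{\Pi}$ started at $z$ that does not cross $\mathcal{W}$, whence $\hat{\pi}^{z} = \widehat{\pi}_z$ almost surely. Intersecting this almost sure event over the countable set $\widehat{D}$ shows that almost surely $\widehat{\pi}_z \in \mathcal{Z}$ for every $z \in \widehat{D}$. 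As $\mathcal{Z}$ is compact, hence closed in $(\widehat{\Pi},\widehat{d})$, the closure identity above yields $\widehat{\mathcal{W}} \subseteq \mathcal{Z}$ almost surely.

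The reverse inclusion $\mathcal{Z} \subseteq \widehat{\mathcal{W}}$ is the main obstacle, and it is here that hypothesis (iii) is essential. Take an arbitrary $\hat{\pi} \in \mathcal{Z}$ with backward starting time $\hat{\sigma}$ and starting point $\hat{z} = (\hat{\pi}(\hat{\sigma}),\hat{\sigma})$. The difficulty is that $\hat{z}$ is random and may be a special point of $\mathcal{W}$ at which the dual path from $\hat{z}$ is not unique, so the uniqueness clause of Proposition \ref{Result:dualbrownianweb} cannot be applied directly. Instead I would construct a genuine dual path through $\hat{z}$ as a limit: choosing deterministic points $z_m^{\pm} \to \hat{z}$ with $z_m^{-}$ to the left and $z_m^{+}$ to the right of $\hat{z}$ and starting times decreasing to $\hat{\sigma}$, the dual paths $\widehat{\pi}_{z_m^{\pm}} \in \widehat{\mathcal{W}}$ coalesce below a time $T_m \uparrow \hat{\sigma}$ and, by compactness of $\widehat{\mathcal{W}}$, converge along a subsequence to a dual path $\widehat{\pi} \in \widehat{\mathcal{W}}$ issued from $\hat{z}$. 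It then remains to show $\hat{\pi} = \widehat{\pi}$. Both paths start at $\hat{z}$ and, by (ii), neither crosses any forward path of $\mathcal{W}$. If they differed on some time interval below $\hat{\sigma}$, the region enclosed between them would be traversed by $\hat{\pi}$ without crossing forward paths; the interlacing structure of the double Brownian Web forces any backward trajectory confined to such a region either to cross a forward path, which is excluded by (ii), or to run along a forward path of $\mathcal{W}$ over a nondegenerate time interval, which is exactly the coincidence excluded by (iii). Hence $\hat{\pi} = \widehat{\pi} \in \widehat{\mathcal{W}}$.

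The delicate point, and the step I expect to require the most care, is this last implication: upgrading ``$\hat{\pi}$ neither crosses nor coincides with forward paths'' to ``$\hat{\pi}$ equals the dual path $\widehat{\pi}$''. Making it rigorous relies on the fine classification of special points of $(\mathcal{W},\widehat{\mathcal{W}})$ and on the fact that, outside a measure-zero set of times, a dual path is pinned between the nearest forward paths on either side; condition (iii) is precisely what rules out the spurious scenario in which a path of $\mathcal{Z}$ adheres to a forward path for a while and then peels off, producing a backward trajectory outside $\widehat{\mathcal{W}}$. Combining the two inclusions gives $\mathcal{Z} = \widehat{\mathcal{W}}$ almost surely.
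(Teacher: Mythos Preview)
The paper does not prove Proposition \ref{prop:dualwebcritter}; it is quoted from \cite[Proposition~2.7]{dual} and used as a black box in the proof of Proposition \ref{prop:dualwebconv}. There is therefore no in-paper argument to compare your proposal against.

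On the merits of your sketch: the inclusion $\widehat{\mathcal{W}} \subseteq \mathcal{Z}$ is clean and essentially complete. From each deterministic $z$, hypothesis (i) supplies a backward path in $\mathcal{Z}$, hypothesis (ii) says it does not cross $\mathcal{W}$, and the uniqueness clause at the end of Proposition \ref{Result:dualbrownianweb} forces it to coincide with $\widehat{\pi}_z$; closing over a countable dense set and using compactness of $\mathcal{Z}$ finishes. The reverse inclusion, however, is only an outline. Your appeal to the ``interlacing structure of the double Brownian Web'' and the assertion that a non-crossing, non-coinciding backward path must agree with the genuine dual path both hide substantial work: one needs the classification of special points of $(\mathcal{W},\widehat{\mathcal{W}})$ and a careful analysis of what happens when $\hat{\pi}$ touches a forward path without crossing it. You correctly locate where condition (iii) enters and why it is indispensable, but the argument as written is a roadmap rather than a proof; the details are precisely what \cite{dual} supplies.
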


Since $\hat{W}_n^l$ consists only of paths that can not cross each other, tightness of the family $\{\hat{W}_n^l\}_{n \in \mathbb{N}}$ follows from tightness of the non dual system obtained from \eqref{eq:partialmapconvergence}. However we provide a more direct proof of the convergence for dual paths in Lemma \ref{lemma:onedualpath} in Appendix \ref{sec:conditionU} which implies tightness according to \cite[Proposition B.2]{finr1} (Lemma \ref{lemma:onedualpath} and its proof are of interest by themselves, see Remark \ref{remark-A1}). So, the joint families $\{(W_n^l,\hat{W}_n^l)\}_{n \in \mathbb{N}}$ and $\{(W_n,\hat{W}_n)\}_{n \in \mathbb{N}}$ are also tight since both have tight coordinates. Then, to prove Proposition \ref{prop:dualwebconv} it is enough to show that for any subsequential limit $(\mathcal{W},\mathcal{Z})$ of $\{(W_n,\hat{W}_n)\}_{n \in \mathbb{N}}$, the random variable $\mathcal{Z}$ satisfies the conditions given in Proposition \ref{prop:dualwebcritter}. By Skorohod's representation theorem, we may assume that the convergence to $(\mathcal{W},\mathcal{Z})$ happens almost surely.

By Lemma \ref{lemma:onedualpath} for any deterministic $(x,t) \in \mathbb{R}^2$ there exists a path $\hat{\pi} \in \mathcal{Z}$ starting at $(x,t)$ going backward in time almost surely, which gives us (i) of Proposition \ref{prop:dualwebcritter}.

Since we are assuming the almost surely convergence of $\{(W_n,\hat{W}_n)\}_{n \in \mathbb{N}}$ to $(\mathcal{W},\mathcal{Z})$, if a dual path in $\mathcal{Z}$ crosses a path in $\mathcal{W}$, there exists a dual path in $\hat{W}_n^l$ which crosses a path in $W_n^l$, for some $n \geq 1$, which, by construction, is a contradiction. Hence, the paths in $\mathcal{Z}$ do not cross paths in $\mathcal{W}$ almost surely, which gives us (ii) in Proposition \ref{prop:dualwebcritter}.

Now, to verify condition (iii), for $\delta > 0$ and each integer $m \geq 1$, define $A_{\delta,m} = \{$there exist paths $\pi \in \mathcal{W}$, $\hat{\pi} \in \mathcal{Z}$ with $\sigma_{\pi},\sigma_{\hat{\pi}} \in (-m,m)$, and there exists $t_0$ such that $\sigma_{\pi} < t_0 < t_0 + \delta < \sigma_{\hat{\pi}}$ with $-m < \pi(t) = \hat{\pi}(t) < m$ for all $t \in [t_0,t_0+\delta] \}$.

\noindent It is enough to show that for any fixed $\delta > 0$ and for $m \geq 1$, we have $P(A_{\delta,m}) = 0$. Note that the $P(A_{\delta,m}) = 0$ if and only if the equivalent event considering $\pi \in \mathcal{W}_{b_p}^{l}$ and $\hat{\pi} \in \mathcal{Z}^l$ also have probability zero, where $(\mathcal{W}_{b_p}^{l}, \mathcal{Z}^l)$ is a subsequential weak limit of $\{(W_n^l,\hat{W}_n^l)\}_{n \in \mathbb{N}}$. Let us call this alternative version of event $A_{\delta,m}$ by $\overline{A}_{\delta,m}$.

In \cite[Theorem 2.9 and Lemma 2.11]{dual} it is proved that $P(A_{\delta,m}) = 0$ considering $(\mathcal{W},\mathcal{Z})$ as a subsequential weak limit of paths from the Drainage Network without branching and its dual under diffusive scaling. The proof here that $P(\overline{A}_{\delta,m})=0$ is analogous since the only difference is that the paths from $\mathcal{W}_{b_p}^{l}$ have a drift to the left, which implies that paths from $\mathcal{Z}^l$ will go back in time with a drift to the right, but the behavior of the distance between them is the same. It concludes the proof of Proposition \ref{prop:dualwebconv} and part I. \hfill $\square$ 

\medskip

\noindent \textit{Proof of Theorem \ref{Teo:teoprincipal}} By Proposition \ref{Result:convergencecritter2} we need to show that conditions (C), (I$_{\mathcal{N}}$), (H) and (U$_{\mathcal{N}}^{'}$) hold for $\mathcal{X}_{\epsilon_n}^n$. Condition (I$_{\mathcal{N}}$) was already proved in Section \ref{sec:conditionI}. Conditions (C), (H) and (U$_{\mathcal{N}}^{'}$) follow directly from the construction of the DNB and its dual. \hfill $\square$ 

\vspace{0.2cm}

\appendix
\section{Auxiliary results, proofs and additional discussions}
\label{sec:conditionU}

Here we prove two useful results for the dual DNB. The first result states that single l-paths and single r-paths from the dual converge to Brownian motions with drift. The second one gives an upper bound on the probability that two dual l-paths or two dual r-paths do not coalesce until time $t$ (analogous to the one obtained for non dual paths in Lemma \ref{lemma:dnbcoalescencetime}).

\subsection{Convergence of one l-path or r-path from the dual}\
\label{sec:dualconverge}

\medskip
Recall the definition of $b_p$ from the statement of Theorem \ref{Teo:teoprincipal}.

\begin{lemma}
\label{lemma:onedualpath}
For any deterministic point $(x,t) \in \mathbb{R}^2$, there exists a sequence $(x_n,t_n)$ converging to $(x,t)$, a sequence of l-paths $\hat{X}_n^l \subset \hat{W}^l_n$ starting from $(x_n,t_n)$ and a sequence of r-paths $\hat{X}_n^r \subset \hat{W}^r_n$ starting from $(x_n,t_n)$ which converge to Brownian motions starting at $x$ at time $t$, evolving backward in time, with drift $+b_p$ and $-b_p$ respectively, when $n \rightarrow \infty$ and the branching parameter is $\epsilon_n = \fb n^{-1}$. 
\end{lemma}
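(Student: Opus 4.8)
## Proof plan for Lemma \ref{lemma:onedualpath}

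The plan is to prove convergence of a single dual l-path (the r-path case being symmetric, with the drift sign reversed, since everything below goes through verbatim after the sign change) by a Donsker-type argument, following the strategy used in Lemma \ref{lemma:individualconvergence} for the forward DNB paths. First I would fix the deterministic point $(x,t)\in\mathbb{R}^2$ and, for each $n$, pick a dual vertex $\hat{z}_n=(x_n n, t_n n^2)\in\hat{V}$ with $(x_n,t_n)\to(x,t)$ — such a vertex exists a.s. because the dual vertices $\hat V$ become dense in $\frac12\mathbb{Z}\times\mathbb{Z}$, and after rescaling by $n$ they become dense in $\mathbb{R}^2$. Let $\hat X_n^l$ be the dual l-path of $\hat W_n^l$ starting from $\hat z_n$. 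By the time-homogeneity of the dual process we may assume $\hat z_n$ has time coordinate $0$ and reset the clock, so that the path runs backward through integer times $0,1,2,\dots$; denote by $\hat Y_k^l$ its first coordinate after $k$ steps, and by $(\hat\xi_k)_{k\ge 1}$ the increments $\hat\xi_k=\hat Y_k^l-\hat Y_{k-1}^l$.

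The core analytic step is to decompose the increment sequence into a mean-zero part and a drift part, exactly as in Lemma \ref{lemma:individualconvergence}. The subtlety, noted in Section \ref{sec:coalescence}, is that the law of $\hat\xi_k$ depends on whether $\hat Y_{k-1}^l$ sits at an integer or a half-integer position; so the increments are \emph{not} i.i.d. I would handle this by the regeneration argument already set up before Lemma \ref{lemma:dualcoalescencetime}: the times at which the dual path sits at an integer position recur with a frequency bounded below uniformly in $n$ (each step has probability at least $p^4(1-p)^2$ of landing back at an integer, hence the return time is stochastically dominated by a geometric variable), and between two such times only finitely many steps occur with exponential tails. On the regeneration blocks the increments are i.i.d. with a law that does not depend on $n$ except through the $O(n^{-1})$ branching correction, and one computes: the symmetric part has finite variance $\lambda_p^2$ (up to the block structure; the effective per-unit-time variance is a fixed constant, which one should check equals $\lambda_p^2$, or absorb the discrepancy into the definition — here I would simply invoke that the diffusion coefficient is the one appearing in all other statements, as in Remark \ref{remark:sigmap}), and the branching asymmetry contributes a drift of size $+b_p/n$ per step with $b_p=\fb(1-p)/(2-p)^2$, the sign being $+$ for l-paths because the dual reverses the orientation of left and right. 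The branching probability is $\epsilon_n=\fb n^{-1}$, so over $tn^2$ steps the accumulated drift is $b_p t n$, which survives the rescaling by $n^{-1}$ in space; all higher moments of the increments are finite and uniformly bounded, by the same $Geom(p)$-domination as in Remark \ref{rem:moments}, so the block sums satisfy a Lindeberg/Donsker hypothesis.

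Concretely I would write $\hat Y_{tn^2}^l \stackrel{d}{=} \hat A_{tn^2} + \hat D_{tn^2}$ where $\hat A$ is the driftless (symmetric) part and $\hat D$ is the drift-correction part; apply Donsker's invariance principle to $n^{-1}\hat A_{tn^2}$ over the regeneration blocks (using a random-time-change / renewal argument to pass from block-indexed sums to the continuous time parameter, exactly the device used around \eqref{eq:LrRtdifequation}), obtaining a Brownian motion with diffusion coefficient $\lambda_p^2$ run backward in time; and apply a Doob-maximal-inequality argument to show $n^{-1}\hat D_{tn^2} + b_p t$ converges to zero uniformly on compacts in probability, precisely mimicking the displayed estimate for $L_t^{b,(n)}$ in the proof of Lemma \ref{lemma:individualconvergence}, using that the per-step variance of the correction is $O(n^{-1})\to 0$. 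Combining, $n^{-1}\hat Y^l_{tn^2}\Rightarrow B_t^{x,+b_p,\lambda_p^2}$ (backward in time from $x$ at time $t$), which is the claim; the r-path statement is identical with $+b_p$ replaced by $-b_p$.

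The main obstacle is the bookkeeping around the non-i.i.d.\ structure: one must be careful that the regeneration blocks genuinely decouple the dependence (the dual path's increment law at step $k$ depends only on whether $\hat Y_{k-1}^l\in\mathbb{Z}$, so conditioning on the regeneration times does restore i.i.d.-ness of the blocks, but one should verify the block lengths and block displacements have enough integrability — exponential tails for lengths, polynomial moments of all orders for displacements — to run Donsker with a random number of summands), and that the drift constant computed on the blocks really is $b_p=\fb(1-p)/(2-p)^2$ to leading order in $n^{-1}$, with the $\tfrac12$-position states contributing only a vanishing correction. I expect that once the regeneration framework from Section \ref{sec:coalescence} is imported, this is routine but genuinely needs to be spelled out; everything else is a direct transcription of Lemma \ref{lemma:individualconvergence}.
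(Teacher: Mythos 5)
Your outline is essentially sound and would yield the lemma, but it follows a genuinely different route from the paper. The paper does not use a regeneration/block Donsker argument: it first establishes that $(\hat{Y}_k^l(\hat z))_{k\ge0}$ is a time-homogeneous Markov chain with explicitly computed transition kernel (Proposition \ref{proposition:markov}), subtracts the compensator $\sum_j \tfrac{\epsilon_n}{2}\mathbb{I}_{\{\mathbb{Z}\}}(\hat Y_j^l)$ to obtain an $L^2$-martingale, and invokes the martingale functional CLT of Brown (Proposition \ref{Result:martingaleTCL}); the variance condition $\sigma_k^2/s_k^2\to1$ is checked via the Ergodic Theorem applied to the two-state chain recording whether the current position is an integer or a half-integer. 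Your regeneration-at-integer-positions scheme addresses the same non-i.i.d.\ issue and would also work, at the cost of the renewal bookkeeping you anticipate. The more substantive difference is the identification of the drift. You assert that the branching asymmetry contributes $+b_p/n$ per step, but on your route this requires computing the stationary fraction $\pi(1)$ of integer positions and verifying $\fb\,\pi(1)/2=b_p$, i.e.\ $\pi(1)=2(1-p)/(2-p)^2$ — a parity computation for differences of independent geometrics that you flag but do not carry out (it does hold, so your route closes). The paper sidesteps this entirely with a squeezing argument: since dual l-paths never cross forward l-paths, $\hat X_n^l$ is sandwiched between two forward l-paths, each converging (Lemma \ref{lemma:individualconvergence}) to a Brownian motion with drift $-b_p$; run backward in time this forces the limiting drift to be $+b_p$ without ever computing $\pi(1)$. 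That device is the one ingredient worth importing from the paper's proof; the rest of your plan, including the Doob-maximal-inequality control of the drift-correction term and the symmetric treatment of r-paths, matches the paper's intent.
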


\begin{remark}\label{remark-A1}
We only mention Lemma \ref{lemma:onedualpath} in Section \ref{sec:lefttobrownian} as an alternative way to argue that the family $\{\hat{W}_n\}_{n \in \mathbb{N}}$ is tight, but even this is a more direct consequence of tightness for the forward system of paths and the non-crossing property. With tightness, the convergence to Brownian motions would be a consequence of Proposition \ref{prop:dualwebcritter}. So the reason to include Lemma \ref{lemma:onedualpath} here is twofold: 1. To describe the transition probability function of the dual paths. 2. The proof gives insight into the evolution of dual paths, which could be useful to prove a full convergence to the Brownian Net. It consists of decomposing the paths in a mean-zero martingale plus a drift correction term and establish convergence for both parts. For the martingale convergence we use well known results that only require appropriate moment estimates. 
\end{remark}

Recall the construction and notation from Section \ref{subsec:dualdef}. By translation invariance, it is enough to consider $(x,t) = (0,0)$ and $(x,t) = (\frac{1}{2},0)$ depending if either $x \in \mathbb{Z}$ or $x \notin \mathbb{Z}$, but since the proof of case $(x,t) = (\frac{1}{2},0)$ would be analogous, we will consider only the case $(x,t) = (0,0)$. Also recall that $\hat{Y}^l_t(\hat{z})$ and $\hat{Y}^r_t(\hat{z})$ denotes the positions respectively of the dual l-path and the dual r-path that start from the dual vertex $\hat{z}$ after $t$ steps, i.e, at time $\hat{z_2}-t$.

\begin{proposition}
\label{proposition:markov}
For any $\hat{z} \in \hat{V}$,  $(\hat{Y}_k^l(\hat{z}))_{k \geq 0}$ and $(\hat{Y}_k^r(\hat{z}))_{k \geq 0}$ are time homogeneous Markov processes with respect to the filtration $\hat{\mathcal{F}}_k = \sigma\big( (\omega(v),\theta(v)): v \in \mathbb{Z}^2, \hat{z}_2 \geq v_2 \geq \hat{z}_2 - k \} \big)$.
\end{proposition}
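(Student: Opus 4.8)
The plan is to show that the dual evolution is a deterministic function of the environment restricted to the relevant strip of time levels, and that this restriction refreshes at each step. First I would fix $\hat z\in\hat V$ and recall from Section~\ref{subsec:dualdef} that a single step $\hat{\Gamma}^l$ (respectively $\hat{\Gamma}^r$) of a dual path sitting at a vertex $(\hat x,\hat t)\in\hat V$ is entirely determined by the quantities $a^l(\hat x,\hat t)$ and $a^r(\hat x,\hat t)$, which in turn only depend on the configuration $(\omega(v),\theta(v))$ for $v$ on the time line $\hat t-1$ (and, through the definitions of $K^l,K^r$ and of $\Gamma^l,\Gamma^r$ for vertices on that line, on nothing below it). Concretely, if $\hat{Y}^l_k(\hat z)=\hat x$ then $\hat{Y}^l_{k+1}(\hat z)=\hat{\Gamma}^l((\hat x,\hat z_2-k))_1$ is a measurable function, say $g^l$, of $\hat x$ together with $(\omega(v),\theta(v))_{v_2=\hat z_2-k-1}$. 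This is the key structural observation and I would state it as a short claim with a one-line justification pointing back to the explicit formulas for $a^l,a^r,\hat{\Gamma}^l,\hat{\Gamma}^r$.

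Given that claim, the Markov property is immediate: $\hat{Y}^l_{k+1}(\hat z)=g^l\big(\hat{Y}^l_k(\hat z),(\omega,\theta)|_{\text{line }\hat z_2-k-1}\big)$, where $\hat{Y}^l_k(\hat z)$ is $\hat{\mathcal F}_k$-measurable and the environment on line $\hat z_2-k-1$ is independent of $\hat{\mathcal F}_k$ (the environment variables are independent across vertices, and $\hat{\mathcal F}_k=\sigma\{(\omega(v),\theta(v)):\hat z_2\ge v_2\ge \hat z_2-k\}$ does not see that line). Hence $E[f(\hat{Y}^l_{k+1}(\hat z))\mid\hat{\mathcal F}_k]=E[f(g^l(y,\text{(fresh line)}))]\big|_{y=\hat{Y}^l_k(\hat z)}$ depends on $\hat{\mathcal F}_k$ only through $\hat{Y}^l_k(\hat z)$, which is the Markov property with respect to $(\hat{\mathcal F}_k)$. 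Time homogeneity follows because the law of the environment on line $\hat z_2-k-1$ does not depend on $k$, so the transition kernel $P(y,\cdot)=\text{law of }g^l(y,\text{line})$ is the same at every step; here one also uses translation invariance of the environment to identify the kernel started from an integer versus half-integer position with a common one up to shift, but strictly speaking only homogeneity in time is being asserted. The argument for $\hat{Y}^r_k(\hat z)$ is verbatim the same with $g^r$ in place of $g^l$.

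The one genuine subtlety — and the step I would be most careful about — is verifying that the update of a dual path really only reads the single time line immediately below its current level, and not, say, information two levels down. This needs a moment's thought because $a^l(\hat z)$ and $a^r(\hat z)$ are defined via $\Gamma^l,\Gamma^r$ of vertices on line $\hat z_2-1$, and $\Gamma^l((k,\hat z_2-1)),\Gamma^r((k,\hat z_2-1))$ involve the function $h$, i.e.\ the nearest open vertices on line $\hat z_2$ — which is line $\hat z_2$, not below $\hat z_2-1$. So in fact the one-step update of a dual path at level $\hat z_2-k$ reads the environment on lines $\hat z_2-k$ and $\hat z_2-k-1$. But line $\hat z_2-k$ is already $\hat{\mathcal F}_k$-measurable (indeed it was needed to locate $\hat V$ at that level and the position $\hat{Y}^l_k(\hat z)$), and only line $\hat z_2-k-1$ is fresh. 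I would phrase the claim accordingly: $\hat{Y}^l_{k+1}(\hat z)$ is a measurable function of $\big((\omega,\theta)|_{\text{line }\hat z_2-k}\,,\,(\omega,\theta)|_{\text{line }\hat z_2-k-1}\big)$ and of $\hat{Y}^l_k(\hat z)$; since the first block is $\hat{\mathcal F}_k$-measurable and the second is independent of $\hat{\mathcal F}_k$ with a law not depending on $k$, the Markov property and time homogeneity follow exactly as above. The remainder is routine bookkeeping, so I would keep the write-up to the structural claim plus the two-line conditioning argument.
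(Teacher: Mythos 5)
Your structural framework is the right one, and you correctly locate the subtle point: the one-step update of a dual path at level $\hat z_2-k$ reads the environment on \emph{two} lines, $\hat z_2-k$ and $\hat z_2-k-1$, because $a^l,a^r$ are defined through $\Gamma^l,\Gamma^r$ of vertices on the fresh line, and those in turn look upward via $h$. But your resolution of this point contains a genuine gap. From a representation $\hat Y^l_{k+1}=G\bigl(\hat Y^l_k,\,W_k,\,\xi\bigr)$ with $W_k=(\omega,\theta)|_{\text{line }\hat z_2-k}$ being $\hat{\mathcal F}_k$-measurable and $\xi$ (the fresh line) independent of $\hat{\mathcal F}_k$, one only gets $E[f(\hat Y^l_{k+1})\mid\hat{\mathcal F}_k]=h(\hat Y^l_k,W_k)$ with $h(y,w)=E[f(G(y,w,\xi))]$. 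The Markov property requires this to be a function of $\hat Y^l_k$ alone, and that does \emph{not} "follow exactly as above'': $W_k$ being $\hat{\mathcal F}_k$-measurable is precisely the problem, not the solution (take $Y_{k+1}=W_k+\xi$ for a counterexample to the general deduction). What must be verified is that the dependence of the transition on $W_k$ factors through the current position $u=\hat Y^l_k$.

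That verification is the actual content of the paper's proof, which computes the one-step conditional law case by case. The underlying geometric fact is: the dual vertex at $u$ is the midpoint of two \emph{consecutive} open vertices $A<u<B$ on line $\hat z_2-k$; hence every open vertex $j$ on the fresh line with $j<u$ is strictly closer to an open vertex at position $\le A$ than to any at position $\ge B$, so $\Gamma^l_1(j),\Gamma^r_1(j)<u$ regardless of where $A,B$ actually sit (symmetrically for $j>u$), and $j=u$ (possible only when $u\in\mathbb Z$) sees an automatic exact tie between $A$ and $B$, so the branching/choice there is governed by $\theta(u,\hat z_2-k-1)$ alone. Only with this observation does the kernel reduce to a function of $u$ (through its position and its integer/half-integer type) and the fresh line, yielding the transition probabilities $(1-p)P(G_1-G_2=2v)+p\epsilon_nP(G_1=2v)+\tfrac p2(1-\epsilon_n)P(G_1=|2v|)$ in the integer case and $P(G_1-G_2=2v)$ in the half-integer case. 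You should either supply this geometric argument or carry out the explicit computation; note also that the paper needs the explicit kernel later (for the drift identity \eqref{eq:dualexpected} and Proposition \ref{proposition:martingale}), so the computation is not wasted effort.
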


\noindent \textit{Proof.} We will present the proof for $(\hat{Y}_k^l(\hat{z}))_{k \geq 0}$ and the proof for $(\hat{Y}_k^r(\hat{z}))_{k \geq 0}$ is analogous.

We need the specification of the transition probabilities of $(\hat{Y}_k^l(\hat{z}))_{k \geq 0}$. Recall that $\hat{Y}_k^l(\hat{z})$ assumes values in $\frac{1}{2}\mathbb{Z}$. The description of the transition probabilities depends on whether the current value of the process is an integer or not, so we study each case separately.

\noindent (i) If the current value of $\hat{Y}_k^l(\hat{z})$ is non-integer: In this case we cannot have a vertex from $V$ in front of $(\hat{Y}_k^l(\hat{z}),\hat{z}_2-k)$ since $(\hat{Y}_k^l(\hat{z}),\hat{z}_2-(k+1)) \notin \mathbb{Z}^2$. This implies that the dual path can not be blocked by paths in the DNB that have just branched, so we also can not have a branching occurring on the dual process at this vertex. Thus this dual vertex will be connected to the nearest one that it can reach without crossing a path from DNB.

It is straightforward to verify from definitions that if $z\in \hat V$ and $\hat{z}\in \hat V$ satisfy that either $z_2 \neq \hat{z}_2$ or $z_2 = \hat{z}_2$ and $z_1 >\hat{z}_1$, then $K^r(z)$ and $K^l(\hat{z})$ are IID $Geom(p)$.
To simplify notation, let $(G_1,G_2)$ be a random vector distributed as IID $Geom(p)$.

For $u \notin \mathbb{Z}$ and $v \in \frac{1}{2}\mathbb{Z}$ we have:
\begin{eqnarray*}
\lefteqn{
P \hspace{-0.05cm}\left( \hat{Y}_{k+1}^l(\hat{z}) - \hat{Y}_k^l(\hat{z}) = v \ | \ \hspace{-0.05cm} \hat{Y}_k^l(\hat{z}) = u \right) } \\
& = & P \hspace{-0.05cm}\left( K^r \hspace{-0.05cm}\left( u-\frac{1}{2}, z_2-(k+1) \right) - K^l \hspace{-0.05cm}\left( u+\frac{1}{2}, z_2-(k+1)\right) = 2v \hspace{-0.05cm} \right) \\
& = & P \left( G_1 - G_2 = 2v \right).
\end{eqnarray*}

\noindent (ii) If the current value of $\hat{Y}_k^l(\hat{z})$ is an integer: In this case we have three distinct scenarios which are (a) either we do not have a vertex from $V$ in front of $(\hat{Y}_k^l(\hat{z}),\hat{z}_2-k)$ or (b) that vertex exists and the DNB branches (branching is possible since the dual vertex is the midpoint between two consecutive open vertices) or (c) that vertex exists and the DNB does not branch. Situations, (a), (b) and (c) are formally described just below. To help the reader, in Figure \ref{demomarkovfig} we exemplify the occurrence of the situations (a), (b) and (c).
\begin{itemize}
\item[(a)] The vertex $(\hat{Y}_k^l(\hat{z}),\hat{z}_2-(k+1))$ is not in $V$ (not open): This event occurs with probability $(1-p)$ and, for $u \in \mathbb{Z}$ and $v \in \frac{1}{2}\mathbb{Z}$, we have that 
$$
P \hspace{-0.05cm}\left( \hat{Y}_{k+1}^l(\hat{z}) - \hat{Y}_k^l(\hat{z}) = v, \ (\hat{Y}_k^l(\hat{z}),\hat{z}_2-(k+1)) \notin V \ | \ \hspace{-0.05cm} \hat{Y}_k^l(\hat{z}) = u \right)
$$
\begin{eqnarray*}
&=& P(\omega(u,z_2-(k+1)) = 0) \\
& & \quad P \left( \left. \hat{Y}_{k+1}^l(\hat{z}) - \hat{Y}_k^l(\hat{z}) = v \ \right| \ \hat{Y}_k^l(\hat{z}) = u, \omega(u,z_2-(k+1)) = 0 \right) \\
& = & (1-p) P \hspace{-0.05cm}\left( K^r \hspace{-0.05cm}\left( u, z_2-(k+1) \right) - K^l \hspace{-0.05cm}\left( u, z_2-(k+1)\right) = 2v \hspace{-0.05cm} \right) \\
& = & (1-p) P \left( G_1 - G_2 = 2v \right).
\end{eqnarray*}
\end{itemize}
\begin{itemize}
\item[(b)] The vertex $(\hat{Y}_k^l(\hat{z}),\hat{z}_2-(k+1))$ is in $V$ and a branching of the DNB occurs at this vertex. This event occurs with probability $p\epsilon_n$ implying that the dual process also branches at vertex $(\hat{Y}_k^l(\hat{z}),\hat{z}_2-k)$. Also, recall that we are dealing with dual l-paths that must jump to the left when a branching occurs. Thus for $u \in \mathbb{Z}$ and $v \in \frac{1}{2}\mathbb{Z}$ put $\tilde{z}_k = (u,z_2-(k+1))$ and write
\begin{eqnarray*}
\lefteqn{\!\!\!\!
P \hspace{-0.05cm}\left( \hat{Y}_{k+1}^l(\hat{z}) - \hat{Y}_k^l(\hat{z}) = v, \, \omega(\tilde{z}_k) = 1, \, \theta(\tilde{z}_k) = 0 \ | \ \hspace{-0.05cm} \hat{Y}_k^l(\hat{z}) = u \right) } \\
&=& P(\omega(\tilde{z}_k) = 1) \, P(\theta(\tilde{z}_k) = 0)  \\
& & P \left( \left. \hat{Y}_{k+1}^l(\hat{z}) - \hat{Y}_k^l(\hat{z}) = v \ \right| \ \hat{Y}_k^l(\hat{z}) = u, \omega(\tilde{z}_k) = 1, \theta(\tilde{z}_k) = 0  \right) \\
& = & p \epsilon_n P \hspace{-0.05cm}\left( K^r \hspace{-0.05cm}\left( \tilde{z}_k\right) = 2v \hspace{-0.05cm} \right) 
\, = \, p \epsilon_n P(G_1 = 2v). 
\end{eqnarray*}
Note that if $v \leq 0$, the second term is equal to zero since the dual l-path always chooses the left option in case when a branching occurs. This is the only term that would change in the proof for $(\hat{Y}_k^r(\hat{z}))_{k \geq 0}$ and it would be replaced by $p \epsilon_n P(G_1 = -2v)$.
\item[(c)] The vertex in position $(\hat{Y}_k^l(\hat{z}),\hat{z}_2-(k+1))$ is in $V$ but we do not have a branching of the DNB at this vertex: This event occurs with probability $p(1-\epsilon_n)$ and here the dual process have a conditional probability of $1/2$ of being forced to go to left (transition to a higher value) and a conditional probability $1/2$ of being forced to go to the right (transition to a lower value) to avoid a crossing with paths of the DNB. Thus for $u \in \mathbb{Z}$ and $v \in \frac{1}{2}\mathbb{Z}$ put $\tilde{z}_k = (u,z_2-(k+1))$ and write 
\begin{eqnarray*}
\lefteqn{\!\!\!\!
P \hspace{-0.05cm}\left( \hat{Y}_{k+1}^l(\hat{z}) - \hat{Y}_k^l(\hat{z}) = v, \, \omega(\tilde{z}_k) = 1, \, |\theta(\tilde{z}_k)| = 1 \ | \ \hspace{-0.05cm} \hat{Y}_k^l(\hat{z}) = u \right) } \\
&=& P(\omega(\tilde{z}_k) = 1) \, P(|\theta(\tilde{z}_k)| = 1)  \\
& & P \left( \left. \hat{Y}_{k+1}^l(\hat{z}) - \hat{Y}_k^l(\hat{z}) = v \ \right| \ \hat{Y}_k^l(\hat{z}) = u, \omega(\tilde{z}_k) = 1, \theta(\tilde{z}_k) = 0  \right) 
\end{eqnarray*}
which is equal to
$$
p (1-\epsilon_n) \Big( \frac{P \hspace{-0.05cm}\left( K^r \hspace{-0.05cm}\left( \tilde{z}_k\right) = 2v \hspace{-0.05cm} \right)}{2} + \frac{P \hspace{-0.05cm}\left( K^l \hspace{-0.05cm}\left( \tilde{z}_k\right) = 2v \hspace{-0.05cm} \right)}{2}
\Big) = \frac{p}{2} (1-\epsilon_n) P(G_1 = \left| 2v \right|).
$$
\end{itemize}
\begin{figure}[h!]
\centering
\includegraphics[scale=0.6]{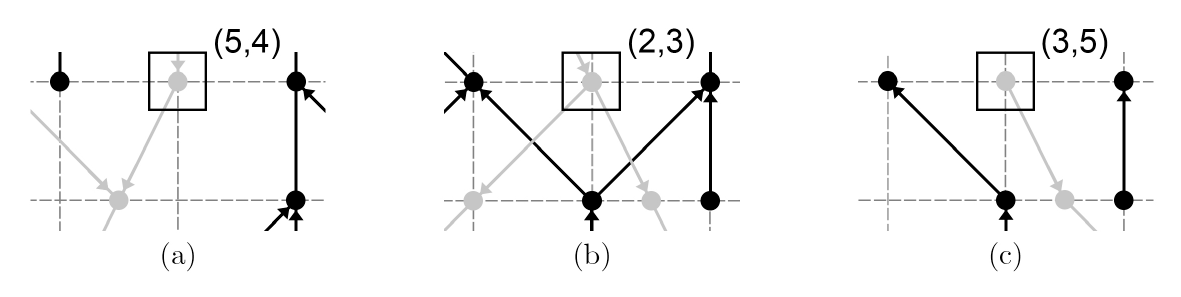}
\caption{Example of vertices (marked with a square) from Figure \ref{dualfig} where the situations (a), (b) and (c) described in the proof of proposition \ref{proposition:markov} occurs.}
\label{demomarkovfig}
\end{figure}

From situations (a), (b) and (c) above, for $u \in \mathbb{Z}$ and $v \in \frac{1}{2}\mathbb{Z}$ we have: 
$$P \left( \left. \hat{Y}_1^l(\hat{z}) - \hat{Y}_0^l(\hat{z}) = v \ \right| \ \hat{Y}_0^l(\hat{z}) = u \right) = $$
$$= (1-p) P(G_1 - G_2 = 2v) +  p \epsilon_n P(G_1 = 2v) 
+ \frac{p}{2}(1-\epsilon_n) P(G_1 = \left| 2v \right|).$$
\hfill $\square$ 

\vspace{0.2cm}

Note that the increments of $(\hat{Y}_k^l(\hat{z}))_{k \geq 0}$ do not have mean zero:
\begin{equation}
\label{eq:dualexpected}
E \left( \left. \hat{Y}_{k+1}^l(\hat{z}) - \hat{Y}_k^l(\hat{z}) \ \right| \ \hat{Y}_k^l(\hat{z}) \right) = \begin{cases} \displaystyle{0, \mbox{ if } \hat{Y}_k^l(\hat{z}) \notin \mathbb{Z}} ,  \\ \displaystyle{p \epsilon_n \frac{E(G)}{2} = \frac{\epsilon_n}{2}, \mbox{ if } \hat{Y}_k^l(\hat{z}) \in \mathbb{Z}} \end{cases},
\ \ \forall \, k\ge 0.
\end{equation}
For each fixed $\hat{z} \in \hat{V}$ and $n \geq 1$, we consider the following process:
$$M_{n,0}(\hat{z}) = \hat{z}, \ \displaystyle M_{n,k}(\hat{z}) = \hat{Y}_k^l(\hat{z}) - \sum_{j=1}^{k-1} \frac{\epsilon_n}{2} \mathbb{I}_{\{\mathbb{Z}\}}(\hat{Y}_j^l(\hat{z})), \ k \geq 1.$$
For any $\hat{z} \in \hat{V}{\cap \mathbb{Z}^2}$, we have that the process $(M_{n,k}(\hat{z}))_{k\geq0}$ is an $L^2$-martingale with respect to the reversed time filtration $\hat{\mathcal{F}}_k = \sigma(\{\omega(z),\theta(z): z \in \mathbb{Z}$ with $z_2 \geq \hat{z}_2-k\})$. By the invariance under integer translations of the system, we can suppose that $(0,0) \in \hat{V}$. Write $M_{n,j} = M_{n,j}(0,0)$ and $\hat{Y}_j = \hat{Y}_j(0,0)$ for the sake of simplicity and  define:
$$s_k^2 = E(M_{n,k}^2) = \sum_{j=1}^k E[(M_{n,j} - M_{n,j-1})^2], \quad \sigma_k^2 = \sum_{j=1}^k E[(M_{n,j} - M_{n,j-1})^2 | \hat{\mathcal{F}}_{j-1}],$$
$$\displaystyle \mathcal{M}_{n,k}(t) = s_k^{-1} \left( M_{n,j} + \frac{(M_{n,j+1} - M_{n,j})(ts_k^2 - s_j^2)}{(s_{j+1}^2 - s_j^2)} \right),$$
for $k \in \mathbb{N}$, $t \in [0,\infty)$ and $s_j^2 \leq ts_k^2 < s_{j+1}^2$. So, by definition, $(\mathcal{M}_{n,k}(t))_{t \geq 0}$ is the linearly interpolated normalization of the martingale $(M_{n,k}(t))_{t \geq 0}$ and note that since $M_{n,j}(\hat{z})$ is a time homogeneous process, we have the same values of $s_k^2$ and the same distribution of $\sigma_k^2$ for any vertex $\hat{z} \in \hat{V} \cap \mathbb{Z}^2$. 

\begin{proposition}
\label{proposition:martingale}
The process $\mathcal{M}_{n,k}(t)$ converges in distribution to a standard Brownian motion when $k \rightarrow \infty$.
\end{proposition}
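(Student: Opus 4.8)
The plan is to prove Proposition \ref{proposition:martingale} via the martingale functional central limit theorem (see, e.g., Ethier--Kurtz or Billingsley). Since $(M_{n,k}(t))_{t\ge 0}$ is an $L^2$-martingale with stationary increments (by the time-homogeneity established in Proposition \ref{proposition:markov}), and $\mathcal{M}_{n,k}$ is its linearly interpolated, variance-normalized version, we only need to check the two standard hypotheses of the martingale invariance principle: (a) the normalized conditional quadratic variation converges in probability to the identity function $t\mapsto t$, i.e. $s_k^{-2}\sigma_{\lfloor tk^2\rfloor}^2 \to t$ in probability for each fixed $t$ (note: here we actually run $k$ itself to infinity as the time parameter, so the statement is that $s_k^{-2}\sigma_k^2\to 1$ together with the interpolation being asymptotically negligible); and (b) a Lindeberg-type condition on the increments, which will follow from a uniform bound on the third (or $(2+\delta)$-th) absolute moment of the martingale increments $M_{n,j}-M_{n,j-1}$.

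First I would record that the increments $M_{n,j}-M_{n,j-1} = \hat{Y}^l_j - \hat{Y}^l_{j-1} - \frac{\epsilon_n}{2}\mathbb{I}_{\{\mathbb{Z}\}}(\hat{Y}^l_{j-1})$ are, in absolute value, stochastically dominated by a fixed random variable with all moments finite: from the transition probabilities computed in Proposition \ref{proposition:markov}, an increment of $\hat{Y}^l_k$ is a difference (or single copy, or twice a single copy) of i.i.d. $\mathrm{Geom}(p)$ random variables, hence dominated by $2(G_1+G_2)$ with $G_1,G_2\sim\mathrm{Geom}(p)$ i.i.d.; subtracting the bounded drift term $\epsilon_n/2\le 1/2$ changes nothing. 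This gives $\sup_{n,j}E[|M_{n,j}-M_{n,j-1}|^3]\le C$, which immediately yields the Lindeberg condition for the normalized triangular array once we know $s_k^2$ grows like a constant times $k$. It also shows the interpolation correction in the definition of $\mathcal{M}_{n,k}$ is uniformly negligible.

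The core estimate is therefore the asymptotics $s_k^2 = E[M_{n,k}^2] \sim c\,k$ and the convergence $\sigma_k^2/s_k^2 \to 1$ in probability. For this I would use \eqref{eq:dualexpected}: the conditional second moment of an increment equals $\mathrm{Var}(G_1-G_2)$ when $\hat{Y}^l_{j-1}\notin\mathbb{Z}$ and equals (up to a correction of order $\epsilon_n^2$, i.e. $O(n^{-2})$, coming from the centering) a different but also explicit constant $\mathrm{Var}(2G_1$-ish) when $\hat{Y}^l_{j-1}\in\mathbb{Z}$. Hence $\sigma_k^2 = \sum_{j=1}^k \big(\alpha_0 + (\alpha_1-\alpha_0)\mathbb{I}_{\{\mathbb{Z}\}}(\hat{Y}^l_{j-1})\big) + O(k\epsilon_n^2)$ for explicit constants $\alpha_0,\alpha_1>0$. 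The main obstacle, and the only non-routine point, is controlling the empirical fraction $k^{-1}\sum_{j=1}^k \mathbb{I}_{\{\mathbb{Z}\}}(\hat{Y}^l_{j-1})$ of time that the dual path spends at integer sites: one must show this fraction converges (in probability, or at least that $k^{-1}\sigma_k^2$ converges), so that $s_k^{-2}\sigma_k^2\to 1$. I would handle this by the regeneration structure already exploited in the discussion preceding Lemma \ref{lemma:dualcoalescencetime}: the times at which $\hat{Y}^l$ sits at an integer (equivalently, the excursions between consecutive visits to $\mathbb{Z}$, whose lengths have a geometric-type tail bounded uniformly in $n$) form a renewal-like sequence, so the renewal theorem / law of large numbers for the associated additive functional gives the almost sure (hence in probability) convergence of $k^{-1}\sum_{j<k}\mathbb{I}_{\{\mathbb{Z}\}}(\hat{Y}^l_j)$ to the reciprocal of the mean inter-visit time, a constant not depending on $n$. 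Combining: $s_k^2\sim ck$ for a constant $c=c(p)>0$, $\sigma_k^2/s_k^2\to1$, the Lindeberg condition holds, and the martingale FCLT gives that $\mathcal{M}_{n,k}(\cdot)\Rightarrow$ standard Brownian motion as $k\to\infty$, which is exactly the claim.

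Finally I would remark that the dependence on $n$ here is mild and uniform: all the constants $\alpha_0,\alpha_1,c$ and the inter-visit mean are either independent of $n$ or differ from their $n=\infty$ values by $O(\epsilon_n)=O(n^{-1})$, so the convergence in Proposition \ref{proposition:martingale} can in fact be taken jointly in $n$ and $k$; this uniformity is what is needed downstream when one scales $\hat{Y}^l$ diffusively with $k$ of order $n^2$ and $\epsilon_n$ of order $n^{-1}$ to recover a Brownian motion with drift $\pm b_p$ as in Lemma \ref{lemma:onedualpath}, the extracted drift being $\lim_k k^{-1}\sum_{j<k}\frac{\epsilon_n}{2}\mathbb{I}_{\{\mathbb{Z}\}}(\hat{Y}^l_j)$ rescaled appropriately.
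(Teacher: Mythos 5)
Your proposal is correct and follows essentially the same route as the paper: the paper also invokes a martingale CLT (Brown's theorem, Proposition \ref{Result:martingaleTCL}) and verifies exactly your conditions (a) and (b), computing the conditional increment variance separately on $\{\hat{Y}^l_{j-1}\in\mathbb{Z}\}$ and its complement and using uniform moment bounds from the geometric domination of the increments. The only cosmetic difference is that the paper obtains the law of large numbers for the occupation fraction $k^{-1}\sum_{j}\mathbb{I}_{\{\mathbb{Z}\}}(\hat{Y}^l_{j-1})$ from the ergodic theorem for the induced two-state Markov chain (plus dominated convergence to pass from $\sigma_k^2/k$ to $s_k^2/k$), rather than from your renewal/regeneration argument; the two are interchangeable here.
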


\vspace{0.1cm}

For the proof of Proposition \ref{proposition:martingale} we will make use of the following result: 

\begin{proposition}
\label{Result:martingaleTCL}
\textbf{(\cite[Theorem $3$]{martingale})} Let $\{C,\mathcal{B},P_w\}$ be the probability space where $C=C[0,1]$ with the sup norm topology, $\mathcal{B}$ being the Borel $\sigma$-field generated by open sets in $C$ and $P_w$ the Wiener measure. Let $\{P_k\}$ be the sequence of probability measures on $\{C,\mathcal{B}\}$ determined by the distribution of $\{\mathcal{M}_{n,k}(t), 0 \leq t \leq 1 \}$. Then $P_k \rightarrow P_w$ weakly as $k \rightarrow \infty$ if the following two conditions hold:

(i) $\displaystyle \frac{\sigma_k^2}{s_k^2} \stackrel{P}{\longrightarrow} 1$ as $k \rightarrow \infty$;

(ii) Lindeberg condition:

$\displaystyle \frac{1}{s_k^2} \sum_{j=1}^{k} \displaystyle E\left[\left(M_{n,j} - M_{n,j-1} \right)^2 I_{\{|M_{n,j} - M_{n,j-1}| \geq \epsilon_n s_k \}} \right] \stackrel{}{\longrightarrow} 0$ as $k \rightarrow \infty$ for all $\epsilon_n > 0$.
\end{proposition}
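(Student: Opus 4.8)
Since Proposition \ref{Result:martingaleTCL} is the functional martingale central limit theorem, the plan is to prove the stated implication: \emph{assuming} that conditions (i) and (ii) hold, I would show that $P_k \to P_w$ weakly by the classical two-step route of establishing (a) convergence of the finite-dimensional distributions of $\mathcal{M}_{n,k}$ to those of standard Brownian motion and (b) tightness of $\{P_k\}$ in $C[0,1]$, and then invoking Prohorov's theorem to combine them. Throughout I write $\xi_{k,j} = s_k^{-1}(M_{n,j}-M_{n,j-1})$ for the normalized increments, so that $\{\xi_{k,j},\hat{\mathcal{F}}_j\}_{1\le j\le k}$ is a martingale-difference array with $\sum_{j=1}^k \xi_{k,j} = s_k^{-1}M_{n,k} = \mathcal{M}_{n,k}(1)$, and I note that the interpolation in the definition of $\mathcal{M}_{n,k}$ is indexed by the variance clock $s_j^2/s_k^2$, so that a value $t$ of the time parameter corresponds to the index $j$ with $s_j^2 \approx t\,s_k^2$.

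For the finite-dimensional distributions, the core is the non-functional martingale CLT for the terminal value. I would prove $\mathcal{M}_{n,k}(1)=\sum_j \xi_{k,j} \Rightarrow N(0,1)$ by a characteristic-function telescoping argument: writing $\phi_k(u)=E\big[\exp(iu\sum_j\xi_{k,j})\big]$ and comparing it term by term against the random Gaussian factor $\prod_j \exp\!\big(-\tfrac{u^2}{2}E[\xi_{k,j}^2\mid\hat{\mathcal{F}}_{j-1}]\big)$, one controls the error by a third-order Taylor expansion of $e^{iux}$ in which the increment's square is replaced by its conditional variance. Condition (i), rewritten as $\sigma_k^2/s_k^2 \stackrel{P}{\longrightarrow} 1$, forces the accumulated conditional variance appearing in the Gaussian factor to converge to $1$, while condition (ii) bounds the remainder coming from atypically large increments. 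For general points $0\le t_1<\cdots<t_m\le 1$ I would apply the same estimate to the increments $\mathcal{M}_{n,k}(t_{i+1})-\mathcal{M}_{n,k}(t_i)$, which correspond to partial sums of $\xi_{k,j}$ over disjoint index blocks whose accumulated conditional variances converge to $t_{i+1}-t_i$; the Cram\'er--Wold device together with the martingale structure then yields joint Gaussianity with independent increments, i.e.\ exactly the finite-dimensional laws of Brownian motion.

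For tightness of $\{P_k\}$ in $C[0,1]$, I would verify the standard oscillation criterion, namely that for every $\eta>0$ one has $\lim_{\delta\downarrow 0}\limsup_{k}P\big(w_{\mathcal{M}_{n,k}}(\delta)>\eta\big)=0$, where $w$ denotes the modulus of continuity. Because $\mathcal{M}_{n,k}$ is a linear interpolation of the martingale reparametrized by its variance, an oscillation over a time window of length $\delta$ is an increment of $M_{n,j}/s_k$ across an index block of accumulated conditional variance $\approx \delta s_k^2$; Doob's $L^2$-maximal inequality bounds the probability of such an increment being large by its expected accumulated conditional variance, which is $\approx\delta$ by condition (i), uniformly in the placement of the window thanks to the martingale property. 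Ruling out a single increment dominating the oscillation is handled by the Lindeberg control in condition (ii). Prohorov's theorem then upgrades tightness together with the finite-dimensional convergence to the asserted weak convergence $P_k\to P_w$.

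The hard part will be the finite-dimensional step, precisely because the $\xi_{k,j}$ are only martingale differences and not independent or identically distributed, so the Lindeberg--Feller theorem is unavailable and the deterministic variances must be replaced by conditional variances everywhere in the comparison. The delicate point is substituting the \emph{random} accumulated conditional variance for its limit inside the Gaussian factor without incurring an uncontrolled error; this is exactly where the convergence in probability in condition (i) (rather than mere convergence in mean) is essential, and I would realize the substitution through a stopping-time truncation that halts the partial sums when the accumulated conditional variance first crosses a prescribed level. Carrying out this substitution simultaneously over the several disjoint blocks needed for the joint distributions, while keeping the Lindeberg remainder negligible on each block, is the technically demanding core of the proof.
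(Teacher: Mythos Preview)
The paper does not prove this proposition at all: it is quoted verbatim as \cite[Theorem~3]{martingale} (Brown's martingale functional CLT) and used as a black box to verify Proposition~\ref{proposition:martingale}. There is therefore no ``paper's own proof'' to compare your proposal against.

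Your outline is a faithful sketch of the classical argument for Brown's theorem (characteristic-function telescoping with conditional variances, Cram\'er--Wold for the finite-dimensional distributions, and a maximal-inequality tightness bound), so nothing is wrong with it as a plan. But for the purposes of this paper the appropriate ``proof'' is simply a citation, and that is exactly what the authors do.
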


\vspace{0.3cm}

\noindent \textit{Proof of Proposition \ref{proposition:martingale}.} For this proof we will verify that $\mathcal{M}_{n,k}(t)$ satisfies the two sufficient conditions of Proposition \ref{Result:martingaleTCL}. For condition (ii), using the bounds on the probability function of $(\hat{Y}_k^l(\hat{z}))_{k \geq 0}$ obtained in Proposition \ref{proposition:markov}, we can check that the increments $M_{n,j} - M_{n,j-1}$ have uniformly bounded variance. So $s_k^2$ is of order $k$ and (ii) follows. For condition (i), first, we need to do some auxiliary calculus using results obtained in the proof of Proposition \ref{proposition:markov}:
$$E \left[ \left(M_{n,j} - M_{n,j-1} \right)^2 \ \left| \ \hat{\mathcal{F}}_{j-1}, \hat{Y}_{j-1}^l \in \left\{ \mathbb{Z} + \frac{1}{2} \right. \right\} \right] =$$
$$= E \left[ \left( \hat{Y}_j^l - \hat{Y}_{j-1}^l - \frac{\epsilon_n}{2} I_{\mathbb{\{Z\}}}\hat{Y}_{j-1}^l \right)^2 \ \left| \ \hat{Y}_{j-1}^l \in \left\{ \mathbb{Z} + \frac{1}{2} \right. \right\} \right] =$$
$$= E \left[ \left( \hat{Y}_j^l - \hat{Y}_{j-1}^l \right)^2 \ \left| \ \hat{Y}_{j-1}^l \in \left\{ \mathbb{Z} + \frac{1}{2} \right.
\right\} \right] = E \left[ \left( \frac{1}{2} (G_1 - G_2) \right)^2 \right] =$$
\begin{equation}
= \frac{1}{4} \left[ E(G_1^2) + E(G_2^2) - 2E(G_1)E(G_2) \right] = \frac{1-p}{2p^2}.
\label{eq:varnonint} 
\end{equation}
Now the same computation for the case where $\hat{Y}_{j-1}^l$ is an integer:
$$E \left[ \left(M_{n,j} - M_{n,j-1} \right)^2 \ \left| \ \hat{\mathcal{F}}_{j-1}, \hat{Y}_{j-1}^l \in \mathbb{Z}  \right. \right] = E \left[ \left( \hat{Y}_j^l - \hat{Y}_{j-1}^l - \frac{\epsilon_n}{2} \right)^2 \ \left| \ \hat{Y}_{j-1}^l \in  \mathbb{Z}  \right. \right] $$
$$= E \left[ \left( \hat{Y}_j^l - \hat{Y}_{j-1}^l \right)^2 - \epsilon_n \left(\hat{Y}_j^l - \hat{Y}_{j-1}^l \right) + \frac{\epsilon_n^2}{4} \ \left| \ \hat{Y}_{j-1}^l \in  \mathbb{Z}  \right. \right] $$
$$= \frac{\epsilon_n^2}{4} - \epsilon_n E\left(\hat{Y}_j^l - \hat{Y}_{j-1}^l \ \left| \ \hat{Y}_{j-1}^l \in  \mathbb{Z} \right. \right) + E \left[ \left( \hat{Y}_j^l - \hat{Y}_{j-1}^l \right)^2 \ \left| \ \hat{Y}_{j-1}^l \in  \mathbb{Z} \right. \right]$$
$$= \frac{\epsilon_n^2}{4} - \frac{\epsilon_n^2}{2} + \frac{1-p}{4}E\left[ (G_1-G_2)^2 \right] + \frac{p}{4} E \left[G^2 \right]$$
\begin{equation}
= - \frac{\epsilon_n^2}{4} + \frac{1-p}{4} \left( \frac{2(1-p)}{p^2} \right) + \frac{p}{4} \left( \frac{2-p}{p^2} \right) = \frac{2(1-p)^2 + p(2-p)}{4p^2} - \frac{\epsilon_n^2}{4}.
\label{eq:varint} 
\end{equation}
Before we proceed, note that since $(\hat{Y}_j^l)_{j \geq 0}$ is a time homogeneous Markov process, we can create a discrete homogeneous Markov chain $(Z_j)_{j \geq 0}$ with space state $\{0,1\}$ that describes if either $\hat{Y}_j^l$ is integer ($Z_j = 1)$ or not $(Z_j = 0)$. Since $(Z_j)_{j \geq 0}$ is irreducible and aperiodic, it converges to a unique invariant distribution that we will denote by $\pi = (\pi(0),\pi(1))$. Now we are ready to verify (i):
$$\frac{\sigma_k^2}{k} = \frac{1}{k}\sum_{j=1}^k E[(M_{n,j} - M_{n,j-1})^2 | \hat{\mathcal{F}}_{j-1}] =$$
$$= \frac{1}{k}\sum_{j=1}^{k} \left[ \frac{1-p}{2p^2} \mathbb{I}_{\{\mathbb{Z}+\frac{1}{2}\}}(\hat{Y}_{j-1}^l) + \left( \frac{2(1-p)^2 + p(2-p)}{4p^2} - \frac{\epsilon_n^2}{4} \right) \mathbb{I}_{\{\mathbb{Z}\}}(\hat{Y}_{j-1}^l) \right] \stackrel{a.s.}{\longrightarrow} $$
$$\stackrel{a.s.}\longrightarrow \left(\frac{1-p}{2p^2}\right) \pi(0) + \left( \frac{2(1-p)^2 + p(2-p)}{4p^2} - \frac{\epsilon_n^2}{4} \right) \pi(1),$$
where the last equality is justified by (\ref{eq:varnonint}), (\ref{eq:varint}) and this convergence is justified by the Ergodic Theorem. We also have:
$$\frac{s_k^2}{k} = \frac{1}{k} \sum_{j=1}^k E[(M_{n,j} - M_{n,j-1})^2] = \frac{1}{k} \sum_{j=1}^k E \left[ E \left( (M_{n,j} - M_{n,j-1})^2 \ \left| \ \hat{\mathcal{F}}_{j-1} \right. \right) \right] = $$
$$= E \left[ \frac{\sigma_k^2}{k} \right] \stackrel{}{\longrightarrow} \left(\frac{1-p}{2p^2}\right) \pi(0) + \left( \frac{2(1-p)^2 + p(2-p)}{4p^2} - \frac{\epsilon_n^2}{4} \right) \pi(1),$$
where this convergence is justified by Dominated Convergence Theorem, since $\displaystyle \frac{\sigma_k^2}{k}$ is dominated by the maximum between $\displaystyle \frac{1-p}{2p}$ and $\displaystyle \left( \displaystyle \frac{2(1-p)^2 + p(2-p)}{4p^2} - \frac{\epsilon_n^2}{4} \right)$. Since $\sigma_k^2/k$ and $s_k^2/k$ converge a.s. to the same limit, (i) is satisfied and it concludes the proof. \hfill $\square$ \

\vspace{0.2cm}

\begin{remark}
The Proposition \ref{proposition:martingale} can be adapted for the dual r-paths. We just need to define the process $\tilde{\mathcal{M}}_{n,k}(t)$ using the martingale $\displaystyle \tilde{M}_{n,k}(\hat{z}) = \hat{Y}_k^r(\hat{z}) + \sum_{j=1}^{k-1} \frac{\epsilon_n}{2} \mathbb{I}_{\{\mathbb{Z}\}}(\hat{Y}_j^r(\hat{z}))$ instead of $M_{n,k}(\hat{z})$ and its proof would be analogous.
\end{remark}

\begin{remark}
The proof of Proposition \ref{proposition:martingale} considering that the initial vertex $\hat{z}$ has a non integer first coordinate would be analogous.
\end{remark}

Now we are ready to present the proof of Lemma \ref{lemma:onedualpath}.

\noindent \textit{Proof of Lemma \ref{lemma:onedualpath}.} This proof will be done only for $\hat{X}_n^l \subset \hat{W}_n^l$ since the proof for the case $\hat{X}_n^r \subset \hat{W}_n^r$ is analogous.

By the invariance under integer translations of the system, if the starting point of the dual l-path $\hat{X}_n^l$ has an integer first coordinate, we can assume that this path is starting from $(0,0)$. In the case where the starting point has a non integer first coordinate, we can restart the path $\hat{X}_n^l$ when it reaches an integer position for the first time and the number of steps that we need to wait until it happens is stochastically dominated by a geometric distribution with a parameter that does not depend on $n$. So, in both cases, we can assume that $\hat{X}_n^l$ starts from $(0,0)$.\

Now recall that we are assuming that $\epsilon_n = \fb n^{-1}$ and so we have:
$$\frac{1}{n}\sum_{j=1}^{n^2-1} \frac{\epsilon_n}{2} \mathbb{I}_{\{\mathbb{Z}\}}\left(\hat{Y}_j^l \right) = \frac{\fb}{n^2} \sum_{j=1}^{n^2-1} \mathbb{I}_{\{\mathbb{Z}\}}\left(\hat{Y}_j^l \right) \stackrel{a.s.}{\longrightarrow} \fb \pi(1),$$
where $\pi(1)$ is the same as defined in the proof of Proposition \ref{proposition:martingale} and this convergence is justified by the Ergodic Theorem.

Now since we have from Proposition \ref{proposition:martingale} that $\mathcal{M}_{n,k}(t)$ converges to a standard Brownian motion, it follows from Slutsky's Theorem, and the same arguments used in the proof of Lemma \ref{lemma:individualconvergence}, that $\hat{X}_n^l$ converges to a Brownian motion with drift, going backward in time.

To conclude, we will argue that this drift is equal to $b_p$. Since l-paths from dual never crosses l-paths from DNB, we have that $\hat{X}_n^l$ is always compressed between two l-paths from DNB, one that visits a position to the right of $x_n$ at time $t_n$ and other that visits a position to the left of $x_n$ at time $t_n$. By Lemma \ref{lemma:individualconvergence} we have that these two l-paths converge to Brownian motions with drift $-b_p$, under diffusive scale, which implies that the limiting process of $\hat{X}_n^l$, given that it exists, has drift $b_p$. \hfill $\square$ 

\vspace{0.2cm}

\subsection{Estimating coalescence times for a pair of dual l-paths (or dual r-paths)}\
\label{sec:dualcoalesce}

\vspace{0.2cm}

This subsection is devoted to proving Lemma \ref{lemma:dualcoalescencetime}. Before we present the proof, recall the notation of Section \ref{sec:coalescence}. Our strategy will consist of two steps: First, we will analyze specific random times where both dual paths are in integer positions and show that these times almost surely appear with enough frequency to allow us to do the proof considering the distance between the two dual paths only in these convenient random times. For this first part, we will prove Proposition \ref{proposition:renewaldual2}. In the second part we will show that, restricted to these random times, Proposition \ref{Result:coalescencetime} holds.

\vspace{0.2cm}

\begin{proposition}
\label{proposition:renewaldual2}
Let $\hat{\pi}_1$ and $\hat{\pi}_2$ be two dual l-paths starting from different integer points $x_1$ and $x_2$ at the same time, with $x_2 > x_1$. There exists a sequence of random times $\{\hat{S}_n : n \geq 0\}$ such that: 
\begin{enumerate}
\item[(a)] $\hat{\pi}_1(\hat{S}_n) \in \mathbb{Z}$ and $\hat{\pi}_2(\hat{S}_n) \in \mathbb{Z}$ for all $n \geq 0$;
\item[(b)] The sequence $\{\hat{S}_{n+1} - \hat{S}_n : n \geq 0 \}$ is independent and identically distributed;
\item[(c)] There exists a constant $C_s$ such that $\mathbb{E} [ |\hat{S}_1 - \hat{S}_0|] \leq C_s$;
\item[(d)] For any $l \geq 0$, a.s. $E[((\hat{\pi}_2(\hat{S}_{l+1}) - \hat{\pi}_1(\hat{S}_{l+1})) - (\hat{\pi}_2(\hat{S}_l) - \hat{\pi}_1(\hat{S}_l)))|\hat{\pi}_2(\hat{S}_l) - \hat{\pi}_1(\hat{S}_l)] \leq 0$.
\end{enumerate}
\end{proposition}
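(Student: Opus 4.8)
The plan is to construct the renewal times $\hat{S}_n$ explicitly from the regeneration structure already suggested in Section~\ref{sec:coalescence}, using the environment configuration that forces both dual $l$-paths onto integer positions. Recall that there we exhibited, for every time step, an environmental event of probability at least $p^4(1-p)^2$ under which, whenever at least one of the two dual $l$-paths sits at a non-integer position, both paths land on integer positions at the next step. First I would define $\hat{S}_0 = 0$ (both paths start at integer points $x_1,x_2\in\mathbb{Z}$, so this is consistent with (a)), and then define $\hat{S}_{n+1}$ recursively as the first time strictly after $\hat{S}_n$ at which a "good block" of environment has been observed guaranteeing that both $\hat{\pi}_1$ and $\hat{\pi}_2$ are again at integer positions. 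Because the increments of the dual $l$-paths are local functionals of the environment in the strips of time between consecutive steps, and because this good block is defined only through environment variables in a bounded number of time levels that are disjoint from those used to define $\hat{S}_n$, the inter-renewal structure is i.i.d.\ by the Markov property of the dual paths (Proposition~\ref{proposition:markov}), giving (a) and (b) simultaneously.

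For (c), the key point is that the waiting time for the next good block is stochastically dominated by a geometric random variable with a parameter bounded below by $p^4(1-p)^2$ (times a fixed integer accounting for the length of the block): each time level provides an independent chance of success, uniformly in the current positions, so $\hat{S}_{n+1}-\hat{S}_n$ has an exponential tail and in particular finite expectation. One subtlety is that the paths may coalesce before the renewal completes; in that case one simply declares the construction to continue with both paths equal (still integer-valued), which is harmless since after coalescence the distance is $0$ and all four claims are trivially satisfied. This is essentially the same stochastic-domination argument by which $\hat{\nu}_{\hat{u},\hat{v}}$ was controlled in Section~\ref{sec:coalescence}, so $C_s$ can be taken as the mean of the dominating geometric random variable times the block length.

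For (d) — the supermartingale property of the distance sampled at renewal times — I would argue as follows. Between renewal times the dual $l$-path positions $(\hat{Y}^l_t(\hat{u}),\hat{Y}^l_t(\hat{v}))$ form a Markov process whose increments have conditional mean that is either $0$ (when at a non-integer position) or $\epsilon_n/2 > 0$ (when at an integer position), by \eqref{eq:dualexpected}; crucially these conditional drifts are \emph{the same} for both paths whenever they are in the same type of position, and more importantly the \emph{difference} $\hat{\pi}_2(t)-\hat{\pi}_1(t)$ has nonnegative conditional drift only through the leading path and can be shown to have conditional mean-zero or negative increments because the right path $\hat{\pi}_2$ (being to the right, hence at the same or larger position) experiences a drift that is dominated by that of $\hat{\pi}_1$ by the non-crossing constraint and the translation structure. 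More carefully, I would show that $D_t := \hat{\pi}_2(t)-\hat{\pi}_1(t) + \sum_{j<t}\frac{\epsilon_n}{2}\big(\mathbb{I}_{\{\hat{\pi}_2(j)\in\mathbb{Z}\}} - \mathbb{I}_{\{\hat{\pi}_1(j)\in\mathbb{Z}\}}\big)$ is a martingale (this follows from \eqref{eq:dualexpected} applied to each path separately), and then observe that the correction term has nonpositive conditional mean increments when $D$ is sampled at $\hat{S}_n$: at these times \emph{both} paths are at integer positions, so the correction is $0$ at renewal epochs, leaving $E[D_{\hat{S}_{l+1}}\mid \mathcal{F}_{\hat{S}_l}] = D_{\hat{S}_l} = \hat{\pi}_2(\hat{S}_l)-\hat{\pi}_1(\hat{S}_l)$ minus a nonnegative quantity coming from the expected accumulation of the correction over the block (which is nonnegative because the leading path spends at least as much "integer time" contributing positive drift).

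The main obstacle I anticipate is making the sign argument in (d) fully rigorous: one must verify that the right path $\hat{\pi}_2$ does not accumulate strictly more positive drift than $\hat{\pi}_1$ over a renewal block, i.e.\ that the expected number of integer-position visits of $\hat{\pi}_2$ before $\hat{S}_{l+1}$ does not exceed that of $\hat{\pi}_1$ (or that the resulting discrepancy is controlled). A clean way around this is to define the renewal block itself so that it is \emph{symmetric} in the two paths — e.g.\ require the good environment event to synchronize both paths onto integers simultaneously using a fixed pattern that does not privilege either — so that by the stationarity and the symmetric construction the difference process sampled at $\hat{S}_n$ is a genuine martingale (hence (d) holds with equality, which is stronger than needed). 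Alternatively, since $\hat{Z}_t$ cannot change sign (non-crossing of dual $l$-paths), the martingale $D_t$ reflected appropriately still yields a supermartingale at renewal epochs; this reflection/non-negativity bookkeeping is the delicate part, and I would carry it out by conditioning on the block environment and directly computing the two conditional expectations of integer-visit counts. Once (a)--(d) are in hand, Lemma~\ref{lemma:dualcoalescencetime} follows by applying Proposition~\ref{Result:coalescencetime} to the embedded chain $Y_l := \hat{\pi}_2(\hat{S}_l)-\hat{\pi}_1(\hat{S}_l)$ and transferring the bound back to real time via (c) and the exponential tail of $\hat{S}_1-\hat{S}_0$, exactly as in the computation displayed before the statement of Lemma~\ref{lemma:dualcoalescencetime}.
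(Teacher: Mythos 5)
Your construction of the renewal times and your treatment of parts (a)--(c) coincide with the paper's proof: the paper also defines $\hat{S}_{n+1}$ by taking one step from $\hat{S}_n$ and, if either path lands off $\mathbb{Z}$, waiting for the geometrically dominated time (parameter $p^4(1-p)^2$, exactly the event exhibited in Section~\ref{sec:coalescence}) until both are simultaneously at integer positions; (b) is attributed to the Markov property and (c) to $C_s = E[G_s]+1$. So up to that point you are reproducing the intended argument.

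For part (d) you diverge from the paper and, more importantly, you leave the decisive step open. The paper's argument is a dichotomy: on the event that the two paths coalesce within the block the distance drops to $0$, and on the complementary event it invokes the fact that both are dual l-paths ``with the same behavior in average'' to conclude the expected distance is unchanged; your martingale decomposition is a legitimate alternative framing, but it reduces (d) to showing that
$E\bigl[\sum_{j}\bigl(\mathbb{I}_{\{\hat{\pi}_2(j)\in\mathbb{Z}\}}-\mathbb{I}_{\{\hat{\pi}_1(j)\in\mathbb{Z}\}}\bigr)\bigr]\le 0$
over a renewal block, which you explicitly do not prove. Note also two slips in your setup: the drift of a dual l-path at an integer position is $+\epsilon_n/2$ by \eqref{eq:dualexpected}, so the compensated process must be $\hat{\pi}_2(t)-\hat{\pi}_1(t)-\sum_{j}\frac{\epsilon_n}{2}\bigl(\mathbb{I}_{\{\hat{\pi}_2(j)\in\mathbb{Z}\}}-\mathbb{I}_{\{\hat{\pi}_1(j)\in\mathbb{Z}\}}\bigr)$ (you wrote a plus sign), and the statement that ``the correction is $0$ at renewal epochs'' is not what you need --- the sum accumulated \emph{between} $\hat{S}_l$ and $\hat{S}_{l+1}$ is generally nonzero even though both endpoints are integer times, and its sign is precisely the unresolved issue. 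Your proposed fix of redefining the block symmetrically in the two paths would not by itself remove the asymmetry, since the dynamics of a dual l-path is not reflection-invariant (reflection turns it into a dual r-path), so the left and right paths are not exchangeable. In short: the construction and (a)--(c) are fine and match the paper; (d) as you present it has a genuine gap that the paper closes (albeit briefly) by the coalescence dichotomy combined with the identical marginal law of the two l-path displacements over the block, an ingredient absent from your argument.
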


\noindent \textit{Proof.} First recall that if at least one of $\hat{\pi}_1$ and $\hat{\pi}_2$ is not in an integer position, the amount of time required until we have both $\hat{\pi}_1$ and $\hat{\pi}_2$ in an integer position is stochastically dominated by a geometric random variable $G_s$, with parameter $p^4(1-p)^2$, as we argued in Section \ref{sec:coalescence}.

So, define $\hat{S}_0$ as the first time such that both $\hat{\pi}_1$ and $\hat{\pi}_2$ are in an integer position. After that, if both of them stay in integer positions, we have $\hat{S}_1 = \hat{S}_0 - 1$. Otherwise, if at least one of them jumps to a non integer position, the remaining amount of time until both $\hat{\pi}_1$ and $\hat{\pi}_2$ reach integer positions is a random variable $T$, which is stochastically dominated by $G_s$, and in this case, we have $\hat{S}_1 = \hat{S}_0 - 1 - T$. Now, we can define $\hat{S}_n$ for any $n \geq 1$ following this procedure.

In this way, we have the statement $(a)$ of the proposition by construction. Statement $(b)$ follows from Markov property of the system. Statement $(c)$ holds since we can take $C_s = E[G_s] + 1 = p^{-4}(1-p)^{-2} + 1$.  

Now it remains to prove statement (d). First note that since $x_2 > x_1$, $\hat{\pi}_2(\hat{S}_l) - \hat{\pi}_1(\hat{S}_l)$ will be either a positive value or zero. If we have $\hat{\pi}_2(\hat{S}_l) - \hat{\pi}_1(\hat{S}_l) = 0$, it means that these two paths have already coalesced and then the distance between them will remain equal to zero forever. So, we do not need to worry about this situation. If we have $\hat{\pi}_2(\hat{S}_l) - \hat{\pi}_1(\hat{S}_l) > 0$, we can divide our analysis into two cases depending on whether these paths coalesce or not until time $\hat{S}_{l+1}$. In the case where they coalesce, we have that $\hat{\pi}_2(\hat{S}_{l+1}) - \hat{\pi}_1(\hat{S}_{l+1}) = 0$ and consequently the distance between them will reduce. For the case where they do not coalesce, note that we do not have information about the environment between the times $\hat{S}_l$ and $\hat{S}_{l+1}$; and that both are dual l-paths, so on average both paths have the same behavior. Because of that, we expect to see the same value for the distance at time $\hat{S}_{l+1}$ that we have in $\hat{S}_{l}$. It implies that the expectation value in part (d) has to be less or equal to zero. \hfill $\square$  

\vspace{0.2cm}

\begin{remark}
All statements in the Proposition \ref{proposition:renewaldual2} are also valid for dual r-paths in the same way and the proof would be analogue.
\end{remark}

Now we are ready to present the proof of Lemma \ref{lemma:dualcoalescencetime}.

\medskip

\noindent \textit{Proof of Lemma \ref{lemma:dualcoalescencetime}.} For this proof, we can consider that $\hat{\tau}_k$ refers to coalescence between l-paths since the proof considering r-paths would be the same.

First recall the following definition, from Section \ref{sec:coalescence}:
$$\hat{Z}^k_t = | \hat{Y}^l_t((0,0)) - \hat{Y}^l_t((k,0)) |, \ t \geq 0.$$

The main idea of this proof will be to verify that our distance process $\hat{Z}^k_t$ and the coalescence time $\hat{\tau}_k$ satisfies the conditions of Proposition \ref{Result:coalescencetime} when we consider the process $\hat{Z}^k_t$ only at times $\hat{S}_n$ as defined in Proposition \ref{proposition:renewaldual2}. To simplify the notation, from now on we will keep the notation $\hat{Z}^k_t$, but what we will analyze here is the sequence $\hat{Z}^k_{S_n}$. Proposition \ref{proposition:renewaldual2} will allow us to extend our conclusions to the whole process $\hat{Z}^k_t$, that is, if we prove the statement of Lemma \ref{lemma:dualcoalescencetime} considering the dual paths only in the times $S_n$, we will have it valid for the entire time line too.

Now denote $\hat{\mathcal{F}}_t = \sigma\{(\omega(z),\theta(z)), z=(z_1,z_2), z_1 \in \mathbb{Z}, z_2 \geq t\}$, $t \in \mathbb{Z}$. Note that $\hat{Z}^k_t$ only take values in $\mathbb{R}_{+}$ because we can not have a crossing between l-paths and since $\hat{\tau}_k$ is the first time that this process reaches zero, if all two conditions of Proposition \ref{Result:coalescencetime} hold, considering the filtration $\hat{\mathcal{F}}_t$, we will immediately have Lemma \ref{lemma:dualcoalescencetime} proved. So, from now on, we will prove these conditions.

Condition (i) of Proposition \ref{Result:coalescencetime} follows directly from $(d)$ in Proposition \ref{proposition:renewaldual2}.

About the first inequality of the condition (ii) of Proposition \ref{Result:coalescencetime}, note that the increments of $\hat{Z}^k_t$ are not spatially homogeneous, but given $\hat{\mathcal{F}}_l$ and that $\hat{Z}^k_l > 0$, it is always possible to assure that $(\hat{Z}^k_{l+1}-\hat{Z}^k_l)^2 \geq 1$ with a convenient choice of a finite number of open and closed vertices at time $S_{l+1}$.

About the second part of the condition (ii), we can write:
$$E[ \ |\hat{Z}^k_{j+1} - \hat{Z}^k_j|^3 \ |\mathcal{\hat{F}}_j] = E_{S_j - S_{j+1}} \left[ E\left[ \left. \ |\hat{Z}^k_{j+1} - \hat{Z}^k_j|^3 \ \right|\mathcal{\hat{F}}_j, S_j - S_{j+1}\right] \right] \leq$$
\begin{equation}
\leq 2 E(S_j - S_{j+1})  E \left[ \left. |\hat{Y}^l_{j+1}(\hat{u}) - \hat{Y}^l_{j}(\hat{u})|^3 \right| \hat{Y}^l_{j}(\hat{u}) \right] \leq 2C_sE[ \ |G|^3 ],
\label{eq:renewaldual2}
\end{equation}
where $G$ is a geom(p) and $C_s$ is the same constant that appears in Proposition \ref{proposition:renewaldual2}. The last inequality of \eqref{eq:renewaldual2} holds due to Proposition \ref{proposition:renewaldual2} and the fact that the increments of a dual path are stochastically dominated by a Geom(p). Hence, since the third absolute moment of the geometric distribution is finite, we have that condition  (ii) is also satisfied. 

\vspace{0.2cm}

\section{Proofs of some auxiliary results for Section \ref{sec:conditionI}.}\
\label{subsec:conditionIresults}

This subsection is devoted to proving some statements and lemmas of Section \ref{sec:conditionI}. So, the notations and definitions used here are the same used in Section \ref{sec:conditionI}. We begin by proving the upper bound of $E[\xi_{+}^{(n)}]$ that was used in the proof of Lemma \ref{proposition:RtilandR}.

\begin{claim}
\label{claim:overshoot}
Let $\xi_{+}^{(n)}$ be the overshoot distribution that is defined during the proof of Lemma \ref{proposition:RtilandR}. There exists $N \in \mathbb{N}$ such that $E[\xi_{+}^{(n)}] \leq \frac{5}{p}, \forall n \geq N$.
\end{claim}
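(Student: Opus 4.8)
The overshoot $\xi_{+}^{(n)}$ is the amount by which the difference walk $\Delta_t^{(n)} = \tilde R_t^{(n)} - L_t^{(n)}$ exceeds the level $n$ at the first time it crosses $n$ from left to right, starting from the level $n^\beta < n$. The plan is to decompose the increments of $\Delta^{(n)}$ and to bound the overshoot by the size of the single jump that takes $\Delta^{(n)}$ over level $n$, conditioned on that jump being large enough to do so. First I would recall from the proof of Lemma \ref{lemma:individualconvergence} that the increments of $\tilde R^{(n)}$ and of $L^{(n)}$ each decompose (in distribution) into a mean-zero part distributed as an increment of a Drainage Network path without branching — whose absolute value is stochastically dominated by $Geom(p)$ by Remark \ref{rem:moments} — plus a small correction term $D^{(n),\cdot}$ that performs, with probability $O(1/n)$, a jump of size twice a $Geom(p)$. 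Hence, while $\Delta^{(n)}$ is far from its boundaries and conditioned on the event that no jump of size greater than $\kappa\log(n)/2$ occurs, an increment of $\Delta^{(n)}$ is stochastically dominated by a fixed random variable $H$ that does not depend on $n$: concretely the sum of two independent $Geom(p)$ variables plus (with vanishing probability) a couple of $2\,Geom(p)$ terms, so $E[H] \le 2/p + o(1)$.

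The key step is then the standard renewal-type bound for overshoots. Let $N$ be the first time $\Delta^{(n)}$ exceeds $n$; the overshoot $\xi_{+}^{(n)}$ is bounded above by $J \cdot \mathbb{1}_{\{J > R\}}$, where $J$ is the single increment taken at time $N$ and $R$ is the ``residual gap'' $n - \Delta^{(n)}_{N-1} \ge 0$ at the previous time step. Since $J$ is (stochastically) independent of the walk's past and dominated by the fixed law of $H$, one gets $E[\xi_{+}^{(n)}] \le E[\,|H|\,\mathbb{1}_{\{|H| > R\}} \mid R\,]$ uniformly, and because the jump distribution has exponential tails (each summand is $Geom(p)$ or $2\,Geom(p)$), the size-biased/conditional mean $\sup_{r\ge 0} E[\,H\,\mathbb{1}_{\{H>r\}}\,]/P(H>r)$ is finite and can be computed explicitly; in fact the worst case is $r=0$, giving the bound $E[H] \le 2/p + o(1)$. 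For $n$ large enough the $o(1)$ correction coming from the $D^{(n)}$ terms is at most, say, $3/p$, so $E[\xi_{+}^{(n)}] \le 5/p$ for all $n \ge N$. One can make this cleaner by invoking Lemma 2.6 of \cite{nrs}, exactly as done elsewhere in the paper (e.g.\ in the proof of Lemma \ref{proposition:lastterm} and later in Part II), which directly bounds the conditional mean of an overshoot of a random walk whose increments have finite absolute third moment by a constant depending only on $p$; the numerical constant $5/p$ is then just a convenient explicit choice valid for large $n$.

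The main obstacle, though a minor one, is keeping track of the conditioning: the increments of $\Delta^{(n)}$ are not i.i.d.\ in the strict sense (the $D^{(n)}$ corrections depend on $n$ and introduce a slight inhomogeneity, and we are working on the event that long jumps are suppressed), so one must argue that the stochastic domination by the fixed law $H$ holds uniformly in the past of the walk and in $n$, and that restricting to the no-long-jump event only decreases the relevant conditional expectation (as is argued verbatim for $U_j^{(n)}$ in the proof of Lemma \ref{proposition:lastterm}). Once that uniform domination is in place, the overshoot bound is routine and the constant $5/p$ drops out for $n$ beyond some threshold $N$.
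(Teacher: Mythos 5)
Your route is genuinely different from the paper's. The paper does not reduce to a generic random-walk overshoot bound: it conditions on the crossing event $B_+=\{\Delta_1^{(n)}>n\}$, shows $B_+$ forces a specific geometric configuration ($L_1<R_0-\tfrac n2$, $R_1>L_0+\tfrac n2$), and then bounds $\xi_+^{(n)}$ pathwise by at most a few gaps between consecutive open vertices ($Y_+$, and $2X_-+Y_+$ on $\{L_1<R_0-n\}$), whose conditional means given the crossing are bounded by $1/p$ and $2/p$ by direct comparison with conditioned i.i.d.\ Bernoullis; summing gives the explicit $5/p$. Your plan — dominate the increments of $\Delta^{(n)}$ uniformly in $n$ by a fixed exponential-tailed law $H$ and invoke a generic overshoot estimate such as \cite[Lemma 2.6]{nrs} — is a legitimate alternative, and indeed the paper itself uses exactly that lemma for the closely related overshoot $\delta^{(n)}$ a few lines after citing this Claim; since only a uniform constant (not the value $5/p$) is used downstream, your approach would suffice, at the cost of an inexplicit constant and of checking that the small drift $2b_p/n$ and the conditioning on no long jumps do not break the hypotheses of that lemma.

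However, the central inequality of your ``key step'' is wrong as written. The overshoot at the crossing step satisfies $E[\xi_+^{(n)}\mid R=r,\ \text{crossing now}]=E[H-r\mid H>r]$; your bound $E[\,|H|\,\mathbb{1}_{\{|H|>R\}}\mid R\,]$ omits both the normalization by $P(|H|>R)$ and the subtraction of $R$, and the quantity you then claim is finite,
$\sup_{r\ge 0} E[H\,\mathbb{1}_{\{H>r\}}]/P(H>r)=\sup_{r\ge 0}E[H\mid H>r]$,
is in fact \emph{infinite} for a geometric-tailed $H$: it grows like $r+O(1/p)$, so the worst case is $r\to\infty$, not $r=0$ as you assert. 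What is finite, by the near-memorylessness of the geometric tail, is $\sup_{r\ge 0}E[H-r\mid H>r]$, and that is the statement you actually need (it is also essentially what \cite[Lemma 2.6]{nrs} or Lorden-type bounds deliver). With that correction, and with the size-biasing of the crossing jump handled by the renewal argument rather than by the asserted independence of $J$ from the past on $\{N=t\}$, your strategy goes through; the arithmetic leading to the specific constant $5/p$ is not actually carried out in your sketch, but as noted the constant is immaterial to the application.
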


\begin{proof}
Recall that $\xi_{+}^{(n)}$ is the overshoot distribution of $\Delta_t^{(n)} = \tilde{R}^{(n)}_t - L^{(n)}_t$ when it crosses the position $n$ from left to right at a time $t$. To simplify the notation, let us say here that it will happen at time $t=1$. We also suppress $n$ writing simply $\tilde{R}^{(n)}_t = R_t$ and $L^{(n)}_t = L_t$.

By Lemma \ref{lemma:longjump} and assuming that $n$ is large enough, we can consider that $n-n^{\frac{3}{4}} \le R_0-L_0 \le n$. Denote by $B_{+}$ the event $\Delta_1^{(n)} > n$. First note that if $L_1 > R_0 - \frac{n}{2}$, then vertex $(L_1,1)$ is open and either $L_1, \ R_1 \in [R_0-\frac{n}{2},R_0+\frac{n}{2}]$ or $L_1 = R_1$.

Thus $R_1 - L_1 \le n$ and $B_+$ does not happen. Analogously, if $R_1 < L_0 + \frac{n}{2}$, then $B_+$ does not happen either. Put $A_{+} = \{L_1 < R_0 - \frac{n}{2}, \, R_1 > L_0 + \frac{n}{2}\}$, then $B_+ \subset A_+$.

We also define $X_-$ the distance between $R_0-n$ and the first open vertex at the left hand side of $R_0-n$ at time $t=1$ and $Y_+$ as the distance between
$$
\mathfrak{a} = \max\Big(R_0, \, L_1+n, \, \big((n-(R_0-2L_0))+X_- \big) \mathbb{I}_{\{L_1 < R_0-n\}} \Big) -1
$$ 
and the first open vertex at the right hand side of $\mathfrak{a}$ at time $t=1$. Given $B_{+}$, and thus also $A_+$, our strategy from here will be to also condition on the position of $L_1$. Specifically it is straightforward to see that
$$
\xi_{+}^{(n)} \le Y_+ \textrm{ on } L_1=j \textrm{ for } R_0-n \le j \le R_0-\frac{n}{2}, 
$$
and
$$
\xi_{+}^{(n)} \le 2 X_- + Y_+ \textrm{ on } L_1 < R_0-n.
$$
The second inequality above requires $R_0-L_0 \ge \frac{n}{2}$ which is less restrictive than our hypothesis. Therefore
\begin{eqnarray} \label{cond-xi}
E[\xi_{+}^{(n)}] & \le & \sum_{j=R_0-n}^{R_0-\frac{n}{2}} E[ Y_+ | A_+, B_+,L_1=j] P(L_1=j|A_+, B_+) \nonumber \\
& & \ + E[2X_- + Y_+ | A_+, B_+,L_1<R_0-n] P(L_1<R_0-n|A_+, B_+).
\end{eqnarray}

In Figure \ref{overshootfar} we have an illustration of the occurrence of event $B_{+}$ according to either $L_1$ be smaller than $R_0 - n$ (scenario (a)) or not (scenario (b)).

\begin{figure}[h!]
\centering
\includegraphics[scale=0.50]{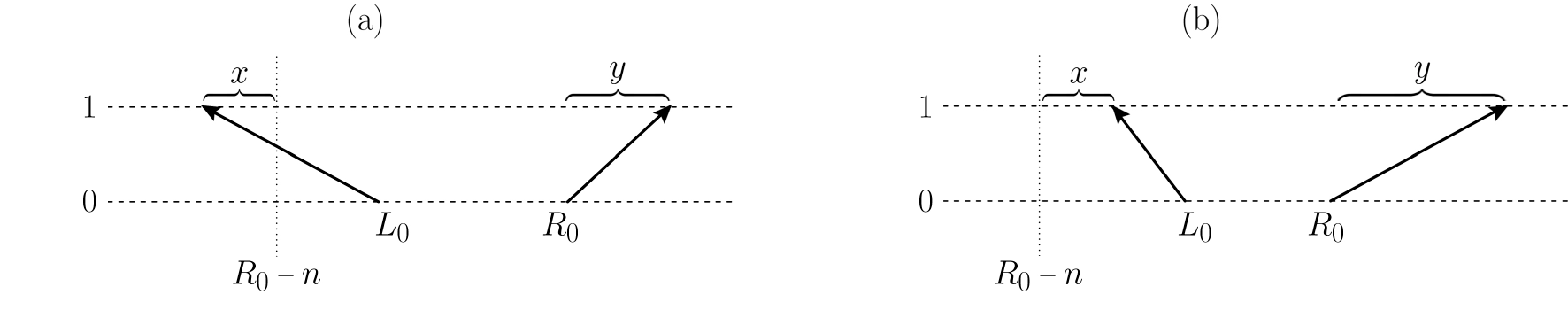}
\caption{Illustration of event $B_{+}$. In scenario (a) we have a situation where $L_1 < R_0-n$ and here $\xi_{+}^{(n)} = x + y$. In scenario (b) we have a situation where $R_0-n \le j \le R_0-\frac{n}{2}$ and here $\xi_{+}^{(n)} = y - x$.}
\label{overshootfar}
\end{figure}

On the event $A_+ \cap B_+ \cap \{L_1 = j\}$, $R_0-n \le j \le R_0-\frac{n}{2}$, the configuration on vertices $(x,1) \in \mathbb{Z}^2$ such that
$R_0 \le j + n \le x \le 2 R_0 -j$
is distributed as the conditional of IID Bernoulli with parameter $p$ given that at least one of these Bernoullis is one. This gives that 
\begin{equation}
\label{espY+}
E[ Y_+ | A_+, B_+,L_1=j] \le E[ Geom(p) | A_+, B_+,L_1=j] = \frac{1}{p}.
\end{equation}
Now write $P(X_{-} > x|A_{+},B_{+},L_1<R_0-n)$ as
\begin{eqnarray*}
&& \frac{P(B_{+},A_{+}|X_{-}>x,L_1<R_0-n)P(X_{-}>x|L_1<R_0-n)P(L_1<R_0-n)}{P(B_{+},A_+|L_1<R_0-n)P(L_1<R_0-n)} \\
& & \qquad \leq 2 P(X_{-}>x|L_1<R_0-n) 
\, \leq \, 2 P(\mbox{Geom}(p) > x),
\label{eq:overshootfarA}
\end{eqnarray*}
where the first inequality follows since conditioned to the occurrence of $\{L_1<R_0-n\}$, $R_t$ jumps to the right or stays in the same position at time $1$ is enough to assure that $B_{+} \cap A_+$ happens, which implies that $P(B_{+},A_+|L_1<R_0-n) \geq \frac{1}{2}$. Therefore
\begin{equation}
\label{espX-}
E[X_-| A_+, B_+,L_1<R_0-n] \le \frac{2}{p} .
\end{equation}

Going back to \eqref{cond-xi}, from \eqref{espX-} and \eqref{espY+} we obtain that
$E[\xi_{+}^{(n)}] \le 5/p.$
\end{proof}

\medskip

Now we present the proof of Lemmas \ref{lemma:pathsstayclose} and \ref{lemma:meetingtime}.

\smallskip

\noindent \textit{Proof of Lemma \ref{lemma:pathsstayclose}.}  We will only consider l-paths, but the proof for r-paths is analogous. By the time homogeneity of paths in the DNB, we can assume that $s=0$. 

Now fix $\gamma_2$ such that $\gamma_1 < \gamma_2 < \gamma$ and then, we can write:
\begin{align}
	&P\left(\sup_{0 \leq t \leq \tau^{(n)}_x} \left| \overline{l}^{(n)}_{x}(t) -  l^{(n)}_x(t) \right| > n^{\gamma} \right)  = P\left(\sup_{0 \leq t \leq \tau^{(n)}_x} \left| \overline{l}^{(n)}_{x}(t) -  l^{(n)}_x(t) \right| > n^{\gamma} ; \tau^{(n)}_x > n^{2\gamma_2}\right) \nonumber\\
	&\qquad \qquad \qquad \qquad + P\left(\sup_{0 \leq t \leq \tau^{(n)}_x} \left| \overline{l}^{(n)}_{x}(t) -  l^{(n)}_x(t) \right| > n^{\gamma} ; \tau^{(n)}_x \leq n^{2\gamma_2}\right) \nonumber
	\end{align}
which is bounded from above by
\begin{align}\label{eq:lemma6}
	&P\left( \tau^{(n)}_x > n^{2\gamma_2}\right) +  P\left(\sup_{0 \leq t \leq n^{2\gamma_2}} \left| \overline{l}^{(n)}_{x}(t) -  l^{(n)}_x(t) \right| > n^{\gamma} \right).
	\end{align}
About the first term in \eqref{eq:lemma6}, we have by Lemma \ref{lemma:dnbcoalescencetime} (and Remark \ref{remark:indepcoalesce} if $l^{(n)}_x$ and $\overline{l}^{(n)}_{x}$ are evolving according to independent environments) that:
$$P\left( \tau^{(n)}_x > n^{2\gamma_2}\right) \leq \frac{C_0 n^{\gamma_1}}{n^{\gamma_2}},$$
which converges to zero as $n \rightarrow \infty$. About the second term in \eqref{eq:lemma6}, we have by Lemma \ref{lemma:individualconvergence} that both $l^{(n)}_x$ and $\overline{l}^{(n)}_{x}$ converge under diffusive scale to Brownian motions with drift $-b_p$ and diffusion coefficient $\lambda_p^2$. So, this second term also converges to zero as $n \rightarrow \infty$.
\hfill $\square$

\medskip

\noindent \textit{Proof of Lemma \ref{lemma:meetingtime}.} First note that $\mathcal{T}_{x,s}^{(n)}$ does not depend on $x$ or $s$ due to the translation invariance in time and space of the DNB paths.

To prove the limit, let us decompose $\mathcal{T}^{(n)} = \tilde{\mathcal{T}}^{(n)} + \ddot{\mathcal{T}}^{(n)}$, where $\tilde{\mathcal{T}}^{(n)}$ is the amount of time that we have to wait until both $r^{(n)}_x$ and $\tilde{r}^{(n)}_x$ cross $l^{(n)}_{x}$, and $\ddot{\mathcal{T}}^{(n)}$ is the amount of time that until $r^{(n)}_x$ and $\tilde{r}^{(n)}_x$ meet each other for the first time after they cross $l^{(n)}_{x}$.

Now fix $\gamma_2$ and $\gamma_3$ such that $\gamma_1 < \gamma_2 < \gamma_3 < \gamma$. About $\tilde{\mathcal{T}}^{(n)}$, recall that $\tau^c_k$ is the time until we have a crossing between an l-path at the right-hand side of an r-path initially at distance $k$ from each other. By Corollary \ref{corollary:dnbcrossingtime} and Remark \ref{remark:indepcoalesce}, we have that
$$P\left(\tilde{\mathcal{T}}^{(n)} > n^{2\gamma}\right) \leq P\left(\tilde{\mathcal{T}}^{(n)} > n^{2\gamma_2}\right) \leq 2P\left(\tau^c_{n^{\gamma_1}} > n^{2\gamma_2}\right) \leq \frac{2 C_0 n^{\gamma_1}}{n^{\gamma_2}} = \tilde{C}_1 n^{\gamma_1 - \gamma_2},$$
which converges to zero as $n$ goes to infinity.

About $\ddot{\mathcal{T}}^{(n)}$, denote by $D_t^{(n)}$ the distance between $r^{(n)}_x$ and $\tilde{r}^{(n)}_x$ at time $t$. We have by Lemma \ref{lemma:individualconvergence} that both $r^{(n)}_x$ and $\tilde{r}^{(n)}_x$ converge under diffusive scaling to Brownian motions with drift $-b_p$ and diffusion coefficient $\lambda_p^2$. Then
	\begin{equation*}
	P\left(D^{(n)}_{\tilde{\mathcal{T}}^{(n)}} > n^{\gamma_3}, \tilde{\mathcal{T}}^{(n)} \le n^{2\gamma_2}\right)\le P\left( \sup_{0\le t \le n^{2\gamma_2}}D^{(n)}_{t} > n^{\gamma_3}\right)\to 0\, \text{as} \, n\to\infty\, . 
	\end{equation*}
	So, by Lemma \ref{lemma:dnbcoalescencetime}, we have:
	\begin{align*}
	P\left(\ddot{\mathcal{T}}^{(n)} > n^{2\gamma}\right) 
	& \le P\left( \left. \ddot{\mathcal{T}}^{(n)} > n^{2\gamma} \right| D^{(n)}_{\tilde{\mathcal{T}}^{(n)}} < n^{\gamma_3} \right) +  P\left(D^{(n)}_{\tilde{\mathcal{T}}^{(n)}} \geq n^{\gamma_3}\right)\\
	& \leq C_0 n^{\gamma_3-\gamma} + P\left(  D^{(n)}_{\tilde{\mathcal{T}}^{(n)}} \geq n^{\gamma_3}, \tilde{\mathcal{T}}^{(n)} \leq n^{2\gamma_2} \right) + P\left(\tilde{\mathcal{T}}^{(n)} > n^{2\gamma_2} \right)\, \\
	& \leq C_0 n^{\gamma_3-\gamma} + P\left( D^{(n)}_{\tilde{\mathcal{T}}^{(n)}} \geq n^{\gamma_3}, \tilde{\mathcal{T}}^{(n)} \leq n^{2\gamma_2} \right) + \tilde{C}_1 n^{\gamma_1 -\gamma_2}\, \to 0 \text{ as } n\to\infty\, ,
	\end{align*}
which completes the proof. \hfill $\square$

\vspace{0.2cm}

\section{On the condition ($U_{\mathcal{N}}^{''}$)}\
\label{subsec:conditionUdiscuss}

To prove that the DNB with branching parameter $\epsilon_n = \fb n^{-1}$ converges in distribution under diffusive scaling to the Brownian Net remains to verify that  any subsequential limit from Theorem \ref{Teo:teoprincipal} does not have more paths than the Brownian Net. The strategy we know so far follows from \cite{ss1} and consists of verifying that conditions ($U_{\mathcal{N}}^{'}$) and ($U_{\mathcal{N}}^{''}$).

We have proved ($U_{\mathcal{N}}^{'}$), but to prove condition ($U_{\mathcal{N}}^{''}$), we need to verify that for any limit point $(X,W^l,W^r,\widehat{W}^l,\widehat{W}^r)$ of $(X_n,W_n^l,W_n^r,\widehat{W}_n^l,\widehat{W}_n^r)$ and for any deterministic countable dense set $D \subset \mathbb{R}^2$, a.s. paths in $X$ do not enter any wedge of $(\widehat{W}^l(D),\widehat{W}^r(D))$ from outside. This last condition was not proved, but we conjecture that ($U_{\mathcal{N}}^{''}$) holds for the DNB. 

Let us make some considerations about ($U_{\mathcal{N}}^{''}$). The arguments used in \cite{ss1} to prove it for independent random walks with branching are the following: 

Let $\mathcal{D}^l, \mathcal{D}^r \subset \mathbb{R}^2$ be deterministic countable dense sets. For each $z \in \mathcal{D}^l$ (resp. $\tilde{z} \in \mathcal{D}^r$), fix a sequence $z_n \in \mathbb{Z}^2_{\mbox{even}}$ (resp. $\tilde{z}_n \in \mathbb{Z}^2_{\mbox{even}}$) such that $z_n \rightarrow z$ (resp. $\tilde{z}_n \rightarrow \tilde{z}$) under diffusive scaling and let $(\hat{l}_{z_n}^{(n)})_{n \geq 1}$ (resp. $(\hat{r}_{\tilde{z_n}}^{(n)})_{n \geq 1})$ be a sequence of dual l-paths (resp. dual r-paths) with branching parameter $o(n^{-1})$ starting from $z_n$ (resp. $\tilde{z}_n$). Also let $\tau(\hat{\pi}_1,\hat{\pi}_2)$ denote the "possibly infinite" first meeting time of dual paths $\hat{\pi}_1,\hat{\pi}_2$. Using independence, it is straightforward to show that for any $z \in \mathcal{D}^l$ and $\tilde{z} \in \mathcal{D}^r$, ($\hat{l}_{z_n}^{(n)}$,$\hat{r}_{z_n}^{(n)}$,$\tau(\hat{l}_{z_n}^{(n)},\hat{r}_{z_n}^{(n)}$)) jointly converges under diffusive scaling to ($\hat{l}_z$,$\hat{r}_z$,$\tau(\hat{l}_z,\hat{r}_z$)) where $(\hat{l}_z$,$\hat{r}_z)$ is a pair of left-right Brownian motions.

Let $\mathcal{N^{*}}$ be a weak limit of the system of independent random walks defined in \cite{ss1}. By Skorohod's representation theorem, we can construct a coupling such that the previous convergences occur almost surely and we will assume such coupling from now on. If $\pi \in \mathcal{N^{*}}$ enters a wedge $\psi(\hat{r},\hat{l})$ from outside, then by  \cite[Lemma 3.4(b)]{ss1}, $\pi$ must enter some skeletal wedge $\psi(\hat{r}_{\tilde{z}},\hat{l}_{z})$ from outside, with $z \in \mathcal{D}^l$ and $\tilde{z} \in \mathcal{D}^r$. By the a.s. convergence of the system to $\mathcal{N^{*}}$, there exist $\pi^{(n)}$ such that $\pi^{(n)} \rightarrow \pi$ under diffusive scaling. By the a.s. convergence of $\hat{l}_{z_n}^{(n)}$, \ $\hat{r}_{z_n}^{(n)}$ to $\hat{l}_{z}$, \ $\hat{r}_{z}$ and the convergence of their first meeting time, for $n$ large enough, $\pi^{(n)}$ must enter a discrete wedge from outside, which is impossible. 

So, the two key ingredients of their proof are: (1) the convergence of ($\hat{l}_{z_n}^{(n)}$,$\hat{r}_{z_n}^{(n)}$,$\tau(\hat{l}_{z_n}^{(n)},\hat{r}_{z_n}^{(n)}$)); (2) \cite[Lemma 3.4(b)]{ss1} which ensures that a path can only enter a wedge from the outside if it also enters a discrete version of this wedge from outside. It is worth noting that (1) is straightforward to verify for systems with independence before meeting times or coalescence. This follows from the fact that two independent Brownian paths with opposite drift will cross each other immediately after they meet, and this implies that pairs of random walks converging to the pair of Brownian motions will also cross each other (thus they meet) when they get close to each other by a distance negligible under diffusive scaling.

However, in our case, we do not have independence between paths and we were not able to prove the convergence of the first meeting time between one dual l-path and one dual r-path. Without that, it seems possible that two dual limiting paths $\hat{r}_z$ and $\hat{l}_{z}$ can create a wedge, although existing sequences of paths $\hat{r}_{z'}^{(n)}$ and $\hat{l}_{z}^{(n)}$ that converge to them and never meet each other. It would mean that we do not create a discrete version of the wedge and we cannot guarantee that do not exist paths in $\mathcal{X}$ that enter the wedge from outside.\\

\textbf{Acknowledgements.} This work derived from the PhD thesis of Rafael Santos, so we thank the committee members: Maria Eulália Vares, Giulio Iacobelli, Remy Sanchis and Simon Griffiths for all the helpful suggestions and comments. Finally, we are also grateful to an anonymous referee for his careful reading of the text and comments, which brought major improvements to the presentation of this paper.  



\begin{thebibliography}{99}
		
		\bibitem{a}
		R.~Arratia: Coalescing Brownian motions and the voter model
		on $\Z$. Unpublished partial manuscript (circa 1981), available
		at {\tt rarratia@math.usc.edu}.
		
		\bibitem{a1}
		R.~Arratia: Limiting point processes for rescalings of
		coalescing and annihilating random walks on $\Z^d$. {\it Ann.~Prob.}~{\bf 9},
		pp 909--936 (1981).
		
		\bibitem{martingale}
		B. M. Brown: Martingale central limit theorems. {\it Ann. Math. Stat.} {\bf 42} 59--66 (1971).
		
		\bibitem{bmsv}
		S.~Belhaouari; T.~Mountford; R.~Sun; G.~Valle:
		Convergence results and sharp estimates for the voter model interfaces.
		{\it Electr.~J.~Probab.}~{\bf  11}, 768--801 (2006).
		
		\bibitem{cfd} C. Coletti; L. R. Fontes; E. Dias: Scaling limit for a drainage network model. {\it J. Appl.Prob.}~{\bf  46}, 1184--1197 (2009).
		
		\bibitem{cv} C. Coletti, G. Valle: Convergence to the Brownian Web for a generalization of the drainage network model. {\it Annales de L.I.H.P. Probabilités e Statistiques}, v. 50, 899--919 (2014).
		
		\bibitem{cssc} D. Coupier, K. Saha, A. Sarkar, V. Tran: The 2d-directed spanning forest converges to Brownian web. {\it Ann. Probab.}~{\bf  49(1)}, 435--484 (2021).
		
		\bibitem{efs} A. Etheridge, N. Freeman, D. Straulino: The Brownian net and selection in the spatial $\Lambda$-Fleming-Viot process. {\it Electr.~J.~Probab.}~{\bf  22}, no.~39, 1--36 (2017).
		
		\bibitem{ffw}
P. A. Ferrari; L. R. Fontes; X. Y. Wu: Two-dimensional poisson trees converge to the Brownian web. {\it Annales de L.I.H.P. Probabilités e Statistiques}, v. 41(5): 851–858 (2005).

		\bibitem{finr}
		L.~R.~G.~Fontes; M.~Isopi; C.~M.~Newman; K.~Ravishankar: The
		Brownian web. {\it Proc. Natl. Acad. Sci. USA} {\bf 99}, no.~25,
		15888 -- 15893 (2002).
		
		\bibitem{finr1}
		L.~R.~G.~Fontes; M.~Isopi; C.~M.~Newman; K.~Ravishankar: The
		Brownian web: characterization and convergence. {\it Ann.~Probab.}~{\bf 32},
		2857--2883 (2004).
		
		\bibitem{finr2}
		L.~R.~G.~Fontes; M.~Isopi; C.~M.~Newman; K.~Ravishankar:
		Coarsening, nucleation, and the marked Brownian web. 
		{\it Ann.~Inst. H. Poincar\'e Probab. Statist.}~{\bf 42}, 37--60 (2006).
		
		\bibitem{fav}
		L.~R.~Fontes; L.~A.~Valencia; G.~Valle: 
		Scaling Limit of the Radial Poissonian Web. 
		{\it Elect. Journal of Prob.} (2015).
  
		\bibitem{grs}
		S. Gangopadhyay; R. Roy; A. Sarkar:
		Random oriented trees: a model of drainage networks. 
		{\it Ann.~App.~Probab.}~{\bf 14}, 1242--1266 (2004). 
		
		\bibitem{nrsc} C.~M.~Newman; K.~Ravishankar; E.~Schertzer: Marking (1, 2) points of the Brownian web and applications. {\it Ann.~Inst. H. Poincar\'e Probab. Statist.}~{\bf Vol. 46, No. 2} 537–-574 (2010).
		
		\bibitem{nrs} C.~M.~Newman; K.~Ravishankar; R.~Sun: Convergence of coalescing nonsimple random walks to the Brownian web. {\it Electr.~J.~Probab.} {\bf 10}, 21--60 (2005).
		
		\bibitem{resnick} 
		S. Resnick.
		{\it Adventures in Stochastic Processes.}
		Birkhuser Boston, Inc. (1992).
		
		\bibitem{rss} R.~Roy, K.~Saha, A.~Sarkar: Random Directed Forest and the Brownian Web. {\it Ann. Inst. H. Poincaré Probab. Statist.} {\bf 52(3)}, 1106--1143 (2016).
	
		\bibitem{dual} R.~Roy, K.~Saha, A.~Sarkar: Hack's law in a drainage network model: a Brownian web approach. {\it Annals of Applied Probability 2016} {\bf Vol. 26, No. 3}, 1807--1836 (2016).
		
		\bibitem{ss} A.~Sarkar, R.~Sun: Brownian web in the scaling limit of supercritical oriented percolation in dimension 1+1. {\it Electr. J. Probab.} {\bf 18} (2013).
		
		\bibitem{scheidegger} A.~E.~Scheidegger: A stochastic model for drainage pattern into an intramontane trench. {\it Bull. Ass. Sci. Hydrol.} {\bf 12}, 15--20 (1967).
  
		\bibitem{ss2} E.~Schertzer, R.~Sun: Perturbations of Supercritical Oriented Percolation and Sticky Brownian Webs. {\it Sojourns in Probability Theory and Statistical Physics - II}, pp 241–261, Springer (2019).
  
		\bibitem{sss} E.~Schertzer, R.~Sun, J.~M.~Swart: Stochastic flows in the Brownian web and net. {\it Mem. Amer. Math. Soc.} 227, no. 1065 (2014).
		
		\bibitem{survey1} E.~Schertzer, R.~Sun, J.~M.~Swart: The Brownian web, the Brownian net, and their universality. {\it Advances in Disordered Systems, Random Processes and Some Applications} {\bf Chapter 6}, 270--368, Cambridge University Press (2017). 
		
		\bibitem{ss1} R.~Sun, J.~M.~Swart: The Brownian net. {\it Ann.~Probab.}~{\bf 36},
		1153--1208 (2008).
		
		\bibitem{tw} B.~T\'oth; W.~Werner: The true self-repelling motion. {\it Prob. Theory Related Fields} {\bf 111}, 375--452 (1998).
		
	\end{thebibliography}
\end{document}